\newtheorem{definition}{Definition}[section]
\newtheorem{theorem}{Theorem}[section]
\newtheorem{prop}[theorem]{Proposition}
\newtheorem{coro}[theorem]{Corollary}
\newtheorem{lemma}[theorem]{Lemma}
\newtheorem{remark}[theorem]{Remark}
\newtheorem{exem}{Example}[section]
\newcommand{\R}{\mathbb{R}}             
\newcommand{\N}{\mathbb{N}}             
\newcommand{\Z}{\mathbb{Z}}             %
\newcommand{\C}{\mathbb{C}}             
\author{Damien Gobin\footnote{Laboratoire de Math\'ematiques Jean Leray, Universit\'e de Nantes, 2, rue de la Houssini\`ere, BP
     92208, 44322 Nantes Cedex 03. Email adress: damien.gobin@univ-nantes.fr. Research supported by the French National Research Project
AARG, No. ANR-12-BS01-012-01.}}
\title{Inverse scattering at fixed energy on three-dimensional asymptotically hyperbolic St\"ackel manifolds}
\date{\today}
\begin{document}

\maketitle


\begin{abstract}
In this paper, we study an inverse scattering problem at fixed energy on three-dimensional asymptotically hyperbolic St\"ackel manifolds
having the topology of toric cylinders and satisfying the Robertson condition. On these manifolds the Helmholtz equation can be separated into a system of a radial ODE and two angular 
ODEs. We can thus decompose the full scattering operator onto generalized harmonics and the resulting partial scattering matrices consist in a countable 
set of $2 \times 2$ matrices whose coefficients are the so-called transmission and reflection coefficients. It is shown that the reflection coefficients are nothing but generalized 
Weyl-Titchmarsh functions associated with the radial ODE. Using a novel multivariable version of the Complex Angular Momentum method, we show that the knowledge of the scattering 
operator at a fixed non-zero energy is enough to determine uniquely the metric of the three-dimensional St\"ackel manifold up to natural obstructions.

\vspace{0.5cm}
\noindent \textit{Keywords}. Inverse Scattering, St\"ackel manifolds, CAM method, Weyl-Titchmarsh function.\\
\textit{2010 Mathematics Subject Classification}. Primaries 81U40, 35P25; Secondary 58J50.
\end{abstract}


\section{Introduction and statement of the main result}

In this work we are interested in an inverse scattering problem at fixed energy for the Helmholtz equation on three-dimensional St\"ackel manifolds satisfying the 
Robertson condition. The St\"ackel manifolds were introduced in 1891 by St\"ackel in \cite{Sta1} and are of main interest in the theory of variable separation. Indeed, it is known 
that the additive separability of the Hamilton-Jacobi equation for the geodesic flow on a riemannian manifold is equivalent to the fact that the metric is in St\"ackel form.
However, to obtain the multiplicative separability of the Helmholtz equation, we also have to assume that the Robertson condition is satisfied. 
As we will see in a brief review of the theory of variable separation, there also exist some 
intrinsic characterizations of the separability of the Hamilton-Jacobi and Helmholtz equations in terms of Killing tensors (which correspond to hidden symmetries) or symmetry operators.
We refer to \cite{BCR1,BCR2,CR,Eis,Eis2,KM1,KM3,KM2,Sta} for important contributions in this domain and to \cite{Benen,Mi} for surveys on these questions.
We emphasize that the description of the riemannian manifolds given by St\"ackel is local. We shall obtain a global description of these manifolds by choosing a global St\"ackel system of coordinates and
we shall thus use the name of ``St\"ackel manifold'' in our study. We choose to work on a St\"ackel manifold $(\mathcal{M},g)$ having the topology of a toric cylinder and in order to define the 
scattering matrix on this manifold, we add an asymptotically hyperbolic structure, introduced in \cite{IK} (see also \cite{GuiSB,JSB,SB}), at the two radial ends of our 
cylinder. We can then define the scattering 
operator $S_g(\lambda)$ at a fixed energy $\lambda \neq 0$ associated with the Helmholtz equation on $(\mathcal{M},g)$ which is the object of main interest of this paper. The question we address is the following:
\begin{center}
\emph{Does the scattering operator $S_g(\lambda)$ at a fixed energy $\lambda \neq 0$ uniquely determine the metric $g$ of the St\"ackel manifold?}
\end{center}
We recall that inverse scattering problems at fixed energy on asymptotically hyperbolic manifolds are closely related to the anisotropic Calder\'on problem on compact riemannian
manifolds with boundary. We refer to the surveys \cite{GT,IK,KKL,KS,S,U} for the current state of the art on this question. One of the aim of this paper is thus to give examples of 
manifolds on which we can solve the inverse scattering problem at fixed energy 
but do not have one of the particular structures for which the uniqueness for the anisotropic Calder\'on problem on compact manifolds with boundary is known.
Note that the result we prove here is an uniqueness result. We are also interested in stability result i.e. in study the continuity of the application $g \mapsto 
S_g(\lambda)$. This question will be the object of a future work.

The main tool of this paper consists in complexifying the coupled angular momenta that appear in the variable separation procedure.
Indeed, thanks to variable separation, the scattering operator at fixed energy can be decomposed onto scattering coefficients indexed by a discrete set of \emph{two} angular 
momenta that correspond to the \emph{two} constants of separation.
Roughly speaking, the aim of the Complexification of the Angular Momentum method is the following: from a discrete set of data (here the equality of the 
reflection coefficients on the coupled spectrum) we want to obtain a continuous regime of informations (here the equality of these functions on $\C^2$).
This method consists in three steps. We first allow the angular momentum to be a complex number. We then use uniqueness results for functions in certain analytic classes 
to obtain the equality of the non-physical corresponding data on the complex plane $\C$. Finally, we use this new information to solve our inverse problem thanks to the 
Borg-Marchenko Theorem.
The general idea of considering complex angular momentum originates from a paper of Regge 
(see \cite{Re}) as a tool in the analysis of the scattering matrix of Schr\"odinger operators in $\R^3$ with spherically symmetric potentials.
We also refer to \cite{AR,New} for books dealing with this method. This tool was already used in the field of inverse problems for one angular momentum in
\cite{Pap1,DaKaNi,DNK2,DN3,DN,DN4,G,Ra} and we note that this method is also used in high energy physics (see \cite{Collins}).
In this work we use a novel multivariable version 
of the Complexification of 
the Angular Momentum method for \emph{two angular momenta} which correspond to the constants of separation of the Helmholtz equation. Note that we have to use this multivariable version since 
these two angular moments (which are also coupled eigenvalues of two commuting operators) are not independent and cannot be considered separately.
This work is a continuation of the paper \cite{DaKaNi} of Daud\'e, Kamran and Nicoleau 
in which the authors treat the same question on Liouville surfaces which correspond to St\"ackel manifolds in two dimensions.

\subsection{Review of variable's separation theory}\label{reviewsep}

In this Subsection, we propose a brief review of variable's separation theory. We refer to \cite{Benen,Mi} and to the introduction of \cite{BCR1} for surveys of this
theory. Let $(\mathcal{M},g)$ be a riemannian manifold of dimension $n$. On $(\mathcal{M},g)$, we are first interested in the Hamilton-Jacobi equation
\begin{equation}\label{eqHJ}
\nabla W . \nabla W = E,
\end{equation}
where $E$ is a constant parameter and $\nabla$ is the gradient operator
\[ (\nabla W)^i = g^{ij} \partial_j W,\]
where we use the Einstein summation convention. We are also interested in the Helmholtz equation
\begin{equation}\label{eqHel}
-\Delta_g \psi = E \psi.
\end{equation}
where $\Delta_g$ is the Laplace-Beltrami operator
\[ \Delta_g \psi = g^{ij} \nabla_i \nabla_j \psi,\]
where $\nabla_j$ is the covariant derivative with respect to the Levi-Civita connection. We note that, as it is shown in \cite{BCR1}, we can add a potential $V$ satisfying 
suitable conditions to these equations without more complications in the theory we describe here.
It is known that, in many interesting cases, these equations admit local separated solutions of the form
\[ W(\underline{x},\underline{c}) = \sum_{i=1}^n W_i(x^i,\underline{c}),\]
for the Hamilton-Jacobi equation and of the form
\[ \psi(\underline{x},\underline{c}) = \prod_{i=1}^n \psi_i(x^i,\underline{c}),\]
for the Helmholtz equation, where $\underline{x} = \{ x^i \}$ is a suitable coordinate system on $\mathcal{M}$ and $\underline{c}$ denotes a set of constant parameters, whose 
number depends on an appropriate definition of separation (see below). The reason why we are interested in such solutions is that
it happens that for solutions of this kind, Equations (\ref{eqHJ})-(\ref{eqHel}) become equivalent to a system of ordinary differential separated equations, each one involving a single 
coordinate.
In this work we study the particular case of the orthogonal separation, i.e. when $g^{ij} = 0$ for $i \neq j$. We now recall the definition of separation of variables 
for the Hamilton-Jacobi and the Helmholtz equations.

\begin{definition}[\cite{BCR1}]
 The Hamilton-Jacobi equation is separable in the coordinates $\underline{x} = \{ x^i \}$ if it admits a complete separated solution, i.e. a solution of the kind
 \[ W(\underline{x},\underline{c}) = \sum_{i=1}^n W_i(x^i,\underline{c}),\]
 depending on $n$ parameters $\underline{c} = (c_j)$ satisfying the completeness condition
 \[ \det \begin{pmatrix} \frac{\partial p_i}{\partial c_j}  \end{pmatrix} \neq 0, \quad \textrm{where} \quad p_i = \partial_i W.\]
\end{definition}

\begin{definition}[\cite{BCR1}, Definition 4.1]
 The Helmholtz equation is separable in the coordinates $\underline{x} = \{ x^i \}$ if it admits
 a complete separated solution, i.e. a solution of the form 
 \[\psi(\underline{x},\underline{c}) = \prod_{i=1}^{n} \psi_i(x^i,\underline{c}),\] 
 depending on $2n$ parameters $\underline{c} = (c_j)$ satisfying the completeness condition
 \[ \det \begin{pmatrix} \frac{\partial u_i}{\partial c_j} \\ \frac{\partial v_i}{\partial c_j} \end{pmatrix} \neq 0, \quad \textrm{where} \quad u_i = \frac{\psi_i'}{\psi_i} \quad 
 \textrm{and} \quad v_i = \frac{\psi_i''}{\psi_i}.\]
\end{definition}

We now recall the results proved by St\"ackel, Robertson and Eisenhart at the end of the eighteenth century and at the beginning of the nineteenth century which:
\begin{enumerate}
 \item Characterize the riemannian manifolds admitting orthogonal variable separation.
 \item Make the link between the variable separation for Hamilton-Jacobi and Helmholtz equations.
\end{enumerate}

\begin{definition}[St\"ackel matrix]
 A St\"ackel matrix is a regular $n \times n$ matrix $S(\underline{x}) = (s_{ij}(x^i))$ whose components $s_{ij}(x^i)$ are functions depending on the variable 
 corresponding to the line number only.
\end{definition}

\begin{theorem}[St\"ackel 1893, \cite{Sta}]
 The Hamilton-Jacobi equation is separable in orthogonal coordinates $\underline{x}$ if and only if the metric $g$ is of the form
 \[ g = H_1^2 (dx^1)^2 + H_2^2 (dx^2)^2 + H_3^2 (dx^3)^2,\]
where
\[H_i^2 = \frac{\det(S)}{s^{i1}}, \quad \forall i \in \{1,2,3\},\]
where $s^{i1}$ is the minor associated with the coefficient $s_{i1}$, for all $i \in \{1,2,3\}$.
\end{theorem}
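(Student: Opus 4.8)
The plan is to work throughout in the given orthogonal coordinate system, where $g^{ij}=0$ for $i\neq j$ and $g^{ii}=1/H_i^2$, so that the Hamilton-Jacobi equation (\ref{eqHJ}) reads $\sum_{i=1}^{3} g^{ii}(\underline{x})\, p_i^2 = E$ with $p_i=\partial_i W$. Separability then means we seek $W=\sum_{i} W_i(x^i,\underline{c})$, i.e. each $p_i=p_i(x^i,\underline{c})$ is a function of its own coordinate and of the separation constants $\underline{c}=(c_1,c_2,c_3)$ only. I would prove the two implications separately, normalising so that the energy is one of the constants, say $E=c_1$. The computation is dimension-independent, so I would carry it out for general $n$ and read off the case $n=3$.

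For the sufficiency of the St\"ackel form I would exhibit an explicit complete separated solution. Given a St\"ackel matrix $S=(s_{ij}(x^i))$ with $H_i^2=\det(S)/s^{i1}$, hence $g^{ii}=s^{i1}/\det(S)$, I define the separation equations $p_i^2=\sum_{j} s_{ij}(x^i)\,c_j$. Since $s_{ij}$ depends on $x^i$ alone, each $p_i$ depends only on $x^i$ and $\underline{c}$, so $W=\sum_i \int^{x^i} p_i\,dt$ is genuinely separated. Substituting into the Hamilton-Jacobi equation and using the cofactor (Laplace-expansion) identity $\sum_i s_{ij}\,s^{i1}=\delta_{j1}\det(S)$, the double sum collapses to $c_1=E$, so $W$ solves (\ref{eqHJ}). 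The completeness condition follows from $\partial p_i/\partial c_j = s_{ij}/(2p_i)$, whose determinant equals $\det(S)/\prod_i (2p_i)\neq 0$ because $S$ is regular and $p_i\neq 0$ generically.

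For the necessity of the St\"ackel form I would run this computation in reverse. Assuming a complete separated solution, I differentiate the identity $\sum_i g^{ii}p_i^2=c_1$ with respect to each $c_k$ to obtain $\sum_i g^{ii} M_{ik}=\delta_{1k}$, where $M_{ik}:=\partial(p_i^2)/\partial c_k = 2p_i\,\partial p_i/\partial c_k$ depends on $(x^i,\underline{c})$ only. The completeness condition guarantees $\det M = 2^{3}\bigl(\prod_i p_i\bigr)\det(\partial p_i/\partial c_j)\neq 0$, so Cramer's rule yields $g^{ii}=(M^{-1})_{1i}=M^{i1}/\det M$, the quotient of a cofactor by the determinant. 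Since $g^{ii}$ is a geometric quantity independent of $\underline{c}$, this identity holds for every admissible value of the constants; fixing one value $\underline{c}=\underline{c}_0$ with $\det M\neq 0$ and setting $s_{ij}(x^i):=M_{ij}(x^i,\underline{c}_0)$ produces a regular matrix whose entries depend on the line variable alone, that is, a St\"ackel matrix, for which $g^{ii}=s^{i1}/\det(S)$, equivalently $H_i^2=\det(S)/s^{i1}$.

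The main obstacle is this reverse implication, and specifically the passage from the pointwise-in-$\underline{c}$ linear relation to a bona fide St\"ackel matrix: one must check that $M_{ij}(\cdot,\underline{c}_0)$ is regular, which is exactly where the completeness hypothesis is essential and where one must argue that the generic zero set of $\prod_i p_i$ can be avoided, and one must justify that evaluating at a single $\underline{c}_0$ is legitimate precisely because $g^{ii}$ carries no $\underline{c}$-dependence. The remaining care is purely bookkeeping: fixing a consistent convention for the cofactors $s^{i1}$ as signed minors, and verifying the cofactor identity $\sum_i s^{i1}s_{ij}=\delta_{1j}\det(S)$ used in both directions.
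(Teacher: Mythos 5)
The paper never proves this statement: it appears in the background review of Subsection 1.1 and is quoted as a classical result with the citation \cite{Sta}, so there is no internal proof to compare against, and your argument must be judged against the classical one --- which is exactly what you have reconstructed, correctly. The sufficiency direction is complete as written: the separation ansatz $p_i^2=\sum_j s_{ij}(x^i)c_j$ plus the cofactor identity $\sum_i s_{ij}s^{i1}=\delta_{j1}\det(S)$ collapses the Hamilton--Jacobi equation to $E=c_1$, and the completeness determinant $\det(\partial p_i/\partial c_j)=\det(S)/(2^n\prod_i p_i)$ is nonzero once the constants are chosen in the nonempty open set where $\sum_j s_{ij}(x^i)c_j>0$ near the base point (nonempty precisely because $S(x_0)$ is invertible); this is a local statement, consistent with the local nature of the theorem that the paper itself emphasizes. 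The necessity direction is the standard reverse computation, and the two caveats you flag are indeed the only real gaps; both close by routine arguments that you should record. First, the normalization $E=c_1$ is legitimate because, wherever $\det M\neq 0$, the gradient of $E$ in the constants cannot vanish (otherwise the relation $\sum_i g^{ii}M_{ik}=\partial E/\partial c_k$ would force all $g^{ii}=0$), so one may reparametrize the parameter space to make the energy the first constant without destroying completeness. Second, a value $\underline{c}_0$ with $p_i(x_0^i,\underline{c}_0)\neq 0$ for every $i$ exists: if the closed sets $\{\underline{c}:\,p_i(x_0^i,\underline{c})=0\}$ covered an open set of parameters, one of them would contain an open set, forcing the row $\partial p_i/\partial c_j$ to vanish identically there and contradicting completeness; continuity then gives $\det M(\cdot,\underline{c}_0)\neq 0$ on a neighborhood of $x_0$, which is all that the (local) statement requires. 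With these two remarks inserted, your proposal is a complete and correct proof, essentially the one going back to St\"ackel.
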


\begin{theorem}[Robertson 1927, \cite{Rob}]\label{thmRob}
 The Helmholtz equation is separable in orthogonal coordinates $\underline{x}$ if and only if in these coordinates the Hamilton-Jacobi equation is separable and moreover the 
 following condition is satisfied
\begin{equation}\label{condRobintro}
  \frac{\det(S)^2}{|g|} = \frac{\det(S)^2}{\prod_{i=1}^n H_i^2} = \prod_{i=1}^n f_i(x^i),
\end{equation}
 where $f_i(x^i)$ are arbitrary functions of the corresponding coordinate only and $|g|$ is the determinant of the metric $g$.
\end{theorem}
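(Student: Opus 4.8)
The plan is to substitute the multiplicative ansatz $\psi=\prod_{i=1}^n\psi_i(x^i)$ directly into the Helmholtz equation and isolate the obstruction to separation. Since the equivalence between Hamilton--Jacobi separability and the St\"ackel form of $g$ is already furnished by St\"ackel's theorem, I would work in orthogonal coordinates in which $g^{ii}=1/H_i^2=s^{i1}/\det(S)$, so that the contravariant metric coefficients are the entries of the first row of the inverse St\"ackel matrix; these satisfy the fundamental orthogonality relation
\[ \sum_{i=1}^n \frac{s^{i1}}{\det(S)}\,s_{ij}(x^i)=\delta_{1j}, \]
which is the algebraic engine of every St\"ackel separation. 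I would also record the elementary fact that the minor $s^{i1}$, being built from the rows $k\neq i$, does not depend on $x^i$, so that $\d_i s^{i1}=0$.

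Writing the Laplace--Beltrami operator in orthogonal coordinates as $\Delta_g\psi=\frac{1}{\sqrt{|g|}}\sum_i\d_i\!\big(\frac{\sqrt{|g|}}{H_i^2}\d_i\psi\big)$ and using $\d_i\psi=\frac{\psi_i'}{\psi_i}\psi$ and $\d_i^2\psi=\frac{\psi_i''}{\psi_i}\psi$, the equation $-\Delta_g\psi=E\psi$ becomes, after dividing by $\sqrt{|g|}$,
\[ \sum_{i=1}^n\frac{1}{H_i^2}\left[\frac{\psi_i''}{\psi_i}+\Big(\d_i\ln\sqrt{|g|}+H_i^2\,\d_i\tfrac{1}{H_i^2}\Big)\frac{\psi_i'}{\psi_i}\right]=-E. \]
The crucial simplification comes from $\d_i s^{i1}=0$: since $\frac{1}{H_i^2}=\frac{s^{i1}}{\det(S)}$, one gets $\d_i\frac{1}{H_i^2}=-\frac{1}{H_i^2}\,\d_i\ln\det(S)$, so the first-order coefficient collapses to the single logarithmic derivative $\d_i\ln\frac{\sqrt{|g|}}{\det(S)}$. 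Hence the equation takes the form $\sum_i\frac{s^{i1}}{\det(S)}\big[\frac{\psi_i''}{\psi_i}+\big(\d_i\ln\frac{\sqrt{|g|}}{\det(S)}\big)\frac{\psi_i'}{\psi_i}\big]=-E$.

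Now comes the separation criterion. Because $\frac{\psi_i''}{\psi_i}$ and $\frac{\psi_i'}{\psi_i}$ depend on $x^i$ alone, the $i$-th bracket is a one-variable function of $x^i$ if and only if the coefficient $\d_i\ln\frac{\sqrt{|g|}}{\det(S)}$ is a function of $x^i$ alone. When this holds for every $i$, I would set $\frac{\psi_i''}{\psi_i}+(\d_i\ln\frac{\sqrt{|g|}}{\det(S)})\frac{\psi_i'}{\psi_i}=-\sum_{j}s_{ij}(x^i)\,c_j$ with $c_1=E$; summing against $\frac{s^{i1}}{\det(S)}$ and invoking the orthogonality relation recovers exactly the Helmholtz equation, while each line is a second-order linear ODE in $x^i$ carrying two integration constants. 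This yields a $2n$-parameter complete separated solution, and the completeness determinant of the definition is verified on these ODE solutions. Conversely, the requirement that every $\d_i\ln\frac{\sqrt{|g|}}{\det(S)}$ be single-variable integrates to $\ln\frac{\sqrt{|g|}}{\det(S)}=\sum_i\rho_i(x^i)$, i.e. $\frac{\sqrt{|g|}}{\det(S)}=\prod_i e^{\rho_i(x^i)}$; squaring and inverting gives precisely Robertson's condition $\frac{\det(S)^2}{|g|}=\prod_i f_i(x^i)$.

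For the forward implication I would argue that Helmholtz separability already forces the St\"ackel form: the second-derivative (principal) part of the separated equation is governed by the contravariant coefficients $g^{ii}$, and its separation, as the separation constants are varied, reproduces the Hamilton--Jacobi separation $\sum_i g^{ii}p_i^2=E$, so that $g$ must be St\"ackel by St\"ackel's theorem; the first-order terms then force Robertson as above. I expect this forward direction to be the main obstacle, since one must carefully decouple the genuinely second-order contribution from the first-order drift and use the completeness condition on $(u_i,v_i)=(\psi_i'/\psi_i,\psi_i''/\psi_i)$ to guarantee that the separation constants are independent — it is exactly this independence that upgrades ``the bracket happens to separate'' into ``the coefficient must be a single-variable function,'' thereby closing the equivalence.
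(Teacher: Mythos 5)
A preliminary remark: the paper does not prove Theorem \ref{thmRob} at all; it is quoted as a classical result with a citation to \cite{Rob} (see also \cite{BCR1}), so there is no internal proof to compare you against. The nearest internal computation is the proof of Lemma \ref{introHL}, which is exactly your sufficiency direction run under the hypothesis: assuming the Robertson condition (\ref{Rob1}), the drift collapses and $\Delta_g = \sum_i H_i^{-2}\bigl(\partial_i^2 - \frac{1}{2}\partial_i(\log f_i)\partial_i\bigr)$, after which the St\"ackel relations $\sum_i \frac{s^{i1}}{\det(S)}s_{ij} = \delta_{1j}$ split the equation. Your version of this half is correct: the use of $\partial_i s^{i1}=0$ to collapse the first-order coefficient to $\partial_i\ln\bigl(\sqrt{|g|}/\det(S)\bigr)$, the ansatz $\mathrm{bracket}_i = -\sum_j s_{ij}(x^i)c_j$ with $c_1=E$, and the integration of single-variable drifts into $\det(S)^2/|g| = \prod_i f_i(x^i)$ are all sound. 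The completeness of the resulting $2n$-parameter family, which you merely assert, is a routine check: writing $d_i$ for initial values of the Riccati variables $u_i$, a row operation using the separated ODEs turns the $2n\times 2n$ determinant into $\pm\det(S)\prod_i \partial u_i/\partial d_i \neq 0$.

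The genuine gap is the necessity direction, which is the substantive half of Robertson's theorem and which you sketch but do not prove. Granting a complete separated solution $\psi = \prod_i\psi_i(x^i,\underline{c})$, all you actually have is the single identity $\sum_i g^{ii}\bigl[v_i + \gamma_i u_i\bigr] = -E$, where $\gamma_i = \partial_i\ln\bigl(\sqrt{|g|}\,g^{ii}\bigr)$ is the drift, each factor $\psi_i$ depends a priori on \emph{all} $2n$ parameters, and each $\gamma_i$ depends a priori on \emph{all} coordinates. Nothing in your text gets from ``the weighted sum of the brackets is constant'' to ``each bracket is a function of $x^i$ alone'', which is what your integration step requires; likewise your claim that the principal part ``reproduces the Hamilton--Jacobi separation'' (hence the St\"ackel form) is asserted, not derived. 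The missing mechanism is the actual use of the completeness condition: differentiate the identity with respect to the parameters $c_j$, use the invertibility of the matrix $\bigl(\partial u_i/\partial c_j,\ \partial v_i/\partial c_j\bigr)$ to solve for the coefficient vector $(g^{ii},\, g^{ii}\gamma_i)$, and then take cross-derivatives in $x^k$, $k\neq i$, to force the St\"ackel structure of the $g^{ii}$ and the single-variable character of the $\gamma_i$. This Levi-Civita-type analysis (carried out in \cite{BCR1}, see also \cite{KM2}) is precisely what upgrades your formal computation into an equivalence; as written, your ``only if'' is a statement of intent rather than a proof, as your own final sentence concedes.
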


Thanks to this Theorem we see that a full understanding of the separation theory for the Helmholtz equation depends on an understanding of the corresponding problem for 
the Hamilton-Jacobi equation and we note that the separability of the Helmholtz equation is more demanding.
The additional condition (\ref{condRobintro}) in Theorem \ref{thmRob} is called \emph{Robertson condition}. This condition has a geometrical meaning given by the 
following characterization dues to Eisenhart.

\begin{theorem}[Eisenhart 1934, \cite{Eis}]
 The Robertson condition (\ref{condRobintro}) is satisfied if and only if in the orthogonal coordinates system $\underline{x}$ the Ricci tensor is diagonal:
 \[ R_{ij} = 0, \quad \forall i \neq j.\]
\end{theorem}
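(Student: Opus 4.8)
The plan is to prove the equivalence by a direct computation of the off-diagonal components of the Ricci tensor in the orthogonal coordinate system $\underline{x}$, and to show that their vanishing is governed by a single scalar quantity whose separability is exactly the Robertson condition. First I would recall the classical expression for the Ricci tensor of an orthogonal metric $g = \sum_i H_i^2 (dx^i)^2$. Setting $\beta_i = \log H_i$, a computation from the Christoffel symbols $\Gamma^i_{ii} = \partial_i\beta_i$, $\Gamma^i_{ij} = \partial_j\beta_i$ (for $i\neq j$), $\Gamma^i_{jj} = -(H_j^2/H_i^2)\partial_i\beta_j$ (for $i\neq j$) and $\Gamma^i_{jk}=0$ (for distinct $i,j,k$) yields, in dimension three, for $i\neq j$ and $k$ the remaining index,
\[ R_{ij} = -\partial_i\partial_j\beta_k + \partial_i\beta_k\,\partial_j\beta_i + \partial_j\beta_k\,\partial_i\beta_j - \partial_i\beta_k\,\partial_j\beta_k . \]
In higher dimension the same computation produces a sum of such contributions over all $k\neq i,j$. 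The first term is a mixed second derivative, while the remaining ones are quadratic in the first derivatives of the $\beta$'s.

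Next I would inject the St\"ackel form $H_i^2 = \det(S)/s^{i1}$, equivalently $g^{ii}=1/H_i^2 = s^{i1}/\det(S)$, which one recognizes as the first row of $S^{-1}$. Because each entry $s_{lm}$ depends only on the variable $x^l$, the inverse metric obeys the fundamental St\"ackel relations $\sum_l (1/H_l^2)\,s_{lm} = \delta_{1m}$. Differentiating these once and twice in $x^i,x^j$ and re-inverting $S$ produces the St\"ackel integrability conditions, which express the first derivatives $\partial_i\beta_l$ through the St\"ackel matrix and its cofactors; in particular one obtains the clean identity $\partial_i\beta_i = \tfrac12\,\partial_i\log\det(S)$. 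I would feed these relations into the quadratic terms above so as to collapse the whole of $R_{ij}$ into a nonzero multiple of the single mixed second derivative $\partial_i\partial_j\log\!\left(\det(S)^2/|g|\right)$.

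The final step is to connect this quantity with the Robertson condition. By the St\"ackel theorem, $|g| = \prod_i H_i^2 = \det(S)^n/\prod_i s^{i1}$, so that $\det(S)^2/|g|=\prod_i s^{i1}/\det(S)^{\,n-2}$ depends on the coordinates only through $\det(S)$ and the cofactors $s^{i1}$, precisely the quantities surviving the reduction of $R_{ij}$. Hence $R_{ij}=0$ for all $i\neq j$ if and only if every mixed second derivative $\partial_i\partial_j\log\!\left(\det(S)^2/|g|\right)$ vanishes, i.e. if and only if $\log\!\left(\det(S)^2/|g|\right)$ is a sum of functions each depending on a single coordinate; exponentiating gives $\det(S)^2/|g| = \prod_i f_i(x^i)$, which is the Robertson condition \eqref{condRobintro}.

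The hard part is the middle step. The quadratic first-derivative terms in $R_{ij}$ do not vanish individually, and showing that they recombine — using the St\"ackel integrability conditions derived from $\sum_l (1/H_l^2)\,s_{lm}=\delta_{1m}$ — into a multiple of the single mixed derivative $\partial_i\partial_j\log\!\left(\det(S)^2/|g|\right)$ is the computational heart of the argument. Care is required with the cofactor identities (expansions of the type $\sum_m \dot s_{im}\,s^{km}$, which reduce to derivatives of $\det(S)$ when $k=i$ and to determinants with a repeated row otherwise) and, when $n>3$, with the summation over the third index $k$; it is exactly the one-variable dependence of the St\"ackel entries $s_{lm}=s_{lm}(x^l)$ that makes this recombination possible.
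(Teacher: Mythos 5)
The paper never proves this statement: it is quoted as a classical result with a citation to Eisenhart's 1934 paper, so there is no in-paper argument to compare yours against. Judged on its own merits, your proposal is correct, and the step you flag as the ``computational heart'' does go through --- in fact more cleanly than you anticipate, so let me certify it. Your off-diagonal Ricci formula is the standard one, and your St\"ackel relations are right. Writing $c_l = H_l^{-2}$, differentiating $\sum_l c_l s_{lm} = \delta_{1m}$ once gives $\sum_l (\partial_i c_l)s_{lm} = -c_i\dot{s}_{im}$, and differentiating again in $x^j$ ($j\neq i$) and eliminating $\dot{s}_{im}$, $\dot{s}_{jm}$ by invertibility of $S$ yields the integrability conditions
\[ \partial_i\partial_j c_l \;=\; \frac{\partial_j c_i}{c_i}\,\partial_i c_l \;+\; \frac{\partial_i c_j}{c_j}\,\partial_j c_l, \qquad i\neq j,\quad \forall\, l, \]
which in terms of $\beta_l=\log H_l$ read
\[ \partial_i\partial_j\beta_l \;=\; 2\,\partial_i\beta_l\,\partial_j\beta_l \;-\; 2\,\partial_j\beta_i\,\partial_i\beta_l \;-\; 2\,\partial_i\beta_j\,\partial_j\beta_l . \]
Rather than feeding the \emph{first}-derivative relations into the quadratic terms, use this last identity (with $l=k$) in the opposite direction: since it gives $\partial_j\beta_i\,\partial_i\beta_k+\partial_i\beta_j\,\partial_j\beta_k = \partial_i\beta_k\,\partial_j\beta_k - \tfrac12\,\partial_i\partial_j\beta_k$, each summand of your Ricci formula collapses to $-\tfrac32\,\partial_i\partial_j\beta_k$, whence for a St\"ackel metric and $i\neq j$
\[ R_{ij} \;=\; -\frac32\sum_{k\neq i,j}\partial_i\partial_j\beta_k \;=\; -\frac34\,\partial_i\partial_j\bigl(\log|g|-\log H_i^2-\log H_j^2\bigr) \;=\; \frac34\,\partial_i\partial_j\log\Bigl(\frac{\det(S)^2}{|g|}\Bigr), \]
the last equality because $\log H_i^2=\log\det(S)-\log s^{i1}$ and the cofactors $s^{i1}$, $s^{j1}$ are independent of $x^i$, $x^j$ respectively, so their mixed $\partial_i\partial_j$-derivatives vanish. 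Since $3/4\neq 0$, the Ricci tensor is diagonal if and only if all mixed partials of $\log\bigl(\det(S)^2/|g|\bigr)$ vanish, i.e. if and only if this function splits additively, which after exponentiation is exactly the Robertson condition; the argument is dimension-independent, which also disposes of your worry about the sum over $k$ when $n>3$. So your plan is sound and becomes a complete proof once this substitution is written out; the only refinement I would suggest is organizational: eliminate the quadratic terms in favor of the mixed second derivatives $\partial_i\partial_j\beta_k$ (not the reverse), which bypasses the cofactor bookkeeping you were worried about.
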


\begin{remark}
We note that the Robertson condition is satisfied for Einstein manifolds. Indeed, an Einstein manifold is a riemannian manifold whose Ricci tensor is proportional to the 
metric which is diagonal in the orthogonal case we study.
\end{remark}

As shown by Eisenhart in \cite{Eis,Eis2} and by Kalnins and Miller in \cite{KM3} the separation of the Hamilton-Jacobi equation for the geodesic flow is related to the 
existence of Killing tensors of order two
(whose presence highlights the presence of hidden symmetries).
We thus follow \cite{Benen,KM3} in order to study this relation.
We use the natural 
symplectic structure on the cotangent bundle $T^{\star} \mathcal{M}$ of the manifold $(\mathcal{M},g)$. Let $\{x^i\}$ be local coordinates on $\mathcal{M}$ and $\{x^i,p_i\}$
the associated coordinates on $T^{\star}\mathcal{M}$.
Let
\[ H = g^{ij}p_i p_j,\]
be the geodesic hamiltonian on $T^{\star} \mathcal{M}$. The Hamilton-Jacobi equation can thus be written as
\[ H(x^i,\partial_i W) = E.\]
Thanks to this formalism, an other necessary and sufficient condition for the separability of the Hamilton-Jacobi equation has been proved by Levi-Civita. We state 
the version given in \cite{Benen}.

\begin{theorem}[Levi-Civita 1904, \cite{LC}]
 The Hamilton-Jacobi equation
 \[ H(x^i,\partial_i W) = E,\]
 admits a separated solution if and only if the differential equations, known as the separability equations of Levi-Civita,
 \[ L_{ij}(H) := \partial_i H \partial_j H \partial^i \partial^j H + \partial^i H \partial^j \partial_i \partial_j H - \partial_i H \partial^j H \partial^i \partial_j H
  - \partial^i H \partial_j H \partial_i \partial^j H = 0, \]
 where $\partial_i = \frac{\partial}{\partial x^i}$ and $\partial^i = \frac{\partial}{\partial p_i}$, are identically satisfied.
\end{theorem}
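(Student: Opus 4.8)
The plan is to prove the two implications separately, in both cases by differentiating the Hamilton-Jacobi identity $H(x^i,\partial_i W)=E$ along the Lagrangian leaves cut out by the (putative) complete integral, and exploiting the fact that on a separated solution $p_i=\partial_i W=W_i'(x^i,\underline{c})$ depends on the \emph{single} coordinate $x^i$ together with the parameters $\underline{c}$. The whole argument hinges on the interplay between this single-variable dependence and the leaf-tangent total derivative.

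First I would treat \emph{necessity}. Assume $W(\underline{x},\underline{c})=\sum_{i=1}^n W_i(x^i,\underline{c})$ is a complete separated solution, so that $\partial_i\partial_j W=0$ for $i\neq j$ while $\partial_i\partial_i W=W_i''(x^i,\underline{c})$. Differentiating $H(x^i,\partial_i W)=E$ with respect to $x^j$ at fixed $\underline{c}$, and using that only $p_j$ depends on $x^j$, yields
\[ \partial_j H + \partial^j H\,W_j''=0, \qquad\text{i.e.}\qquad W_j''=-\frac{\partial_j H}{\partial^j H} \]
on the leaf. Introducing the leaf-tangent operator $D_i:=\partial_i-\frac{\partial_i H}{\partial^i H}\,\partial^i$ (so that $D_i p_k=0$ for $k\neq i$ and $D_i p_i=W_i''$), I would differentiate the previous relation once more in the $x^i$ direction with $i\neq j$. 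Because $W_j''$ depends only on $(x^j,\underline{c})$ we have $D_i W_j''=0$, and after substituting the expressions for $W_i'',W_j''$ and clearing denominators by $\partial^i H\,\partial^j H$ the resulting identity is exactly $L_{ij}(H)=0$. This direction is essentially a two-step differentiation; its only structural inputs are the vanishing of the mixed second derivatives of $W$ and the single-variable dependence of $W_j''$.

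For \emph{sufficiency} I would reverse this computation. Setting $\phi_j:=-\partial_j H/\partial^j H$, a direct calculation gives the pointwise identity
\[ L_{ij}(H) = -(\partial^j H)^2\,\partial^i H\; D_i\phi_j, \]
so that on the region where $\partial^i H\neq 0$ the Levi-Civita equations $L_{ij}(H)=0$ (for $i\neq j$) are \emph{equivalent} to $D_i\phi_j=0$. Since $D_i H=\partial_i H+\phi_i\partial^i H=0$ by construction, the $n$ operators $D_i$ are tangent to each level set of $H$, each involves only the conjugate pair $(x^i,p_i)$, and the conditions $D_i\phi_j=0$ express precisely the mutual compatibility needed to integrate the overdetermined system $\partial_i W=p_i$, $W_i''=\phi_i$. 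I would then integrate the separation relations one coordinate at a time: at the $i$-th step $L_{ij}(H)=0$ guarantees that the quantity being integrated reduces to a function of $x^i$ alone, so the procedure is consistent and introduces the parameters $\underline{c}$, producing $W=\sum_i W_i(x^i,\underline{c})$.

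The hard part will be the \textbf{completeness} of the integral so constructed: one must verify $\det\!\big(\partial p_i/\partial c_j\big)\neq 0$, so that $W$ is a genuine complete separated solution and not a degenerate family. This is where the regularity of the St\"ackel matrix --- equivalently, the non-degeneracy of the Jacobian produced when the separation relations are integrated --- must be invoked, and where the Frobenius integrability bookkeeping for the fields $D_i$ has to be carried out in full. In short, necessity is a direct differentiation, whereas sufficiency reduces to checking that $D_i\phi_j=0$ makes the one-variable-at-a-time integration both solvable and non-degenerate.
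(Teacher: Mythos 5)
The paper itself offers no proof of this statement: it is quoted as a classical theorem (Levi-Civita 1904, in the form given in \cite{Benen}), so your attempt can only be measured against the standard argument. Your strategy \emph{is} that standard argument, and your central identity is correct: writing $\phi_j = -\partial_j H/\partial^j H$ and $D_i = \partial_i + \phi_i\,\partial^i$, one checks directly that $L_{ij}(H) = -(\partial^j H)^2\,\partial^i H\, D_i\phi_j$ (incidentally, this confirms that the paper's displayed formula has a typo: the second term should read $\partial^i H\,\partial^j H\,\partial_i\partial_j H$). Necessity is then the two-step differentiation you describe, and sufficiency is cleanest through the commutator formula $[D_i,D_j] = (D_i\phi_j)\,\partial^j - (D_j\phi_i)\,\partial^i$, so that the Levi-Civita equations make the fields $D_i$ commute, Frobenius yields $n$-dimensional leaves on which $p_i$ depends on $x^i$ alone and $H$ is constant, and $W = \sum_i \int p_i\,dx^i$ is the desired separated solution.

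There are, however, two genuine gaps. First, in the necessity direction your differentiation only gives $L_{ij}(H)=0$ \emph{on the Lagrangian leaf} of the given solution, whereas the theorem asserts the equations are \emph{identically} satisfied in $(x,p)$. You must invoke the completeness condition $\det\left(\partial p_i/\partial c_j\right)\neq 0$: it makes $(\underline{x},\underline{c})\mapsto(\underline{x},\partial W(\underline{x},\underline{c}))$ a local diffeomorphism, so the leaves sweep out an open set of phase space and the identity propagates there; without this step the implication as stated is simply not proved. Second, your handling of completeness in the sufficiency direction is misdirected: you propose to invoke ``the regularity of the St\"ackel matrix,'' but no St\"ackel matrix exists at this level of generality --- Levi-Civita's theorem concerns arbitrary Hamiltonians, and the St\"ackel form only appears later, when $H$ is quadratic in the momenta and the coordinates are orthogonal. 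Completeness is in fact the easy part, not the hard one: parametrize the leaves by the initial momenta $c_i := p_i(x_0)$ at a base point $x_0$, with $E = H(x_0,\underline{c})$; then $\partial p_i/\partial c_j = \delta_{ij}$ at $x_0$, so the Jacobian equals $1$ there and is nonzero nearby. With these two repairs (and the standing nondegeneracy assumption $\partial^i H\neq 0$ made explicit), your outline becomes a complete proof.
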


\begin{remark}
 This Theorem gives us a simple method for testing whether a coordinate system is separable or not. Moreover, it also provides us the basis for the geometrical (i.e. 
 intrinsic) characterization of the separation.
\end{remark}


Since we are interested in a characterization of the separation of the Hamilton-Jacobi equation using Killing tensors, we recall here some basic properties of these objects 
following the second Section of \cite{Benen}. We first recall that the contravariant symmetric tensors $\mathbf{K} = (K^{i...j})$ on $\mathcal{M}$ are in one-to-one correspondence with the 
homogeneous polynomials on $T^{\star} \mathcal{M}$ by the correspondence,
\[ P_{\mathbf{K}} = P(\mathbf{K}) = K^{i...j}p_i...p_j.\]

\begin{exem}
 The hamiltonian $H$ corresponds to the contravariant metric tensor $\mathbf{G}$.
\end{exem}
\noindent
The space of these polynomial functions is closed with respect to the canonical Poisson bracket
\[ \{ A,B \} := \partial^i A \partial_i B - \partial^i B \partial_i A.\]
We then defined the Lie-algebra structure $[.,.]$ on the space of the symmetric contravariant tensors by setting,
\[ P([\mathbf{K}_1,\mathbf{K}_2]) = \{ P(\mathbf{K}_1),P(\mathbf{K}_2) \}.\]

\begin{definition}
 Let $\mathbf{G}$ be a metric tensor. We say that $\mathbf{K}$ is a Killing tensor if $P(\mathbf{K})$ is in involution with $P_{\mathbf{G}} = P(\mathbf{G}) = H$, i.e.
 \[ \{ P(\mathbf{K}),P(\mathbf{G}) \} = 0,\]
 which, by definition, is equivalent to the Killing equation,
 \[ [\mathbf{K},\mathbf{G}] = 0.\]
\end{definition}

\begin{remark}
\begin{enumerate}
 \item This means that if $\mathbf{K}$ is a Killing tensor, $P(\mathbf{K})$ is a first integral of the geodesic flow.
 \item A vector field $\mathbf{X}$ is a Killing vector field, i.e. $[\mathbf{X},\mathbf{G}] =0$, if 
 and only if its flow preserves the metric.
\end{enumerate}
\end{remark}

We now consider the case of a symmetric $2$-tensor $\mathbf{K}$. Since there is a metric tensor, $\mathbf{K}$ can be represented in components as a tensor of type $(2,0)$, 
$(1,1)$ or $(0,2)$, respectively noted $\mathbf{K} = (K^{ij}) = (K_j^i) = (K_{ij})$. As a symmetric tensor of type $(1,1)$, $\mathbf{K}$ defines an endomorphism on the space 
$\chi(\mathcal{M})$ of the vector fields on $\mathcal{M}$ and an endomorphism on the space $\Phi^1(\mathcal{M})$ of the one-forms on $\mathcal{M}$. We denote by $\mathbf{K} \mathbf{X}$
the vector field image of $\mathbf{X} \in \chi(\mathcal{M})$ by $\mathbf{K}$ and by $\mathbf{K} \phi$ the one-form image of $\phi \in \Phi^1(\mathcal{M})$ by $\mathbf{K}$. In other 
words,
\[ \mathbf{K} \mathbf{X} = K_j^i X^j \partial_i \quad \textrm{and} \quad \mathbf{K} \phi = K_j^i \phi_i dx^j.\]
Then a $2$-tensor $\mathbf{K}$ gives rise to eigenvalues, eigenvectors or eigenforms according to the equations $\mathbf{K} \mathbf{X} = \rho \mathbf{X}$ and 
$\mathbf{K} \phi = \rho \phi$.
%
Finally, we denote by  $\mathbf{K}_1 \mathbf{K}_2$ the product of the two endomorphisms $\mathbf{K}_1$ and $\mathbf{K}_2$ whose expression in components is 
$\mathbf{K}_1 \mathbf{K}_2 = K_1^{ih} K_{2h}^j$. The algebraic commutator of two tensors is then denoted by
\[ [\![ \mathbf{K}_1 , \mathbf{K}_2  ]\!] := \mathbf{K}_1 \mathbf{K}_2 - \mathbf{K}_2  \mathbf{K}_1.\]

%

The first link between separation of variables for Hamilton-Jacobi equation and Killing tensors is then given by the following Proposition.

\begin{prop}[\cite{BenF,KM3,Koo}]
 To every orthogonal coordinate system $\{ x^i \}$ which permits additive separation of variables in the Hamilton-Jacobi equation, there correspond $n-1$ second order Killing 
 tensors $\mathbf{K}_1$,...,$\mathbf{K}_{n-1}$ in involution, i.e.
$ [ \mathbf{K}_i , \mathbf{K}_j  ] = 0$, and such that $\{H,P_{\mathbf{K}_1},...,P_{\mathbf{K}_{n-1}}\}$ is linearly independent. The separable solutions $W = \sum_{k=1}^n W^{(k)}(x^k)$ 
 are characterized by the relations
 \[ H(x^i,p_i) = E \quad \textrm{and} \quad P_{\mathbf{K}_l}(x^i,p_i) = \lambda_l, \quad \textrm{where} \quad l = 1,...,n-1 \quad \textrm{and} \quad p_i = \partial_{x^i}W,\]
 where $\lambda_1$,...,$\lambda_{n-1}$ are the separation constants.
\end{prop}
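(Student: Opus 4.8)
The plan is to build the Killing tensors explicitly from the St\"ackel matrix and to reduce every commutation statement to a single symmetry property of a matrix. Since the Hamilton-Jacobi equation separates in the orthogonal coordinates $\{x^i\}$, St\"ackel's Theorem provides a regular St\"ackel matrix $S=(s_{ij}(x^i))$ with $g^{ii}=s^{i1}/\det(S)$, where $s^{ij}$ denotes the $(i,j)$-cofactor. I would then define, for $l=0,\dots,n-1$, the diagonal symmetric contravariant $2$-tensors $\mathbf{K}_l=(K_l^{ij})$ by $K_l^{ii}=s^{i,l+1}/\det(S)$ and $K_l^{ij}=0$ for $i\neq j$, so that $\mathbf{K}_0=\mathbf{G}$ and $P_{\mathbf{K}_0}=H$. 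Writing $P_{\mathbf{K}_l}=\sum_i K_l^{ii}p_i^2$ and using that $S^{-1}$ equals the transpose of the cofactor matrix divided by $\det(S)$, the whole family is encoded by $(P_{\mathbf{K}_0},\dots,P_{\mathbf{K}_{n-1}})^{T}=S^{-1}(p_1^2,\dots,p_n^2)^{T}$, which inverts to the fundamental relation
\[ p_i^2=\sum_{r=1}^n s_{ir}(x^i)\,P_{\mathbf{K}_{r-1}},\qquad i=1,\dots,n.\qquad (\star) \]
This is an identity on $T^{\star}\mathcal{M}$, so it may be differentiated freely.

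Next I would prove at once that each $\mathbf{K}_l$ is a Killing tensor and that the family is in involution. Differentiating $(\star)$ in $p_j$ gives $\sum_r s_{ir}\,\partial^{j}P_{\mathbf{K}_{r-1}}=2p_i\delta_{ij}$, and differentiating it in $x^j$, using the St\"ackel property $\partial_j s_{ir}(x^i)=\delta_{ij}s_{ir}'(x^i)$, gives $\sum_r s_{ir}\,\partial_j P_{\mathbf{K}_{r-1}}=-\delta_{ij}\sum_r s_{ir}'(x^i)P_{\mathbf{K}_{r-1}}$. Setting, for $r=1,\dots,n$, $U_{rj}=\partial^{j}P_{\mathbf{K}_{r-1}}$ and $V_{rj}=\partial_j P_{\mathbf{K}_{r-1}}$, these read $SU=2D$ and $SV=-W$ with $D=\mathrm{diag}(p_i)$ and $W$ diagonal, hence $U=2S^{-1}D$ and $V=-S^{-1}W$. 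With the paper's bracket one has $\{P_{\mathbf{K}_{a-1}},P_{\mathbf{K}_{b-1}}\}=(UV^{T})_{ab}-(UV^{T})_{ba}$, the antisymmetric part of $UV^{T}=-2\,S^{-1}(DW)(S^{-1})^{T}$; since $DW$ is diagonal, $S^{-1}(DW)(S^{-1})^{T}$ is symmetric, so all brackets vanish. The case $a-1=0$ gives $\{P_{\mathbf{K}_l},H\}=0$, i.e. $[\mathbf{K}_l,\mathbf{G}]=0$, and the remaining cases give $[\mathbf{K}_i,\mathbf{K}_j]=0$.

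Linear independence follows again from $(\star)$: a constant-coefficient relation $\sum_l \alpha_l P_{\mathbf{K}_l}=0$ becomes $\sum_i\big(\sum_l \alpha_l (S^{-1})_{l+1,i}\big)p_i^2=0$, and since the $p_i^2$ are linearly independent on the fibres of $T^{\star}\mathcal{M}$ this forces $\alpha^{T}S^{-1}=0$, whence $\alpha=0$ by regularity of $S$. For the characterization of the separated solutions, fixing the values $P_{\mathbf{K}_0}=E$ and $P_{\mathbf{K}_l}=\lambda_l$ turns $(\star)$ into the decoupled relations $p_i^2=s_{i1}(x^i)E+\sum_{l=1}^{n-1}s_{i,l+1}(x^i)\lambda_l$, whose right-hand side depends on $x^i$ only; setting $p_i=\partial_{x^i}W$ and integrating each equation separately yields a complete solution $W=\sum_k W^{(k)}(x^k)$. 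Conversely, on any separated solution $p_i=(W^{(i)})'(x^i)$ depends on $x^i$ alone, so $(\star)$ forces the $P_{\mathbf{K}_{r-1}}$ to be constant along it, and those constants are exactly the separation constants $E,\lambda_1,\dots,\lambda_{n-1}$; the completeness condition is inherited from the regularity of $S$, which makes the passage from $(E,\lambda_1,\dots,\lambda_{n-1})$ to the $p_i$ nondegenerate.

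I expect the involution step to be the main obstacle, namely organizing the Poisson-bracket computation so that the St\"ackel structure is used in precisely the right place. Everything hinges on the fact that differentiating $(\star)$ in $x^j$ produces the \emph{diagonal} matrix $W$: this is what symmetrizes $UV^{T}$. Without the St\"ackel property $s_{ir}=s_{ir}(x^i)$ the right-hand side of that identity would not be diagonal, and the brackets would in general fail to vanish.
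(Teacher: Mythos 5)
The paper does not actually prove this proposition: it is quoted from the literature (the references \cite{BenF,KM3,Koo}), so there is no internal argument to compare yours against. Your proof is a correct, self-contained rendering of the classical St\"ackel construction, and it is consistent with the conventions the paper uses (the signed minors $s^{i1}$, the bracket $\{A,B\}=\partial^i A\,\partial_i B-\partial^i B\,\partial_i A$, and the identification of $[\mathbf{K}_i,\mathbf{K}_j]=0$ with the vanishing of the corresponding Poisson brackets). The matrix formulation is exactly the right organizing device: from $(\star)$ you get $U=2S^{-1}D$ and $V=-S^{-1}W$ with $D$ and $W$ diagonal, hence $UV^{T}=-2\,S^{-1}(DW)(S^{-1})^{T}$ is symmetric, and every bracket $\{P_{\mathbf{K}_{a-1}},P_{\mathbf{K}_{b-1}}\}$, being the antisymmetric part of $UV^{T}$, vanishes; this yields the Killing property and the involution in one stroke. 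The linear-independence argument and the forward direction of the separation statement (constants $(E,\lambda_l)$ produce a complete separated solution, with completeness coming from $\det(\partial p_i/\partial c_j)=\det(S)/(2^n\prod_i p_i)\neq 0$) are also fine.

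One step is compressed to the point of hiding the argument it needs: the converse claim that on a separated solution of $H=E$ the functions $c_r(x):=P_{\mathbf{K}_{r-1}}(x,dW(x))$ are constant. This does not follow from $(\star)$ and separation alone: for an arbitrary separated $W$ that does not solve the Hamilton-Jacobi equation, the vector $c=S^{-1}\phi$ with $\phi_i=\bigl((W^{(i)})'\bigr)^2$ is in general non-constant. What makes it work is the equation $H=E$ itself. Fix $j$ and differentiate $\phi_i(x^i)=\sum_r s_{ir}(x^i)\,c_r(x)$ in $x^j$ for each $i\neq j$: this shows that $\partial_j c$ lies in the one-dimensional kernel of the $(n-1)\times n$ matrix $(s_{ir})_{i\neq j,\,r}$, which is spanned by the row-$j$ cofactor vector $(s^{j1},\dots,s^{jn})$, since $\sum_r s_{ir}s^{jr}=\delta_{ij}\det(S)$. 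Because $c_1=H(x,dW)=E$ is constant and $s^{j1}=g^{jj}\det(S)\neq 0$, the proportionality factor must vanish, so $\partial_j c=0$ for every $j$ and all the $c_r$ are constant. With this short argument inserted, your proof is complete.
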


\begin{remark}
 In language of hamiltonian mechanics, Killing tensors correspond to constants of the motion. The link mentioned in the previous Proposition thus states that if the Hamilton-Jacobi 
 equation is separable, then the corresponding hamiltonian system is completely integrable (see \cite{Arn}).
\end{remark}

Whereas it corresponds a family of $n-1$ Killing tensors in involution to every separable system, there also exist families of Killing tensors in involution that are not 
related to separable systems. We thus need to find additional conditions to characterize the admissible families of Killing tensors. Such conditions are given in the 
following two Theorems.

\begin{theorem}[\cite{BCR1} Theorem 7.15]\label{thmsepHJ}
 The Hamilton-Jacobi equation is orthogonally separable if and only if there exist $n$ pointwise independent Killing tensors $(\mathbf{K}_i)$ one other
 commuting as linear operators, i.e. $ [\![ \mathbf{K}_i , \mathbf{K}_j  ]\!] = 0$, for all $i \neq j$, and in involution, i.e.
$ [ \mathbf{K}_i , \mathbf{K}_j  ] = 0$, for all $i \neq j$.
\end{theorem}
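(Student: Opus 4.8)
The plan is to prove the two implications separately. The direct implication (orthogonal separability $\Rightarrow$ existence of the $n$ Killing tensors) follows almost immediately from the structural results already recalled, whereas the converse is the substantial part and rests on a simultaneous-diagonalization argument followed by an integrability (``normality'') argument.

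For the direct implication I would start from St\"ackel's Theorem, which puts the metric in St\"ackel form with a regular St\"ackel matrix $S=(s_{ij}(x^i))$ and $H_i^2=\det(S)/s^{i1}$, and then exhibit the $n$ tensors explicitly. I take $\mathbf{K}_1=\mathbf{G}$ together with the $n-1$ Killing tensors $\mathbf{K}_2,\dots,\mathbf{K}_n$ produced by the Proposition of \cite{BenF,KM3,Koo}, whose first integrals are $P_{\mathbf{K}_a}=\sum_i \frac{s^{ia}}{\det S}\,p_i^2$, so that $P_{\mathbf{K}_1}=H$. Viewed as $(1,1)$-endomorphisms these are all diagonal in the coordinate frame $\{\partial_i\}$, with eigenvalues $\rho^{(i)}_a=s^{ia}/s^{i1}$; since tensors diagonal in a common basis commute, this gives $[\![\mathbf{K}_a,\mathbf{K}_b]\!]=0$. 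Involution of the whole family, $[\mathbf{K}_a,\mathbf{K}_b]=0$, follows from the cited Proposition together with the fact that each $P_{\mathbf{K}_a}$, being a first integral of the geodesic flow, Poisson-commutes with $H=P_{\mathbf{G}}$. Finally, pointwise independence reduces to $\det(\rho^{(i)}_a)=\det(s^{ia})/\prod_i s^{i1}\neq 0$, which holds because $(s^{ia})$ is, up to sign, the cofactor matrix of the regular matrix $S$ and hence has determinant $(\det S)^{n-1}\neq 0$.

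For the converse I argue geometrically. Each $\mathbf{K}_i$, being a symmetric $2$-tensor, is $g$-self-adjoint as a $(1,1)$-endomorphism and therefore has real eigenvalues; the hypothesis $[\![\mathbf{K}_i,\mathbf{K}_j]\!]=0$ then allows me to diagonalize the whole family simultaneously in a common $g$-orthonormal eigenframe $\{E_i\}$. Pointwise independence forces the joint eigenvalue matrix $(\rho^{(i)}_a)$ to be nonsingular, so on a dense open set the spectra are simple and the $n$ mutually orthogonal eigenline fields $\langle E_i\rangle$ are well defined. The aim is to show that they form an orthogonal separable web, i.e. that the eigenvectors are \emph{normal}: each $E_i^\perp$ is integrable, so that there exist coordinates $x^i$ whose level sets are the leaves and in which $g$ is diagonal. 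Granting this, I pass to those adapted coordinates, write $(\mathbf{K}_a)^i{}_j=\rho^{(i)}_a\delta^i_j$, use the Killing equations to show that each eigenvalue $\rho^{(i)}_a$ is a function of the single variable $x^i$, reconstruct a St\"ackel matrix from the $\rho^{(i)}_a$, and conclude orthogonal separability by St\"ackel's Theorem.

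The main obstacle is precisely the normality of the common eigenframe, and this is where the involution hypothesis enters decisively: the condition $[\![\cdot,\cdot]\!]=0$ alone only produces the common orthogonal eigendirections and does not make $E_i^\perp$ integrable. I would establish normality by expanding the Killing equations for the $\mathbf{K}_a$ and the vanishing Poisson brackets $\{P_{\mathbf{K}_a},P_{\mathbf{K}_b}\}=0$ in the moving frame $\{E_i\}$, writing everything through the rotation (anholonomy) coefficients of the frame; the decisive point is that, the joint eigenvalue matrix being invertible, the off-diagonal involution relations can be solved to force the mixed rotation coefficients obstructing the integrability of the $E_i^\perp$ to vanish --- equivalently, the Nijenhuis (Haantjes) torsion of each $\mathbf{K}_a$ has no component transverse to the eigendistributions. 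This is the technical heart of the Eisenhart--Benenti characterization; once it is secured, the adapted coordinates, the single-variable dependence of the eigenvalues, and the passage to St\"ackel form are routine verifications.
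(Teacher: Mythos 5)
Your overall architecture is legitimate and genuinely different from the paper's: the paper never proves this statement directly, but derives it from the single-tensor characterization (Theorem \ref{thmsepHJ1tenseur}) together with the assertion that a characteristic Killing tensor generates a Killing--St\"ackel space, whereas you attempt a direct two-implication proof. Your forward direction (St\"ackel's theorem plus the cofactor construction, with pointwise independence coming from $\det(s^{ia}) = (\det S)^{n-1} \neq 0$) is correct and in fact more explicit than anything the paper records.

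However, the last step of your converse contains a concrete error that makes the reconstruction fail as written. You claim that the Killing equations force each eigenvalue $\rho_a^{(i)}$ to be a function of the single variable $x^i$; they force exactly the opposite. For a diagonal tensor $K^i{}_j = \rho^{(i)}\delta^i_j$ in orthogonal coordinates, the component $i=j=k$ of the Killing equation $\nabla_{(i}K_{jk)}=0$ reduces to $(\partial_i \rho^{(i)})\, H_i^2 = 0$, i.e.\ $\rho^{(i)}$ is \emph{independent} of $x^i$, while the mixed components give the Eisenhart equations $\partial_j \rho^{(i)} = (\rho^{(j)}-\rho^{(i)})\,\partial_j \log H_i^2$ for $j \neq i$. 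This is already visible in the St\"ackel model you invoke in the forward direction: $\rho_a^{(i)} = s^{ia}/s^{i1}$ is built from the rows $j \neq i$ of $S$, hence depends on all variables \emph{except} $x^i$. Consequently one cannot ``reconstruct a St\"ackel matrix from the $\rho_a^{(i)}$'' by taking them as single-variable entries. The correct reconstruction inverts the coefficient matrix $\left(\rho_a^{(i)} H_i^{-2}\right)_{i,a}$ of the quadratic first integrals (nonsingular by pointwise independence), and what must be proved---using again the Killing equations, the involution, and the Levi-Civita conditions---is that the \emph{rows of this inverse} are single-variable functions; that inverse is the St\"ackel matrix. This correction is standard, but it is not what you wrote, and the final step fails as stated. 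Separately, your normality argument, which you rightly identify as the technical heart, remains a strategy sketch rather than a proof; the paper sidesteps it by citing Theorem \ref{thmsepHJ1tenseur}, so you are not worse off there, but a complete argument would have to carry out that moving-frame computation.
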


\begin{theorem}[\cite{BCR1} Theorem 7.16]
 The Helmholtz equation is orthogonally separable if and only if there exist $n$ pointwise independent Killing tensors $(\mathbf{K}_i)$ one other
 commuting as linear operators, in involution and commuting with the Ricci tensor (Robertson condition).
\end{theorem}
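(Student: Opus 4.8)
The plan is to deduce this characterization by assembling the three results already at our disposal: the Killing-tensor characterization of orthogonal separability for the Hamilton-Jacobi equation (Theorem \ref{thmsepHJ}), the theorem of Robertson (Theorem \ref{thmRob}) which reduces Helmholtz separability to Hamilton-Jacobi separability together with the Robertson condition, and the theorem of Eisenhart \cite{Eis} identifying the Robertson condition with the vanishing of the off-diagonal components of the Ricci tensor in the separable coordinates. By Theorem \ref{thmRob}, the Helmholtz equation is orthogonally separable if and only if the Hamilton-Jacobi equation is orthogonally separable \emph{and} the Robertson condition \eqref{condRobintro} holds. By Theorem \ref{thmsepHJ} the first half is equivalent to the existence of $n$ pointwise independent Killing tensors $(\mathbf{K}_i)$ commuting as linear operators, $[\![ \mathbf{K}_i , \mathbf{K}_j ]\!] = 0$, and in involution, $[ \mathbf{K}_i , \mathbf{K}_j ] = 0$. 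Hence the only thing left to prove is that, \emph{given} such a family, the Robertson condition is equivalent to the algebraic commutation of each $\mathbf{K}_i$ with the Ricci tensor $\mathbf{R}$, viewed as a $(1,1)$ tensor.

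The heart of the argument is an eigenstructure computation in the separable coordinates $\underline{x} = \{ x^i \}$ furnished by Theorem \ref{thmsepHJ}. Because the $\mathbf{K}_i$ commute as endomorphisms, $[\![ \mathbf{K}_i , \mathbf{K}_j ]\!] = 0$, they are simultaneously diagonalizable and admit a common eigenbasis of one-forms, which can be taken to be the differentials $dx^i$ of the orthogonal separable coordinates; in this basis every $\mathbf{K}_i$ is a diagonal matrix $\mathrm{diag}(\rho_i^{(1)}, \dots, \rho_i^{(n)})$. Pointwise independence of the family translates into the statement that, at each point, the eigenvalue vectors $\rho^{(k)} = (\rho_1^{(k)}, \dots, \rho_n^{(k)})$ attached to distinct coordinate directions $k$ are pairwise distinct: for any $k \neq l$ there is an index $i$ with $\rho_i^{(k)} \neq \rho_i^{(l)}$. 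Equivalently, the common eigenspaces are one-dimensional.

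It then remains to invoke the elementary fact that a $(1,1)$ tensor $(R^k_l)$ commutes with a diagonal matrix $\mathrm{diag}(d^{(1)}, \dots, d^{(n)})$ if and only if $(d^{(k)} - d^{(l)}) R^k_l = 0$ for all $k,l$. Applying this to $\mathbf{R}$ against each $\mathbf{K}_i$, the separating property above forces the conditions $[\![ \mathbf{K}_i , \mathbf{R} ]\!] = 0$ for all $i$ to be equivalent to the vanishing of all off-diagonal entries $R^k_l$, $k \neq l$, of the Ricci endomorphism; since the metric is diagonal in these coordinates, this is the same as $R_{kl} = 0$ for $k \neq l$. By Eisenhart's theorem \cite{Eis}, this last condition is precisely the Robertson condition \eqref{condRobintro}, which closes the equivalence and, combined with Theorem \ref{thmRob}, yields the claim.

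The main obstacle is the passage carried out in the second paragraph: extracting from the abstract hypotheses of Theorem \ref{thmsepHJ} the concrete normal form in which the $\mathbf{K}_i$ are simultaneously diagonal in the separable coordinate frame, \emph{and} in which pointwise independence yields one-dimensional joint eigenspaces. This relies on the structure theory of Killing-St\"ackel spaces—namely that a family of commuting Killing tensors in involution possesses common, orthogonally integrable eigendirections serving as the separable coordinates—rather than on a purely formal manipulation. Once this normal form is secured, the remaining steps are linear algebra together with the already cited theorems of Robertson and Eisenhart.
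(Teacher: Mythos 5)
You should first note that the paper contains no proof of this statement at all: it is quoted from \cite{BCR1} (Theorem 7.16) exactly as Theorems \ref{thmRob} and \ref{thmsepHJ} are quoted, so there is no internal argument to compare yours against. Your reconstruction is the standard one and is correct as a sketch. Theorem \ref{thmRob} reduces orthogonal separability of Helmholtz to orthogonal separability of Hamilton--Jacobi together with the Robertson condition (\ref{condRobintro}); Theorem \ref{thmsepHJ} converts the first conjunct into the existence of the $n$ pointwise independent Killing tensors commuting as linear operators and in involution; Eisenhart's theorem converts (\ref{condRobintro}) into diagonality of the Ricci tensor in the separable coordinates; and your linear-algebra step is sound: if the $\mathbf{K}_i$ are simultaneously diagonal with eigenvalue matrix $(\rho_i^{(k)})$ of rank $n$ (which is exactly pointwise independence of $n$ diagonal tensors), then no two columns coincide, so a symmetric $(1,1)$-tensor commutes with every $\mathbf{K}_i$ if and only if it is itself diagonal in the same frame, which for a diagonal metric is the condition $R_{kl}=0$, $k \neq l$.

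The one place where you use more than the bare quoted statements---and you correctly identify it as the crux---is the claim that the separable coordinates produced by Theorem \ref{thmsepHJ} diagonalize the given family $(\mathbf{K}_i)$, with the $dx^i$ as common eigenforms. The statement of Theorem \ref{thmsepHJ} alone does not provide this; without it, in the converse direction the hypothesis $[\![ \mathbf{K}_i , \mathbf{R} ]\!] = 0$ cannot be transported into ``Ricci diagonal in the separable chart,'' and in the direct direction one cannot conclude that the tensors built from the St\"ackel form commute with the Ricci tensor. What fills this hole is the Killing--St\"ackel structure theory recalled in the paper around Theorem \ref{thmsepHJ1tenseur}: the family generates a Killing--St\"ackel space whose common eigenframe is normal, and the separable coordinates are precisely the ones adapted to that eigenframe, so the $\mathbf{K}_i$ are indeed simultaneously diagonalized there. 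Since this is also how \cite{BCR1} argues, your proposal should be read as a faithful reduction of the theorem to the other results quoted in this section rather than as a complete self-contained proof or a genuinely different route.
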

\noindent
As it is shown in \cite{BCR1,BCR2,KM3} and recalled in \cite{Benen} there exist some equivalent reformulations of Theorem \ref{thmsepHJ}. We first recall the definition of 
normal vector field.

\begin{definition}
 On a riemannian manifold $\mathcal{M}$ a vector field $\mathbf{X}$ is called normal if it is orthogonally integrable or surface forming, i.e. if it is orthogonal to a
 foliation of $\mathcal{M}$ by hypersurfaces.
\end{definition}

\begin{theorem}[\cite{KM3,Benen2,Benen}]\label{thmsepHJ1tenseur}
 The geodesic Hamilton-Jacobi equation is separable in orthogonal coordinates if and only if there exists a Killing $2$-tensor with simple eigenvalues and normal eigenvectors.
\end{theorem}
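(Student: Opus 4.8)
The plan is to prove the two implications separately, using Theorem \ref{thmsepHJ} as the bridge between a single characteristic Killing tensor and the full commuting family of $n$ tensors. For the direct implication, I assume the geodesic Hamilton-Jacobi equation is orthogonally separable. By Theorem \ref{thmsepHJ} there exist $n$ pointwise independent Killing $2$-tensors $\mathbf{K}_1,\dots,\mathbf{K}_n$ which commute as endomorphisms, $[\![ \mathbf{K}_i , \mathbf{K}_j ]\!] = 0$, and are in involution, $[ \mathbf{K}_i , \mathbf{K}_j ] = 0$. Being symmetric with respect to $\mathbf{G}$ and pairwise commuting as linear operators, they are simultaneously diagonalizable and share a common orthogonal eigenframe $(\mathbf{E}_1,\dots,\mathbf{E}_n)$. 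First I would form the combination $\mathbf{K} = \sum_i c_i \mathbf{K}_i$ with constant coefficients $c_i$, which is again a Killing tensor (the Killing equation is linear) with the same eigenframe. Pointwise independence of the $\mathbf{K}_i$ ensures that for a generic choice of the $c_i$ the eigenvalues of $\mathbf{K}$ are pairwise distinct, i.e.\ simple. Finally, by St\"ackel's Theorem the separability provides orthogonal coordinates $\{x^i\}$ in which the metric is diagonal; the common eigenvectors $\mathbf{E}_i$ align with the coordinate directions $\partial_{x^i}$, and the coordinate fields of an orthogonal system are normal, each being orthogonal to the foliation $\{x^i = \mathrm{const}\}$. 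This produces a Killing tensor with simple eigenvalues and normal eigenvectors.

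For the converse, suppose $\mathbf{K}$ is a Killing $2$-tensor with simple eigenvalues $\rho_1,\dots,\rho_n$ and normal eigenvectors $\mathbf{E}_1,\dots,\mathbf{E}_n$. Since $\mathbf{K}$ is symmetric with respect to $\mathbf{G}$ and its eigenvalues are distinct, the $\mathbf{E}_i$ are mutually orthogonal; as each is in addition normal (orthogonally integrable), the classical result on orthogonal nets shows that the $\mathbf{E}_i$ are proportional to the coordinate vector fields of a local orthogonal coordinate system $\{x^i\}$, in which both $\mathbf{G}$ and $\mathbf{K}$ are diagonal, namely $g = \sum_i H_i^2 (dx^i)^2$ and $K^i_j = \rho_i \delta^i_j$.

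It then remains to extract the St\"ackel structure from the Killing equation in these coordinates. Expanding $[\mathbf{K},\mathbf{G}] = 0$ for the diagonal tensor $\mathbf{K}$ yields, for $i \neq j$, relations of the form $\partial_j \rho_i = (\rho_j - \rho_i)\, \partial_j \ln H_i^2$, together with the conditions coming from the diagonal part. Using that the eigenvalues are simple, so that $\rho_j - \rho_i \neq 0$, I would integrate these relations to show that each $H_i^2$ factors as $\det(S)/s^{i1}$ for a suitable St\"ackel matrix $S = (s_{ij}(x^i))$ whose rows depend on single coordinates, equivalently that the Levi-Civita separability equations $L_{ij}(H) = 0$ hold identically for $H = g^{ij}p_i p_j$. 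By St\"ackel's Theorem this establishes the orthogonal separability of the Hamilton-Jacobi equation. The main obstacle is precisely this last step: reconstructing the St\"ackel matrix, or verifying the Levi-Civita equations, from the single set of eigenvalue-metric relations. Here simplicity of the eigenvalues is essential in order to invert these relations, while normality is what guarantees the integrability needed both to set up the coordinates and to realize the separated single-variable dependence of the rows of $S$.
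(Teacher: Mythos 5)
First, note that the paper itself offers no proof of Theorem \ref{thmsepHJ1tenseur}: it is quoted from \cite{KM3,Benen2,Benen}, and the paper's own logical flow runs in the direction opposite to yours --- it uses this theorem (together with the remark that a characteristic Killing tensor generates a Killing--St\"ackel space) to \emph{deduce} Theorem \ref{thmsepHJ}. Your forward implication, which starts from Theorem \ref{thmsepHJ}, is therefore legitimate only if you invoke the independent proof of that theorem in \cite{BCR1}; within this paper's exposition it would be circular. Granting that, your forward argument is essentially the standard one, with two repairable loose ends: (i) genericity of the constants $c_i$ gives pairwise distinct eigenvalues of $\mathbf{K}=\sum_i c_i\mathbf{K}_i$ only at a fixed point (hence, by continuity, on a neighbourhood of it), so you must either appeal to the purely local nature of the separation theory or exhibit an explicit combination valid on the whole chart; and (ii) the claim that the common eigenframe of the abstract tensors $\mathbf{K}_i$ aligns with the coordinate directions does not follow from the mere statement of Theorem \ref{thmsepHJ} --- you need the explicit St\"ackel construction of the $\mathbf{K}_i$, which is diagonal in the separable coordinates (pointwise independence then makes the joint eigenframe unique up to scaling, so the identification is forced).

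The genuine gap is in the converse, and you have named it yourself. After the correct reduction --- orthogonality of the eigenvectors from symmetry and simplicity, integrability of the orthogonal web from normality, hence coordinates in which both $\mathbf{G}$ and $\mathbf{K}$ are diagonal, and Eisenhart-type relations $\partial_j\rho_i=(\rho_j-\rho_i)\,\partial_j\ln H_i^2$ for $i\neq j$ coming from the Killing equation --- the entire mathematical content of the theorem lies in the step you defer: showing that these relations, with the $\rho_i$ pairwise distinct, force the Levi-Civita equations $L_{ij}(H)=0$, equivalently the St\"ackel form $H_i^2=\det(S)/s^{i1}$. Everything preceding that step is linear algebra plus Frobenius; a reader could not reconstruct the proof from your text because the one nontrivial computation is absent. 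The classical way to close it (Eisenhart \cite{Eis}, Kalnins--Miller \cite{KM3}, Benenti \cite{Benen}) is to differentiate the Eisenhart relations once more, use simplicity of the eigenvalues to solve for the mixed second derivatives of $\ln H_i^2$ and of the $\rho_i$, and verify that Levi-Civita's conditions hold identically, or equivalently to assemble the eigenvalue functions into the rows of a St\"ackel matrix. As it stands, your proposal is a correct outline whose crux is missing, so it does not yet constitute a proof.
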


\begin{definition}
 A Killing tensor having these properties is called a characteristic Killing tensor.
\end{definition}

We can show that a characteristic Killing tensor generates a Killing-St\"ackel space, i.e. a $n$-dimensional linear space $\mathcal{K}_n$
of Killing $2$-tensors whose elements commute as linear operators and are in involution. Hence it is immediate from Theorem \ref{thmsepHJ1tenseur} that Theorem 
\ref{thmsepHJ} holds.


%

We conclude this Subsection by an intrinsic characterization of separability for the Hamilton-Jacobi and the Helmholtz equations using symmetry operators given in \cite{KM2}.
We first recall the one-to-one correspondence between a second-order operator and a second order Killing tensor. We have already seen that the contravariant symmetric 
tensors are in one-to-one correspondence with the homogeneous polynomials on $T^{\star} \mathcal{M}$.
Moreover, the operator $\hat{P}_{\mathbf{K}}$ corresponding to a second-degree homogeneous polynomial 
\[ P_{\mathbf{K}} = K^{ij}p_i p_j\]
associated with a symmetric contravariant two-tensor $\mathbf{K}$ on $\mathcal{M}$ is defined by 
\[ \hat{P}_{\mathbf{K}} \psi  = - \Delta_{\mathbf{K}} \psi = -\nabla_i (K^{ij} \nabla_j \psi).\]
We note that, if $\mathbf{K} = \mathbf{G}$ is the contravariant metric tensor, we obtain the Laplace-Beltrami operator $\Delta_g = \hat{P}_{\mathbf{G}}$.


\begin{definition}[\cite{KM2}]
 We say that a second-order symmetry operator $\hat{P}$ is in self-adjoint form if $\hat{P}$ can be expressed into the form
 \[ \hat{P} = \frac{1}{\sqrt{|g|}} \sum_{i,j} \partial_i \left( \sqrt{|g|} a^{ij} \partial_j \right) + c,\]
 where $|g|$ is the determinant of the metric, $a^{ij} = a^{ji}$, for all $(i,j) \in \{1,...,n\}^2$, and $c$ is a real constant.
\end{definition}

\begin{remark}
 If $\hat{P}$ is a second-order selfadjoint operator we can associate to $\hat{P}$ the quadratic form on $T^{\star}\mathcal{M}$ defined by $P = a^{ij}p_i p_j$.
\end{remark}

\begin{theorem}[\cite{KM2} Theorem 3]\label{thmKM}
 Necessary and sufficient conditions for the existence of an orthogonal separable coordinate system $\{ x^i \}$ for the Helmholtz equation are that there exists a linearly 
 independent set $\{ \hat{P}_1 = \Delta_g ,\hat{P}_2,...,\hat{P}_n\}$ of second-order differential operators on $\mathcal{M}$ such that:
 \begin{enumerate}
  \item $[\hat{P}_i, \hat{P}_j] := \hat{P}_i \hat{P}_j - \hat{P}_j \hat{P}_i  = 0$, for all $1 \leq i,j \leq n$.
  \item Each $\hat{P}_i$ is in self-adjoint form.
  \item There is a basis $\{ \omega_{(j)}, \, \, 1 \leq j \leq n \}$ of simultaneous eigenforms for the $\{ P_i \}$.
  \item (Robertson condition) The associated second order Killing tensors $(\mathbf{K}_i)$ commute with the Ricci tensor.
 \end{enumerate}
 If these conditions are satisfied then there exist functions $g^i(x)$ such that $\omega_{(j)} g^j dx^j$, for $j=1,...,n$.
\end{theorem}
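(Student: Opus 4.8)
The plan is to translate the four operator-theoretic hypotheses into the language of Killing tensors and then to appeal to the characterizations of separability already recorded above: Theorem \ref{thmsepHJ} for the Hamilton-Jacobi equation and its Helmholtz counterpart (\cite{BCR1}, Theorem 7.16). The bridge between the two formulations is the one-to-one correspondence between symmetric contravariant two-tensors $\mathbf{K}$ and quadratic polynomials $P_{\mathbf{K}} = K^{ij} p_i p_j$ on $T^{\star}\mathcal{M}$, together with the map $\mathbf{K} \mapsto \hat{P}_{\mathbf{K}}$, $\hat{P}_{\mathbf{K}}\psi = -\nabla_i(K^{ij}\nabla_j \psi)$, sending a tensor to its associated Laplace-type operator. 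Hypothesis 2 is precisely what guarantees that each $\hat{P}_i$ arises in this way from a well-defined symmetric tensor $\mathbf{K}_i$ (with $\mathbf{K}_1 = \mathbf{G}$, so that $\hat{P}_1 = \Delta_g$), so I would work from the outset with the family $(\mathbf{K}_i)$ and the quadratic forms $P_i = a_i^{jk}p_j p_k$.

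The heart of the argument is to extract the tensorial content of Hypothesis 1. I would compute the commutator $[\hat{P}_i,\hat{P}_j]$, which is a priori a third-order differential operator, and expand it by decreasing order. Its principal (third-order) symbol is the Poisson bracket $\{P_i,P_j\}$, whose vanishing is by definition the involution relation $[\mathbf{K}_i,\mathbf{K}_j]=0$. Once involution holds, the remaining lower-order part must also vanish; using the self-adjoint normalization of Hypothesis 2 to discard the divergence terms, this residual part is governed on the one hand by the algebraic commutator $[\![\mathbf{K}_i,\mathbf{K}_j]\!] = \mathbf{K}_i\mathbf{K}_j - \mathbf{K}_j\mathbf{K}_i$ and on the other hand by curvature contributions arising from commuting the covariant derivatives. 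Disentangling these two — showing that the algebraic commutator is forced to vanish while the curvature contribution is exactly the one controlled by the Ricci tensor in Hypothesis 4 — is the delicate computation and the step I expect to be the main obstacle.

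With these identities in hand, $(\mathbf{K}_i)$ is a family of $n$ pointwise-independent Killing tensors that are in involution and commute as endomorphisms, and Hypothesis 4 records that they commute with the Ricci tensor. Hypothesis 3 furnishes a common basis of eigenforms $\omega_{(j)}$, i.e. it states that the $\mathbf{K}_i$ are simultaneously diagonalizable. Following the theory of Killing-St\"ackel spaces underlying Theorem \ref{thmsepHJ1tenseur}, I would then argue that simultaneous diagonalizability together with the involution relations forces each eigenform to be normal (surface-forming); this normality is what yields the functions $g^j$ with $\omega_{(j)} = g^j\,dx^j$ and hence the orthogonal coordinate system $\{x^i\}$. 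By the Helmholtz Killing-tensor characterization (\cite{BCR1}, Theorem 7.16), the combination of involution, algebraic commutativity, pointwise independence, and the Robertson condition of Hypothesis 4 now gives orthogonal separability of the Helmholtz equation, proving sufficiency.

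For necessity I would reverse the construction: orthogonal separability of the Helmholtz equation produces, again by \cite{BCR1} (Theorem 7.16), a family of $n$ pointwise-independent Killing tensors in involution, commuting as endomorphisms and commuting with the Ricci tensor. Assigning to each tensor the self-adjoint operator $\hat{P}_{\mathbf{K}_i}$ gives operators satisfying Hypotheses 1, 2 and 4, while reading the simultaneous eigenforms off the one-forms $dx^i$ of the separating coordinates supplies Hypothesis 3. Via the Eisenhart characterization, the Robertson condition (\ref{condRobintro}) and Hypothesis 4 are two expressions of the same requirement, which closes the equivalence.
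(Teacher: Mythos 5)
First, for calibration: the paper does not prove this theorem at all. It is imported verbatim as background from Kalnins and Miller (\cite{KM2}, Theorem 3), so there is no proof in the paper to compare yours with; your proposal has to be judged as a free-standing reconstruction of the Kalnins--Miller argument. Your overall plan --- use Hypothesis 2 to identify each $\hat{P}_i$ with a symmetric $2$-tensor $\mathbf{K}_i$, extract Killing/involution relations from the principal symbol of $[\hat{P}_i,\hat{P}_j]$, get simultaneous diagonalizability from Hypothesis 3, then invoke the Killing-tensor characterization of Helmholtz separability (\cite{BCR1}, Theorem 7.16) --- is a sensible route, and your necessity direction is essentially right.

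Two steps of the sufficiency direction, however, have genuine gaps. The first is your reading of Hypothesis 1. Once the third-order symbol $\{P_i,P_j\}$ vanishes, the commutator $[\hat{P}_i,\hat{P}_j]$ of two real, formally self-adjoint operators is formally skew-adjoint, so its second-order part vanishes automatically, and what remains is a \emph{first-order} operator whose coefficient is a Carter-type curvature contraction: for $j=1$ it is, up to a constant, the vector field $\nabla_a\bigl(K_i^{[a}{}_{c}R^{b]c}\bigr)$, i.e.\ the divergence of the antisymmetric part of $\mathbf{K}_i\cdot\mathrm{Ric}$. The residual obstruction thus involves the Ricci tensor sandwiched between the Killing tensors, \emph{not} the algebraic commutator $[\![\mathbf{K}_i,\mathbf{K}_j]\!]$; operator commutation cannot ``force the algebraic commutator to vanish'' (on a space of constant curvature the residual term is identically zero once involution holds, whatever $[\![\mathbf{K}_i,\mathbf{K}_j]\!]$ is). Simultaneous diagonalizability is exactly the content of Hypothesis 3, which you invoke later anyway, so this part of your argument is both incorrect and redundant --- and it obscures the true division of labour, since it is Hypothesis 4 (commutation with Ricci, equivalently the Robertson condition via Eisenhart's theorem) that kills the residual first-order term. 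The second gap is the pivotal geometric step: that a common basis of eigenforms of a family of Killing tensors in involution is automatically \emph{normal} (surface-forming), so that $\omega_{(j)}=g^j\,dx^j$ in suitable coordinates. This is the hard core of the Kalnins--Miller/Benenti theory --- it is what Theorem \ref{thmsepHJ1tenseur} encapsulates --- and you assert it in one sentence rather than prove it. Finally, a smaller but real point: \cite{BCR1}, Theorem 7.16 requires the $\mathbf{K}_i$ to be \emph{pointwise} independent, which is strictly stronger than the linear independence of the operators assumed in the statement, so that promotion also needs an argument (it should come from the eigenform basis of Hypothesis 3).
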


In our work we will give explicitly the operators $\hat{P}_1 = \Delta_g$, $\hat{P}_2$ and $\hat{P}_3$ corresponding to our study. We finally note that there exists a more 
general notion of separability called the R-separation (see for instance \cite{KM2,BCR1,BCR2}). Our notion of separability corresponds to the case $R=1$. The study of the 
R-separability in our framework will be the object of a future work.

\subsection{Description of the framework}


The aim of this Subsection is to introduce the framework of our paper. Precisely, we give the definition of the St\"ackel manifolds we are dealing with and we show 
that we can make, without loss of generality, some assumptions on the St\"ackel manifolds we consider.


We start this Subsection by the description of the manifolds we study and the definition of the St\"ackel structure.
We first emphasize that the description given by St\"ackel is local. We obtain a global description by choosing a global St\"ackel system of coordinates
which justify the name ``St\"ackel manifold''. We thus consider manifolds with two ends having the topology of toric cylinders with a global chart
\[\mathcal{M} = (0,A)_{x^1} \times \mathcal{T}_{x^2,x^3}^2,\]
where $\mathcal{T}^{2}_{x^2,x^3} =: \mathcal{T}^{2}$ denotes the 2-dimensional torus. The variable $x^1$ is the radial variable whereas $(x^2,x^3)$ denotes the angular 
variables. We emphasize that we choose to work with angular variables living in a $2$-torus but it is also possible to choose other topologies. For instance the case of 
angular variables living in a $2$-sphere could be the object of a future work.
We define a St\"ackel matrix which is a $3\times 3$ matrix of the form
\[ S =  \begin{pmatrix}
                   s_{11}(x^1) & s_{12}(x^1) & s_{13}(x^1) \\ s_{21}(x^2) & s_{22}(x^2) & s_{23}(x^2) \\ s_{31}(x^3) & s_{32}(x^3) & s_{33}(x^3)
                  \end{pmatrix},\]
where the coefficients $s_{ij}$ are smooth functions. Let $\mathcal{M}$ be endowed with the riemannian metric
\begin{equation}\label{met}
 g = H_1^2 (dx^1)^2 + H_2^2 (dx^2)^2 + H_3^2 (dx^3)^2,
\end{equation}
with
\[H_i^2 = \frac{\det(S)}{s^{i1}}, \quad \forall i \in \{1,2,3\},\]
where $s^{i1}$ is the minor (with sign) associated with the coefficient $s_{i1}$ for all $i \in \{1,2,3\}$.

\begin{remark}
 The metric $g$ is riemannian if and only if the determinant of the St\"ackel matrix $S$ and the minors $s^{11}$, $s^{21}$ and $s^{31}$ have the same sign.
 Moreover, if we develop the determinant with respect to the first column, we note that 
 if we assume that $s_{11}$, $s_{21}$ and $s_{31}$ are positive functions and if the minors $s^{11}$, $s^{21}$ and $s^{31}$ have the same sign, then the sign of the 
 determinant of $S$ is necessary the same as the sign of these minors.
\end{remark}

We emphasize that the application $S \mapsto g$ is not one-to-one. Indeed, we 
describe here two invariances of the metric which will be useful in the resolution of our inverse problem.

\begin{prop}[Invariances of the metric]\label{propinv}
 Let $S$ be a St\"ackel matrix.
 \begin{enumerate}
  \item Let $G$ be a $2 \times 2$ constant invertible matrix. The St\"ackel matrix 
    \[ \hat{S} =  \begin{pmatrix} s_{11}(x^1) & \hat{s}_{12}(x^1) & \hat{s}_{13}(x^1) \\ s_{21}(x^2) & \hat{s}_{22}(x^2) & \hat{s}_{23}(x^2)  \\ s_{31}(x^3) & \hat{s}_{32}(x^3) & \hat{s}_{33}(x^3) \end{pmatrix},\]
  satisfying
  \[\begin{pmatrix} s_{i2} & s_{i3} \end{pmatrix} = \begin{pmatrix} \hat{s}_{i2} & \hat{s}_{i3} \end{pmatrix} G, \quad \forall i \in \{1,2,3\},\]
  leads to the same metric as $S$.
  \item The St\"ackel matrix
    \[ \hat{S} =  \begin{pmatrix} \hat{s}_{11}(x^1) & s_{12}(x^1) & s_{13}(x^1) \\ \hat{s}_{21}(x^2) & s_{22}(x^2) & s_{23}(x^2)  \\ \hat{s}_{31}(x^3) & s_{32}(x^3) & s_{33}(x^3) \end{pmatrix},\]
where,
\begin{equation}\label{inv1}
 \begin{cases}
 \hat{s}_{11}(x^1) = s_{11}(x^1) + C_1 s_{12}(x^1) + C_2 s_{13}(x^1) \\
 \hat{s}_{21}(x^2) = s_{21}(x^2) + C_1 s_{22}(x^2) + C_2 s_{23}(x^2) \\
 \hat{s}_{31}(x^3) = s_{31}(x^3) + C_1 s_{32}(x^3) + C_2 s_{33}(x^3)
\end{cases},
\end{equation}
where $C_1$ and $C_2$ are real constants, leads to the same metric as $S$.
 \end{enumerate}
\end{prop}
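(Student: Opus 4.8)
The plan is to reduce both invariances to the single observation that the metric coefficients $H_i^2 = \det(S)/s^{i1}$ are \emph{ratios}, and that each signed minor $s^{i1}$ is obtained by deleting the first column, hence depends only on the second and third columns of $S$. Both transformations in the Proposition are right-multiplications of $S$ by an explicit constant $3\times 3$ matrix: in case 1 we act only on columns $2,3$, in case 2 we add multiples of columns $2,3$ to column $1$. So the strategy is, for each case, to track separately how $\det(S)$ and each $s^{i1}$ transform, and then to check that the transformation factors cancel in the quotient $H_i^2$, for every $i\in\{1,2,3\}$. Since the three quantities $H_i^2$ determine $g$ through (\ref{met}), equality of all of them gives equality of the metrics. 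Throughout I interpret ``minor with sign'' as the cofactor $s^{i1}=(-1)^{i+1}m^{i1}$, where $m^{i1}$ is the determinant of the $2\times 2$ submatrix obtained by deleting row $i$ and column $1$; the sign $(-1)^{i+1}$ is the same for $S$ and $\hat S$ and therefore plays no role.

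For case 1, write the relation $\begin{pmatrix} s_{i2} & s_{i3}\end{pmatrix} = \begin{pmatrix} \hat s_{i2} & \hat s_{i3}\end{pmatrix} G$ in block form. Denoting by $c_1$ the common first column and by $B,\hat B$ the $3\times 2$ blocks formed by the last two columns of $S,\hat S$, the relation reads $B=\hat B\,G$, hence
\[
S = \begin{pmatrix} c_1 & \hat B\,G\end{pmatrix} = \hat S\,\begin{pmatrix} 1 & 0 \\ 0 & G\end{pmatrix}.
\]
Taking determinants gives $\det(S)=\det(G)\,\det(\hat S)$. For the minors, deleting the first column and row $i$ leaves the $2\times 2$ submatrix $B_{\hat\imath}$ of $B$ (rows $\neq i$); since deleting a row commutes with right multiplication, $B_{\hat\imath}=\hat B_{\hat\imath}\,G$, so $\det(B_{\hat\imath})=\det(G)\,\det(\hat B_{\hat\imath})$, i.e.\ $s^{i1}=\det(G)\,\hat s^{\,i1}$. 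The factor $\det(G)\neq 0$ therefore cancels:
\[
H_i^2(S)=\frac{\det(S)}{s^{i1}}=\frac{\det(G)\,\det(\hat S)}{\det(G)\,\hat s^{\,i1}}=\frac{\det(\hat S)}{\hat s^{\,i1}}=H_i^2(\hat S),\qquad \forall i.
\]

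For case 2, the new first column is a fixed linear combination of all three columns of $S$, so $\hat S = S\,N$ with $N=\begin{pmatrix} 1 & 0 & 0 \\ C_1 & 1 & 0 \\ C_2 & 0 & 1\end{pmatrix}$, an elementary column operation with $\det(N)=1$; hence $\det(\hat S)=\det(S)$. The minors $s^{i1}$ are computed from columns $2$ and $3$ only, which are untouched, so $\hat s^{\,i1}=s^{i1}$ for every $i$. Thus each ratio $H_i^2=\det(S)/s^{i1}$ is literally unchanged, and the metric is preserved.

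I do not expect a genuine obstacle here: both statements are linear algebra once the quotient structure of $H_i^2$ is exploited. The only points requiring care are, first, the sign bookkeeping in the minors, which is handled by the common factor $(-1)^{i+1}$, and second, the verification that $\hat S$ is still a \emph{bona fide} St\"ackel matrix, namely that each entry of row $i$ depends only on $x^i$. This is exactly why $G$ and $C_1,C_2$ must be \emph{constant}: in case 1 one has $\begin{pmatrix} \hat s_{i2} & \hat s_{i3}\end{pmatrix}=\begin{pmatrix} s_{i2} & s_{i3}\end{pmatrix}G^{-1}$ with $G^{-1}$ constant, so $\hat s_{i2},\hat s_{i3}$ remain functions of $x^i$ alone, and similarly $\hat s_{i1}=s_{i1}+C_1 s_{i2}+C_2 s_{i3}$ in case 2. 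Hence the St\"ackel structure is intact, $\det(G)\neq 0$ keeps $H_i^2$ well defined, and the proof is complete.
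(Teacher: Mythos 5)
Your proposal is correct and follows essentially the same route as the paper: the paper's proof simply states the two key identities $\det(S)=\det(G)\det(\hat S)$, $s^{i1}=\det(G)\,\hat s^{\,i1}$ for case 1 (and $\det(S)=\det(\hat S)$, $s^{i1}=\hat s^{\,i1}$ for case 2) and observes that they cancel in $H_i^2=\det(S)/s^{i1}$. You derive these same identities explicitly via the block factorizations $S=\hat S\,\mathrm{diag}(1,G)$ and $\hat S=SN$, which is just the detailed justification the paper leaves implicit.
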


\begin{proof}
  We recall that 
\[g = \sum_{i=1}^3 H_i^2 (dx^i)^2 \quad \textrm{with} \quad H_i^2 = \frac{\det(S)}{s^{i1}} \quad \forall i \in \{1,2,3\}.\]
 \begin{enumerate}
  \item The result is an easy consequence of the equalities,
\[  s^{i1} = \det(G) \hat{s}^{i1}, \quad \forall i \in \{1,2,3\}, \quad \textrm{and} \quad \det(S) = \det(G) \det(\hat{S}).\]
  \item The result follows from, 
  \[  s^{i1} = \hat{s}^{i1}, \quad \forall i \in \{1,2,3\}, \quad \textrm{and} \quad \det(S) = \det(\hat{S}).\]
 \end{enumerate}
\end{proof}
\noindent
Since we are only interested in recovering the metric $g$ of the St\"ackel manifold, we can choose any representative of the equivalence class described by the invariances 
given in the previous Proposition. This fact allows us to make some assumptions on the St\"ackel matrix we consider as we can see in the following Proposition.

\begin{prop}\label{propcadre}
 Let $S$ be a St\"ackel matrix with corresponding metric $g_S$. There exists a St\"ackel matrix $\hat{S}$
 with $g_{\hat{S}} = g_S$
 and such that
 \begin{equation}\tag{C}\label{cadre}
 \begin{cases}
 \hat{s}_{12}(x^1) > 0 \quad \textrm{and} \quad \hat{s}_{13}(x^1) > 0, \quad \forall x^1\\
 \hat{s}_{22}(x^2) < 0 \quad \textrm{and} \quad \hat{s}_{23}(x^2) > 0, \quad \forall x^2\\
 \hat{s}_{32}(x^3) > 0 \quad \textrm{and} \quad \hat{s}_{33}(x^3) < 0, \quad \forall x^3\\
 \lim\limits_{x^1 \to 0} s_{12}(x^1) = \lim\limits_{x^1 \to 0} s_{13}(x^1) = 1
\end{cases}.
\end{equation}
\end{prop}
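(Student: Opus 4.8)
The plan is to use only the first invariance from Proposition \ref{propinv}, since the conditions in \eqref{cadre} constrain only the second and third columns (the first column is left untouched, and indeed is unconstrained by \eqref{cadre}). Writing $v_i := (s_{i2},s_{i3})$ for the $i$-th row of the $3\times 2$ block formed by the last two columns, I first observe that applying the first invariance with a constant invertible matrix $G$ amounts to replacing each row $v_i$ by $v_i H$ with $H = G^{-1}$; equivalently, the new entries are $\hat s_{i2} = \langle n_2, v_i\rangle$ and $\hat s_{i3} = \langle n_3, v_i\rangle$, where $n_2,n_3 \in \R^2$ are the columns of $H$. Thus \eqref{cadre} reduces to producing two covectors $n_2,n_3$ such that, for all values of the variables, $\langle n_2,v_1\rangle>0$, $\langle n_2,v_2\rangle<0$, $\langle n_2,v_3\rangle>0$ (sign pattern $(+,-,+)$ for the new second column) and $\langle n_3,v_1\rangle>0$, $\langle n_3,v_2\rangle>0$, $\langle n_3,v_3\rangle<0$ (pattern $(+,+,-)$ for the new third column), followed by a positive rescaling.

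Next I extract the geometric input from the Riemannian character of $g$. Since $H_i^2 = \det(S)/s^{i1}$ is finite and nonzero everywhere, the three minors $s^{11}=\det(v_2,v_3)$, $s^{21}=\det(v_3,v_1)$, $s^{31}=\det(v_1,v_2)$ never vanish, and by the Remark following \eqref{met} they all share the same sign; in particular no $v_i$ vanishes and no two are parallel. Each $\det(v_i(x^i),v_j(x^j))$ is continuous and nonvanishing on a connected parameter domain, hence of constant sign, which fixes one common cyclic order of the three vector families. I would then show that the images $C_1,C_2,C_3$ of the three families in the circle of directions are pairwise disjoint open arcs in this cyclic order and, crucially, that any two cyclically consecutive cones together span an arc of length strictly less than $\pi$: taking the two extreme directions of such a pair, the constant sign of the corresponding determinant forces the angle between them to lie in $(0,\pi)$, which is exactly this bound.

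With this in hand the construction of $n_2,n_3$ becomes a separation statement, and with only three cones every pair is cyclically consecutive. To realise the pattern $(+,+,-)$ I take $\ker n_3$ to be a line separating the consecutive pair $C_1,C_2$ from $C_3$, with its two antipodal endpoints placed in the two gaps flanking $C_3$; such a line exists because the arc through $C_3$ between those gaps can be made of length exactly $\pi$, which holds precisely since the span of $C_1\cup C_2$ is below $\pi$. Orienting $n_3$ so that $C_1,C_2$ lie on its positive side gives the required signs, and the pattern $(+,-,+)$ is obtained identically by separating $C_1,C_3$ from $C_2$. The two lines are distinct, as a single line cannot meet all three gaps, so $n_2,n_3$ are linearly independent and $H=[\,n_2\mid n_3\,]$ is invertible, i.e. $G=H^{-1}$ is an admissible transformation. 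Finally I rescale each covector by a positive constant — a further admissible column scaling preserving all signs — to arrange $\lim_{x^1\to 0}\hat s_{12}=\lim_{x^1\to 0}\hat s_{13}=1$, using that $v_1(x^1)$ tends to a nonzero limiting direction in $\overline{C_1}$ as $x^1\to 0$, so both limits are positive and finite.

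The main obstacle is the geometric core of the second paragraph: establishing that the three families occupy disjoint cones in a common cyclic order with consecutive spans strictly below $\pi$, and then converting each $2$-versus-$1$ sign pattern into a separating line whose antipodal endpoints can be placed in the appropriate gaps. Once this is in place, linear independence of $n_2,n_3$ and the rescaling normalization are routine. A minor point to verify from the framework is that $s_{12},s_{13}$ admit nonzero limits as $x^1\to 0$, which follows from the asymptotically hyperbolic structure imposed at the radial ends.
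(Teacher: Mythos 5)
Your proposal is correct in substance but takes a genuinely different route from the paper's. The paper argues in three computational steps: first, that each $s_{ij}$, $j\in\{2,3\}$, has constant sign (via the ordering of the quotients $s_{i3}/s_{i2}$ forced by the non-vanishing, same-sign minors); second, that the signs can be made strict using explicit matrices of the form $\begin{pmatrix} a & \pm 1 \\ \pm 1 & b \end{pmatrix}$; and third, a case-by-case choice of explicit matrices $G$, with constants such as $b$ squeezed between quantities like $\sup_{x^1} s_{13}/s_{12}$ and $\inf_{x^2} s_{23}/s_{22}$, to realize the sign pattern, followed by a diagonal rescaling for the normalization at $x^1=0$. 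You replace all of this by one geometric argument: the rows $v_i=(s_{i2},s_{i3})$ sweep three cones in strict cyclic position --- this is precisely the same-sign, non-vanishing condition on $s^{11}=\det(v_2,v_3)$, $s^{21}=\det(v_3,v_1)$, $s^{31}=\det(v_1,v_2)$ --- and the desired transformation is assembled from two covectors whose kernels separate $\{C_1,C_2\}$ from $C_3$ and $\{C_3,C_1\}$ from $C_2$. This unifies the paper's Steps 1 and 2 into the construction itself (vanishing entries and sign changes are excluded automatically once the kernels avoid the cones), and your linear-independence argument (a single line through the origin cannot meet three disjoint gaps) is clean and correct; what the paper's version buys instead is complete explicitness of the matrices involved.

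One step deserves care, and it is exactly as delicate in your argument as in the paper's. Your claim that consecutive spans are \emph{strictly} less than $\pi$ is deduced from the determinant sign at the extreme directions, which presupposes those extremes are attained. For $C_2$ and $C_3$ this is fine, since the angular variables range over a compact torus, so these cones are compact; but $C_1$ is the image of the open interval $(0,A)$, its extreme directions need not be attained, and then one only gets span $\le \pi$, in which case the separating line for $\{C_1,C_2\}$ versus $C_3$ can fail to exist. The paper's proof hits the same wall: its constant $b$ strictly between a supremum over $x^1$ and an infimum over $x^2$ need not exist without attained extrema, and its final normalization silently assumes that $\lim_{x^1\to 0}s_{12}$ and $\lim_{x^1\to 0}s_{13}$ exist and are positive. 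So your proof is valid under the same implicit hypotheses the paper uses (in practice restored by the asymptotically hyperbolic structure, which gives $v_1$ nonzero limits at both radial ends); for the final normalization you should additionally run the separation with the closures $\overline{C_i}$, so that $\ker n_2$ and $\ker n_3$ avoid the limiting direction of $v_1$ at $x^1=0$ and the limits you rescale to $1$ are indeed strictly positive.
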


\begin{proof}
 See Appendix \ref{ap1}.
\end{proof}

\begin{remark}\label{rkrie}
The condition (\ref{cadre}) has some interesting consequences which will be useful in our later analysis.
\begin{enumerate}
 \item We note that under the condition (\ref{cadre}), $s^{21} = s_{13}s_{32} - s_{12}s_{33}$ and $s^{31}= s_{12}s_{23} - s_{13}s_{22}$ are strictly positive. Thus, since 
 the metric $g$ has to be a riemannian metric we must also have $\det(S) > 0$ and $s^{11} > 0$.
 \item We note that, since $s_{22}, \, s_{33} < 0$ and $s_{23}, \, s_{32} > 0$,
 \[ s^{11} > 0 \quad \Leftrightarrow \quad s_{22} s_{33} > s_{23}s_{32} \quad \Leftrightarrow  \quad \frac{s_{22}}{s_{23}} < \frac{s_{32}}{s_{33}}.\]
 We will use these facts later in the study of the coupled spectrum of the operators $H$ and $L$ corresponding to the symmetry operators of $\Delta_g$ introduced in Subsection
 \ref{reviewsep}.
\end{enumerate}
\end{remark}
\noindent
From now on and without loss of generality, we assume that the St\"ackel matrix $S$ we consider satisfies the condition (\ref{cadre}).

On the St\"ackel manifold $(\mathcal{M},g)$ we are interested in studying of the Helmholtz equation
\[ - \Delta_g f = -\lambda^2 f.\]
As mentionned in Subsection \ref{reviewsep} the St\"ackel structure is not enough to obtain the multiplicative separability of the Helmholtz equation. Indeed, we have to
assume that the Robertson condition is satisfied.
We recall that this condition can be defined as follows: for all $i \in \{1,2,3\}$ there exists $f_i(x^i)$, function of $x^i$ alone, such that
\begin{equation}\label{Rob1}
\frac{s^{11} s^{21} s^{31}}{\det(S)} = f_1 f_2 f_3.
\end{equation}
We can easily reformulate this condition into the form
\begin{equation}\label{Rob2}
\frac{\det(S)^2}{H_1^2 H_2^2 H_3^2} = f_1 f_2 f_3.
\end{equation}

\begin{remark}
 We note that the functions $f_i$, $i \in \{1,2,3\}$, are defined up to positive multiplicative constants whose product is equal to one.
 In the following we will choose, without loss of generality, these constants equal to one.
\end{remark}


\subsection{Asymptotically hyperbolic structure and examples}

The aim of this Subsection is to define the asymptotically hyperbolic structure we add on the previously defined St\"ackel manifolds and to give
three examples which illustrate the diversity of the manifolds we consider.

We say that a riemannian manifold $(\mathcal{M},g)$ with boundary $\partial \mathcal{M}$ is asymptotically hyperbolic if its sectional curvatures tends to $-1$ at the boundary. In this paper, we put an asymptotically 
hyperbolic structure at the two radial ends of our St\"ackel cylinders in the sense given by Isozaki and Kurylev in \cite{IK} (Section 3.2)\footnote{Note that the asymptotically hyperbolic structure 
introduced in \cite{IK} is slightly more general than the one used by Melrose, Guillarmou, Joshi and S\'a Barreto in \cite{GuiSB,JSB,Mel,SB}.}.
We give now the definition of this structure in our framework.

\begin{definition}[Asymptotically hyperbolic St\"ackel manifold] \label{defAH}
A St\"ackel manifold with two asymptotically hyperbolic ends having the topology of a toric cylinder is a St\"ackel manifold $(\mathcal{M},g)$ whose St\"ackel matrix $S$ satisfies the condition 
(\ref{cadre}) with a global chart
\[\mathcal{M} = (0,A)_{x^1} \times \mathcal{T}_{x^2,x^3}^2,\]
where $x^1 \in (0,A)_{x^1}$ corresponds to a boundary defining function for the two asymptotically hyperbolic ends $\{x^1=0\}$ and $\{x^1=A\}$ and $(x^2,x^3) \in [0,B]_{x^2} \times [0,C]_{x^3}$
are angular variables on the $2$-torus $\mathcal{T}_{x^2,x^3}^2$, satisfying the following conditions.
\begin{enumerate}
 \item The St\"ackel metric $g$ has the form (\ref{met}).
 \item The coefficients $s_{ij}$, $(i,j) \in \{1,2,3\}^2$, of the St\"ackel matrix are smooth functions.
 \item The coefficients of the St\"ackel matrix satisfy:
\begin{enumerate}
 \item $H_i^2 > 0$ for $i \in \{1,2,3\}$ (riemannian metric).
 \item $s_{2j}(0) = s_{2j}(B)$, $s_{2j}'(0) = s_{2j}'(B)$, $s_{3j}(0) = s_{3j}(C)$ and $s_{3j}'(0) = s_{3j}'(C)$ for $j \in \{1,2,3\}$ (Periodic conditions in angular variables).
 \item Asymptotically hyperbolic ends at $\{x^1=0\}$ and $\{x^1=A\}$:
 \begin{enumerate}
        \item At $\{x^1=0\}$: 
there exist $\epsilon_0 > 0$ and $\delta > 0$ such that $\forall n \in \mathbb{N}$ there exists $C_{n} > 0$ such that:
$\forall x^1 \in (0,A-\delta)$,
  \[ \|  (x^1 \partial_{x^1})^n ((x^1)^2 s_{11}(x^1) -1) \| \leq C_{n} (1 + |\log(x^1)|)^{-\min(n,1)-1-\epsilon_0},\]
  \[ \|  (x^1 \partial_{x^1})^n ( s_{12}(x^1)- 1) \| \leq C_{n} (1 + |\log(x^1)|)^{-\min(n,1)-1-\epsilon_0},\]
  \[ \|  (x^1 \partial_{x^1})^n ( s_{13}(x^1)- 1) \| \leq C_{n} (1 + |\log(x^1)|)^{-\min(n,1)-1-\epsilon_0}.\]
        \item At $\{x^1=A\}$: 
there exist $\epsilon_1>0$ and $\delta>0$ such that $\forall n \in \mathbb{N}$ there exists $C_{n} > 0$ such that:
$\forall x^1 \in (\delta,A)$,
  \[ \|  ((A-x^1) \partial_{x^1})^n ((A-x^1)^2 s_{11}(x^1)-1) \| \leq C_{n} (1 + |\log((A-x^1))|)^{-\min(n,1)-1-\epsilon_1},\]
  \[ \|  ((A-x^1) \partial_{x^1})^n ( s_{12}(x^1)- 1) \| \leq C_{n} (1 + |\log((A-x^1))|)^{-\min(n,1)-1-\epsilon_1},\]
  \[ \|  ((A-x^1) \partial_{x^1})^n ( s_{13}(x^1)- 1) \| \leq C_{n} (1 + |\log((A-x^1))|)^{-\min(n,1)-1-\epsilon_1}.\]
  \end{enumerate}
\end{enumerate}
\end{enumerate}

\end{definition}

\begin{remark}
 We know that, thanks to the condition (\ref{cadre}), $s_{12}$ and $s_{13}$ tend to $1$ when $x^1$ tends to $0$. However, at the end $\{x^1 = A \}$, we can just say that there exist two positive 
 constants $\alpha$ and $\beta$ such that $s_{12}$ and $s_{13}$ tend to $\alpha$ and $\beta$ respectively. Thus, at the end $\{x^1 = A \}$, we should assume that
  \[ (A-x^{1})^2 s_{11}(x^1) = [1]_{\epsilon_1}, \quad s_{12}(x^1) = \alpha [1]_{\epsilon_1} \quad \textrm{and} \quad s_{13}(x^1) = \beta [1]_{\epsilon_1},\]
 where,
  \[[1]_{\epsilon_1} = 1 + O((1 + |\log(A-x^1)|)^{-1-\epsilon_1}).\]
 However, we can show (see the last point of Remark \ref{Robsimpl}) that, if $s_{22}$ or $s_{33}$ are not constant functions, then $\alpha = \beta = 1$.
\end{remark}

\noindent
Let us explain the meaning of asymptotically hyperbolic ends for St\"ackel manifolds we put here\footnote{We refer to \cite{IK}, Section 3 p.99-101, for a justification of the name ``asymptotically 
hyperbolic''.}. Since the explanation is similar at the end $\{x^1 = A \}$ we just study the end $\{ x^1 = 0 \}$. 
We first write the metric (\ref{met}) in a neighbourhood of $\{x^1 = 0\}$ into the form
\[ g = \sum_{i=1}^{3} H_i^2 (dx^i)^2 = \frac{ \displaystyle{\sum_{i=1}^{3}} (x^1)^2 H_i^2 (dx^i)^2}{(x^1)^2}.\]
By definition,
\begin{equation}\label{condhypH}
 \begin{cases}
(x^1)^2 H_1^2 = (x^1)^2 s_{11} + (x^1)^2 \left( s_{12} \frac{s^{12}}{s^{11}} + s_{13} \frac{s^{13}}{s^{21}} \right) \\
(x^1)^2 H_2^2 = (x^1)^2 s_{11} \frac{s^{11}}{s_{32}s_{13} - s_{33}s_{12}} + (x^1)^2 \left(s_{12} \frac{s^{12}}{s_{32}s_{13}- s_{33}s_{12}} + s_{13} \frac{s^{13}}{s_{32}s_{13} - s_{33}s_{12}} \right)\\
(x^1)^2 H_3^2 = (x^1)^2 s_{11} \frac{s^{11}}{s_{23}s_{12} - s_{22}s_{13}} + (x^1)^2 \left( \frac{s^{12}}{s_{23}s_{12} - s_{22}s_{13}} + \frac{s_{13}}{s_{12}} \frac{s^{13}}{s_{23}s_{12} - s_{22}s_{13}} \right)\\
\end{cases}.
\end{equation}
As it is shown in \cite{MM}, we know that at the end $\{x^1 = 0\}$, the sectional curvature of $g$ approaches $-|dx^1|_h$ where
\[ h = \sum_{i=1}^{3} (x^1)^2 H_i^2 (dx^i)^2.\]
In other words, the opposite of the sectional curvature at the end $\{ x^1 = 0 \}$ is equivalent to
\[ (x^1)^2 H_1^2 = (x^1)^2 s_{11} + (x^1)^2 \left( s_{12} \frac{s^{12}}{s^{11}} + s_{13} \frac{s^{13}}{s^{21}} \right).\]
Thus, since an asymptotically hyperbolic structure corresponds to a sectional curvature which tends to $-1$, we want that this last quantity tends to $1$.
This is ensured by the third assumption of Definition \ref{defAH} which entails that (for $n=0$):
\begin{equation}\label{condhyp}
 (x^{1})^2 s_{11}(x^1) = [1]_{\epsilon_0}, \quad s_{12}(x^1) = [1]_{\epsilon_0} \quad \textrm{and} \quad s_{13}(x^1) = [1]_{\epsilon_0},
\end{equation}
where
\[ [1]_{\epsilon_0} = 1 + O((1 + |\log(x^1)|)^{-1-\epsilon_0}).\]
We also note that, under these conditions we can write thanks to (\ref{condhypH}) the metric $g$, in a neighbourhood of $\{x^1 = 0\}$, into the form
\begin{equation}\label{nvg}
 g = \frac{ (dx^1)^2 + d\Omega_{\mathcal{T}^2}^2 + P(x^1,x^2,x^3,dx^1,dx^2,dx^3)}{(x^1)^2},
\end{equation}
where 
\[d\Omega_{\mathcal{T}^2}^2 = \frac{s^{11}}{s_{32} -s_{33}} (dx^2)^2 + \frac{s^{11}}{s_{23}-s_{22}} (dx^3)^2,\]
is a riemannian metric on the $2$-torus $\mathcal{T}^{2}$ (since $s^{11}$, $s^{21}$ and $s^{31}$ have the same sign) and $P$ is a 
remainder term which is, roughly speaking, small as $x^1 \to 0$. Hence, in the limit $x^1 \to 0$, we see that
\[g \sim \frac{ (dx^1)^2 + d\Omega_{\mathcal{T}^2}^2}{(x^1)^2},\]
that is, $g$ is a small perturbation of a hyperbolic like metric.

\begin{remark}
 \begin{enumerate}
  \item According to the previous definition, we also need conditions on the derivatives of $s_{1j}$, $j \in \{1,2,3\}$, to be in the framework of \cite{IK}.
  \item By symmetry, we can do the same analysis at the end $\{x^1 = A\}$.
 \end{enumerate}
\end{remark}

From the conditions (\ref{condhyp}) and the Robertson condition (\ref{Rob1}) we can obtain more informations on the functions $f_1$, $f_2$ and $f_3$. We first remark that
\begin{eqnarray*}
 f_1 f_2 f_3 &=& \frac{s^{11} s^{21} s^{31}}{\det(S)}\\
 &=& \frac{s^{11} (s_{13}s_{32} - s_{12}s_{33}) ( s_{12} s_{23} - s_{13}s_{22})}{s_{11}s^{11} + s_{12} s^{12} + s_{13}s^{13}}.
\end{eqnarray*}
Thus, using the conditions (\ref{condhyp}), we obtain
\[ f_1 f_2 f_3 \sim (x^1)^2 (s_{23} - s_{22}) (s_{32} - s_{33}), \quad \textrm{when} \quad x^1 \to 0.\]
Hence, we can say that there exist three positive constants $c_1$, $c_2$ and $c_3$ such that $c_1c_2c_3 = 1$ and
\begin{equation}\label{Rob3}
 f_{1}(x^1) = c_1(x^1)^2 [1]_{\epsilon_0}, \quad f_2(x^2) = c_2(s_{23}-s_{22}) \quad \textrm{and} \quad f_3(x^3) = c_3 (s_{32}-s_{33}).
\end{equation}
We thus note that the functions $f_i$, $i \in \{1,2,3\}$, are defined up to positive constants $c_1$, $c_2$ and $c_3$ whose product is equal to $1$.
However, as mentioned previously, we can choose these constants as equal to $1$. Of course, the corresponding result on $f_{1}$ at the end $\{x^1 = A\}$ is also true.

\begin{remark}\label{Robsimpl}
The previous analysis allows us to simplify the Robertson condition and thus the expression of the riemannian metric on the $2$-torus.
 \begin{enumerate}
  \item We first note that, if we make a Liouville change of variables in the $i^{\mathrm{th}}-$variable,
  \begin{equation}\label{chgmtLiou}
    X^i = \int_0^{x^i} \sqrt{g_i(s)} \, ds,
  \end{equation}
  where $g_i$ is a positive function of the variable $x^i$, the corresponding coefficient $H_i^2$ of the metric is also divided by $g_i(x^i)$. The same modification of the metric happens when we 
  divide the $i^{\mathrm{th}}-$line of the St\"ackel matrix by the function $g_i$. Thus, to proceed to a Liouville change of variables is equivalent to divide the 
  $i$-th line of the St\"ackel matrix by the corresponding function.
  \item We now remark that, if we divide the $i^{\mathrm{th}}-$line of the St\"ackel matrix by a function $g_i$ of the variable $x^i$, the quantity
  \[\frac{s^{11} s^{21} s^{31}}{\det(S)}\]
  is divided by $g_i$.
  Thus, recalling the form of the Robertson condition (\ref{Rob1}), we can always assume that $f_2 = f_3 = 1$ by choosing appropriate coordinates on $\mathcal{T}^2$.
  However, we do not divide the first line by $f_1$ because it changes the description of the hyperbolic structure (i.e. the condition (\ref{condhyp})). Nevertheless, it remains a degree of 
  freedom on the first line. For instance, we can divide the first line by $s_{12}$ or $s_{13}$ and we then obtain that the radial part depends only on the two scalar functions 
  $\frac{s_{11}}{s_{13}}$ and $\frac{s_{12}}{s_{13}}$. As we will see at the end of Section \ref{invrad}, these quotients are exactly the scalar functions we recover in our study 
  of the radial part. However, since it does not simplify our study we do not use this reduction for the moment.
  \item From now on, $f_2=1$ and $f_3 = 1$ and we can thus rewrite (\ref{Rob3}) into the form
  \[s_{23}-s_{22} = 1 \quad \textrm{and} \quad s_{32}-s_{33} = 1.\]
  Thanks to these equalities, we can also write
  \[d\Omega^2_{\mathcal{T}^2} = s^{11}( (dx^2)^2 + (dx^3)^2),\]
  for the induced metric on the compactified boundary $\{ x^1 = 0 \}$.
  \item Generally, we know that $s_{12}$ and $s_{13}$ tend to $1$ when $x^1$ tends to $0$ but we do not know that this is also true when $x^1$ tends 
  to $A$. However, the St\"ackel structure allows us to show that the asymptotically hyperbolic structure has to be the same at the two ends (under a mild additional assumption). 
  Assume that the behaviour of the first line at the two ends is the following: at the end $\{x^1 = 0\}$
  \[ (x^{1})^2 s_{11}(x^1) = [1]_{\epsilon_0}, \quad s_{12}(x^1) = [1]_{\epsilon_0} \quad \textrm{and} \quad s_{13}(x^1) = [1]_{\epsilon_0},\]
  and at the end $\{x^1 = A\}$
  \[ (A-x^{1})^2 s_{11}(x^1) = [1]_{\epsilon_1}, \quad s_{12}(x^1) = \alpha [1]_{\epsilon_1} \quad \textrm{and} \quad s_{13}(x^1) = \beta [1]_{\epsilon_1},\]
where,
\[ [1]_{\epsilon_0} = 1 + O((1 + |\log(x^1)|)^{-1-\epsilon_0}) \quad \textrm{and} \quad [1]_{\epsilon_1} = 1 + O((1 + |\log(A-x^1)|)^{-1-\epsilon_1})\]
and $\alpha$ and $\beta$ are real positive constants. Using the Robertson condition at the end $\{x^1 = 0\}$ and the end $\{x^1 = A \}$ we obtain
\[ 1 = f_2 = s_{23}-s_{22} = \alpha s_{23}- \beta s_{22},\]
and
\[ 1 = f_3 = s_{32}-s_{33} = \beta s_{32}- \alpha s_{33}.\]
Thus, using that $s_{23} = 1 + s_{22}$ and $s_{32} = 1 + s_{33}$, we obtain
\[ (\alpha - \beta) s_{22} = 1-\alpha \quad \textrm{and} \quad (\beta - \alpha) s_{33} = 1-\beta.\]
Hence, if we assume that $s_{22}$ or $s_{33}$ are not constant functions, we obtain $\alpha = \beta = 1$. 
 \end{enumerate}
\end{remark}

Finally, we now give three examples of St\"ackel manifolds.

\begin{exem}\label{exemple}
We give here three examples of St\"ackel manifolds which illustrate the diversity of the manifolds we consider.
 \begin{enumerate}
  \item We can first choose the St\"ackel matrix
  \[ S =  \begin{pmatrix} s_{11}(x^1) & s_{12}(x^1) & s_{13}(x^1) \\ a & b & c \\ d & e & f \end{pmatrix},\]
  where $s_{11}$, $s_{12}$ and $s_{13}$ are smooth functions of $x^1$ and $a$, $b$, $c$, $d$, $e$ and $f$ are real constants. The metric $g$ can thus be written as
  \[ g = \sum_{i=1}^{3} H_i^2 (dx^i)^2,\]
  where $H_i^2$, for $i \in \{1,2,3\}$, are functions of $x^1$ alone. Therefore, $g$ trivially satisfies the Robertson condition and we can add the asymptotically hyperbolic 
  structure given in Definition \ref{defAH}. We note that, as explained in the previous 
  Remark, $g$ depends only on two arbitrary functions (after a Liouville change of variables in the variable $x^1$). Moreover, we can show that $\partial_{x^2}$ and 
  $\partial_{x^3}$ are Killing vector fields and the existence of these Killing vector fields traduces the symmetries with respect to the translation in $x^2$ and $x^3$.
  \item We can also choose the St\"ackel matrix
  \[ S =  \begin{pmatrix} s_{11}(x^1) & s_{12}(x^1) & a s_{12}(x^1) \\ 0 & s_{22}(x^2) & s_{23}(x^2) \\ 0 & s_{32}(x^3) & s_{33}(x^3) \end{pmatrix},\]
  where $s_{11}$ and $s_{12}$ are smooth functions of $x^1$, $s_{22}$ and $s_{23}$ are smooth functions of $x^2$, $s_{32}$ and $s_{33}$ are smooth functions of $x^3$ and 
  $a$ is a real constant. We can add the asymptotically hyperbolic structure given in Definition \ref{defAH} and the metric $g$ can be written as
  \[ g = s_{11}(dx^1)^2 + \frac{s_{11}}{s_{12}} \left( \frac{s^{11}}{as_{32}-s_{33}} (dx^2)^2 + \frac{s^{11}}{s_{23}-as_{22}} (dx^3)^2 \right).\]
  Therefore, $g$ satisfied the Robertson condition. We note that, after Liouville transformations in the three variables, $g$ depends on three arbitrary functions.
  Moreover, thanks to the Liouville transformation
  \[ X^1 = \int_0^{x^1} \sqrt{s_{11}(s)} \, ds,\]
  we see that there exists a system of coordinates in which the metric $g$ takes the form
  \[ g = (dx^1)^2 + f(x^1) g_0,\]
  where $g_0$ is a metric on the $2$-torus $\mathcal{T}^2$. In other words, $g$ is a warped product. In particular, $g$ is conformal to a metric which can be written as a sum of 
  one euclidean direction and a metric on a compact manifold. We recall that in this case, under some additional assumptions on the compact part, the uniqueness of the 
  anisotropic Calder\'on problem on compact manifolds with boundary has been proved in \cite{DKSU,DKLS}.
  \item At last, we can choose the St\"ackel matrix
  \[ S =  \begin{pmatrix} s_{1}(x^1)^2 & -s_{1}(x^1) & 1 \\ -s_2(x^2)^2 & s_{2}(x^2) & -1 \\ s_3(x^3)^2 & -s_{3}(x^3) & 1 \end{pmatrix},\]
  where $s_{1}$ is a smooth function of $x^1$, $s_{2}$ is a smooth function of $x^2$ and $s_{3}$ is a smooth function of $x^3$. This model was studied in \cite{Benen,BM} and 
  is of main interest in the field of geodesically equivalent riemannian manifolds i.e. of manifold which share the same unparametrized geodesics 
  (see \cite{BM}). The associated metric
  \[ g = (s_1-s_2)(s_1-s_3) (dx^1)^2 + (s_2-s_3)(s_1-s_2)(dx^2)^2 + (s_3-s_2)(s_3-s_1)(dx^3)^2,\]
  satisfies the Robertson condition and $g$ has, a priori, no symmetry, is not a warped product and depends on three arbitrary functions that satisfy $s_1 > s_2 > s_3$.
  To put an asymptotically 
  hyperbolic structure in the sense given in Definition \ref{defAH} we first multiply the second and the third column of the St\"ackel matrix on the right by the invertible
  matrix
  \[ G =  \begin{pmatrix} -1 & -1 \\ 0 & -1 \end{pmatrix},\]
  since it does not change the metric. We thus obtain the new St\"ackel matrix
  \[ \begin{pmatrix} s_{1}(x^1)^2 & s_{1}(x^1) & s_{1}(x^1) -1 \\ -s_2(x^2)^2 & -s_{2}(x^2) & -s_{2}(x^1) + 1 \\ s_3(x^3)^2 & s_{3}(x^3) & s_{3}(x^1) -1 \end{pmatrix}.\]
  In a second time, we use the Liouville change of variables in the first variable
  \[ X^1 = \int_0^{x^1} \sqrt{s_{1}(s)} \, ds,\]
  and we obtain the St\"ackel matrix
  \[ S =  \begin{pmatrix} s_{1}(X^1) & 1 & 1 - \frac{1}{s_1(X^1)} \\ -s_2(x^2)^2 & -s_{2}(x^2) & -s_{2}(x^1) + 1 \\ s_3(x^3)^2 & s_{3}(x^3) & s_{3}(x^1) -1 \end{pmatrix}.\]
  Finally, to put the asymptotically hyperbolic structure on the first line, we assume that
  \[ s_{1}(X^1) =  \frac{1}{(X^1)^2} (1 + O((1 + |\log(X^1)|)^{-1-\epsilon_0})), \quad \textrm{when} \quad X^1 \to 0\]
  and
  \[ s_{1}(X^1) =  \frac{1}{(A^1-X^1)^2} (1 + O((1 + |\log(A^1-X^1)|)^{-1-\epsilon_1})), \quad \textrm{when} \quad X^1 \to A^1,\]
  where $A^1 = \int_0^{A} \sqrt{s_{1}(s)} \, ds$.
 \end{enumerate}
\end{exem}

\subsection{Scattering operator and statement of the main result}

We recall here the construction of the scattering operator given in \cite{IK,IKL} for asymptotically hyperbolic manifolds. This construction has been used in
\cite{DaKaNi} in the case of asymptotically hyperbolic Liouville surfaces. Roughly speaking, in the neighbourhood of the asymptotically hyperbolic
ends we can compare the global dynamics with a simpler comparison dynamics, i.e. we establish the existence and the asymptotically completeness of the wave operators
\[ W_k^{\pm} = s-\lim e^{itH} J_k e^{-itH_0^k},\]
where $J_k$ is a cutoff function that isolate the $k^{\mathrm{th}}$ asymptotically hyperbolic end and $H_0^k$ is a simpler hamiltonian which governs the free wave dynamics
in this end. The scattering operator $S_g$ is then defined by
\[ S_g = (W^+)^{\star} W^-, \quad \textrm{where} \quad W^{\pm} = \sum_k W_k^{\pm}.\]
This operator makes the link between the asymptotic (scattering) data in the past and the asymptotic (scattering) data in the future. The scattering matrix $S_g(\lambda)$ is then the 
restriction of the scattering operator $S_g$ on an energy level $\lambda^2$.
This corresponds to the time-dependent approach to scattering theory.
There is also an equivalent stationary definition.
To define the scattering matrix in a stationary way, we take the Fourier transform of the wave equation $\partial_t^2 u - \Delta_g u = 0$ with respect to $t$, and instead of 
studying the asymptotic behaviour of $u(t,x)$ at late times, we study the spatial asymptotic behaviour of solutions of the Helmholtz equation $-\Delta_g u = \lambda^2 u$ 
as $|x| \to \infty$.
We thus obtain an other but equivalent definition of the scattering matrix at energy $\lambda^2$ thanks to the Helmholtz equation (see Theorem \ref{defscat}).

In our particular model, there are two ends and so we introduce two cutoff functions $\chi_0$ and $\chi_1$, smooth on $\mathbb{R}$, defined by
\begin{equation}\label{defchi}
 \chi_0 = 1 \quad \textrm{on} \quad \left(0,\frac{A}{4}\right), \quad \chi_1 = 1 \quad \textrm{on} \quad \left(\frac{3A}{4},A\right), \quad \chi_0 + \chi_1 = 1 \quad \textrm{on}\quad (0,A),
\end{equation}
in order to separate these two ends. We consider the shifted stationary Helmholtz equation
\[  -(\Delta_g +1) f = \lambda^2 f,\]
where $\lambda^2 \neq 0$ is a fixed energy, which is usually studied in case of asymptotically hyperbolic manifolds (see \cite{Bort,IK,IKL,JSB}). Indeed, it is known (see \cite{IKL})
that the essential spectrum of $- \Delta_g$ is $[1,+\infty)$ and thus, we shift the bottom of the essential 
spectrum in order that it becomes $0$. It is known that the operator $- \Delta_g -1$ has no eigenvalues embedded into the essential spectrum $[0,+\infty)$ (see 
\cite{Bou,IK,IKL}).
It is shown in \cite{IKL} that the solutions of the shifted stationary equation 
\[-(\Delta_g +1) f = \lambda^2 f,\]
  are unique when we impose on $f$ some radiation conditions at 
infinities. Precisely, as in \cite{DaKaNi}, we define some Besov spaces that encode these radiation conditions at infinities as follows. To motivate our definitions,
we first recall that the compactified boundaries $\{x^1 = 0\}$ and $\{x^1 = A\}$ are endowed with the induced metric
  \[d\Omega^2_{\mathcal{T}^2} = s^{11}( (dx^2)^2 + (dx^3)^2).\]

\begin{definition}
 Let $\mathcal{H}_{\mathcal{T}^2} = L^2(\mathcal{T}^2,s^{11} dx^2dx^3)$. Let the intervals $(0,+\infty)$ and $(-\infty,A)$ be decomposed as
 \[(0,+\infty) = \cup_{k \in \mathbb{Z}} I_k \quad \textrm{and} \quad (-\infty,A) = \cup_{k \in \mathbb{Z}} J_k,\]
 where
 \[I_k = 
   \begin{cases}
(\exp(e^{k-1}),\exp(e^k)] \quad \textrm{if} \quad k \geq 1\\
(e^{-1},e] \quad \textrm{if} \quad k = 0 \\
(\exp(-e^{|k|}),\exp(-e^{|k|-1})] \quad \textrm{if} \quad k \leq -1\\
\end{cases}
 \]
 and
  \[J_k = 
   \begin{cases}
(A-\exp(e^{k-1}),A-\exp(e^k)] \quad \textrm{if} \quad k \geq 1\\
(A-e^{-1},A-e] \quad \textrm{if} \quad k = 0 \\
(A-\exp(-e^{|k|}),A-\exp(-e^{|k|-1})] \quad \textrm{if} \quad k \leq -1\\
\end{cases}.
 \]
We define the Besov spaces $\mathcal{B}_0 = \mathcal{B}_0(\mathcal{H}_{\mathcal{T}^2})$ and $\mathcal{B}_1 = \mathcal{B}_1(\mathcal{H}_{\mathcal{T}^2})$ to be the Banach spaces of $\mathcal{H}_{\mathcal{T}^2}$-valued 
functions on $(0,+\infty)$ and $(-\infty,A)$ satisfying respectively
\[ \Vert f \Vert_{\mathcal{B}_0} = \sum_{k \in \mathbb{Z}} e^{\frac{|k|}{2}} \left( \int_{I_k} \Vert f(x) \Vert_{\mathcal{H}_{\mathcal{T}^2}}^2 \, \frac{dx}{x^2} \right)^{\frac{1}{2}} < \infty\]
and
\[ \Vert f \Vert_{\mathcal{B}_1} = \sum_{k \in \mathbb{Z}} e^{\frac{|k|}{2}} \left( \int_{J_k} \Vert f(x) \Vert_{\mathcal{H}_{\mathcal{T}^2}}^2 \, \frac{dx}{(A-x)^2} \right)^{\frac{1}{2}} < \infty.\]
The dual spaces $\mathcal{B}_0^{\star}$ and $\mathcal{B}_1^{\star}$ are then identified with the spaces equipped with the norms
\[ \Vert f \Vert_{\mathcal{B}_0^{\star}} = \left( \sup_{R>e} \frac{1}{\log(R)}\int_{\frac{1}{R}}^R \Vert f(x) \Vert_{\mathcal{H}_{\mathcal{T}^2}}^2 \, \frac{dx}{x^2} \right)^{\frac{1}{2}} < \infty\]
and
\[ \Vert f \Vert_{\mathcal{B}_1^{\star}} =  \left( \sup_{R>e} \frac{1}{\log(R)}\int_{A-R}^{A-\frac{1}{R}} \Vert f(x) \Vert_{\mathcal{H}_{\mathcal{T}^2}}^2 \, \frac{dx}{(A-x)^2} \right)^{\frac{1}{2}} < \infty.\]
\end{definition}

\begin{remark}
 As shown in \cite{IK}, we can compare the Besov spaces $\mathcal{B}_0$ and $\mathcal{B}_0^{\star}$ to weighted $L^2$-spaces. Indeed, if we define $L_0^{2,s}((0,+\infty),\mathcal{H}_{\mathcal{T}^2})$ for $s \in \mathbb{R}$ by
 \[ \Vert f \Vert_{s} = \left( \int_{0}^{+\infty} (1+ |\log(x)|)^{2s} \Vert f(x) \Vert_{\mathcal{H}_{\mathcal{T}^2}}^2 \, \frac{dx}{x^2} \right)^{\frac{1}{2}} < \infty,\]
 then for $s > \frac{1}{2}$,
 \[ L_0^{2,s} \subset \mathcal{B}_0 \subset L_0^{2,\frac{1}{2}} \subset L_0^2 \subset L_0^{2,-\frac{1}{2}} \subset \mathcal{B}_0^{\star} \subset L_0^{2,-s}.\]
 There is a similar result for the Besov spaces $\mathcal{B}_1$ and $\mathcal{B}_1^{\star}$.
\end{remark}

\begin{definition}
 We define the Besov spaces $\mathcal{B}$ and $\mathcal{B}^{\star}$ as the Banach spaces of $\mathcal{H}_{\mathcal{T}^2}$-valued functions on $(0,A)$ with norms
 \[ \Vert f \Vert_{\mathcal{B}} = \Vert \chi_0 f \Vert_{\mathcal{B}_0} + \Vert \chi_1 f \Vert_{\mathcal{B}_1}\]
 and
  \[ \Vert f \Vert_{\mathcal{B}^{\star}} = \Vert \chi_0 f \Vert_{\mathcal{B}_0^{\star}} + \Vert \chi_1 f \Vert_{\mathcal{B}_1^{\star}}.\]
  We also define the Hilbert space of scattering data
  \[\mathcal{H}_{\infty} = \mathcal{H}_{\mathcal{T}^2} \otimes \mathbb{C}^2 \simeq \mathcal{H}_{\mathcal{T}^2} \oplus \mathcal{H}_{\mathcal{T}^2}.\]
\end{definition}
\noindent
In \cite{IK} (see Theorem 3.15) the following theorem is proved.

\begin{theorem}[Stationary construction of the scattering matrix]
 \begin{enumerate}
  \item For any solution $f \in \mathcal{B}^{\star}$ of the shifted stationary Helmholtz equation at non-zero energy $\lambda^2$ 
  \begin{equation}\label{Hel1}
  -(\Delta_g +1) f = \lambda^2 f,
\end{equation}
  there exists a unique 
  $\psi^{(\pm)} = (\psi_0^{(\pm)},\psi_{1}^{(\pm)}) \in \mathcal{H}_{\infty}$ such that
  \begin{align}\label{scat}
   f \simeq& \quad \omega_-(\lambda) \left( \chi_0 \, \, (x^1)^{\frac{1}{2}+i\lambda} \psi_0^{(-)} + \chi_1 \, \, (A-x^1)^{\frac{1}{2}+i\lambda} \psi_1^{(-)} \right) \nonumber\\
    &- \omega_+(\lambda) \left( \chi_0 \, \, (x^1)^{\frac{1}{2}-i\lambda} \psi_0^{(+)} + \chi_1 \, \, (A-x^1)^{\frac{1}{2}-i\lambda} \psi_1^{(+)} \right),
  \end{align}
  where
    \begin{equation}\label{omega}
 \omega_{\pm}(\lambda) = \frac{\pi}{(2 \lambda \sinh(\pi \lambda))^{\frac{1}{2}} \Gamma(1 \mp i \lambda)}.
  \end{equation}
 \item For any $\psi^{(-)} \in \mathcal{H}_{\infty}$, there exists a unique $\psi^{(+)} \in \mathcal{H}_{\infty}$ and $f \in \mathcal{B}^{\star}$ satisfying (\ref{Hel1})
 for which the decomposition (\ref{scat}) above holds. This defines uniquely the scattering operator $S_g(\lambda)$ as the $\mathcal{H}_{\infty}$-valued operator such that for all 
 $\psi^{(-)} \in \mathcal{H}_{\infty}$,
 \begin{equation}\label{scat2}
  \psi^{(+)} = S_g(\lambda) \psi^{(-)}.
 \end{equation}
 \item The scattering operator $S_g(\lambda)$ is unitary on $\mathcal{H}_{\infty}$.  
 \end{enumerate}\label{defscat}
\end{theorem}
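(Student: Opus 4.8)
The plan is to follow the stationary scattering scheme of Isozaki and Kurylev \cite{IK}, whose analytic core is a limiting absorption principle for the shifted Laplacian $H = -\Delta_g - 1$. First I would establish that, for $\lambda \neq 0$, the resolvent $R(z) = (H - z)^{-1}$ admits boundary values $R(\lambda^2 \pm i0)$ as bounded operators from $\mathcal{B}$ into $\mathcal{B}^{\star}$. This rests on commutator (Mourre-type) estimates adapted to the logarithmic weights defining $\mathcal{B}$ and $\mathcal{B}^{\star}$, together with the product structure (\ref{nvg}) of $g$ near the two ends, which—with the help of the cutoffs $\chi_0, \chi_1$—lets one treat the ends $\{x^1=0\}$ and $\{x^1=A\}$ separately. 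In the present St\"ackel setting one may alternatively diagonalize $H$ on the angular eigenbasis and reduce to a family of radial ODEs, proving the resolvent bounds channel by channel.

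For part (1), the existence of the decomposition (\ref{scat}) follows from the radial behaviour near each end. After the change of variables making $g$ hyperbolic-like, the equation (\ref{Hel1}) reduces to leading order to an ODE in $x^1$ whose indicial equation at $x^1 = 0$ has roots $\frac12 \pm i\lambda$; hence any $f \in \mathcal{B}^{\star}$ is asymptotic to a combination of $(x^1)^{\frac12 + i\lambda}$ and $(x^1)^{\frac12 - i\lambda}$ with $\mathcal{H}_{\mathcal{T}^2}$-valued coefficients, and symmetrically at $\{x^1 = A\}$, the normalizations $\omega_{\pm}(\lambda)$ being fixed by matching with the comparison dynamics. Uniqueness of $\psi^{(\pm)}$ is then immediate: since $\lambda \neq 0$ the two profiles $(x^1)^{\frac12 \pm i\lambda}$ oscillate at distinct rates, so each coefficient is recovered from $f$ by an averaging (Abel-limit) procedure over the $\mathcal{B}^{\star}$-shells $I_k$ and $J_k$.

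For part (2), given $\psi^{(-)} \in \mathcal{H}_{\infty}$ I would build the approximate solution $f_0 = \omega_-(\lambda)\bigl(\chi_0 (x^1)^{\frac12 + i\lambda}\psi_0^{(-)} + \chi_1 (A-x^1)^{\frac12 + i\lambda}\psi_1^{(-)}\bigr)$ by hand. A direct computation shows $(H - \lambda^2)f_0 \in \mathcal{B}$, so that $f = f_0 - R(\lambda^2 + i0)(H - \lambda^2)f_0$ lies in $\mathcal{B}^{\star}$, solves (\ref{Hel1}), and retains the prescribed incoming part $\psi^{(-)}$ while the outgoing correction feeds only the $(x^1)^{\frac12 - i\lambda}$ channel; reading off that channel defines $\psi^{(+)}$ and hence $S_g(\lambda)$ through (\ref{scat2}). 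Uniqueness of such an $f$, and thus well-definedness of $S_g(\lambda)$, follows from a Rellich-type theorem: if $\psi^{(-)} = \psi^{(+)} = 0$ then $f$ decays fast enough to be an $L^2$ eigenfunction of $-\Delta_g - 1$ at the non-negative energy $\lambda^2$, which is excluded by the absence of embedded eigenvalues \cite{Bou,IK,IKL}.

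Finally, unitarity in part (3) comes from a flux (Green's formula) identity. Pairing (\ref{Hel1}) with $\bar f$ and integrating over $(0,A) \times \mathcal{T}^2$, the interior terms are real, so the boundary contributions at the two ends must cancel; inserting the asymptotics (\ref{scat}) and computing the Wronskian of $(x^1)^{\frac12 \pm i\lambda}$ yields the conservation law $\|\psi^{(+)}\|_{\mathcal{H}_{\infty}}^2 = \|\psi^{(-)}\|_{\mathcal{H}_{\infty}}^2$, so $S_g(\lambda)$ is an isometry. Surjectivity follows by running the construction of part (2) with the outgoing profile, equivalently from the time-reversal symmetry $f \mapsto \bar f$ which swaps the $\pm$ channels and produces a two-sided inverse. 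The main obstacle is the limiting absorption principle itself: controlling $R(\lambda^2 + i0)$ uniformly in the logarithmic Besov weights near the hyperbolic ends, where the slow (logarithmic) decay demands delicate commutator estimates rather than the polynomial-weight arguments of the Euclidean theory. Once these resolvent bounds and the matching asymptotics are in hand, the remaining steps are essentially bookkeeping.
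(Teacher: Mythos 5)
The paper does not prove this theorem at all: it is quoted directly from Isozaki--Kurylev \cite{IK} (Theorem 3.15), as the sentence immediately preceding the statement says, so there is no in-paper argument to compare against. Your outline --- a limiting absorption principle for $-\Delta_g-1$ in the logarithmic Besov pair $\mathcal{B}$, $\mathcal{B}^{\star}$, construction of the solution from a prescribed incoming profile via the boundary value of the resolvent, uniqueness through a Rellich-type theorem combined with the absence of embedded eigenvalues, and unitarity from a Green/flux identity --- is precisely the scheme carried out in that cited reference, so it is the ``same approach'' in the only meaningful sense; note, though, that what you have written is a proof sketch whose hard analytic content (the LAP with logarithmic weights near the hyperbolic ends) is deferred, as you yourself acknowledge.
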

\noindent
Note that in our model with two asymptotically hyperbolic ends the scattering operator has the structure of a $2 \times 2$ matrix whose components are $\mathcal{H}_{\mathcal{T}^2}$-valued operators. Precisely, we write
\[S_g(\lambda) = \begin{pmatrix}
   L(\lambda) & T_R(\lambda) \\
   T_L(\lambda) & R(\lambda)
  \end{pmatrix},\]
where $T_L(\lambda)$ and $T_R(\lambda)$ are the transmission operators and $L(\lambda)$ and $R(\lambda)$ are the reflection operators from the right and from the left
respectively. The transmission operators measure what is transmitted from one end to the other end in a scattering experiment, while 
the reflection operators measure the part of a signal sent from one end that is reflected to itself.

The main result of this paper is the following:

\begin{theorem}\label{main}
 Let $(\mathcal{M},g)$ and $(\mathcal{M},\tilde{g})$, where $\mathcal{M} = (0,A)_{x^1} \times \mathcal{T}_{x^2,x^3}^2$, be two three-dimensional 
 St\"ackel toric cylinders, i.e. endowed with the metrics $g$ and $\tilde{g}$ defined in (\ref{met}) respectively. We assume that these manifolds satisfy the Robertson condition and are endowed with 
 asymptotically hyperbolic structures at the two ends $\{x^1 = 0 \}$ and $\{x^1 = A\}$ as defined in Definition \ref{defAH}. We denote by $S_g(\lambda)$ and $S_{\tilde{g}}(\lambda)$ the corresponding scattering operators at a
 fixed energy $\lambda \neq 0$ as defined in Theorem \ref{defscat}. Assume that
 \[ S_g(\lambda) = S_{\tilde{g}}(\lambda).\]
 Then, there exists a diffeomorphism $\Psi : \mathcal{M} \to \mathcal{M}$, equals to the identity at the compactified ends $\{x^1 = 0\}$ and $\{x^1 = A\}$, such that
 $\tilde{g}$ is the pull back of $g$ by $\Psi$, i.e.
 \[ \tilde{g} = \Psi^{\star} g.\]
\end{theorem}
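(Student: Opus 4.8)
The plan is to reduce the multidimensional inverse problem to a family of one-dimensional inverse problems by variable separation, and then to glue these back together by a complex-analytic argument in the \emph{two} coupled angular momenta. First I would invoke the separability guaranteed by the St\"ackel structure together with the Robertson condition (Theorem \ref{thmKM}): writing the shifted Helmholtz equation $-(\Delta_g+1)f = \lambda^2 f$ in separated form $f = f_1(x^1)f_2(x^2)f_3(x^3)$, one obtains a radial Sturm--Liouville equation in $x^1$ coupled to two angular eigenvalue problems on the circle factors of $\mathcal{T}^2$ through the two constants of separation, namely the eigenvalues $(\mu,\nu)$ of the commuting angular symmetry operators $H$ and $L$. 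Since the angular problems are regular periodic eigenvalue problems, their joint spectrum is a discrete set $\Sigma\subset\mathbb{R}^2$, the \emph{coupled spectrum}, and the associated joint eigenfunctions form a Hilbert basis of $\mathcal{H}_{\mathcal{T}^2}$.

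Next I would decompose the scattering operator $S_g(\lambda)$ of Theorem \ref{defscat} in this basis. On each joint eigenspace it restricts to a $2\times 2$ matrix whose off-diagonal entries are the transmission coefficients and whose diagonal entries are the reflection coefficients; the key point (announced in the abstract) is to identify these reflection coefficients with the generalized Weyl--Titchmarsh functions $M_L(\lambda,\mu,\nu)$ and $M_R(\lambda,\mu,\nu)$ of the radial ODE at the two ends $\{x^1=0\}$ and $\{x^1=A\}$. The hypothesis $S_g(\lambda) = S_{\tilde g}(\lambda)$ holds on the common space $\mathcal{H}_\infty = \mathcal{H}_{\mathcal{T}^2}\otimes\mathbb{C}^2$, so I would first read off from it the induced boundary metric $s^{11}((dx^2)^2+(dx^3)^2)$ and the angular operators, thereby matching the two coupled spectra, $\Sigma = \tilde\Sigma$, and their eigenfunctions. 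Equality of the two scattering operators then forces equality of the Weyl--Titchmarsh functions of $g$ and $\tilde g$ at every point of the common coupled spectrum $\Sigma$.

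The heart of the argument, and the step I expect to be the main obstacle, is to upgrade this discrete equality into equality on all of $\mathbb{C}^2$. I would show that the radial equation depends holomorphically on the two angular momenta, so that the Weyl--Titchmarsh functions extend to functions analytic in $(\mu,\nu)\in\mathbb{C}^2$ and of controlled growth (of Nevanlinna type in suitable complex directions), and then prove a genuinely two-variable uniqueness theorem: two such analytic functions that agree on the coupled spectrum $\Sigma$, a discrete subset of $\mathbb{C}^2$ whose asymptotic counting and density I would have to control, must coincide on all of $\mathbb{C}^2$. The difficulty, and the reason a classical one-variable Regge-type continuation does not apply, is precisely that $\mu$ and $\nu$ are coupled eigenvalues of two commuting operators and cannot be varied independently; the novelty is thus a multivariable version of the complex-angular-momentum method tailored to the arithmetic of $\Sigma$.

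Finally, having secured equality of the Weyl--Titchmarsh functions for all complex $(\mu,\nu)$, in particular as functions of $\lambda$ for fixed generic momenta, I would apply the Borg--Marchenko theorem to the radial Sturm--Liouville problem to conclude that the radial potentials coincide. This recovers the scalar combinations of St\"ackel coefficients that the radial part actually detects, namely the quotients $s_{11}/s_{13}$ and $s_{12}/s_{13}$ isolated in the radial analysis, while the matching of the angular data recovers the transverse part of the metric. Combining these with the gauge freedoms recorded in Proposition \ref{propinv} and the admissible Liouville changes of variables of Remark \ref{Robsimpl}, I would assemble a diffeomorphism $\Psi:\mathcal{M}\to\mathcal{M}$, equal to the identity at the compactified ends $\{x^1=0\}$ and $\{x^1=A\}$, satisfying $\tilde g = \Psi^{\star} g$, which is exactly the conclusion of the theorem.
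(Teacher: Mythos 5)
Your overall route coincides with the paper's: separation of variables via the St\"ackel/Robertson structure, diagonalization of $S_g(\lambda)$ on the joint eigenspaces of the commuting angular operators, identification of the reflection coefficients with generalized Weyl--Titchmarsh functions, a two-variable complex angular momentum continuation from the coupled spectrum to $\C^2$ (the paper realizes this with exponential-type bounds, boundedness on $(i\R)^2$, Phragm\'en--Lindel\"of, Bloom's uniqueness theorem and Colin de Verdi\`ere's counting of the joint spectrum, plus a delicate separation lemma for the spectrum inside a cone), and finally Borg--Marchenko. Up to that point your plan is the paper's plan, with the hard analytic work correctly flagged rather than carried out.

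There is, however, a genuine gap in your last step. You assert that once the radial potentials coincide, one ``recovers the scalar combinations of St\"ackel coefficients that the radial part actually detects, namely the quotients $s_{11}/s_{13}$ and $s_{12}/s_{13}$.'' This is not immediate. The radial potential has the form
\[
 q_{\nu^2} = -(\lambda^2+1)\,\frac{s_{11}}{s_{12}} + \nu^2\,\frac{s_{13}}{s_{12}}
 +\frac{1}{16} \left( \left( \log \left( \frac{f_{1}}{s_{12}} \right) \right)' \right)^{2} - \frac{1}{4} \left( \log \left( \frac{f_{1}}{s_{12}} \right) \right)'' ,
\]
so decoupling in $\nu^2$ only yields $s_{13}/s_{12}=\tilde{s}_{13}/\tilde{s}_{12}$ together with a \emph{single} equation in which $s_{11}/s_{12}$ is entangled with the derivative terms coming from the Robertson function $f_1$; equality of potentials alone does not separate $s_{11}/s_{12}$ from $f_1/s_{12}$. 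The paper closes this gap by using the Robertson condition to express $s_{11}/s_{12}$ as an explicit function of $s_{13}/s_{12}$ and $f_1/s_{12}$, which converts the remaining equation into a nonlinear second-order ODE for $u=\bigl(\frac{s_{12}}{f_1}\,\frac{\tilde{f}_1}{\tilde{s}_{12}}\bigr)^{1/4}$ with Cauchy data $u(0)=1$, $u'(0)=0$ supplied by the common asymptotically hyperbolic structure; uniqueness for this Cauchy problem forces $u\equiv 1$, and only then do the quotients coincide. Without this argument (or a substitute for it) your proof does not determine the radial part of the metric. A smaller slip: Borg--Marchenko is applied here at the single fixed energy $\lambda$, with $-\mu^2$ as the spectral parameter for each fixed $\nu$; the data are never varied ``as functions of $\lambda$,'' since only one energy is available by hypothesis.
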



For general Asymptotically Hyperbolic Manifolds (AHM in short) with no particular (hidden) symmetry, direct and inverse scattering results for scalar waves have been 
proved by Joshi and S\'a Barreto in \cite{JSB}, by S\'a Barreto in \cite{SB}, by Guillarmou and S\'a Barreto in \cite{GuiSB,GSB} and by Isozaki and Kurylev in \cite{IK}. In 
\cite{JSB}, it is shown that the asymptotics of the metric of an AHM are uniquely determined (up to isometries) by the scattering matrix $S_g(\lambda)$ at a 
fixed energy $\lambda$ off a discrete subset of $\R$. In \cite{SB}, it is proved that the metric of an AHM is uniquely determined (up to isometries) by the 
scattering matrix $S_g(\lambda)$ for every $\lambda \in \R$ off an exceptional subset. Similar results are obtained recently in \cite{IK} for even more general classes of AHM. 
In \cite{GuiSB}, it is proved that, for connected conformally compact Einstein manifolds of even dimension $n+1$, the scattering matrix at energy $n$ on an open subset of 
its conformal boundary determines the manifold up to isometries.
In \cite{GSB}, the authors study direct and inverse scattering problems for asymptotically complex hyperbolic manifolds and show that the topology and the metric of such a 
manifold are determined (up to invariants) by the scattering matrix at all energies.
We also mention the work \cite{Mara} of Marazzi in which the author study inverse scattering for the stationary Schr\"odinger equation with smooth potential not vanishing 
at the boundary on a conformally compact manifold with sectional curvature $-\alpha^2$ at the boundary. The author then shows that the scattering matrix at two
fixed energies $\lambda_1$ and $\lambda_2$, $\lambda_1 \neq \lambda_2$, in a suitable subset of $\C$, determines $\alpha$ and the Taylor series of both the potential and the metric at the boundary.
At last, we also mention \cite{BP} where related inverse problems - inverse resonance problems - are studied in certain subclasses of AHM.

This work must also be put into perspective with the anisotropic Calder\'on problem on compact manifolds with boundary. We recall here, the definition
of this problem. Let $(\mathcal{M},g)$ be a riemannian compact manifold with smooth boundary $\partial \mathcal{M}$. We denote by $-\Delta_g$ the Laplace-Beltrami operator on 
$(\mathcal{M},g)$ and we recall that this operator with Dirichlet boundary conditions is selfadjoint on $L^2(\mathcal{M},dVol_g)$ and has a pure point spectrum $\{ \lambda_j^2 \}_{j \geq 1}$.
We are interested in the solutions $u$ of
\begin{equation}\label{Dirichlet}
 \begin{cases}
-\Delta_g u = \lambda^2 u, \quad \textrm{on} \quad \mathcal{M},\\
\quad \quad u = \psi , \quad \textrm{on} \quad \partial \mathcal{M}.\\
\end{cases}
\end{equation}
It is known (see for instance \cite{S}) that for any $\psi \in H^{\frac{1}{2}}(\partial \mathcal{M})$ there exits a unique weak solution $u \in H^1(\mathcal{M})$ 
of (\ref{Dirichlet}) when $\lambda^2$ does not belong to the Dirichlet spectrum $\{ \lambda_i^2 \}$ of $-\Delta_g$. This allows us to define the Dirichlet-to-Neumann (DN)
map as the operator $\Lambda_g(\lambda^2)$ from $H^{\frac{1}{2}}(\partial \mathcal{M})$ to 
$H^{-\frac{1}{2}}(\partial \mathcal{M})$ defined for all $\psi \in H^{\frac{1}{2}}(\partial \mathcal{M})$ by
\[ \Lambda_g(\lambda^2) (\psi) = (\partial_{\nu} u)_{|\partial \mathcal{M}},\]
where $u$ is the unique solution of (\ref{Dirichlet}) and $(\partial_{\nu} u)_{|\partial \mathcal{M}}$ is its normal derivative with respect to the unit outer normal vector 
$\nu$ on $\partial \mathcal{M}$. The anisotropic Calder\'on problem can be stated as:
\begin{center}
\emph{Does the knowledge of the DN map $\Lambda_g(\lambda^2)$ at a frequency $\lambda^2$ determine uniquely the metric $g$?}
\end{center}
We refer for instance to \cite{DKSU,DKLS,GSB,GT2,KS2,LTU,LU,LeeU} for important contributions to this subject and to the surveys
\cite{GT,KS,S,U} for the current state of the art.

In dimension two, the anisotropic Calder\'on problem with $\lambda^2 = 0$ was shown to be true for smooth connected riemannian surfaces in \cite{LU,LeeU}. A positive answer for 
zero frequency $\lambda^2 = 0$ in dimension $3$ or higher has been given for compact connected real analytic riemannian manifolds with real analytic boundary first in
\cite{LeeU} under some topological assumptions relaxed later in \cite{LTU,LU} and for compact connected Einstein manifolds with boundary in \cite{GuiSB}. The general 
anisotropic Calder\'on problem in dimension $3$ or higher remains a major open problem. Results have been obtained recently in \cite{DKSU,DKLS} for some classes of smooth 
compact riemannian manifolds with boundary that are conformally transversally anisotropic, i.e. riemannian manifolds $(\mathcal{M},g)$ such that
\[ \mathcal{M} \subset \subset \R \times \mathcal{M}_0, \quad g = c(e \oplus g_0),\]
where $(\mathcal{M}_0,g_0)$ is a $n-1$ dimensional smooth compact riemannian manifold with boundary, $e$ is the euclidean metric on the real line, and $c$ is a smooth positive 
function on the cylinder $\R \times \mathcal{M}_0$. Under some conditions on the transverse manifold $(\mathcal{M}_0,g_0)$ (such as simplicity), the riemannian manifold 
$(\mathcal{M},g)$ is said to be admissible. In that framework, the authors of \cite{DKSU,DKLS} were able to determine uniquely the conformal factor $c$ from the knowledge of 
the DN map at zero frequency $\lambda^2 = 0$. One of the aim of this paper is thus to give an example of manifolds on which we can solve the inverse scattering problem at
fixed energy but do not have one of the particular structures we just described before for which the uniqueness for the anisotropic Calder\'on problem on compact manifolds with
boundary is known (see Example \ref{exemple}, 3)).

\subsection{Overview of the proof}

The proof of Theorem \ref{main} is divided into four steps which we describe here.\\

\noindent
\underline{Step 1:} The first step of the proof consists in solving the direct problem. This will be done in Section \ref{dirpb}. In this Section we first use
the structure of St\"ackel manifold satisfying the Robertson condition to proceed to the separation of variables for 
the Helmholtz equation. We obtain that the shifted Helmholtz equation
\[ -(\Delta_g + 1)f = \lambda^2 f,\]
can be rewritten as
\[ A_1 f + s_{12} L f + s_{13} H f = 0,\]
where $A_1$ is a differential operator in the variable $x^1$ alone and $L$ and $H$ are commuting, elliptic, semibounded selfadjoint operators on 
$L^2(\mathcal{T}^2,s^{11} dx^2 dx^3)$ that only depend on the variables $x^2$ and $x^3$.
 Since the operators $L$ and $H$ commute, there exists a common Hilbertian basis of eigenfunctions for $H$ and $L$. Moreover, the ellipticity property on a 
 compact manifold shows that the spectrum is discrete and the selfadjointness proves that the spectrum is real. We consider generalized harmonics $\{ Y_m \}_{m \geq 1}$ which form a
 Hilbertian basis of $L^2(\mathcal{T}^2, s^{11} dx^2 dx^3)$ associated with the coupled spectrum $(\mu_m^2,\nu_m^2)$ of $(H,L)$.
We decompose the solutions $f = \displaystyle{\sum_{m \geq 1} u_m(x^1)Y_m(x^2,x^3)}$ of the Helmholtz equation on the common basis of harmonics $\{Y_m\}_{m \geq 1}$ and we then conclude that the Helmholtz equation 
separates into a system of three ordinary differential equations:
 \[\begin{cases}
-u_m''(x^1) + \frac{1}{2} (\log(f_1)(x^1))' u_m'(x^1) + [-(\lambda^2 +1)s_{11}(x^1) + \mu_m^2 s_{12}(x^1) + \nu_m^2 s_{13}(x^1)]u_m(x^1) = 0\\
-v_m''(x^2) + [-(\lambda^2 +1)s_{21}(x^2) + \mu_m^2 s_{22}(x^2) + \nu_m^2 s_{23}(x^2)]v_m(x^2) = 0\\
-w_m''(x^3) + [-(\lambda^2 +1)s_{31}(x^3) + \mu_m^2 s_{32}(x^3) + \nu_m^2 s_{33}(x^3)]w_m(x^3) = 0
\end{cases},\]
where $f_1$ is the function appearing in the Robertson condition and $Y_m(x^2,x^3) = v_m(x^2) w_m(x^3)$.
In this system of ODEs there is one ODE in the radial variable $x^1$ and two 
ODEs in the angular variables $x^2$ and $x^3$.
We emphasize that the angular momenta $\mu_m^2$ and $\nu_m^2$ which are the separation constants correspond also to the coupled spectrum of the two angular operators $H$ and $L$.
The fact that the angular momenta $(\mu_m^2,\nu_m^2)$ are coupled has an important consequence in the use of the Complexification of 
the Angular Momentum method. Indeed, we cannot work separately with one angular momentum and we thus have to use a multivariable version of this method.

In a second time, we define the characteristic and Weyl-Titchmarsh functions following the construction 
given in \cite{DaKaNi,FY,KST}. We briefly recall here the definition of these objects and the reason why we use them.
Using a Liouville change of variables $X^1 = g(x^1)$, $X^1 \in (0,A^1)$ where $A^1 = \int_{0}^{A} g(x^1) dx^1$, we can write the radial equation as
\begin{equation}\label{eqSL}
 - \ddot{U} + q_{\nu_m^2} U = -\mu_m^2 U, 
\end{equation}
where $-\mu_m^2$ is now the spectral parameter and $q_{\nu_m^2}$ satisfies at the end $\{X^1 = 0\}$,
 \[q_{\nu_m^2}(X^1,\lambda) = - \frac{\lambda^2 + \frac{1}{4}}{(X^1)^2} + q_{0,\nu_m^2}(X^1,\lambda),\]
where $X^1 q_{0,\nu_m^2}(X^1,\lambda)$ is integrable at the end $\{X^1 = 0\}$ (the potential $q_{\nu_m^2}$ also has the same property at the other end).
We are thus in the framework of \cite{FY}.
We can then define the characteristic and Weyl-Titchmarsh functions associated with this singular non-selfadjoint Schr\"odinger equation. To do this, we follow the method given in 
\cite{DaKaNi}. We thus define two fundamental systems of solutions $\{ S_{10},S_{20} \}$ and $\{ S_{11},S_{21} \}$ defined by
\begin{enumerate}
 \item When $X^1 \to 0$,
 \[ S_{10}(X^1,\mu^2,\nu^2) \sim (X^1)^{\frac{1}{2}-i\lambda} \quad \textrm{and} \quad S_{20}(X^1,\mu^2,\nu^2) \sim \frac{1}{2i\lambda} (X^1)^{\frac{1}{2}+i\lambda} \]
 and when $X^1 \to A^1$,
\[ S_{11}(X^1,\mu^2,\nu^2) \sim (A^1-X^1)^{\frac{1}{2}-i\lambda} \quad \textrm{and} \quad S_{21}(X^1,\mu^2,\nu^2) \sim -\frac{1}{2i\lambda} (A^1-X^1)^{\frac{1}{2}+i\lambda}.  \]
 \item $W(S_{1n},S_{2n}) = 1$ for $n \in \{0,1\}$.
 \item For all $X^1 \in (0,A^1)$, $\mu \mapsto S_{jn}(X^1,\mu^2,\nu^2)$ is an entire function for $j \in \{1,2\}$ and $n \in \{0,1\}$.
\end{enumerate}
We add some singular separated boundary conditions at the two ends (see (\ref{condbordsing})) and we consider 
the new radial equation as an eigenvalue problem. Finally, we define the two characteristic functions of this radial equation as Wronskians of functions of the fundamental 
systems of solutions:
\[\Delta_{q_{\nu_m^2}}(\mu_m^2) = W(S_{11},S_{10}) \]
and
\[\delta_{q_{\nu_m^2}}(\mu_m^2) = W(S_{11},S_{20})\]
and we also define the Weyl-Titchmarsh function by:
\begin{equation}\label{deffonctionWTintro}
M_{q_{\nu_m^2}}(\mu_m^2) = -\frac{\delta_{q_{\nu_m^2}}(\mu_m^2)}{\Delta_{q_{\nu_m^2}}(\mu_m^2)}.
\end{equation}
The above definition generalizes the usual definition of classical Weyl-Titchmarsh functions for regular Sturm-Liouville differential operators. We refer to \cite{KST} 
for the theory of selfadjoint singular Sturm-Liouville operators and the definition and main properties of Weyl-Titchmarsh functions.
In our case the 
boundary conditions make the Sturm-Liouville equation non-selfadjoint. The generalized Weyl-Titchmarsh function can nevertheless be defined by the same recipe as 
shown in \cite{DaKaNi,FY} and recalled above. Our interest in considering the generalized Weyl-Titchmarsh function $M_{q_{\nu_m^2}}(\mu_m^2)$ comes from the fact that it is a powerful 
tool to prove uniqueness results for one-dimensional inverse problems.
Indeed, roughly speaking, the Borg-Marchenko Theorem states (see \cite{KST}) that if $M_q$ and $M_{\tilde{q}}$
are two generalized Weyl-Titchmarsh functions associated with the equations
\[ -u'' + q(x)u = -\mu^2 u \quad \textrm{and} \quad -u'' + \tilde{q}(x)u = -\mu^2 u,\]
where $q$ and $\tilde{q}$ satisfy the previous quadratic singularities at the ends, then if
\begin{equation}\label{BorgMar1}
 M_q(\mu^2) = M_{\tilde{q}}(\mu^2), \quad \mu \in \C \setminus \{\textrm{poles}\}, 
\end{equation}
then
\begin{equation}\label{BorgMar2}
 q = \tilde{q}.
\end{equation}
We refer to \cite{Be,GS,Te} for results in the case of 
regular Weyl-Titchmarsh functions and to the recent results \cite{FY,KST} in the case of singular Weyl-Titchmarsh functions corresponding to possibly non-selfadjoint 
equation.

We note that, the characteristic and generalized Weyl-Titchmarsh functions obtained for each one-dimensional equation (\ref{eqSL}) can be summed up over the span of each of 
the harmonics $Y_m$, $m \geq 1$, in order to define operators from $L^2(\mathcal{T}^2, s^{11} dx^2 dx^3)$ onto itself. Precisely, recalling that
\[  L^2(\mathcal{T}^2,s^{11}dx^2dx^3) = \displaystyle{\bigoplus_{m \geq 1}} \langle Y_m \rangle,\]
we define:

\begin{definition}
 Let $\lambda \neq 0$ be a fixed energy. The characteristic operator $\Delta(\lambda)$ and the generalized Weyl-Titchmarsh operator $M(\lambda)$ are defined as operators 
 from $L^2(\mathcal{T}^2, s^{11} dx^2 dx^3)$ onto itself that are diagonalizable on the Hilbert basis of eigenfunctions $\{Y_m\}_{m \geq 1}$ associated 
 with the eigenvalues $\Delta_{q_{\nu_m^2}}(\mu_m^2)$ and $M_{q_{\nu_m^2}}(\mu_m^2)$. More precisely, for all $v \in L^2(\mathcal{T}^2, s^{11} dx^2 dx^3)$, $v$ can be decomposed as
 \[ v = \sum_{m \geq 1} v_m Y_m, \quad v_m \in \C\]
 and we have
 \[ \Delta(\lambda)v = \sum_{m \geq 1} \Delta_{q_{\nu_m^2}}(\mu_m^2) v_m Y_m \quad \textrm{and} \quad M(\lambda)v = \sum_{m \geq 1} M_{q_{\nu_m^2}}(\mu_m^2) v_m Y_m.\]
\end{definition}

We emphasize that the separation of the variables allows us to ``diagonalize'' the reflection and the transmission operators into a countable family of 
multiplication operators by numbers $R_g(\lambda,\mu_m^2,\nu_m^2)$, $L_g(\lambda,\mu_m^2,\nu_m^2)$ and $T_g(\lambda,\mu_m^2,\nu_m^2)$ called reflection and transmission
coefficients respectively. We will show (see Equations (\ref{lienLM})-(\ref{lienRM})) that the characteristic 
and Weyl-Titchmarsh functions are nothing but the transmission and the reflection coefficients respectively. The aim of this identification is to use the Borg-Marchenko 
theorem from the equality of the scattering matrix at fixed energy.\\

\noindent
\underline{Step 2:} The second step of the proof consists in solving the inverse problem for the angular part of the St\"ackel matrix. We begin our proof by a first reduction of our problem.
Indeed, our main assumption is
\[S_g(\lambda) = S_{\tilde{g}}(\lambda)\]
and these operators act on $L^2(\mathcal{T}^2, s^{11} dx^2 dx^3)$ and $L^2(\mathcal{T}^2, \tilde{s}^{11} dx^2 dx^3)$ respectively. To compare these objects we thus must have
\[ s^{11} = \tilde{s}^{11}.\]
Thanks to this equality and the gauge choice $f_2 = f_3 = 1$, we will show easily that
\[ \begin{pmatrix} s_{22} & s_{23} \\ s_{32} & s_{33} \end{pmatrix} = \begin{pmatrix} \tilde{s}_{22} & \tilde{s}_{23} \\ \tilde{s}_{32} & \tilde{s}_{33} \end{pmatrix} G,\]
where $G$ is a constant matrix of determinant $1$. As mentioned in the Introduction, the presence of the matrix $G$ is due to an invariance of the metric
$g$ under a particular choice of the 
St\"ackel matrix $S$. We can then assume that $G = I_2$ and we thus obtain
\begin{equation}\label{eqbloc1}
 \begin{pmatrix} s_{22} & s_{23} \\ s_{32} & s_{33} \end{pmatrix} = \begin{pmatrix} \tilde{s}_{22} & \tilde{s}_{23} \\ \tilde{s}_{32} & \tilde{s}_{33} \end{pmatrix}.
\end{equation}
Secondly, we want to show that $s_{21} = \tilde{s}_{21}$ and $s_{31} = \tilde{s}_{31}$. Using the particular structures of the operators $H$ and $L$, we can easily show
that
\begin{equation}\label{eqover}
 \begin{pmatrix} \partial_2^2 \\ \partial_3^2 \end{pmatrix} = - \begin{pmatrix}
               s_{23} & s_{22} \\ 
               s_{33} & s_{32}
              \end{pmatrix} \begin{pmatrix} H \\ L \end{pmatrix} + (\lambda^2+1) \begin{pmatrix} s_{21} \\ s_{31} \end{pmatrix}.
\end{equation}
We then apply Equation (\ref{eqover}) on a vector of generalized harmonics
\[ \begin{pmatrix} Y_m \\ Y_m \end{pmatrix}.\]
We use the decomposition onto the generalized harmonics to write $Y_m = \sum_{p \in E_m} c_p \tilde{Y}_p$, $m \geq 1$, on the Hilbertian basis of generalized harmonics 
$\{ \tilde{Y}_m \}_{m \geq 1}$ and we identify for each $p \in E_m$ the coefficient of the harmonic $\tilde{Y}_p$. Hence, we obtain, thanks to (\ref{eqbloc1}), that
\[ - \begin{pmatrix}
               s_{23} & s_{22} \\ 
               s_{33} & s_{32}
              \end{pmatrix} \begin{pmatrix} \mu_m^2 \\ \nu_m^2 \end{pmatrix} + (\lambda^2 +1) \begin{pmatrix} s_{21} \\ s_{31} \end{pmatrix} 
  = - \begin{pmatrix}
               s_{23} & s_{22} \\ 
               s_{33} & s_{32}
              \end{pmatrix} \begin{pmatrix} \tilde{\mu}_p^2 \\ \tilde{\nu}_p^2 \end{pmatrix} + (\lambda^2 +1) \begin{pmatrix} \tilde{s}_{21} \\ \tilde{s}_{31} \end{pmatrix}, \quad \forall p \in E_m.\]
We put at the left-hand side the constants terms with respect to the variables $x^2$ and $x^3$ and the other terms at the right-hand side. We thus obtain that
\[\begin{cases}
s_{21}(x^2) = \tilde{s}_{21}(x^2) - C_1 s_{23}(x^2) - C_2 s_{22}(x^2) \\
s_{31}(x^3) = \tilde{s}_{31}(x^3) - C_1 s_{33}(x^3) - C_2 s_{32}(x^3)
\end{cases},\]
where $C_1$ and $C_2$ are real constants.
We note that, as mentioned previously in the Introduction, these equalities describe an invariance of the metric $g$ under the definition of the St\"ackel matrix 
$S$ and we can choose $C_1=C_2=0$. Finally, we obtain
\[ \begin{pmatrix}
               s_{21} \\ 
               s_{31}
              \end{pmatrix} = \begin{pmatrix}
               \tilde{s}_{21} \\ 
               \tilde{s}_{31}  \end{pmatrix}.\]
We conclude Section \ref{invang} by noticing that, thanks to these results, $H = \tilde{H}$ and $L = \tilde{L}$. As a consequence, since the generalized harmonics only 
depend on $H$ and $L$, we can choose $Y_m = \tilde{Y}_m$ and
\[ \begin{pmatrix}  \mu_{m}^2  \\ \nu_{m}^2 \end{pmatrix} = \begin{pmatrix}  \tilde{\mu}_{m}^2  \\ \tilde{\nu}_{m}^2 \end{pmatrix}, \quad \forall m \geq 1.\]
We emphasize that the choice of the generalized harmonics is not uniquely defined in each eigenspace associated with an eigenvalue with multiplicity higher than two. However, 
the scattering matrix does not depend on the choice of the $Y_m$ on each eigenspace.\\

\noindent
\underline{Step 3:} In a third step, we solve in Section \ref{invrad} the inverse problem for the radial part of the St\"ackel matrix. The main tool of this Section is a 
multivariable version of the 
Complex Angular Momentum method. The main assumption of Theorem \ref{main} implies that,
\[ M(\mu_m^2,\nu_m^2) = \tilde{M}(\mu_m^2,\nu_m^2), \quad \forall m \geq 1.\]
Roughly speaking, the aim of the Complexification of the Angular Momentum method is the following: from a discrete set of informations (here the equality of the Weyl-Titchmarsh functions 
on the coupled spectrum) we want to obtain a continuous regime of informations (here the equality of these functions on $\C^2$). In other words,
we want to extend the previous equality on $\C^2$, i.e. that we want to show that
\[ M(\mu^2,\nu^2) = \tilde{M}(\mu^2,\nu^2), \quad \forall (\mu,\nu) \in \C^2 \setminus P,\]
where $P$ is the set of points $(\mu,\nu) \in \C^2$ such that the Weyl-Titchmarsh functions does not exists, i.e. such that the denominator vanishes.
We proceed as follows. Recalling the definition of the Weyl-Titchmarsh function given in (\ref{deffonctionWTintro}), we consider the application
\[\begin{array}{ccccl}
\psi & : & \C^2 & \to & \C \\
 & & (\mu,\nu) & \mapsto & \tilde{\Delta}(\mu^2,\nu^2)\delta(\mu^2,\nu^2) - \Delta(\mu^2,\nu^2)\tilde{\delta}(\mu^2,\nu^2)\\
\end{array}\]
and we want to show that $\psi$ is identically zero on $\C^2$. To obtain this fact we use an uniqueness result for multivariable holomorphic functions given in \cite{Bl} which says 
roughly speaking that a holomorphic function which satisfies good estimates on a certain cone and which has enough zeros in this cone is identically zero. We thus first show that the
function $\psi$ is holomorphic and of exponential type with respect to $\mu$ and $\nu$, i.e. that we can find three positive constants $A$, $B$ and $C$ such that $|\psi(\mu,\nu)|$ 
is less than $C \exp(A|\textrm{Re}(\mu)| + B |\textrm{Re}(\nu)|)$. Up to an exponential correction, we then obtain that $\psi$
is holomorphic and bounded on a certain cone of $(\R^+)^2$.
Finally, we quantify the zeros of $\psi$ in this cone using the knowledge of the distribution of the coupled
spectrum (on which the function $\psi$ vanishes) given in the works of Colin de Verdi\`ere \cite{CdV1,CdV2}.
We can then conclude that $\psi = 0$, i.e.
\[ M(\mu^2,\nu^2) = \tilde{M}(\mu^2,\nu^2), \quad \forall (\mu,\nu) \in \C^2 \setminus P\]
and, by definition, we deduce from this equality that,
\[  M_{q_{\nu^2}}(\mu^2) =  M_{\tilde{q}_{\nu^2}}(\mu^2), \quad \forall (\mu,\nu) \in \C^2 \setminus P.\]

\noindent
\underline{Step 4:} We use the celebrated Borg-Marchenko Theorem (see \cite{DaKaNi,FY}) to obtain
\[ q_{\nu_m^2} =  \tilde{q}_{\nu_m^2}, \quad \forall m \geq 1.\]
Since this equality is true for all $m \geq 1$, we can ``decouple'' the potential
\[ q_{\nu_m^2} = -(\lambda^2+1) \frac{s_{11}}{s_{12}} + \nu_m^2 \frac{s_{13}}{s_{12}}
 +\frac{1}{16} \left( \dot{\left( \log \left( \frac{f_{1}}{s_{12}} \right) \right)}\right)^2 - \frac{1}{4} \ddot{\left( \log \left( \frac{f_{1}}{s_{12}} \right) \right)}.\]
and we thus obtain the uniqueness of the quotient
\[ \frac{s_{13}}{s_{12}}\]
and one ODE on the quotients 
\[ \frac{f_{1}}{s_{12}}, \quad \frac{\tilde{f}_{1}}{\tilde{s}_{12}} \quad \textrm{and} \quad \frac{s_{11}}{s_{12}}, \quad \frac{\tilde{s}_{11}}{\tilde{s}_{12}}.\]
We then rewrite this last ODE as a non-linear ODE on the function
\[ u = \left( \frac{s_{12}}{f_{1}} \frac{\tilde{f}_{1}}{\tilde{s}_{12}} \right)^{\frac{1}{4}},\]
given by
\begin{equation}\label{eqdiffuintro}
 u'' + \frac{1}{2} (\log(\tilde{h}))' u' + (\lambda^2 + 1) \tilde{h}(ls_{32} - s_{33})(s_{23}-ls_{22}) (u^5 - u)= 0, 
\end{equation}
where
\[f = \frac{s_{11}}{s_{12}}, \quad h = \frac{s_{12}}{f_1}\quad \textrm{and} \quad l = \frac{s_{13}}{s_{12}} = \tilde{l}.\]
Moreover, $u$ satisfies Cauchy conditions at the ends $0$ and $A$ given by
\[u(0) = u(A) = 1 \quad \textrm{and} \quad u'(0) = u'(A)  = 0.\]
We note that $u = 1$ is a solution of this system and by uniqueness of the Cauchy problem we conclude that $u = 1$. We then have shown that
  \[\frac{f_{1}}{s_{12}} = \frac{\tilde{f}_{1}}{\tilde{s}_{12}}.\]
Finally, using the Robertson condition, we conclude that
\begin{equation*}
 \frac{s_{11}}{s_{12}} = \frac{\tilde{s}_{11}}{\tilde{s}_{12}} \quad \textrm{and} \quad \frac{s_{11}}{s_{13}} = \frac{\tilde{s}_{11}}{\tilde{s}_{13}}.
\end{equation*}
This finishes the proof of Step 4 and together with the previous steps, the proof of Theorem \ref{main}.
We emphasize that we transformed the implicit non-linear problem of determining the metric from the knowledge of the scattering matrix at fixed energy
into an explicit non-linear problem consisting in solving the Cauchy problem associated with the non-linear ODE (\ref{eqdiffuintro}).

This paper is organized as follows. In Section \ref{dirpb} we solve the direct problem. In this Section we study the separation of variables for the Helmholtz equation, 
we define the characteristic and Weyl-Titchmarsh functions for different choices of spectral parameters and we make the link between these different functions and the 
scattering coefficients. In Section \ref{invang} we solve the inverse problem for the angular part of the St\"ackel matrix.
In Section \ref{invrad} we solve the inverse problem for the radial part of the St\"ackel matrix using a multivariable version of the 
Complex Angular Momentum method. Finally, in Section \ref{conclusion}, we finish the proof of our main Theorem \ref{main}.

\section{The direct problem}\label{dirpb}

In this Section we will study the direct scattering problem for the shifted Helmholtz equation (\ref{Hel}). 
We first study the separation of the Helmholtz equation. Secondly, we define several characteristic and generalized Weyl-Titchmarsh functions associated with 
unidimensional Schr\"odinger equations in the radial variable corresponding to different choices of spectral parameters and we study the link between these functions and the scattering operator associated with the Helmholtz 
equation.

\subsection{Separation of variables for the Helmholtz equation}

We consider (see \cite{Bort,IK,IKL,JSB}) the shifted stationary Helmholtz equation
\begin{equation}\label{Hel}
  -(\Delta_g +1) f = \lambda^2 f,
\end{equation}
where $\lambda \neq 0$ is a fixed energy, which is usually studied in case of asymptotically hyperbolic manifolds (see \cite{Bort,IK,IKL,JSB}). Indeed, it is known
(see \cite{IKL}) that the essential spectrum of $- \Delta_g$ is $[1,+\infty)$ and thus, we shift the bottom of the essential 
spectrum to $0$. It is known that the operator $- \Delta_g -1$ has no eigenvalues embedded into the essential spectrum $[0,+\infty)$ (see 
\cite{Bou,IK,IKL}).
We know that there exists a coordinates system separable for the Helmholtz equation (\ref{Hel}) if and only if the metric (\ref{met}) is in 
St\"ackel form and furthermore if the Robertson condition (\ref{Rob1}) is satisfied. 
We emphasize that, contrary to the case $n =2$ studied 
in \cite{DaKaNi}, we really need the Robertson condition in the case $n = 3$.

\begin{lemma}\label{introHL}
 The Helmholtz equation (\ref{Hel}) can be rewritten as
 \begin{equation}\label{Helint3}
 A_1 f + s_{12} Lf + s_{13} Hf = 0,
\end{equation}
where,
\begin{equation}\label{defAi}
 A_i = - \partial_i^2 + \frac{1}{2} \partial_i \left( \log(f_i) \right) \partial_i - (\lambda^2 +1) s_{i1}, \quad \textrm{for} \quad i \in \{1,2,3\},
\end{equation}
and
\begin{equation}\label{defLetH}
 L = -\frac{s_{33}}{s^{11}} A_2 + \frac{s_{23}}{s^{11}} A_3 \quad \textrm{and} \quad H = \frac{s_{32}}{s^{11}} A_2 - \frac{s_{22}}{s^{11}} A_3.
\end{equation}
\end{lemma}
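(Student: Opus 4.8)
The plan is to compute the Laplace--Beltrami operator explicitly in the orthogonal St\"ackel coordinates and then reorganize the result using the two algebraic ingredients at our disposal: the Robertson condition (\ref{Rob1}) and the classical cofactor (Laplace) expansion identities of the St\"ackel matrix $S$. First I would recall that for an orthogonal metric $g = \sum_i H_i^2 (dx^i)^2$ one has $\sqrt{|g|} = H_1 H_2 H_3$ and
\[ \Delta_g f = \frac{1}{H_1 H_2 H_3} \sum_{i=1}^3 \partial_i\left( \frac{H_1 H_2 H_3}{H_i^2} \partial_i f \right). \]
Since $H_i^2 = \det(S)/s^{i1}$, the metric determinant is $|g| = \det(S)^3/(s^{11}s^{21}s^{31})$, and the Robertson condition (\ref{Rob1}), written as $s^{11}s^{21}s^{31} = f_1 f_2 f_3 \det(S)$, collapses this to $H_1 H_2 H_3 = \det(S)/\sqrt{f_1 f_2 f_3}$ (using $\det(S)>0$, cf. Remark \ref{rkrie}). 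Hence $\frac{H_1H_2H_3}{H_i^2} = s^{i1}/\sqrt{f_1 f_2 f_3}$ and
\[ \Delta_g f = \frac{\sqrt{f_1 f_2 f_3}}{\det(S)} \sum_{i=1}^3 \partial_i\left( \frac{s^{i1}}{\sqrt{f_1 f_2 f_3}} \partial_i f \right). \]

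The crucial structural observation --- the heart of St\"ackel separability --- is that the signed minor $s^{i1}$ does \emph{not} depend on the variable $x^i$, while each $f_j$ depends on $x^j$ alone. Thus in the $i$-th summand the factor $s^{i1}$ and all $f_j$ with $j \neq i$ commute with $\partial_i$, and a short computation using $\partial_i(f_i^{-1/2}) = -\tfrac12 f_i^{-1/2}(\log f_i)'$ gives
\[ \partial_i\left( \frac{s^{i1}}{\sqrt{f_1 f_2 f_3}} \partial_i f \right) = \frac{s^{i1}}{\sqrt{f_1 f_2 f_3}}\left( \partial_i^2 f - \tfrac12 (\log f_i)' \partial_i f \right). \]
This is precisely where the Robertson condition is indispensable and where the first-order coefficient $\tfrac12(\log f_i)'$ of $A_i$ is generated; I expect this to be the only genuinely delicate point, since one must be certain that no stray first-order term survives (it is exactly the $x^i$-independence of $s^{i1}$ that prevents one). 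Substituting, multiplying the shifted equation $-\Delta_g f = (\lambda^2+1)f$ through by $\det(S)$, and invoking the cofactor expansion of the determinant along the first column, $\det(S) = \sum_i s^{i1}s_{i1}$, allows me to absorb the zeroth-order term $(\lambda^2+1)\det(S)f$ into the sum and obtain $\sum_{i=1}^3 s^{i1} A_i f = 0$ with $A_i$ as in (\ref{defAi}).

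Finally I would divide by $s^{11}>0$ (positive by Remark \ref{rkrie}) to get $A_1 f + \frac{s^{21}}{s^{11}} A_2 f + \frac{s^{31}}{s^{11}} A_3 f = 0$, and identify the last two coefficients with the claimed combination. Using the explicit signed minors $s^{21} = s_{13}s_{32}-s_{12}s_{33}$ and $s^{31} = s_{12}s_{23}-s_{13}s_{22}$ together with the definitions (\ref{defLetH}) of $L$ and $H$, one checks directly that
\[ s_{12} L + s_{13} H = \frac{1}{s^{11}}\bigl[ (s_{13}s_{32}-s_{12}s_{33}) A_2 + (s_{12}s_{23}-s_{13}s_{22}) A_3 \bigr] = \frac{s^{21}}{s^{11}} A_2 + \frac{s^{31}}{s^{11}} A_3, \]
which yields exactly (\ref{Helint3}). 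The remaining manipulations are purely computational.
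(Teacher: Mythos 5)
Your proof is correct and follows essentially the same route as the paper: both derive the separable expression $\Delta_g = \sum_i H_i^{-2}\bigl(\partial_i^2 - \tfrac{1}{2}\partial_i(\log f_i)\partial_i\bigr)$ from the Robertson condition, then clear denominators (your multiplication by $\det(S)$ followed by division by $s^{11}$ is exactly the paper's multiplication by $H_1^2 = \det(S)/s^{11}$) and identify $s_{12}Lf + s_{13}Hf$ via the cofactor identities $s^{21} = s_{13}s_{32}-s_{12}s_{33}$ and $s^{31} = s_{12}s_{23}-s_{13}s_{22}$. The only difference is that you spell out the computation the paper labels ``easy'', namely that the $x^i$-independence of the signed minor $s^{i1}$, combined with the Robertson factorization of $\sqrt{|g|}$, is what prevents any stray first-order terms from surviving.
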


\begin{proof}
 We recall that the Laplace-Beltrami operator is given in the global coordinates system $(x^i)_{i=1,2,3}$ by
\[ \Delta_g = \frac{1}{\sqrt{|g|}} \partial_i (\sqrt{|g|} g^{ij} \partial_j),\]
where $|g|$ is the determinant of the metric and $(g^{ij})$ is the inverse of the metric $(g_{ij})$. 
Using the fact that
\[g^{ii} = \frac{1}{H_i^2}, \quad \sqrt{|g|} = H_1 H_2 H_3,\]
and the Robertson condition (\ref{Rob2}) we easily show that
\begin{equation}\label{express}
 \Delta_g = \sum_{i=1}^3 \frac{1}{H_i^2} \left( \partial_i^2 - \frac{1}{2} \partial_i \left( \log(f_i) \right) \partial_i \right). 
\end{equation}

\begin{remark}
 We note that the Robertson condition is equivalent to the existence of three functions $f_i = f_i(x^i)$ such that
\[ \partial_i \log \left( \frac{H_i^4}{H_1^2 H_2^2 H_3^2} \right) = \partial_i \log( f_i), \quad \forall i \in \{1,2,3\}.\]
This equality is interesting since it gives us an expression of the Robertson condition directly in terms of the coefficients $H_i^2$ of the metric $g$.
\end{remark}
\noindent
Hence, from (\ref{express}) we immediately obtain that the Helmholtz equation (\ref{Hel}) can be written as
\begin{equation}\label{Helint1}
 \sum_{i=1}^3 \frac{1}{H_i^2} A_i^0 f = (\lambda^2 +1) f,
\end{equation}
where
\begin{equation}\label{defAi0}
A_i^0 = - \partial_i^2 + \frac{1}{2} \partial_i \left( \log(f_i) \right) \partial_i, \quad \textrm{for} \quad i \in \{1,2,3\}. 
\end{equation}
If we multiply Equation (\ref{Helint1}) by $H_1^2$ and if we use that
\[ H_1^2 = s_{11} + s_{21} \frac{s^{21}}{s^{11}} + s_{31} \frac{s^{31}}{s^{11}}, \quad \frac{H_1^2}{H_2^2} = \frac{s^{21}}{s^{11}} \quad \textrm{and} \quad \frac{H_1^2}{H_3^2} = \frac{s^{31}}{s^{11}},\]
we obtain
\begin{equation}\label{Helint2}
 A_1 f + \frac{s^{21}}{s^{11}} A_2 f + \frac{s^{31}}{s^{11}} A_3 f = 0.
\end{equation}
Finally, using the equalities
\[ \frac{s^{21}}{s^{11}} = -s_{12} \frac{s_{33}}{s^{11}} + s_{13} \frac{s_{32}}{s^{11}} \quad \textrm{and} \quad \frac{s^{31}}{s^{11}} = s_{12} \frac{s_{23}}{s^{11}} - s_{13} \frac{s_{22}}{s^{11}},\]
we obtain from (\ref{Helint2}) the equation
\[ A_1 f + s_{12} Lf + s_{13} Hf = 0,\]
where the operators $H$ and $L$ are given by (\ref{defLetH}).
\end{proof}

\begin{remark}
 Since we assumed that $f_2$ and $f_3$ are constant functions equal to $1$ (see Remark \ref{Robsimpl}) we know that
 \[ A_2^0 = - \partial_2^2 \quad \textrm{and} \quad A_3^0 = - \partial_3^2.\]
\end{remark}

\begin{remark}\label{lienopeHL}
 We can make the link between the angular operators $H$ and $L$ and the operators $\hat{P}_2$ and $\hat{P}_3$ related to the existence of Killing $2$-tensors as
 introduced in Theorem \ref{thmKM}. To do this we follow the
 construction given in \cite{KM2}. We thus consider, according to Equation (2.21) of \cite{KM2}, for $i \in \{1,2,3\}$, the operators
 \[ \hat{P}_i = \sum_{j=1}^{3} \frac{s^{ji}}{\det(S)} \left( \partial_{j}^2 - \frac{1}{2} \partial_j \log(f_j) \partial_j \right).\]
 In other words,
  \[ \begin{pmatrix}  \hat{P}_1 \\ \hat{P}_2 \\ \hat{P}_3 \end{pmatrix} = - S^{-1} \begin{pmatrix}  A_1^0 \\ A_2^0 \\ A_3^0 \end{pmatrix},\]
 where $A_i^0$ were defined in (\ref{defAi0}).
 We note that
 \[ \hat{P}_1 = \sum_{j=1}^{3} \frac{s^{j1}}{\det(S)} \left( \partial_{j}^2 - \frac{1}{2} \partial_j \log(f_j) \partial_j \right) = 
 \sum_{j=1}^{3} \frac{1}{H_j^2} \left( \partial_{j}^2 - \frac{1}{2} \partial_j\log(f_j) \partial_j \right) = \Delta_g.\]
 Since,
  \[  \begin{pmatrix}  A_1^0 \\ A_2^0 \\ A_3^0 \end{pmatrix}= - S \begin{pmatrix}  \hat{P}_1 \\ \hat{P}_2 \\ \hat{P}_3 \end{pmatrix},\]
 we see that
 \[ \begin{pmatrix} A_2^0 \\ A_3^0 \end{pmatrix} = -\begin{pmatrix} s_{22} & s_{23} \\ s_{32} & s_{33} \end{pmatrix} \begin{pmatrix} \hat{P}_2 \\ \hat{P}_3 \end{pmatrix} - \begin{pmatrix}  s_{21} \hat{P}_1 \\ s_{31}\hat{P}_1 \end{pmatrix}.\]
 Applied to solutions of the Helmholtz equation (\ref{Hel}) these operators coincide with
 \[ \begin{pmatrix} A_2^0 -(\lambda^2 +1) s_{21} \\ A_3^0 - (\lambda^2 +1) s_{31} \end{pmatrix} =
 -\begin{pmatrix} s_{22} & s_{23} \\ s_{32} & s_{33} \end{pmatrix} \begin{pmatrix} \hat{P}_2 \\ \hat{P}_3 \end{pmatrix}.\]
 Since
 \[ \begin{pmatrix} A_2^0 -(\lambda^2 +1) s_{21} \\ A_3^0 -(\lambda^2 +1) s_{31} \end{pmatrix} = \begin{pmatrix} A_2 \\ A_3 \end{pmatrix},\]
 it follows that
 \[ \begin{pmatrix} \hat{P}_2 \\ \hat{P}_3 \end{pmatrix} = -\frac{1}{s^{11}} \begin{pmatrix} s_{33} & -s_{23} \\ -s_{32} & s_{22} \end{pmatrix} \begin{pmatrix} A_2 \\ A_3 \end{pmatrix}.\]
 In other words, when applied to solution of (\ref{Hel}), we have
 \[ \hat{P}_2 = -\frac{s_{33}}{s^{11}} A_2 + \frac{s_{23}}{s^{11}}A_3 \quad \textrm{and} \quad \hat{P}_3 = \frac{s_{32}}{s^{11}} A_2 - \frac{s_{22}}{s^{11}}A_3,\]
 or
 \[ \hat{P}_2 = L \quad \textrm{and} \quad \hat{P}_3 = H.\]
 We emphasize that the operators $L$ and $H$ and the operators $\hat{P}_2$ and $\hat{P}_3$ respectively coincide only if we apply them to solutions of the Helmholtz 
 equation (\ref{Hel}).
 Moreover, thanks to \cite{KM2} we know that $[\hat{P}_2,\hat{P}_3] =0$. Thus, $L$ and $H$ are commuting operators.
 Moreover, the coupled eigenvalues of the operators $L$ and $H$ correspond to the separation constants of the Helmholtz equation.
\end{remark}

\noindent
The operators $L$ and $H$ are of great interest in our study. In particular we will show that $L$ and $H$ are elliptic operators in the sense of the definition of
ellipticity given in \cite{KKL} which we recall here. Let $a(y,D)$ be a differential operator given in local coordinates by
  \[ (a(y,D)f)(y) = - a^{jk}(y) \partial_j \partial_k f(y) - b^j(y) \partial_j f(y) - c(y)f(y),\]
  where $y=(x^2,x^3)$, $\partial_j = \frac{\partial}{\partial x^j}$, $j \in \{2,3\}$, the coefficients are real and $(a^{jk})$ is a symmetric matrix. The differential 
  operator $a(y,D)$ is then said to be elliptic if the matrix $(a^{jk})$ is positive definite.
  We can first prove the following lemma.

\begin{lemma}\label{propHL}
 The operators $L$ and $H$ satisfy the following properties:
 \begin{enumerate}
  \item $LH = HL$.
  \item $L$ and $H$ are elliptic operators.
  \item $L$ and $H$ are selfadjoint operators on the space $L^2(\mathcal{T}^2, s^{11} dx^2 dx^3)$.
  \item $L$ and $H$ are semibounded operators.
 \end{enumerate}
\end{lemma}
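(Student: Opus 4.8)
The plan is to treat the four assertions in the order (2), (3), (4), (1), since ellipticity feeds the spectral-theoretic arguments used afterwards. Throughout I would use the gauge normalisation $f_2=f_3=1$ of Remark \ref{Robsimpl}, so that by (\ref{defAi}) the angular operators reduce to $A_2=-\partial_2^2-(\lambda^2+1)s_{21}$ and $A_3=-\partial_3^2-(\lambda^2+1)s_{31}$, together with the sign information of condition (\ref{cadre}) and its consequence $s^{11}>0$ recorded in Remark \ref{rkrie}. The decisive structural feature, which makes every step work, is that in (\ref{defLetH}) the coefficient of $A_2$ depends only on $x^3$ while the coefficient of $A_3$ depends only on $x^2$.

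For ellipticity (2) I would read off the principal part. Since neither $A_2$ nor $A_3$ produces a mixed derivative $\partial_2\partial_3$, the second-order part of $L$ is $\frac{s_{33}}{s^{11}}\partial_2^2-\frac{s_{23}}{s^{11}}\partial_3^2$ and that of $H$ is $-\frac{s_{32}}{s^{11}}\partial_2^2+\frac{s_{22}}{s^{11}}\partial_3^2$. In the notation of the ellipticity definition recalled just before the lemma, the coefficient matrices are the diagonal matrices $\mathrm{diag}(-s_{33}/s^{11},\,s_{23}/s^{11})$ for $L$ and $\mathrm{diag}(s_{32}/s^{11},\,-s_{22}/s^{11})$ for $H$. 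Condition (\ref{cadre}) gives $s_{22},s_{33}<0$ and $s_{23},s_{32}>0$, and $s^{11}>0$, so all four diagonal entries are strictly positive and both matrices are positive definite; hence $L$ and $H$ are elliptic.

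For self-adjointness (3) the key computation is the symmetry of $L$ and $H$ for the weighted inner product $\langle u,v\rangle=\int_{\mathcal{T}^2}u\bar v\,s^{11}dx^2dx^3$. The weight $s^{11}$ cancels the denominator in (\ref{defLetH}), leaving $\langle Lu,v\rangle=\int_{\mathcal{T}^2}(-s_{33}A_2u+s_{23}A_3u)\bar v\,dx^2dx^3$. I would integrate by parts twice in $x^2$ in the first term: since $s_{33}$ is independent of $x^2$ and $A_2$ is formally symmetric in $x^2$ for the Lebesgue measure (no boundary terms, by periodicity on $\mathcal{T}^2$), this term transfers onto $\bar v$; symmetrically for the second term in $x^3$, using that $s_{23}$ is independent of $x^3$. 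One obtains $\langle Lu,v\rangle=\langle u,Lv\rangle$, and the identical argument gives symmetry of $H$. To upgrade symmetry to self-adjointness I would invoke that a formally symmetric second-order elliptic operator with smooth coefficients on the closed manifold $\mathcal{T}^2$ is essentially self-adjoint on $C^\infty(\mathcal{T}^2)$, with self-adjoint realisation of domain $H^2(\mathcal{T}^2)$; this uses (2).

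Finally, semiboundedness (4) follows from the same manipulation applied to $\langle Lu,u\rangle=\int_{\mathcal{T}^2}[(-s_{33})(|\partial_2u|^2+a_2|u|^2)+s_{23}(|\partial_3u|^2+a_3|u|^2)]\,dx^2dx^3$, where $a_2=-(\lambda^2+1)s_{21}$ and $a_3=-(\lambda^2+1)s_{31}$; since $-s_{33}>0$ and $s_{23}>0$ the gradient terms are nonnegative, while the zeroth-order integrand is a bounded function on the compact $\mathcal{T}^2$ and is therefore controlled below by $-C\|u\|_{L^2(s^{11})}^2$, using that $s^{11}$ is bounded between two positive constants, and the same bound holds for $H$. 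For the commutativity (1) I would exhibit a complete common eigenbasis: the separated products $Y_m=v_m(x^2)w_m(x^3)$ coming from the two angular ODEs satisfy the pointwise identities $A_2Y_m=-(\mu_m^2s_{22}+\nu_m^2s_{23})Y_m$ and $A_3Y_m=-(\mu_m^2s_{32}+\nu_m^2s_{33})Y_m$, and substituting into (\ref{defLetH}) the $\nu_m^2$ cross terms cancel while the $\mu_m^2$ terms assemble into $s^{11}=s_{22}s_{33}-s_{23}s_{32}$, giving the genuine eigenvalue relations $LY_m=\mu_m^2Y_m$ and $HY_m=\nu_m^2Y_m$. Since the self-adjoint elliptic operator $L$ on the compact $\mathcal{T}^2$ has discrete spectrum and a complete orthonormal eigenbasis, and the $Y_m$ provide such a basis simultaneously diagonalising $L$ and $H$, it follows that $LH=HL$; alternatively this is immediate from the identification $L=\hat{P}_2$, $H=\hat{P}_3$ of Remark \ref{lienopeHL} together with $[\hat{P}_2,\hat{P}_3]=0$. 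The main obstacle I anticipate lies in the two spectral-theoretic passages — from formal symmetry to self-adjointness, and the completeness of the common eigensystem used for commutativity — both of which rest on elliptic theory on the closed manifold, so that the genuinely new input is the structural integration-by-parts identity made possible by the cross-variable dependence of the coefficients in (\ref{defLetH}).
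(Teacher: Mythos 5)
Your treatments of ellipticity, self-adjointness and semiboundedness coincide with the paper's proof: the same diagonal coefficient matrices $\mathrm{diag}(-s_{33}/s^{11},\,s_{23}/s^{11})$ and $\mathrm{diag}(s_{32}/s^{11},\,-s_{22}/s^{11})$, made positive definite by condition (\ref{cadre}) together with $s^{11}>0$ (Remark \ref{rkrie}); the same integration by parts in which the boundary terms vanish by periodicity and the key point is that $s_{33}$ (resp. $s_{23}$) does not depend on $x^2$ (resp. $x^3$); and the same splitting of $\langle Lu,u\rangle$ into nonnegative gradient terms plus a zeroth-order term bounded below on the compact torus. Your extra step upgrading formal symmetry to self-adjointness on $H^2(\mathcal{T}^2)$ via ellipticity is in fact more careful than the paper, which stops at the symmetry computation (invoking Exercise 2.19 of \cite{KKL}).

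The genuine gap is in your primary argument for $LH=HL$. You want to exhibit the separated products $Y_m=v_m(x^2)w_m(x^3)$ as a \emph{complete} orthonormal common eigenbasis and deduce commutativity from that. But the completeness of this family is asserted, not proved: the spectral theorem applied to $L$ alone gives a complete eigenbasis of $L$, and nothing guarantees that each eigenspace of $L$ is spanned by separated products that are simultaneously eigenfunctions of $H$. In the paper the implication runs in exactly the opposite direction: commutativity is proved first, the common Hilbertian basis is then obtained as a consequence (Remark \ref{rknum}), and only afterwards is it shown that the common eigenfunctions can be chosen in separated form (Remark \ref{remmult}). Establishing completeness of the separated products \emph{without} commutativity would require a genuine theorem of two-parameter (multiparameter) Sturm--Liouville theory for the coupled periodic system, which you do not supply; as written, your argument is circular. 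Your fallback route --- $L=\hat{P}_2$, $H=\hat{P}_3$ and $[\hat{P}_2,\hat{P}_3]=0$ from \cite{KM2} --- is the same secondary justification the paper records, but calling it ``immediate'' overstates it: the paper stresses (Remark \ref{lienopeHL}) that the identifications $L=\hat{P}_2$ and $H=\hat{P}_3$ hold only when the operators are applied to solutions of the Helmholtz equation (\ref{Hel}). The paper's primary argument is instead the direct one: $A_2$ and $A_3$ commute since they act in different variables, and the coefficient structure --- $s_{22},s_{23}$ functions of $x^2$ only, $s_{32},s_{33}$ functions of $x^3$ only --- lets one verify $[L,H]=0$ by computation; some such argument is what your proof is missing.
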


\begin{proof}
 \begin{enumerate}
  \item The proof of the commutativity of the operators $L$ and $H$ is quite easy since $A_2$ and $A_3$ are commuting operators and $s_{22}$ and $s_{23}$ only depend on 
  $x^2$ whereas $s_{32}$ and $s_{33}$ only depend on $x^3$. We note that, thanks to the fact that $\hat{P}_2 = L$ and $\hat{P}_3 = H$, we already know that these operators 
  commute thanks to \cite{KM2}.
  \item Using the definitions of $L$ and $H$ given in (\ref{defLetH}), we obtain that $L$ is an elliptic operator if and only
   if the matrix
   \[ \begin{pmatrix}  -\frac{s_{33}}{s^{11}} & 0 \\ 0 & \frac{s_{23}}{s^{11}} \end{pmatrix}\]
   is positive-definite whereas $H$ is an elliptic operator if and only if the matrix
      \[ \begin{pmatrix}  \frac{s_{32}}{s^{11}} & 0 \\ 0 & -\frac{s_{22}}{s^{11}} \end{pmatrix}\]
   is positive-definite. We now recall that $s_{22}, \, s_{33} < 0$ and $s_{23}, \, s_{32} > 0$ (see condition (\ref{cadre}) in Proposition \ref{propcadre}) and that $s^{11} > 0$ (see Remark (\ref{rkrie})). 
   We can thus conclude that $L$ and $H$ are elliptic operators.
   \item We just study the operator $L$ since the proof is similar for the operator $H$. We first note that, to find the weight $s^{11}$ we can use the exercise 2.19 
   of \cite{KKL} which says that an operator $A$ defined by
   \[ (Af)(y) = (a(y,D)f)(y) = -a^{jk}(y)\partial_j \partial_k f(y) + b^j(y)\partial_j f(y) + c(y) f(y), \]
   is selfadjoint on $L^2(\mathcal{T}_y^2,mg^{\frac{1}{2}}dy)$ if and only if
   \[ a(y,D) f = - \frac{1}{mg^{\frac{1}{2}}} \partial_i \left( mg^{\frac{1}{2}} a^{ij} \partial_j f \right) + qf.\]
   We recall that
   \[ L = \frac{s_{33}}{s^{11}} \partial_2^2 - \frac{s_{23}}{s^{11}} \partial_3^2 + q(x^2,x^3).\]
   Thus,
   \[ \langle Lu , v \rangle = \left\langle \frac{s_{33}}{s^{11}} \partial_2^2 u , v \right\rangle + \left\langle - \frac{s_{23}}{s^{11}} \partial_3^2 u , v \right\rangle + \left\langle qu , v \right\rangle.\]
   Moreover,
   \[ \left\langle \frac{s_{33}}{s^{11}} \partial_2^2 u , v \right\rangle = \int_{\mathcal{T}^2} s_{33} \left( \partial_2^2u \right) v \, dx^2dx^3 =  \int_{\mathcal{T}^2} s_{33}u \left( \partial_2^2v \right) \, dx^2dx^3,\]
   since the boundary terms vanish by periodicity and the function $s_{33}$ does not depend on $x^2$. Thus,
   \[ \left\langle \frac{s_{33}}{s^{11}} \partial_2^2 u , v \right\rangle = \left\langle u , \frac{s_{33}}{s^{11}} \partial_2^2 v \right\rangle.\]
   The second and the third terms can be treat following the same procedure. Finally, we have shown that
   $L$ is selfadjoint on the space $L^2(\mathcal{T}^2, s^{11} dx^2 dx^3)$.
   \item Since the proof is similar for the operator $H$ we just give the proof for the operator $L$.
   \begin{eqnarray*}
          \langle Lu , u \rangle &=& \left\langle \left( -\frac{s_{33}}{s^{11}}A_2 + \frac{s_{23}}{s^{11}} A_3 \right) u , u \right\rangle\\
          &=& \left\langle \frac{s_{33}}{s^{11}} \partial_2^2 u , u \right\rangle + \left\langle - \frac{s_{23}}{s^{11}} \partial_3^2 u , u \right\rangle + (\lambda^2+1) \left\langle  \frac{s_{33}s_{21} - s_{23}s_{31}}{s^{11}} u , u \right\rangle.
         \end{eqnarray*}
  We now study each of these terms.
     \begin{eqnarray*}
          \left\langle \frac{s_{33}}{s^{11}} \partial_2^2u , u \right\rangle &=& \int_{\mathcal{T}^2} \frac{s_{33}}{s^{11}} \left( \partial_2^2 u \right) u s^{11}\, dx^2dx^3\\
&=&  \int_{\mathcal{T}^2} s_{33} \left( \partial_2^2 u \right) u \, dx^2dx^3\\
&=& \underbrace{\left[ s_{33} \left(\partial_2 u\right) u\right]}_{=0 \, \, \textrm{by} \, \, \textrm{periodicity}} + \int_{\mathcal{T}^2} (-s_{33}) (\partial_2 u)^2 \, dx^2dx^3\\
&\geq& 0,
         \end{eqnarray*}
         since $s_{33} < 0$. Similarly,
\[ \left\langle - \frac{s_{23}}{s^{11}} \partial_3^2u , u \right\rangle \geq 0.\]
 At last, since $s_{ij} \in C^{\infty}(\mathcal{T}^2)$ for $i \in \{2,3\}$ and $j \in \{1,2,3\}$, there exists $m \in \R$ such that
 \[ (\lambda^2+1) \left\langle  \frac{s_{33}s_{21} - s_{23}s_{31}}{s^{11}} u , u \right\rangle \geq m \langle u , u \rangle.\]
\end{enumerate}
\end{proof}

\begin{remark}\label{rknum}
 Since the operators $L$ and $H$ commute, there exists a common Hilbertian basis of eigenfunctions of $H$ and $L$. Moreover, the ellipticity property on a 
 compact manifold shows that the spectrum is discrete and the selfadjointness proves that the spectrum is real. The generalized harmonics $\{ Y_m \}_{m \geq 1}$,
 associated with the coupled or joint spectrum $(\mu_m^2,\nu_m^2)$ for $(H,L)$, form a
 Hilbertian basis of $L^2(\mathcal{T}^2, s^{11} dx^2 dx^3)$, i.e.
 \begin{equation}\label{LHvp}
  HY_m = \mu_m^2 Y_m \quad \textrm{and} \quad LY_m = \nu_m^2 Y_m, \quad \forall m \geq 1,
 \end{equation}
 and
 \[  L^2(\mathcal{T}^2,s^{11}dx^2dx^3) = \displaystyle{\bigoplus_{m \geq 1}} \langle Y_m \rangle.\]
 Here, we order the coupled spectrum $(\mu_m^2,\nu_m^2)$ such that
 \begin{enumerate}
  \item Counting multiplicity:
  \[ \mu_1^2 < \mu_2^2 \leq \mu_3^2 \leq \mu_4^2 \leq ... \leq \mu_n^2 \leq ... \to \infty.\]
  \item Starting from $n=1$ and by induction on $n$, for each $n \geq 1$ such that $\mu_n^2$ has multiplicity $k$, i.e. $\mu_n^2 = \mu_{n+1}^2 = ... = \mu_{n+k-1}^2$, we 
  order the corresponding $(\nu_j^2)_{n \leq j \leq n+k-1}$ in increasing order, i.e., counting multiplicity,
  \[ \nu_n^2 \leq \nu_{n+1}^2 \leq ... \leq \nu_{n+k-1}^2.\]
 \end{enumerate}
\end{remark}
\noindent
The toric cylinder's topology implies that the boundary conditions are compatible with the decomposition on the common harmonics $\{ Y_m \}_{m \geq 1}$ of $H$ and $L$.
We thus look for solutions of (\ref{Hel}) under the form
\begin{equation}\label{decf}
  f(x^1,x^2,x^3) = \sum_{m \geq 1} u_m(x^1) Y_m(x^2,x^3).
\end{equation}
We use (\ref{decf}) in (\ref{Helint3}) and we obtain that $u_m$ satisfies, for all $m \geq 1$,
\[ -u''(x^1) + \frac{1}{2} (\log(f_1)(x^1))' u'(x^1) + \left[-(\lambda^2 +1)s_{11}(x^1) + \mu_m^2 s_{12}(x^1) + \nu_m^2 s_{13}(x^1)\right]u(x^1) = 0.\]
Finally, inverting (\ref{LHvp}), we obtain
\begin{equation}\label{systemeeqA2A3}
 \begin{cases}
A_2Y_m = -(s_{22} \mu_m^2 + s_{23} \nu_m^2) Y_m \\
A_3Y_m = -(s_{32} \mu_m^2 + s_{33} \nu_m^2) Y_m
\end{cases}.
\end{equation}

\begin{remark}\label{remmult}
 The harmonics $Y_m(x^2,x^3)$, $m \geq 1$, can be written as a product of a function of the variable $x^2$ and a function of the variable $x^3$. Let $(f_2,g_2)$ and 
 $(f_3,g_3)$ be periodic fundamental systems of solutions associated with the operators $A_2$ and $A_3$ respectively. We can thus write $Y_m(x^2,x^3)$ as
 \[ Y_m(x^2,x^3) = a(x^3) f_2(x^2) + b(x^3) g_2(x^2).\]
 We then apply the operator $A_3$ on this equality and we obtain that
  \[ A_3(Y_m)(x^2,x^3) = A_3(a)(x^3) f_2(x^2) + A_3(b)(x^3) g_2(x^2).\]
 Thus, using that $A_3Y_m = -(s_{32} \mu_m^2 + s_{33} \nu_m^2) Y_m$ and the fact that $(f_2,g_2)$ is a fundamental system of solutions we obtain that
 \[ Y_{m}(x^2,x^3) = af_2(x^2)f_3(x^3) + bf_2(x^2)g_3(x^3) + cg_2(x^2)f_3(x^3) + dg_2(x^2)g_3(x^3),\]
 where
 $a$, $b$, $c$ and $d$ are real constants. Thus, for each coupled 
 eigenvalue $(\mu_m^2,\nu_m^2)$, $m \geq 1$, the corresponding eigenspace for the couple of operator $(H,L)$ is at most of dimension four. However, the diagonalization of the scattering matrix $S_g(\lambda)$
 does not depend on the choice of the harmonics in each eigenspace associated with a coupled eigenvalue $(\mu_m^2,\nu_m^2)$ and we can thus choose as harmonics: $Y_m = f_2f_3$, $Y_m = f_2g_3$, 
 $Y_m = g_2f_3$ and $Y_m = g_2g_3$. We can then assume that $Y_m(x^2,x^3)$ is a product of a function of the variable $x^2$ and a function of the variable $x^3$.
\end{remark}

\begin{lemma}\label{eqsep}
Any solution $u \in H^1(\mathcal{M})$ of $-(\Delta_g +1)u = \lambda^2 u$, can be written as
\[ u = \sum_{m \geq 1} u_m(x^1) Y_m(x^2,x^3),\]
where $Y_m(x^2,x^3) = v_m(x^2) w_m(x^3)$ and
 \[\begin{cases}
-u_m''(x^1) + \frac{1}{2} (\log(f_1)(x^1))' u_m'(x^1) + [-(\lambda^2 +1)s_{11}(x^1) + \mu_m^2 s_{12}(x^1) + \nu_m^2 s_{13}(x^1)]u_m(x^1) = 0\\
-v_m''(x^2) + [-(\lambda^2 +1)s_{21}(x^2) + \mu_m^2 s_{22}(x^2) + \nu_m^2 s_{23}(x^2)]v_m(x^2) = 0\\
-w_m''(x^3) + [-(\lambda^2 +1)s_{31}(x^3) + \mu_m^2 s_{32}(x^3) + \nu_m^2 s_{33}(x^3)]w_m(x^3) = 0
\end{cases}.\]
\end{lemma}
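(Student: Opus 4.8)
The plan is to assemble the statement from three ingredients already in place: the reformulation of the Helmholtz equation in Lemma \ref{introHL}, the spectral properties of the commuting angular operators $H$ and $L$ from Lemma \ref{propHL}, and the completeness of the generalized harmonics recorded in Remark \ref{rknum}. Since $\{Y_m\}_{m\geq 1}$ is a Hilbertian basis of $L^2(\mathcal{T}^2, s^{11} dx^2 dx^3)$, the first move is to expand each angular slice of $u$ in this basis. For a solution $u \in H^1(\mathcal{M})$ of $-(\Delta_g+1)u = \lambda^2 u$, I would set
\[ u_m(x^1) = \int_{\mathcal{T}^2} u(x^1,x^2,x^3)\, \overline{Y_m(x^2,x^3)}\, s^{11}\, dx^2 dx^3, \]
so that $u = \sum_{m\geq 1} u_m(x^1) Y_m(x^2,x^3)$ for each fixed $x^1 \in (0,A)$. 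Because $u \in H^1(\mathcal{M})$ and the $Y_m$ are smooth eigenfunctions of the elliptic selfadjoint operators $H$ and $L$ on the compact torus, elliptic regularity makes each coefficient $u_m$ smooth on $(0,A)$, which is what is needed to legitimize the termwise operations below.

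For the radial equation I would start from the reformulation (\ref{Helint3}) of Lemma \ref{introHL}, namely $A_1 u + s_{12} L u + s_{13} H u = 0$, and project it onto $Y_m$. The point is that $A_1$ together with the coefficients $s_{12}$ and $s_{13}$ act only in the variable $x^1$, whereas $H$ and $L$ act only in the angular variables and are selfadjoint on $L^2(\mathcal{T}^2, s^{11} dx^2 dx^3)$; pairing (\ref{Helint3}) with $Y_m$ and using the eigenvalue relations (\ref{LHvp}) collapses all angular dependence and leaves an identity for $u_m$ alone. Unfolding the definition (\ref{defAi}) of $A_1$ then gives exactly the first (radial) ODE of the statement, with spectral data $\mu_m^2$ and $\nu_m^2$ entering as the coefficients of $s_{12}$ and $s_{13}$.

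For the angular part I would invoke Remark \ref{remmult}, which permits choosing each harmonic in product form $Y_m(x^2,x^3) = v_m(x^2) w_m(x^3)$ without affecting the scattering matrix. Applying the eigenvalue relations (\ref{systemeeqA2A3}), obtained by inverting (\ref{defLetH}) via (\ref{LHvp}), and recalling that the gauge choice $f_2 = f_3 = 1$ reduces $A_2$ and $A_3$ to $-\partial_2^2 - (\lambda^2+1)s_{21}$ and $-\partial_3^2 - (\lambda^2+1)s_{31}$, the equation $A_2 Y_m = -(s_{22}\mu_m^2 + s_{23}\nu_m^2) Y_m$ becomes
\[ \bigl(-v_m'' - (\lambda^2+1)s_{21}v_m\bigr) w_m = -(s_{22}\mu_m^2 + s_{23}\nu_m^2)\, v_m w_m. \]
Since $w_m$ solves a second-order ODE it vanishes only on an isolated set, so dividing by $w_m$ on the open dense set where $w_m \neq 0$ and extending by continuity yields the second ODE; the identical argument applied to $A_3 Y_m = -(s_{32}\mu_m^2 + s_{33}\nu_m^2) Y_m$ produces the third.

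The only genuine obstacle is justifying the formal separation rigorously: that the Fourier coefficients $u_m$ are regular enough to satisfy the radial ODE in the classical sense and that projection onto $Y_m$ commutes with the action of $A_1$. This is resolved by the fact that the spectral decomposition happens purely in the angular variables $(x^2,x^3)$ on the compact torus, where $H$ and $L$ are elliptic (Lemma \ref{propHL}), while $A_1$ differentiates only in the transverse variable $x^1$; elliptic regularity on $\mathcal{T}^2$ and dominated convergence then make the termwise manipulations legitimate. Everything else is a direct bookkeeping consequence of the lemmas and remarks already established.
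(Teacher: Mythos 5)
Your proposal is correct and takes essentially the same route as the paper: the paper's own (informal) argument likewise expands the solution on the Hilbertian basis $\{Y_m\}_{m\geq 1}$, inserts this decomposition into the separated form (\ref{Helint3}) of Lemma \ref{introHL} to obtain the radial ODE, and then combines the inverted eigenvalue relations (\ref{systemeeqA2A3}) with the product structure $Y_m = v_m w_m$ of Remark \ref{remmult} to obtain the two angular ODEs. Your extra steps (explicit projection using the selfadjointness of $H$ and $L$, elliptic regularity of the coefficients $u_m$, and division by $w_m$ off its zero set) merely make rigorous what the paper leaves implicit.
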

\noindent
From Lemma \ref{eqsep} we can deduce more informations on the eigenvalues $(\mu_m^2)_{m \geq 1}$ and $(\nu_m^2)_{m \geq 1}$. Indeed, we can prove the following Lemma 
which will be useful in the following.

\begin{lemma}\label{distrivp}
 There exist real constants $C_1$, $C_2$, $D_1$ and $D_2$ such that for all $m \geq 1$,
\[ C_1 \mu_m^2 + D_1 \leq \nu_{m}^2 \leq C_2 \mu_m^2 + D_2,\]
where
\[ C_1 = \min \left( -\frac{s_{32}}{s_{33}} \right) > 0 \quad \textrm{and} \quad C_2 = -\min \left(\frac{s_{22}}{s_{23}} \right) > 0.\]
\end{lemma}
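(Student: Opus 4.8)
The plan is to read the two inequalities directly off the angular ODEs of Lemma \ref{eqsep}, using the periodicity of the angular eigenfunctions together with the sign conditions (\ref{cadre}). First I would record an elementary fact about periodic Hill-type equations: if $q$ is continuous and periodic and $-y'' + qy = 0$ admits a nontrivial periodic solution $y$, then $\min q \leq 0$. Indeed, multiplying by $y$ and integrating over one period, the boundary terms cancel by periodicity, giving the energy identity $\int (y')^2 + \int q\, y^2 = 0$; if $q$ were strictly positive everywhere the left-hand side would be strictly positive, a contradiction. Since $v_m$ and $w_m$ are nontrivial periodic solutions of the second and third equations of Lemma \ref{eqsep}, applying this fact to the potentials $q_2(x^2) = -(\lambda^2+1)s_{21} + \mu_m^2 s_{22} + \nu_m^2 s_{23}$ and $q_3(x^3) = -(\lambda^2+1)s_{31} + \mu_m^2 s_{32} + \nu_m^2 s_{33}$ produces points $x_*^2$ and $x_*^3$ at which $q_2$ and $q_3$ are respectively nonpositive.

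Next I would solve these pointwise inequalities for $\nu_m^2$. At $x_*^3$ one has $\mu_m^2 s_{32}(x_*^3) + \nu_m^2 s_{33}(x_*^3) \leq (\lambda^2+1)s_{31}(x_*^3)$; dividing by $s_{33}(x_*^3) < 0$, which reverses the inequality thanks to (\ref{cadre}), gives $\nu_m^2 \geq (\lambda^2+1)\frac{s_{31}}{s_{33}}(x_*^3) + \mu_m^2\left(-\frac{s_{32}}{s_{33}}\right)(x_*^3)$. Since $-s_{32}/s_{33} > 0$ is bounded below by $C_1 = \min(-s_{32}/s_{33})$ and $s_{31}/s_{33}$ is bounded on the compact circle, this yields the lower bound $\nu_m^2 \geq C_1\mu_m^2 + D_1$. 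Symmetrically, at $x_*^2$ one has $\mu_m^2 s_{22}(x_*^2) + \nu_m^2 s_{23}(x_*^2) \leq (\lambda^2+1)s_{21}(x_*^2)$; dividing by $s_{23}(x_*^2) > 0$ and using that $-s_{22}/s_{23} > 0$ is bounded above by $-\min(s_{22}/s_{23}) = C_2$ gives the upper bound $\nu_m^2 \leq C_2\mu_m^2 + D_2$. The strict positivity of $C_1$ and $C_2$ is immediate from the sign conditions $s_{32}, s_{23} > 0$ and $s_{33}, s_{22} < 0$.

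The one point requiring care, and the main (though mild) obstacle, is the direction of the estimate $\mu_m^2\, a(x_*) \gtreqless C_i\,\mu_m^2$ for a coefficient $a$ lying in an interval $[C_i, C_i']$: this is only immediate when $\mu_m^2 \geq 0$, whereas a priori only finitely many of the eigenvalues could be negative. I would handle this uniformly using that $H$ is semibounded (Lemma \ref{propHL}), so that $\mu_m^2 \geq -M_0$ for some $M_0 \geq 0$ and all $m$. Writing the bounded coefficient $\left(-\frac{s_{32}}{s_{33}}\right)(x_*^3) \in [C_1, C_1']$, one checks that $\mu_m^2\left(-\frac{s_{32}}{s_{33}}\right)(x_*^3) \geq C_1\mu_m^2 - (C_1' - C_1)M_0$ whatever the sign of $\mu_m^2$, and the extra constant $-(C_1'-C_1)M_0$ is absorbed into $D_1$; the upper bound is treated in the same way, absorbing the analogous error into $D_2$. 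This keeps $D_1$ and $D_2$ uniform in $m$ and completes the argument.

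I expect the routine parts (the integration by parts and the extraction of extrema over the compact torus) to be straightforward; the only genuine bookkeeping is tracking the inequality reversals from the signs of $s_{33} < 0$ and $s_{23} > 0$, and the uniform treatment of the sign of $\mu_m^2$ just described.
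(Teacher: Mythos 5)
Your proof is correct, and at bottom it rests on the same variational fact as the paper's proof: for a periodic Sturm--Liouville problem, the spectrum lies above the minimum of the potential. The executions differ in two ways. The paper first performs a Liouville change of variables and rescaling in each angular equation of Lemma \ref{eqsep}, so that $-\nu_m^2$ (resp.\ $+\nu_m^2$) becomes the spectral parameter of a Schr\"odinger operator with potential $p_{\mu_m^2,2}$ (resp.\ $p_{\mu_m^2,3}$) as in (\ref{potpx2})--(\ref{potpx3}), and then invokes the inclusion of the spectrum in $[\min p, +\infty)$; you bypass the transformation entirely, obtaining from the energy identity a point where the full untransformed potential is nonpositive and then dividing by $s_{33}<0$ or $s_{23}>0$ at that point. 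This buys you two things. First, the argument is self-contained (nothing beyond integration by parts) and avoids a cosmetic imprecision of the paper, whose transformed potentials as written omit the bounded correction terms generated by the Liouville substitution (harmless, since those terms would only shift $D_1$ and $D_2$). Second, and more substantively, your uniform treatment of possibly negative $\mu_m^2$ via the semiboundedness of $H$ (Lemma \ref{propHL}) addresses a point the paper passes over in silence: extracting $C_1$ and $C_2$ from $\min(p_{\mu_m^2,2})$ and $\min(p_{\mu_m^2,3})$ by writing $\min\left(\mu_m^2\, s_{22}/s_{23}\right) = \mu_m^2 \min\left(s_{22}/s_{23}\right)$ is legitimate only when $\mu_m^2 \geq 0$, which the paper never verifies (one could alternatively dispose of the exceptional $m$ by enlarging $D_2$ and shrinking $D_1$, since semiboundedness makes them finite in number, but some such remark is needed). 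So your proposal is a valid, and in fact slightly more careful, variant of the paper's argument.
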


\begin{proof}
  We first recall the angular equations of Lemma \ref{eqsep}:
  \begin{equation}\label{eqrad1}
   -v''(x^2) + [-(\lambda^2 +1)s_{21}(x^2) + \mu_m^2 s_{22}(x^2) + \nu_m^2 s_{23}(x^2)]v(x^2) = 0
  \end{equation}
and
  \begin{equation}\label{eqrad2}
-w''(x^3) + [-(\lambda^2 +1)s_{31}(x^3) + \mu_m^2 s_{32}(x^3) + \nu_m^2 s_{33}(x^3)]w(x^3) = 0.
  \end{equation}
  We use a Liouville change of variables in (\ref{eqrad1}) to transform this equation into a Schr\"odinger equation in which $-\nu_m^2$ is the spectral 
  parameter. Thus, we define the diffeomorphism
  \[ X^2 = g_2(x^2) = \int_{0}^{x^2} \sqrt{s_{23}(t)}\, \mathrm dt\]
 and we define
\[v(X^2,\mu_m^2,\nu_m^2) = v(h_2(X^2),\mu_m^2,\nu_m^2),\]
where $h_2 =g_2^{-1}$ is the inverse function of $g_2$. We also introduce a weight function to cancel the first order term. We thus define
\[ V(X^2,\mu_m^2,\nu_m^2) = \left( \frac{1}{s_{23}(h_2(X^2))} \right)^{-\frac{1}{4}} v(h_2(X^2),\mu_m^2,\nu_m^2).\]
After calculation, we obtain that $V(X^2,\mu_m^2,\nu_m^2)$ satisfies, in the variable $X^2$, the Schr\"odinger equation
\begin{equation}\label{eqrad1bis}
 - \ddot{V}(X^2,\mu_m^2,\nu_m^2) + p_{\mu_m^2,2}(X^2,\lambda) V(X^2,\mu_m^2,\nu_m^2) = -\nu_m^2 V(X^2,\mu_m^2,\nu_m^2),
\end{equation}
where,
\begin{equation}\label{potpx2}
p_{\mu_m^2,2}(X^2,\lambda) = -(\lambda^2+1) \frac{s_{21}(X^2)}{s_{23}(X^2)} + \mu_m^2 \frac{s_{22}(X^2)}{s_{23}(X^2)},
\end{equation}
with $s_{21}(X^2) := s_{21}(h_2(X^2))$, $s_{22}(X^2) := s_{22}(h_2(X^2))$ and $s_{23}(X^2) := s_{23}(h_2(X^2))$.
We follow the same procedure for (\ref{eqrad2}) putting
\[ X^3 = g_3(x^3) = \int_{0}^{x^3} \sqrt{-s_{33}(t)}\, \mathrm dt \quad \textrm{and} \quad W(X^3,\mu_m^2,\nu_m^2) = \left( \frac{1}{-s_{33}(h_3(X^3))} \right)^{-\frac{1}{4}} w(h_3(X^3),\mu_m^2,\nu_m^2)\]
and we obtain that $W(X^3)$ satisfies, in the variable $X^3$, the Schr\"odinger equation
\begin{equation}\label{eqrad2bis}
 - \ddot{W}(X^3,\mu_m^2,\nu_m^2) + p_{\mu_m^2,3}(X^3,\lambda) W(X^3,\mu_m^2,\nu_m^2) = \nu_m^2 W(X^3,\mu_m^2,\nu_m^2),
\end{equation}
where,
\begin{equation}\label{potpx3}
p_{\mu_m^2,3}(X^3,\lambda) = (\lambda^2+1) \frac{s_{31}(X^3)}{s_{33}(X^3)} - \mu_m^2 \frac{s_{32}(X^3)}{s_{33}(X^3)},
\end{equation}
with $s_{31}(X^3) := s_{31}(h_3(X^3))$, $s_{32}(X^3) := s_{32}(h_3(X^3))$ and $s_{33}(X^3) := s_{33}(h_3(X^3))$.
 Assume now that $\mu_m^2$ is fixed and look at (\ref{eqrad1bis}) and (\ref{eqrad2bis}) as eigenvalue problems in $\pm \nu_m^2$. We suppose that $\mu_m^2$ has multiplicity 
 $k \geq 1$ and we use the notations given in Remark \ref{rknum}, i.e. that we want to show that
 \[ C_1 \mu_m^2 + D_2 \leq \nu_j^2 \leq C_2 \mu_m^2 + D_2, \quad \forall m \leq j \leq m+k-1,\]
 where $\nu_j^2 \leq \nu_{j+1}^2$ for all $j \in \{m,...,m+k-1\}$.
 We know that the spectra of the operators
 \[ P_2 = - \frac{\mathrm{d}^2}{(\mathrm{d}X^2)^2} + p_{\mu_m^2,2} \quad \textrm{and} \quad P_3 = - \frac{\mathrm{d}^2}{(\mathrm{d}X^3)^2} + p_{\mu_m^2,3}\]
 are included in
 \[ [\min(p_{\mu_m^2,2}), +\infty) \quad \textrm{and} \quad [\min(p_{\mu_m^2,3}), +\infty),\]
 respectively. The first condition gives us that
   \[ - \nu_{j}^2 \geq - C_2 \mu_m^2 - D_2, \quad \textrm{where} \quad -C_2 = \min \left(\frac{s_{22}}{s_{23}} \right) \quad \textrm{and} \quad -D_2 = (\lambda^2+1) \min \left(-\frac{s_{21}}{s_{23}} \right)\]
 and the second one tells us that
  \[ \nu_{j}^2 \geq C_1 \mu_m^2 + D_1, \quad \textrm{where} \quad C_1 = \min \left( -\frac{s_{32}}{s_{33}} \right) \quad \textrm{and} \quad D_1 = (\lambda^2+1) \min \left(\frac{s_{31}}{s_{33}} \right).\]
  Since $(\nu_j^2)_{m \leq j \leq m+k-1}$ is the set of eigenvalues of (\ref{eqrad1}) and (\ref{eqrad2}), for a fixed $\mu_m^2$ of multiplicity $k$ we obtain from these estimates that
  \[ C_1 \mu_m^2 + D_1 \leq \nu_{j}^2 \leq C_2 \mu_m^2 + D_2, \quad \forall m \leq j \leq m+k-1.\]
  In other words, thanks to our numerotation of the coupled spectrum explained in Remark \ref{rknum},
  \[ C_1 \mu_m^2 + D_1 \leq \nu_{m}^2 \leq C_2 \mu_m^2 + D_2, \quad \forall m \geq 1.\]
\end{proof}

\begin{remark}
\begin{enumerate}
 \item Thanks to the condition given in Remark \ref{rkrie},
 \[C_1 = \min \left( -\frac{s_{32}}{s_{33}} \right) < -\min \left(\frac{s_{22}}{s_{23}} \right) = C_2.\]
  \item  The previous Lemma says that the coupled spectrum $\{(\mu_m^2,\nu_m^2), \, \, m \geq 1 \}$ lives in a cone contained in the quadrant $(\R^+)^2$ (up to a possible 
 shift dues to the presence of the constants $D_1$ and $D_2$). 
 Moreover, since the multiplicity of $\mu_m^2$ is finite for all $m \geq 1$, there is a finite number of points of the coupled spectrum on each vertical line.
 We can summarize these facts with the following generic picture:
  \begin{figure}[htbp]
    \center
   \includegraphics[scale=0.25]{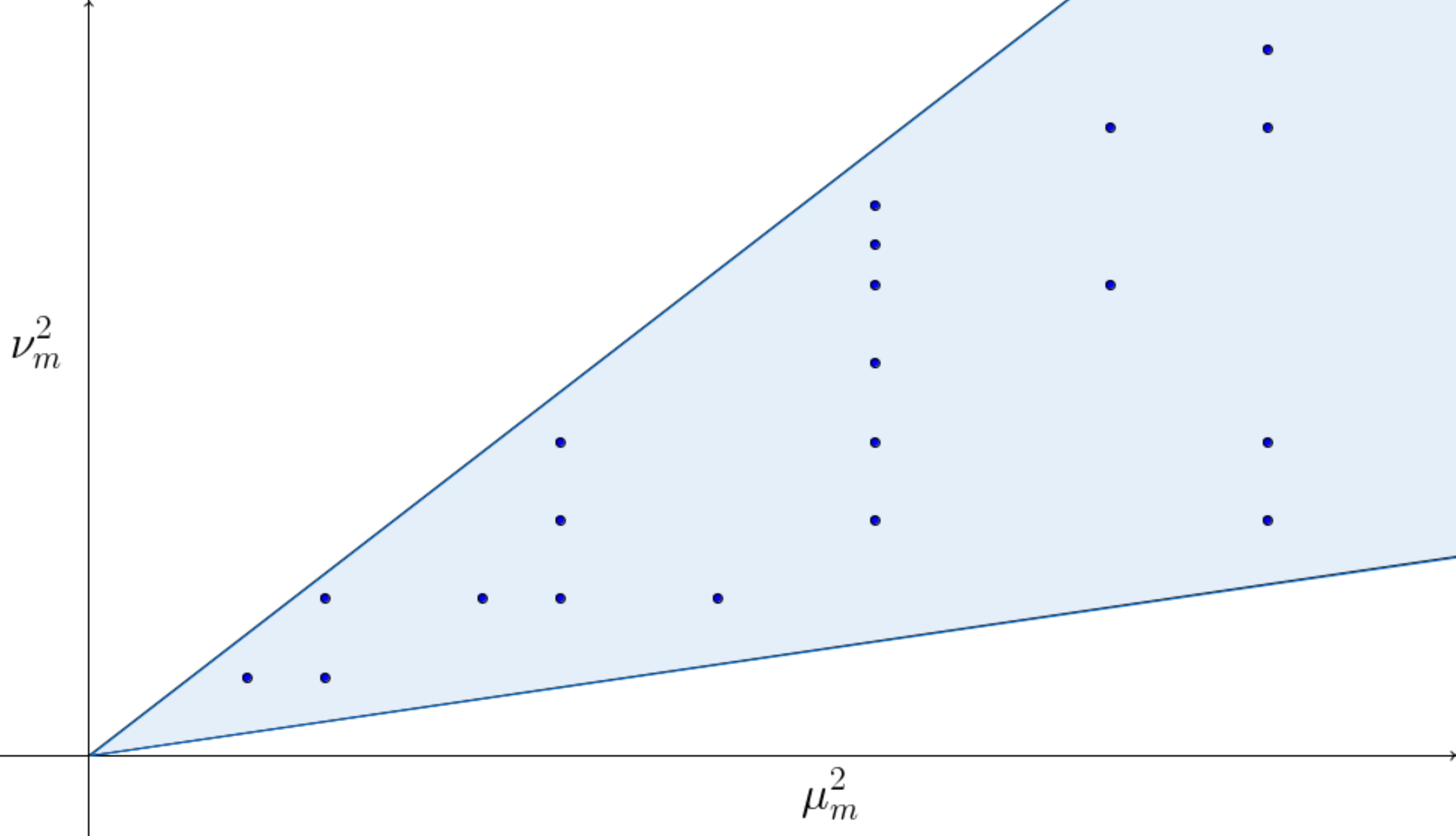}
    \caption{Coupled spectrum}
\end{figure} 
 \item The Weyl law (see \cite{KKL} Theorem 2.21) which says (in two dimensions) that there exists a constant $C$ such that the eigenvalues are equivalent for large $m$ to 
 $Cm$, is satisfied by the eigenvalues $\{\mu_m^2\}_m$ and $\{\nu_m^2\}_m$ but we have to order these
 sequences in an increasing order to use it. However, we ordered the coupled spectrum in such a way that the order for the $(\nu_m^2)$ is not necessary increasing.
\item An eigenvalue of the coupled spectrum $(\mu_m^2,\nu_m^2)$ has at most multiplicity four as it was mentionned in Remark \ref{remmult}.
\end{enumerate}
\end{remark}

\begin{exem}
 We can illustrate the notion of coupled spectrum on the examples given in Example \ref{exemple}.
 \begin{enumerate}
  \item We define the St\"ackel matrix
  \[ S_1 =  \begin{pmatrix} s_{11}(x^1) & s_{12}(x^1) & s_{13}(x^1) \\ 0 & 1 & 0 \\ 0 & 0 & 1 \end{pmatrix}.\]
      In this case $H = -\partial_3^2$ and $L = - \partial_2^2$ and we note that these operators can be obtained by derivation of the Killing vector fields $\partial_2$ and $\partial_3$.
      The coupled spectrum of these operators is $\{(m^2,n^2),$ $(m,n) \in \Z^2 \}$ and we can 
      decompose the space $L^2(\mathcal{T}^2,s^{11} dx^2 dx^3)$ on the basis of generalized harmonics $Y_{mn} = e^{imx^2 + inx^3}$. We note that this coupled spectrum is not
      included in a cone strictly contained in $(\R^+)^2$ but there is no contradiction with Lemma \ref{distrivp} since the St\"ackel matrix $S$
      does not satisfies the condition (\ref{cadre}). However, we can use the invariances of Proposition \ref{propinv} to come down to our framework (this transformation 
      modifies the coupled spectrum). Indeed, we can obtain the St\"ackel matrix
    \[ S_2 \begin{pmatrix} s_{11}(x^1) & s_{12}(x^1) & s_{13}(x^1) \\ a & b & c \\ d & e & f \end{pmatrix},\]
  where $s_{11}$, $s_{12}$ and $s_{13}$ are smooth functions of $x^1$ and $a$, $b$, $c$, $d$, $e$ and $f$ are real constants such that $b,f < 0$ and $c,e > 0$.
  In this case we have
    \[ H = -\frac{s_{32}}{s^{11}} \partial_2^2 + \frac{s_{22}}{s^{11}} \partial_3^2 =  -\frac{e}{bf-ce} \partial_2^2 + \frac{b}{bf-ce} \partial_3^2\]
and
  \[ L = \frac{s_{33}}{s^{11}} \partial_2^2 - \frac{s_{23}}{s^{11}} \partial_3^2 =  \frac{f}{bf-ce} \partial_2^2 - \frac{c}{bf-ce} \partial_3^2.\]
 Thus, the coupled spectrum of the operators $H$ and $L$ can be computed using the same procedure as the one used for $S_1$.
  
       We emphasize that in the case of the St\"ackel matrix $S_1$ the coupled spectrum is in fact uncoupled. We can thus freeze one angular momentum and let the other one 
      move on the integers.
      After the use of the invariance to come down to our framework these vertical or horizontal half-lines are transformed into half-lines contained in our cone of $(\R^+)^2$.
      This allows us to use the usual Complexification of the Angular Momentum method in one dimension on a half-line contained in our cone.
  \item We define the St\"ackel matrix
  \[ S =  \begin{pmatrix} s_{11}(x^1) & s_{12}(x^1) & a s_{12}(x^1) \\ 0 & s_{22}(x^2) & s_{23}(x^2) \\ 0 & s_{32}(x^3) & s_{33}(x^3) \end{pmatrix},\]
    where $s_{11}$ and $s_{12}$ are smooth functions of $x^1$, $s_{22}$ and $s_{23}$ are smooth functions of $x^2$, $s_{32}$ and $s_{33}$ are smooth functions of $x^3$ and 
  $a$ is a real constant. In this case, the Helmholtz equation (\ref{Hel}) can be rewritten as
 \[ A_1 f + s_{12} (L + a H) f = 0.\]
 Therefore, the separation of variables depends only on a single angular operator given by $L + a H$ whose properties can be easily derived from the ones for $H$ and $L$. 
 In particular, the set of angular momenta is given by $\omega_m^2 = \mu_m^2 + \nu_m^2$, $m \geq 1$, and could be used to apply the Complexification of the Angular 
 Momentum method. Note that, even though the spectra $\{\mu_m^2,\nu_m^2\}$ are coupled, only the spectrum $\omega_m^2$ appears in the separated radial equation.
  \item In the case of the St\"ackel matrix
    \[ S =  \begin{pmatrix} s_{1}(x^1)^2 & -s_{1}(x^1) & 1 \\ -s_2(x^2)^2 & s_{2}(x^2) & -1 \\ s_3(x^3)^2 & -s_{3}(x^3) & 1 \end{pmatrix},\]
  where $s_{1}$ is a smooth function of $x^1$, $s_{2}$ is a smooth function of $x^2$ and $s_{3}$ is a smooth function of $x^3$, there is no trivial symmetry. We 
  are thus in the general case and we have to use the general method we develop in this paper.
  \end{enumerate}
\end{exem}

\subsection{A first construction of characteristic and Weyl-Titchmarsh functions}\label{firstdef}

The aim of this section is to define the characteristic and Weyl-Titchmarsh functions for the radial equation choosing $-\mu_m^2$ 
as the spectral parameter. We recall that the radial equation is 
\begin{equation}\label{eqR}
-u'' + \frac{1}{2} (\log(f_1))' u' + \left[-(\lambda^2 +1)s_{11} + \mu_m^2 s_{12} + \nu_m^2 s_{13} \right] u = 0,
\end{equation}
where the functions depend only on $x^1$. We choose $-\mu^2 := -\mu_m^2$ to be the spectral parameter. As mentioned in the Introduction, to do this we make a Liouville change of variables:
\[X^1 = g(x^1) = \int_{0}^{x^1} \sqrt{s_{12}(t)}\, \mathrm dt\]
and we define
\[u(X^1,\mu^2,\nu^2) = u(h(X^1),\mu^2,\nu^2),\]
where $h =g^{-1}$ is the inverse function of $g$ and $\nu^2 := \nu_m^2$. In a second time, to cancel the resulting first order term and obtain a Schr\"odinger equation, we 
introduce a weight function. Precisely, we define
\begin{equation}\label{nouvU}
 U(X^1,\mu^2,\nu^2) = \left( \frac{f_1}{s_{12}}(h(X^1)) \right)^{-\frac{1}{4}} u(h(X^1),\mu^2,\nu^2).
\end{equation}
After calculation, we obtain that $U(X^1,\mu^2,\nu^2)$ satisfies, in the variable $X^1$, the Schr\"odinger equation
\begin{equation}\label{eqmu}
 - \ddot{U}(X^1,\mu^2,\nu^2) + q_{\nu^2}(X^1,\lambda) U(X^1,\mu^2,\nu^2) = -\mu^2 U(X^1,\mu^2,\nu^2),
\end{equation}
where,
\begin{equation}\label{potq}
 q_{\nu^2}(X^1,\lambda) = -(\lambda^2+1) \frac{s_{11}(X^1)}{s_{12}(X^1)} + \nu^2 \frac{s_{13}(X^1)}{s_{12}(X^1)}
 +\frac{1}{16} \left( \dot{\left( \log \left( \frac{f_{1}(X^1)}{s_{12}(X^1)} \right) \right)}\right)^2 - \frac{1}{4} \ddot{\left( \log \left( \frac{f_{1}(X^1)}{s_{12}(X^1)} \right) \right)}.
\end{equation}
with $\dot{f} = \frac{df}{dX^1}$, $f_1(X^1) := f_1(h_1(X^1))$, $s_{11}(X^1) := s_{11}(h_1(X^1))$, $s_{12}(X^1) := s_{12}(h_1(X^1))$ and $s_{13}(X^1) := s_{13}(h_1(X^1))$.

\begin{lemma}
 The potential $q_{\nu^2}$ satisfies, at the end $\{X^1 = 0\}$,
 \[q_{\nu^2}(X^1,\lambda) = - \frac{\lambda^2 + \frac{1}{4}}{(X^1)^2} + q_{0,\nu^2}(X^1,\lambda),\]
 where $X^1 q_{0,\nu^2}(X^1,\lambda) \in L^1\left( 0, \frac{A^1}{2} \right)$ with $A^1 = g(A)$. Similarly, at the end $\{X^1 = A^1\}$,
 \[q_{\nu^2}(X^1,\lambda) = - \frac{\lambda^2 + \frac{1}{4}}{(A^1-X^1)^2} + q_{A^1,\nu^2}(X^1,\lambda),\]
  where $(A^1-X^1) q_{A^1,\nu^2}(X^1,\lambda) \in L^1\left(\frac{A^1}{2},A^1 \right)$.
\end{lemma}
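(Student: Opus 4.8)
The plan is to substitute the Liouville change of variables $X^1 = g(x^1) = \int_0^{x^1}\sqrt{s_{12}(t)}\,dt$ into the explicit formula (\ref{potq}) for $q_{\nu^2}$ and to extract the singular part term by term, carefully tracking the logarithmic decay supplied by Definition \ref{defAH}. First I would record the elementary but essential comparison of the two variables near $\{x^1=0\}$: since $s_{12}(x^1) = [1]_{\epsilon_0} = 1 + O((1+|\log x^1|)^{-1-\epsilon_0})$ by (\ref{condhyp}), integrating gives $X^1 = x^1\bigl(1 + O((1+|\log x^1|)^{-1-\epsilon_0})\bigr)$, so that $X^1 \asymp x^1$ and, crucially, the weights $1+|\log X^1|$ and $1+|\log x^1|$ are comparable. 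This lets me transfer each decay estimate from the variable $x^1$ to the variable $X^1$ and converts the chain-rule factor $\dot{\,}\, = s_{12}^{-1/2}(\cdot)'$ into $1$ up to an $O((1+|\log X^1|)^{-1-\epsilon_0})$ error.

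Next I would treat the three contributions to (\ref{potq}) separately. For the first term, (\ref{condhyp}) gives $(x^1)^2 s_{11} = [1]_{\epsilon_0}$ and $s_{12} = [1]_{\epsilon_0}$, hence $\frac{s_{11}}{s_{12}} = (X^1)^{-2}\bigl(1 + O((1+|\log X^1|)^{-1-\epsilon_0})\bigr)$, producing the singular contribution $-\frac{\lambda^2+1}{(X^1)^2}$. The term $\nu^2\frac{s_{13}}{s_{12}}$ is bounded near $X^1=0$ (both factors tend to $1$) and is therefore harmless. The delicate part is the logarithmic-derivative block $\frac{1}{16}(\dot{L})^2 - \frac14\ddot{L}$ with $L = \log(f_1/s_{12})$. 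Here I would use the Robertson relation (\ref{Rob3}), $f_1 = c_1(x^1)^2[1]_{\epsilon_0}$, to write $L = \log c_1 + 2\log X^1 + r(X^1)$, where $r$ and its first two $X^1$-derivatives are controlled by the $n=0$ and $n=1$ bounds of Definition \ref{defAH}: each application of $x^1\partial_{x^1}$ improves the exponent by one power of the logarithm (the $-\min(n,1)$ appearing there). A direct computation then yields $\dot{L} = \frac{2}{X^1}\bigl(1 + O((1+|\log X^1|)^{-1-\epsilon_0})\bigr)$ and $\ddot{L} = -\frac{2}{(X^1)^2} + (X^1)^{-2}O((1+|\log X^1|)^{-1-\epsilon_0})$, whence $\frac{1}{16}(\dot{L})^2 - \frac14\ddot{L} = \frac{3}{4(X^1)^2} + (X^1)^{-2}O((1+|\log X^1|)^{-1-\epsilon_0})$.

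Adding the singular parts gives exactly
\[
-\frac{\lambda^2+1}{(X^1)^2} + \frac{3}{4(X^1)^2} = -\frac{\lambda^2+\frac14}{(X^1)^2},
\]
which is the claimed principal term, while everything else is collected into $q_{0,\nu^2}$. By the estimates above, $q_{0,\nu^2}$ is a sum of a bounded term (from $\nu^2 s_{13}/s_{12}$) and terms of the form $(X^1)^{-2}O((1+|\log X^1|)^{-1-\epsilon_0})$. Multiplying by $X^1$ and substituting $u = -\log X^1$ reduces the integrability of $X^1 q_{0,\nu^2}$ to the convergence of $\int^{\infty}(1+u)^{-1-\epsilon_0}\,du$, which holds precisely because $\epsilon_0 > 0$; this is exactly the role of the strict exponent $1+\epsilon_0$ in Definition \ref{defAH}. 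Hence $X^1 q_{0,\nu^2} \in L^1(0,\frac{A^1}{2})$. The analysis at $\{X^1 = A^1\}$ is identical after replacing $x^1$ by $A-x^1$ and $\epsilon_0$ by $\epsilon_1$, using the corresponding conditions in Definition \ref{defAH}. The main obstacle is the block $\frac{1}{16}(\dot{L})^2 - \frac14\ddot{L}$: differentiating twice a priori destroys integrability, and the point is that the quadratic-singularity coefficient $\frac34$ combines with the $\lambda^2+1$ from the first term to produce the hyperbolic value $\lambda^2+\frac14$, while the derivative bookkeeping must be carried out with exactly the logarithmic gain that Definition \ref{defAH} provides.
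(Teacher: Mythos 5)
Your proposal is correct and takes essentially the same approach as the paper, whose proof merely notes that $X^1 \sim x^1$ near the ends so that the hyperbolicity conditions of Definition \ref{defAH} can be read directly in the variable $X^1$, leaving implicit the term-by-term computation and the cancellation $-(\lambda^2+1)+\frac{3}{4} = -(\lambda^2+\frac{1}{4})$ that you carry out explicitly. One cosmetic slip: controlling $\ddot{L}$ requires the $n=2$ bounds of Definition \ref{defAH} rather than only the $n=0,1$ ones you cite, but since the definition supplies bounds for every $n \in \mathbb{N}$ this costs nothing.
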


\begin{proof}
 We first note that since $s_{12}(x^1) \sim 1$ when $x^1 \to 0$ we obtain by definition of $X^1$ that $X^1 \sim x^1$, as $x^1 \to 0$. Thus we can use the hyperbolicity 
 conditions (\ref{condhyp}) directly in the variable $X^1$. The lemma is then a consequence of these conditions.
\end{proof}
\noindent
We now follow the paper \cite{DaKaNi} to define the characteristic and the Weyl-Titchmarsh functions associated with Equation (\ref{eqmu}).
To do that, we introduce two fundamental systems of solutions $S_{jn}$, $j \in \{1,2\}$ and $n \in \{0,1\}$, defined by
\begin{enumerate}
 \item When $X^1 \to 0$,
 \begin{equation}\label{S_0}
 S_{10}(X^1,\mu^2,\nu^2) \sim (X^1)^{\frac{1}{2}-i\lambda} \quad \textrm{and} \quad S_{20}(X^1,\mu^2,\nu^2) \sim \frac{1}{2i\lambda} (X^1)^{\frac{1}{2}+i\lambda} 
 \end{equation}
 and when $X^1 \to A^1$,
 \begin{equation}\label{S_1}
 S_{11}(X^1,\mu^2,\nu^2) \sim (A^1-X^1)^{\frac{1}{2}-i\lambda} \quad \textrm{and} \quad S_{21}(X^1,\mu^2,\nu^2) \sim -\frac{1}{2i\lambda} (A^1-X^1)^{\frac{1}{2}+i\lambda}.  
 \end{equation}
 \item $W(S_{1n},S_{2n}) = 1$ for $n \in \{0,1\}$.
 \item For all $X^1 \in (0,A^1)$, $\mu \mapsto S_{jn}(X^1,\mu^2,\nu^2)$ is an entire function for $j \in \{1,2\}$ and $n \in \{0,1\}$.
\end{enumerate}
\noindent
As in \cite{DaKaNi,FY}, we add singular boundary conditions at the ends $\{X^1 = 0\}$ and $\{X^1 = A^1\}$ and we consider (\ref{eqmu}) as an eigenvalue problem. Precisely we
consider the conditions
\begin{equation}\label{condbordsing}
U(u) := W(S_{10},u)_{|X^1 = 0} = 0 \quad \textrm{and} \quad V(u) := W(S_{11},u)_{|X^1 = A^1} = 0,
\end{equation}
 where $W(f,g) = fg'-f'g$ is the Wronskian of $f$ and $g$. Finally, we can define the characteristic functions
\begin{equation}\label{defDelta1}
 \Delta_{q_{\nu^2}}(\mu^2) = W(S_{11},S_{10}) \quad \textrm{and} \quad \delta_{q_{\nu^2}}(\mu^2) = W(S_{11},S_{20})
\end{equation}
and the Weyl-Titchmarsh function 
\begin{equation}\label{defM1}
 M_{q_{\nu^2}}(\mu^2) = -\frac{W(S_{11},S_{20})}{W(S_{11},S_{10})} = -\frac{\delta_{q_{\nu^2}}(\mu^2)}{\Delta_{q_{\nu^2}}(\mu^2)}.
\end{equation}

\begin{remark}
 The Weyl-Titchmarsh function is the unique function such that the solution of (\ref{eqmu}) given by
 \[\Phi(X^1,\mu^2,\nu^2) = S_{10}(X^1,\mu^2,\nu^2) + M_{q_{\nu^2}}(\mu^2) S_{10}(X^1,\mu^2,\nu^2),\]
 satisfies the boundary condition at the end $\{X^1 = A^1\}$.
\end{remark}
\noindent
An immediate consequence of the third condition in the definition of the fundamental systems of solutions is the following lemma.

\begin{lemma}\label{holo1}
 For any fixed $\nu$, the maps
 \[ \mu \mapsto \Delta_{q_{\nu^2}}(\mu^2) \quad \textrm{and} \quad \mu \mapsto \delta_{q_{\nu^2}}(\mu^2)\]
 are entire.
\end{lemma}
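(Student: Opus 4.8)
The plan is to use that both characteristic functions are Wronskians of solutions of the single Schr\"odinger equation (\ref{eqmu}), so that holomorphy in $\mu$ is inherited directly from the third defining property of the fundamental systems $S_{jn}$. First I would note that (\ref{eqmu}) carries no first-order term, so by Abel's identity the Wronskian of any two of its solutions is constant in $X^1$; hence $\Delta_{q_{\nu^2}}(\mu^2) = W(S_{11},S_{10})$ and $\delta_{q_{\nu^2}}(\mu^2) = W(S_{11},S_{20})$ may be evaluated at any fixed interior point $X^1_0 \in (0,A^1)$, which keeps us away from the singular endpoints and from any issue about holomorphy of the endpoint asymptotics.

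At such a point the third condition gives at once that $\mu \mapsto S_{jn}(X^1_0,\mu^2,\nu^2)$ is entire for each $j \in \{1,2\}$ and $n \in \{0,1\}$. The Wronskians also involve the $X^1$-derivatives $\dot S_{jn}(X^1_0,\mu^2,\nu^2)$, so I would next record that these are entire as well: the pair $(S_{jn},\dot S_{jn})^{\mathrm T}$ solves the first-order linear system whose coefficient matrix $\left(\begin{smallmatrix} 0 & 1 \\ q_{\nu^2}+\mu^2 & 0 \end{smallmatrix}\right)$ depends polynomially --- and therefore entirely --- on $\mu$. Standard analytic dependence of solutions of linear ODEs on a parameter (equivalently, locally uniform convergence in $\mu$ of the Picard iteration for this system) upgrades the entirety of $S_{jn}(X^1_0,\cdot)$ to that of its companion derivative $\dot S_{jn}(X^1_0,\cdot)$.

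With both $S_{jn}(X^1_0,\cdot)$ and $\dot S_{jn}(X^1_0,\cdot)$ entire, the Wronskians $\Delta_{q_{\nu^2}} = S_{11}\dot S_{10} - \dot S_{11}S_{10}$ and $\delta_{q_{\nu^2}} = S_{11}\dot S_{20} - \dot S_{11}S_{20}$, evaluated at $X^1_0$, are finite sums of products of entire functions, hence entire, which is the claim. The only genuinely delicate point is the passage from entirety of the solution values (literally all that the third condition asserts) to entirety of their $X^1$-derivatives; everything else is bookkeeping. I would therefore concentrate on that step, either via the analytic-dependence theorem above or, more concretely, by writing $\dot S_{jn}(X^1_0,\mu)$ as a limit of difference quotients of the entire functions $S_{jn}(\cdot,\mu)$ that converges locally uniformly in $\mu$ (the uniformity coming from the local boundedness of $\ddot S_{jn} = (q_{\nu^2}+\mu^2)S_{jn}$ in $(X^1,\mu)$) and invoking Weierstrass's theorem on holomorphic limits.
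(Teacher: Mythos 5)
Your proof is correct and follows essentially the same route as the paper: the paper deduces Lemma \ref{holo1} directly (``an immediate consequence'') from the third defining property of the fundamental systems $S_{jn}$, exactly as you do, since $\Delta_{q_{\nu^2}}(\mu^2)$ and $\delta_{q_{\nu^2}}(\mu^2)$ are Wronskians of these entire-in-$\mu$ solutions. Your two refinements --- using constancy of the Wronskian for Equation (\ref{eqmu}) to evaluate at an interior point, and bridging the fact that the third property speaks only of the values $S_{jn}$ and not of the derivatives $\dot{S}_{jn}$ appearing in the Wronskian --- are details the paper leaves implicit (they hold by the construction of the $S_{jn}$ via integral equations in the cited references), but the underlying argument is the same.
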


In the following (see Subsections \ref{secdef} and \ref{troisdef}), we will define other characteristic and Weyl-Titchmarsh functions which correspond to other choices of spectral parameters which are $-\nu_m^2$ and 
$-(\mu_m^2+ \nu_m^2)$. We study here the influence of these other choices.

\begin{prop}
 Let $\breve{X}^1 = \breve{g}(x^1)$ be a change of variables and $\breve{p}$ be a weight function, then
 \[U(X^1,\mu^2,\nu^2) = \frac{p(h(X^1))}{\breve{p}(\breve{h}(\breve{X}^1))} \breve{U}((\breve{g} \circ h)(X^1),\mu^2,\nu^2)\]
 and
 \[V(X^1,\mu^2,\nu^2) = \frac{p(h(X^1))}{\breve{p}(\breve{h}(\breve{X}^1))} \breve{V}((\breve{g} \circ h)(X^1),\mu^2,\nu^2),\]
 where
 \[ p(h(X^1)) = \left( \frac{f_1}{s_{12}}(h(X^1)) \right)^{-\frac{1}{4}},\]
 \[ \breve{U}(\breve{X}^1,\mu^2,\nu^2) =  \breve{p}(\breve{h}(\breve{X}^1)) u(\breve{h}(\breve{X}^1),\mu^2,\nu^2)   ,\]
 and
  \[ \breve{V}(\breve{X}^1,\mu^2,\nu^2) =  \breve{p}(\breve{h}(\breve{X}^1)) v(\breve{h}(\breve{X}^1),\mu^2,\nu^2).\]
 Moreover,
 \[W_{X^1}(U,V) = \left( \frac{p(h(X^1))}{\breve{p}(\breve{h}(\breve{X}^1))}\right)^{2} \partial_{X^1}(\breve{g} \circ h)(X^1) W_{\breve{X}^1}(\breve{U},\breve{V}).\]
\end{prop}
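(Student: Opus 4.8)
The plan is to trace every quantity back to the single underlying radial solution $u$ (and its companion $v$) and then apply the chain rule, the whole point being that the ``weight'' factors are scalar functions of the common base variable $x^1$. First I would fix $x^1$ and set $X^1 = g(x^1)$, so that $x^1 = h(X^1)$, while simultaneously $\breve{X}^1 = \breve{g}(x^1) = (\breve{g}\circ h)(X^1)$. The crucial bookkeeping observation is that
\[ \breve{h}(\breve{X}^1) = \breve{h}(\breve{g}(h(X^1))) = h(X^1) = x^1, \]
so that the two weights $p(h(X^1))$ and $\breve{p}(\breve{h}(\breve{X}^1))$ are both evaluated at the \emph{same} point $x^1$. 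Their ratio is therefore a genuine scalar function of $X^1$ alone, with no hidden dependence coming through $\breve{U}$ or $\breve{V}$.

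With this in hand the two identities for $U$ and $V$ are immediate. By the definition (\ref{nouvU}) we have $U(X^1) = p(h(X^1)) u(h(X^1)) = p(x^1) u(x^1)$, whereas the stated definition of $\breve{U}$ gives $\breve{U}((\breve{g}\circ h)(X^1)) = \breve{p}(\breve{h}(\breve{X}^1)) u(\breve{h}(\breve{X}^1)) = \breve{p}(x^1) u(x^1)$. Dividing the first by the second produces exactly $U(X^1,\mu^2,\nu^2) = \frac{p(h(X^1))}{\breve{p}(\breve{h}(\breve{X}^1))} \breve{U}((\breve{g}\circ h)(X^1),\mu^2,\nu^2)$, and the identical computation carried out with the second solution $v$ yields the formula for $V$.

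For the Wronskian I would introduce the shorthand $r(X^1) := \frac{p(h(X^1))}{\breve{p}(\breve{h}(\breve{X}^1))}$ and $\sigma := \breve{g}\circ h$, so that $U = r\,(\breve{U}\circ\sigma)$ and $V = r\,(\breve{V}\circ\sigma)$. Differentiating with the product and chain rules gives $\partial_{X^1} U = r'\,(\breve{U}\circ\sigma) + r\,\sigma'\,(\dot{\breve{U}}\circ\sigma)$ and likewise for $V$. Substituting into $W_{X^1}(U,V) = U\,\partial_{X^1}V - V\,\partial_{X^1}U$, the two terms carrying the factor $r\,r'$ cancel against one another, leaving only $r^2\,\sigma'\,\bigl(\breve{U}\,\dot{\breve{V}} - \breve{V}\,\dot{\breve{U}}\bigr)\circ\sigma = r^2\,\sigma'\, W_{\breve{X}^1}(\breve{U},\breve{V})$, which is precisely the claimed identity with $\sigma' = \partial_{X^1}(\breve{g}\circ h)(X^1)$.

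There is no genuine obstacle here: the entire statement is a change-of-variables identity for first-order Wronskians under a Liouville transformation together with a conformal rescaling of the unknown. The only point requiring care is the cancellation of the cross terms $r\,r'$, which is exactly why the Wronskian transforms by the clean factor $r^2\,\sigma'$ rather than acquiring first-derivative corrections; this is the same mechanism that makes the Wronskian of two solutions of a fixed Schr\"odinger equation constant. I would therefore present the computation directly, stressing at the outset the observation $\breve{h}(\breve{X}^1) = h(X^1)$ that renders $r$ an honest scalar multiplier.
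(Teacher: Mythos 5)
Your proof is correct, and it is precisely the routine verification the paper leaves implicit: the paper states this Proposition without any proof, treating it as an immediate change-of-variables computation. Your two key points --- the observation that $\breve{h}(\breve{X}^1) = h(X^1) = x^1$ so the weight ratio is a genuine scalar function, and the cancellation of the $r\,r'$ cross terms in the Wronskian so that only the factor $r^2\,\partial_{X^1}(\breve{g}\circ h)$ survives --- are exactly what is needed, and they are consistent with the paper's convention $W(f,g) = fg' - f'g$.
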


\begin{coro}\label{coroega}
 Let $\hat{X}^1$ and $\check{X}^1$ be two Liouville change of variables defined by
 \[ \hat{X}^1 = \hat{g}(x^1) = \int_{0}^{x^1} \sqrt{s_{13}(t)}\, \mathrm dt \quad \textrm{and} \quad \check{X}^1 = \check{g}(x^1) = \int_{0}^{x^1} \sqrt{r_{\mu^2,\nu^2}(t)}\, \mathrm dt,\]
 where
 \[ r_{\mu^2,\nu^2}(x^1) := \frac{\mu^2 s_{12}(x^1) + \nu^2 s_{13}(x^1)}{\mu^2 + \nu^2},\]
 and let $\hat{p}$ and $\check{p}$ be two weight functions defined by
 \[\hat{p}(\hat{h}(\hat{X}^1)) = \left( \frac{f_1}{s_{13}}(\hat{h}(\hat{X}^1)) \right)^{-\frac{1}{4}} \quad \textrm{and} \quad \check{p}(\check{h}(\check{X}^1)) = \left( \frac{f_1}{r_{\mu^2,\nu^2}}(\check{h}(\check{X}^1)) \right)^{-\frac{1}{4}}.\]
 Let $\hat{U}$ and $\hat{V}$ be defined as
  \[ \hat{U}(\hat{X}^1,\mu^2,\nu^2) =  \hat{p}(\hat{h}(\hat{X}^1)) u(\hat{h}(\hat{X}^1),\mu^2,\nu^2) \quad \textrm{and} \quad
\hat{V}(\hat{X^1},\mu^2,\nu^2) =  \hat{p}(\hat{h}(\hat{X}^1)) v(\hat{h}(\hat{X}^1),\mu^2,\nu^2).\]
and $\check{U}$ and $\check{V}$ be defined as
  \[ \check{U}(\check{X}^1,\mu^2,\nu^2) =  \check{p}(\check{h}(\check{X}^1)) u(\check{h}(\check{X}^1),\mu^2,\nu^2) \quad \textrm{and} \quad
\check{V}(\check{X^1},\mu^2,\nu^2) =  \check{p}(\check{h}(\check{X}^1)) v(\check{h}(\check{X}^1),\mu^2,\nu^2).\]
 Then,
 \[ W_{X^1}(U,V) = W_{\hat{X}^1}(\hat{U},\hat{V}) = W_{\check{X}^1}(\check{U},\check{V}).\]
\end{coro}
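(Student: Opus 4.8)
The plan is to apply the preceding Proposition directly to each of the two alternative Liouville changes of variables and to check that in both cases the prefactor multiplying the transformed Wronskian is identically equal to $1$. No new ideas beyond the Proposition are needed; the content of the Corollary is that the specific $-\frac{1}{4}$ power chosen in all three weights is tuned exactly to the $\sqrt{\,\cdot\,}$ appearing in the Liouville integrals.

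First, to obtain $W_{X^1}(U,V) = W_{\hat{X}^1}(\hat{U},\hat{V})$, I would take $\breve{X}^1 = \hat{X}^1$ and $\breve{p} = \hat{p}$ in the Proposition, which gives
\[ W_{X^1}(U,V) = \left( \frac{p(h(X^1))}{\hat{p}(\hat{h}(\hat{X}^1))}\right)^{2} \partial_{X^1}(\hat{g} \circ h)(X^1) \, W_{\hat{X}^1}(\hat{U},\hat{V}). \]
It then remains to evaluate the prefactor at the common underlying point $x^1 = h(X^1)$. In the ratio of weights the factor $f_1$ cancels, so that $p/\hat{p} = (s_{12}/s_{13})^{1/4}$ and hence $(p/\hat{p})^2 = (s_{12}/s_{13})^{1/2}$. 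For the Jacobian term, the chain rule together with $\hat{g}'(x^1) = \sqrt{s_{13}(x^1)}$ and the inverse-function derivative $h'(X^1) = 1/g'(h(X^1)) = 1/\sqrt{s_{12}}$ yields $\partial_{X^1}(\hat{g}\circ h) = (s_{13}/s_{12})^{1/2}$. The product of these two factors is $1$, which establishes the first equality.

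The second equality follows in exactly the same manner with $\breve{X}^1 = \check{X}^1$, $\breve{p} = \check{p}$ and $r_{\mu^2,\nu^2}$ in place of $s_{13}$: the $f_1$'s cancel once more, the squared weight ratio equals $(s_{12}/r_{\mu^2,\nu^2})^{1/2}$, while $\check{g}'(x^1) = \sqrt{r_{\mu^2,\nu^2}(x^1)}$ gives the Jacobian $\partial_{X^1}(\check{g}\circ h) = (r_{\mu^2,\nu^2}/s_{12})^{1/2}$, and the two cancel. Indeed, the computation shows that for \emph{any} positive function $\phi$, a change of variables $\int_0^{x^1}\sqrt{\phi}\,dt$ with weight $(\phi/f_1)^{1/4}$ produces the squared weight ratio $(s_{12}/\phi)^{1/2}$ and the Jacobian $(\phi/s_{12})^{1/2}$, whose product is $1$ independently of $\phi$. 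The only point demanding care is purely one of bookkeeping, namely to evaluate all weights and derivatives consistently at the same point $x^1 = h(X^1)$ and to apply the inverse-function rule $h' = 1/(g'\circ h)$ correctly; there is no genuine analytic obstacle.
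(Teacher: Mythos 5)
Your proof is correct and is exactly the argument the paper intends: the Corollary is stated as an immediate consequence of the preceding Proposition, and your verification that the squared weight ratio $(s_{12}/\phi)^{1/2}$ cancels the Jacobian $(\phi/s_{12})^{1/2}$ (for $\phi = s_{13}$ and $\phi = r_{\mu^2,\nu^2}$ alike, after the $f_1$ factors cancel) is precisely the computation the paper leaves implicit.
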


\begin{remark}
 We will use $\hat{X}^1$ and $\hat{p}$ in Subsection \ref{secdef} to obtain holomorphic properties and good estimates in the variable $\nu^2$. Secondly, we will use 
 $\check{X}^1$ and $\check{p}$ in Subsection \ref{troisdef} to show that the characteristic functions are bounded for $(\mu,\nu) \in (i\R)^2$.
\end{remark}

\subsection{Link between the scattering coefficients and the Weyl-Titchmarsh and characteristic functions}\label{link}
%

In this Section, we follow \cite{DaKaNi}, Section 3.3, and we make the link between the transmission and the reflection coefficients, corresponding to the restriction of 
the transmission and the reflection operators on each generalized harmonics,
and the characteristic and Weyl-Titchmarsh functions defined in Subsection \ref{firstdef}. First, we observe that 
the scattering operator defined in Theorem \ref{defscat} leaves invariant the span of each generalized harmonic $Y_m$. Hence, it suffices to calculate the scattering operator on each vector space generated 
by the $Y_m$'s. To do this, we recall from Theorem \ref{defscat} that, given any solution $f = u_m(x^1)Y_m(x^2,x^3) \in \mathcal{B}^{\star}$ of (\ref{Hel}), there exists 
a unique $\psi_m^{(\pm)} = (\psi_{0m}^{\pm},\psi_{1m}^{(\pm)}) \in \mathbb{C}^2$ such that
\begin{align}\label{scatrad}
   u_m(x^1) \simeq& \quad \omega_-(\lambda) \left( \chi_0 (x^{1})^{\frac{1}{2}+i\lambda} \psi_{0m}^{(-)} + \chi_1 (A-x^1)^{\frac{1}{2}+i\lambda} \psi_{1m}^{(-)} \right) \nonumber \\
   &- \omega_+(\lambda) \left( \chi_0 (x^1)^{\frac{1}{2}-i\lambda} \psi_{0m}^{(+)} + \chi_1 (A-x^1)^{\frac{1}{2}-i\lambda} \psi_{1m}^{(+)} \right),
\end{align}
where $\omega_{\pm}$ are given by (\ref{omega}) and the cutoff functions $\chi_0$ and $\chi_1$ are defined in (\ref{defchi}). As in \cite{DaKaNi}, we would like 
to apply this result to the FSS $\{S_{jn}$, $j=1$, $2$, $n=0$, $1 \}$. However we recall that $S_{jn}$ are solutions 
of Equation (\ref{eqmu}) and this Schr\"odinger equation was obtained from the Helmholtz equation (\ref{Hel}) by a change a variables and the introduction 
of a weight function (see (\ref{nouvU})). We thus apply the previous result to the functions
\begin{equation}\label{uS}
 u_{jn} (x^1,\mu^2,\nu^2) = \left( \frac{f_1}{s_{12}}(x^1) \right)^{\frac{1}{4}} S_{jn}(g(x^1),\mu^2,\nu^2),\quad j \in \{1,2\}, \quad n\in \{0,1\}. 
\end{equation}
We first study the behaviour of the weight function at the two ends in the following Lemma.

\begin{lemma}\label{poidsbords}
 When $x^1 \to 0$,
 \[\left( \frac{f_1}{s_{12}}(x^1) \right)^{\frac{1}{4}} = \sqrt{x^1}[1]_{\epsilon_0},\]
 where
 \[ [1]_{\epsilon_0} = 1 + O((1+|\log(x^1)|)^{-1-\epsilon_0}).\]
 The corresponding result at the end $\{x^1 = A\}$ is also true.
\end{lemma}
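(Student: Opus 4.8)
The plan is to read the asymptotics of $\left(\frac{f_1}{s_{12}}\right)^{\frac{1}{4}}$ directly off the expansions of $f_1$ and $s_{12}$ recorded when the asymptotically hyperbolic ends were set up, and then to check that the error terms combine correctly. Recall that the hyperbolicity condition (\ref{condhyp}) at $\{x^1 = 0\}$ gives $s_{12}(x^1) = [1]_{\epsilon_0}$, while the discussion of the Robertson condition summarised in (\ref{Rob3}) gives $f_1(x^1) = c_1 (x^1)^2 [1]_{\epsilon_0}$ with the positive constant $c_1$ normalised to $1$ (see the Remark following (\ref{Rob3}) together with Remark \ref{Robsimpl}). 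First I would substitute these two expansions into the quotient, obtaining
\[ \frac{f_1}{s_{12}}(x^1) = (x^1)^2 \, \frac{1 + O(w)}{1 + O(w)}, \qquad w = w(x^1) = (1+|\log(x^1)|)^{-1-\epsilon_0}, \]
with (a priori different) error terms in the numerator and denominator.

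The heart of the matter is then purely algebraic: the class of functions of the form $[1]_{\epsilon_0} = 1 + O(w)$, with $w \to 0$ as $x^1 \to 0$, is stable under the three operations used here. Indeed, if $a = 1 + O(w)$ and $b = 1 + O(w)$ with $b$ bounded away from $0$ (here both tend to $1$), then $ab = 1 + O(w)$ and $a/b = 1 + O(w)$; moreover, for any real exponent $s$, a first-order Taylor expansion of $t \mapsto t^s$ about $t = 1$ gives $a^s = 1 + O(w)$. The quotient rule turns the display above into $\frac{f_1}{s_{12}}(x^1) = (x^1)^2 [1]_{\epsilon_0}$, and the power rule with exponent $s = \frac{1}{4}$ then yields
\[ \left( \frac{f_1}{s_{12}}(x^1) \right)^{\frac{1}{4}} = \sqrt{x^1} \, \left([1]_{\epsilon_0}\right)^{\frac{1}{4}} = \sqrt{x^1} \, [1]_{\epsilon_0}, \]
which is the assertion at $\{x^1 = 0\}$.

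For the end $\{x^1 = A\}$ the argument is entirely symmetric: the analogues of (\ref{condhyp}) and (\ref{Rob3}) there read $s_{12}(x^1) = \alpha[1]_{\epsilon_1}$ and $f_1(x^1) = (A-x^1)^2 [1]_{\epsilon_1}$. Under the standing mild assumption that $s_{22}$ or $s_{33}$ is non-constant one has $\alpha = 1$ (Remark \ref{Robsimpl}), so the constants cancel exactly and the same computation gives $\left(\frac{f_1}{s_{12}}(x^1)\right)^{\frac{1}{4}} = \sqrt{A-x^1}\,[1]_{\epsilon_1}$; in the degenerate case where both $s_{22}$ and $s_{33}$ are constant, the computation is unchanged up to a harmless positive multiplicative constant $\alpha^{-1/4}$.

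The computation carries no genuine difficulty, and the only point demanding care — the \emph{main obstacle}, such as it is — is the closure statement for the error class $[1]_{\epsilon_0}$ under products, quotients and fractional powers. It is precisely this closure, rather than any cancellation special to the geometry, that guarantees that the quotient and its fourth root again carry an error of the same logarithmic order $(1+|\log(x^1)|)^{-1-\epsilon_0}$; everything else is a direct substitution of the expansions fixed when the ends were defined.
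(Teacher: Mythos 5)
Your proof is correct, and it takes a more direct route than the paper's. The paper never invokes (\ref{Rob3}) in its proof of this lemma: instead it divides the Robertson identity (\ref{Rob2}) (with $f_2=f_3=1$) by $s_{12}$ to write
\[
\frac{f_1}{s_{12}} = \frac{\left(\frac{\det(S)}{s_{12}}\right)^2}{\frac{H_1^2}{s_{12}}\,H_2^2\,H_3^2},
\]
and then estimates each factor from (\ref{condhypH})--(\ref{condhyp}) and Remark \ref{Robsimpl}, namely $\frac{\det(S)}{s_{12}} = \frac{s^{11}}{(x^1)^2}[1]_{\epsilon_0}$, $\frac{H_1^2}{s_{12}} = \frac{1}{(x^1)^2}[1]_{\epsilon_0}$, and $H_i^2 = \frac{s^{11}}{(x^1)^2}[1]_{\epsilon_0}$ for $i=2,3$; assembling these uses exactly the closure properties of the class $[1]_{\epsilon_0}$ under products, quotients and fractional powers that you isolate as the only real content. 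Your shortcut — dividing the recorded expansion $f_1=(x^1)^2[1]_{\epsilon_0}$ of (\ref{Rob3}) (constants normalized to $1$) by $s_{12}=[1]_{\epsilon_0}$ from (\ref{condhyp}) — is legitimate, since (\ref{Rob3}) is established before the lemma; what the paper's longer computation buys is self-containedness, re-deriving through the metric coefficients $H_i^2$ and $\det(S)$ what (\ref{Rob3}) already packages. One correction to your end-$A$ aside: the degenerate case you allow for cannot occur, because Definition \ref{defAH} itself demands that $s_{12}-1$ and $s_{13}-1$ decay at \emph{both} ends, so $\alpha=\beta=1$ is a standing hypothesis (the remark following that definition and Remark \ref{Robsimpl} only serve to show this is no loss of generality); moreover a constant $\alpha\neq 1$ would in fact not be harmless, since the exact normalization of the weight $\left(\frac{f_1}{s_{12}}\right)^{\frac{1}{4}}$ is what gives the functions $u_{jn}$ of Subsection \ref{link} the precise asymptotics $(A-x^1)^{1\mp i\lambda}$ from which the scattering coefficients are read off.
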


\begin{proof}
 We first divide the Robertson condition (\ref{Rob1}) by $s_{12}$ and we obtain that
 \[ \frac{f_1}{s_{12}} = \frac{\left(\frac{\det(S)}{s_{12}}\right)^2}{\frac{H_1^2}{s_{12}}H_2^2 H_3^2}.\]
 We use the hyperbolicity conditions given in (\ref{condhypH})-(\ref{condhyp}) and Remark \ref{Robsimpl} to obtain that
 \[ \begin{cases}
\frac{\det(S)}{s_{12}} = \frac{s^{11}}{(x^1)^2}[1]_{\epsilon_0}\\
\frac{H_1^2}{s_{12}} = \frac{1}{(x^1)^2}[1]_{\epsilon_0}\\
 H_2^2 =  \frac{s^{11}}{(x^1)^2} [1]_{\epsilon_0}\\
 H_3^2 =  \frac{s^{11}}{(x^1)^2} [1]_{\epsilon_0}\\
\end{cases}.\]
The Lemma is then a direct consequence of these estimates.
\end{proof}
\noindent
Thanks to (\ref{S_0})-(\ref{S_1}), (\ref{uS}) and Lemma \ref{poidsbords}, we obtain that when $x^1 \to 0$,
 \[ u_{10}(x^1,\mu^2,\nu^2) \sim (x^1)^{1-i\lambda} \quad \textrm{and} \quad u_{20}(x^1,\mu^2,\nu^2) \sim \frac{1}{2i\lambda} (x^1)^{1+i\lambda},\]
 and when $x^1 \to A$,
 \begin{equation}\label{uA}
 u_{11}(x^1,\mu^2,\nu^2) \sim (A-x^1)^{1-i\lambda} \quad \textrm{and} \quad u_{21}(x^1,\mu^2,\nu^2) \sim -\frac{1}{2i\lambda} (A-x^1)^{1+i\lambda}.
 \end{equation}
 We denote by $\psi^{(-)} = (\psi_0^{(-)},\psi_1^{(-)})$ and $\psi^{(+)} = (\psi_0^{(+)},\psi_1^{(+)})$ the constants appearing in Theorem \ref{defscat} corresponding 
 to $u_{10}$. Since $u_{10} \sim  (x^1)^{1-i\lambda}$ when $x^1 \to 0$, we obtain that
\[ \psi_0^{(-)} = 0 \quad \textrm{and} \quad \psi_0^{(+)} = - \frac{1}{\omega_+(\lambda)}.\]
We now write $S_{10}$ as a linear combination of $S_{11}$ and $S_{21}$, i.e.
\[ S_{10} = a_{1}(\mu_m^2,\nu_m^2) S_{11} + b_1(\mu_m^2,\nu_m^2)S_{21},\]
where
\[ a_{1}(\mu_m^2,\nu_m^2) = W(S_{10},S_{21}) \quad \textrm{and} \quad b_{1}(\mu_m^2,\nu_m^2) = W(S_{11},S_{10}).\]
Thus, thanks to (\ref{uS}),
\[ u_{10} = a_{1}(\mu_m^2,\nu_m^2) u_{11} + b_1(\mu_m^2,\nu_m^2)u_{21}.\]
We then obtain, thanks to (\ref{uA}), that
\[ \psi_1^{(-)} = -\frac{b_1(\mu_m^2,\nu_m^2)}{2i\lambda \omega_-(\lambda)} \quad \textrm{and} \quad \psi_1^{(+)} = - \frac{ a_1(\mu_m^2,\nu_m^2)}{\omega_+(\lambda)}.\]
Finally, we have shown that $u_{10}$ satisfies the decomposition of Theorem \ref{defscat} with
\begin{equation}\label{psi}
\psi^{(-)} = \begin{pmatrix}
               0 \\ -\frac{b_1(\mu_m^2,\nu_m^2)}{2i\lambda \omega_-(\lambda)}
              \end{pmatrix} \quad \textrm{and} \quad
              \psi^{(+)} = \begin{pmatrix}
               -\frac{1}{\omega_+(\lambda)} \\ -\frac{a_1(\mu_m^2,\nu_m^2)}{\omega_+(\lambda)}
              \end{pmatrix}.
\end{equation}
We follow the same procedure for $u_{11}$ and we obtain the corresponding vectors
\begin{equation}\label{phi}
\phi^{(-)} = \begin{pmatrix}
               \frac{b_0(\mu_m^2,\nu_m^2)}{2i\lambda \omega_-(\lambda)} \\ 0
              \end{pmatrix} \quad \textrm{and} \quad
              \phi^{(+)} = \begin{pmatrix}
                -\frac{a_0(\mu_m^2,\nu_m^2)}{\omega_+(\lambda)} \\ -\frac{1}{\omega_+(\lambda)}
              \end{pmatrix},
\end{equation}
where $a_0(\mu_m^2,\nu_m^2) = W(S_{11},S_{20})$ and $b_0(\mu_m^2,\nu_m^2) = W(S_{10},S_{11})$. We now recall that for all $\psi_m^{(-)} \in \C^2$ there 
exists a unique vector $\psi_m^{(+)} \in \C^2$ and $u_m(x)Y_m \in \mathcal{B}^{\star}$ satisfying (\ref{Hel}) for which the expansion (\ref{scat}) holds. 
This defines the scattering matrix $S_g(\lambda,\mu_m^2,\nu_m^2)$ as the $2 \times 2$ matrix such that for all $\psi_m^{(-)} \in \C^2$
\begin{equation}\label{psiS}
 \psi_m^{(+)} = S_g(\lambda,\mu_m^2,\nu_m^2) \psi_m^{(-)}.
\end{equation}
Using the notation
\[ S_g(\lambda,\mu_m^2,\nu_m^2) = \begin{pmatrix}
               L(\lambda,\mu_m^2,\nu_m^2) & T_L(\lambda,\mu_m^2,\nu_m^2) \\ T_R(\lambda,\mu_m^2,\nu_m^2) & R(\lambda,\mu_m^2,\nu_m^2)
              \end{pmatrix},\]
and, using the definition (\ref{psiS}) of the partial scattering matrix together with (\ref{psi})-(\ref{phi}), we find
\begin{equation}\label{matscatpart}
  S_g(\lambda,\mu_m^2,\nu_m^2) = \begin{pmatrix}
               -\frac{2i\lambda \omega_-(\lambda)}{\omega_+(\lambda)} \frac{a_0(\mu_m^2,\nu_m^2)}{b_0(\mu_m^2,\nu_m^2)} & \frac{2i\lambda  \omega_-(\lambda)}{\omega_+(\lambda)} \frac{1}{b_1(\mu_m^2,\nu_m^2)} \\ 
               -\frac{2i\lambda \omega_-(\lambda)}{\omega_+(\lambda)} \frac{1}{b_0(\mu_m^2,\nu_m^2)} & \frac{2i\lambda \omega_-(\lambda)}{\omega_+(\lambda)} \frac{a_1(\mu_m^2,\nu_m^2)}{b_1(\mu_m^2,\nu_m^2)} 
              \end{pmatrix}.
\end{equation}
In this expression of the partial scattering matrix, we recognize the usual transmission coefficients $T_L(\lambda,\mu_m^2,\nu_m^2)$ and $T_R(\lambda,\mu_m^2,\nu_m^2)$ and the 
reflection coefficients  $L(\lambda,\mu_m^2,\nu_m^2)$ and $R(\lambda,\mu_m^2,\nu_m^2)$ from the left and the right respectively. Since they are written in terms of Wronskians of the $S_{jn}$, 
$j=1, \, 2$, $n= 0, \, 1$, we can make the link with the characteristic function (\ref{defDelta1}) and the generalized Weyl-Titchmarsh function (\ref{defM1}) as follows.
Noting that,
\[ \Delta_{q_{\nu_m^2}}(\mu_m^2) = b_1(\mu_m^2,\nu_m^2) = - b_0(\mu_m^2,\nu_m^2) \quad \textrm{and} \quad M_{q_{\nu_m^2}}(\mu_m^2) = \frac{a_0(\mu_m^2,\nu_m^2)}{b_0(\mu_m^2,\nu_m^2)},\]
we get,
\begin{equation}\label{lienLM}
 L(\lambda,\mu_m^2,\nu_m^2) = - \frac{2i\lambda \omega_-(\lambda)}{\omega_+(\lambda)} M_{q_{\nu_m^2}}(\mu_m^2)
\end{equation}
and
\begin{equation}\label{lienTdelta}
 T(\lambda,\mu_m^2,\nu_m^2) = T_L(\lambda,\mu_m^2,\nu_m^2) = T_R(\lambda,\mu_m^2,\nu_m^2) = \frac{2i\lambda \omega_-(\lambda)}{\omega_+(\lambda)} \frac{1}{\Delta_{q_{\nu_m^2}}(\mu_m^2)}.
\end{equation}
Finally, using the fact that the scattering operator is unitary (see Theorem \ref{defscat}) we obtain as in \cite{DaKaNi} the equality
\begin{equation}\label{lienRM}
 R(\lambda,\mu_m^2,\nu_m^2) = \frac{2i\lambda \omega_-(\lambda)}{ \omega_+(\lambda)} \frac{\overline{\Delta_{q_{\nu_m^2}}(\mu_m^2)}}{\Delta_{q_{\nu_m^2}}(\mu_m^2)} \overline{M_{q_{\nu_m^2}}(\mu_m^2)}.
\end{equation}


\subsection{A second construction of characteristic and Weyl-Titchmarsh functions}\label{secdef}

In Subsection \ref{firstdef} we defined the characteristic and the Weyl-Titchmarsh functions when $-\mu_m^2$ is the spectral parameter. We can also define these functions when 
we put $-\nu_m^2$ as the spectral parameter. We recall that the radial equation is given by (\ref{eqR}).
To choose $-\nu^2 := -\nu_m^2$ as the spectral parameter we make the Liouville change of variables:
\[\hat{X}^1 = \hat{g}(x^1) = \int_{0}^{x^1} \sqrt{s_{13}(t)}\, \mathrm dt\]
and we define
\[\hat{u}(\hat{X}^1,\mu^2,\nu^2) = u(\hat{h}(\hat{X}^1),\mu^2,\nu^2),\]
where $\hat{h} =\hat{g}^{-1}$ and $\mu^2 := \mu_m^2$. As in Subsection \ref{firstred} we introduce a weight function and we define
\begin{eqnarray*}
 \hat{U}(\hat{X}^1,\mu^2,\nu^2) = \left( \frac{f_1}{s_{13}}(\hat{h}(\hat{X}^1)) \right)^{-\frac{1}{4}} u(\hat{h}(\hat{X}^1),\mu^2,\nu^2).
\end{eqnarray*}
After calculation, we obtain that $\hat{U}(\hat{X}^1,\mu^2,\nu^2)$ satisfies, in the variable $\hat{X}^1$, the Schr\"odinger equation
\begin{equation}\label{Schro2}
 - \ddot{\hat{U}}(\hat{X}^1,\mu^2,\nu^2) + \hat{q}_{\mu^2}(\hat{X}^1,\lambda) U(\hat{X}^1,\mu^2,\nu^2) = -\nu^2 U(\hat{X}^1,\mu^2,\nu^2),
\end{equation}
where,
\begin{align}\label{potq2}
\hat{q}_{\mu^2}(\hat{X}^1,\lambda) =& -(\lambda^2+1) \frac{s_{11}(\hat{X}^1)}{s_{13}(\hat{X}^1)} + \mu^2 \frac{s_{12}(\hat{X}^1)}{s_{13}(\hat{X}^1)} +\frac{1}{16} \left( \dot{ \left( \log \left( \frac{f_{1}(\hat{X}^1)}{s_{13}(\hat{X}^1)} \right) \right) }\right)^2 - \frac{1}{4} \ddot{\left( \log \left( \frac{f_{1}(\hat{X}^1)}{s_{13}(\hat{X}^1)} \right) \right)} \quad.
\end{align}
\noindent
As in Subsection \ref{firstdef} we can prove the following Lemma.

\begin{lemma}
 The potential $\hat{q}_{\mu^2}$ satisfies, at the end $\{ \hat{X}^1 = 0\}$,
 \[\hat{q}_{\mu^2}(\hat{X}^1,\lambda) = - \frac{\lambda^2 + \frac{1}{4}}{(\hat{X}^1)^2} + \hat{q}_{0,\mu^2}(\hat{X}^1,\lambda),\]
 where $\hat{X}^1 \hat{q}_{0,\mu^2}(\hat{X}^1,\lambda) \in L^1\left( 0, \frac{\hat{A}^1}{2} \right)$ with $\hat{A}^1 = \hat{g}(A)$. Similarly, at the end $\{\hat{X}^1 = \hat{A}^1\}$,
 \[\hat{q}_{\mu^2}(\hat{X}^1,\lambda) = - \frac{\lambda^2 + \frac{1}{4}}{(\hat{A}^1-\hat{X}^1)^2} + \hat{q}_{\hat{A}^1,\mu^2}(\hat{X}^1,\lambda),\]
  where $(\hat{A}^1-\hat{X}^1) \hat{q}_{\hat{A}^1,\mu^2}(\hat{X}^1,\lambda) \in L^1\left(\frac{\hat{A}^1}{2},\hat{A}^1 \right)$.
 \end{lemma}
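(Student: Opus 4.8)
The plan is to mimic, essentially verbatim, the proof of the analogous Lemma from Subsection \ref{firstdef}, replacing the role played there by $s_{12}$ (the function defining the Liouville change of variables) with $s_{13}$. First I would observe that, by the hyperbolicity conditions (\ref{condhyp}) (equivalently by (\ref{cadre})), one has $s_{13}(x^1) \to 1$ as $x^1 \to 0$, so that the change of variables $\hat{X}^1 = \hat{g}(x^1) = \int_0^{x^1}\sqrt{s_{13}(t)}\,dt$ satisfies $\hat{X}^1 \sim x^1$ when $x^1 \to 0$. Consequently the estimates of Definition \ref{defAH}, together with the asymptotics $f_1 = (x^1)^2[1]_{\epsilon_0}$ coming from (\ref{Rob3}) (with the normalization $c_1 = 1$), may all be read directly in the variable $\hat{X}^1$.

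Next I would expand the potential (\ref{potq2}) term by term near $\hat{X}^1 = 0$. Using $(x^1)^2 s_{11} = [1]_{\epsilon_0}$ and $s_{13} = [1]_{\epsilon_0}$ gives $\frac{s_{11}}{s_{13}} = \frac{[1]_{\epsilon_0}}{(\hat{X}^1)^2}$, so the first term contributes $-\frac{\lambda^2+1}{(\hat{X}^1)^2}$ plus a remainder; the term $\mu^2 \frac{s_{12}}{s_{13}}$ is bounded since $s_{12},s_{13}\to 1$. For the last two terms I would write $\log(f_1/s_{13}) = 2\log \hat{X}^1 + \log([1]_{\epsilon_0})$, whose leading singular parts produce $\frac{1}{16}\cdot\frac{4}{(\hat{X}^1)^2} = \frac{1}{4(\hat{X}^1)^2}$ and $-\frac14\cdot\frac{-2}{(\hat{X}^1)^2} = \frac{1}{2(\hat{X}^1)^2}$. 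Summing the singular contributions yields $-(\lambda^2+1) + \frac14 + \frac12 = -(\lambda^2+\frac14)$, which is exactly the claimed coefficient $-\frac{\lambda^2+1/4}{(\hat{X}^1)^2}$; this is the one genuinely informative point, since it is the interplay of the $+1$ from the shifted operator with the $+\frac34$ coming from the weight that produces the correct hyperbolic singular coefficient. Everything not absorbed into the singular term is collected into $\hat{q}_{0,\mu^2}$, and its weighted integrability follows because the corrections are of the form $(1+|\log \hat{X}^1|)^{-1-\epsilon_0}/(\hat{X}^1)^2$: after multiplication by $\hat{X}^1$, integrability on $(0,\hat{A}^1/2)$ reduces to $\int_0 (1+|\log x|)^{-1-\epsilon_0}\,\frac{dx}{x} < \infty$ (set $u = \log x$), which holds since $\epsilon_0 > 0$. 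The identical argument at $\{\hat{X}^1 = \hat{A}^1\}$ uses the estimates at $x^1 = A$ with $\epsilon_1$ in place of $\epsilon_0$.

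The main obstacle is the bookkeeping of the two derivative terms: because the change of variables has Jacobian $\sqrt{s_{13}}$, which is itself only $1$ modulo logarithmic corrections, one must check that the $n=1$ and $n=2$ derivative estimates of Definition \ref{defAH} survive the passage to the variable $\hat{X}^1$ with the correct $-2-\epsilon_0$ logarithmic decay. This is precisely what the $-\min(n,1)-1-\epsilon_0$ structure of the bounds is designed to guarantee, so that $\dot{(\log(f_1/s_{13}))}$ and $\ddot{(\log(f_1/s_{13}))}$ differ from $2/\hat{X}^1$ and $-2/(\hat{X}^1)^2$ only by terms that remain integrable after weighting by $\hat{X}^1$. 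Since every estimate used here is symmetric between $s_{12}$ and $s_{13}$ at the two ends, the argument is a direct transcription of the one already given in Subsection \ref{firstdef}.
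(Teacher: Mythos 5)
Your proposal is correct and takes essentially the same approach as the paper: the paper's proof of this lemma is just the remark ``as in Subsection \ref{firstdef}'', and that earlier proof is precisely your skeleton ($s_{13}\to 1$ forces $\hat{X}^1\sim x^1$, so the hyperbolicity conditions of Definition \ref{defAH} can be read directly in the variable $\hat{X}^1$, from which the claimed form of the potential follows). Your explicit tally of the singular coefficients, $-(\lambda^2+1)+\tfrac14+\tfrac12=-(\lambda^2+\tfrac14)$, and the logarithmic-substitution check that the remainders are in weighted $L^1$, merely fill in computations the paper leaves implicit.
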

\noindent
We can now follow the procedure of Subsection \ref{firstdef} to define the characteristic and Weyl-Titchmarsh functions corresponding to Equation (\ref{Schro2}) using two 
fondamental systems of solutions. Thus, we can define the characteristic functions
\begin{equation}\label{defDelta2}
 \hat{\Delta}_{\hat{q}_{\mu^2}}(\nu^2) = W(\hat{S}_{11},\hat{S}_{10}) \quad \textrm{and} \quad \hat{\delta}_{\hat{q}_{\mu^2}}(\nu^2) = W(\hat{S}_{11},\hat{S}_{20})
\end{equation}
and the Weyl-Titchmarsh function 
\begin{equation}\label{defM2}
 \hat{M}_{\hat{q}_{\mu^2}}(\nu^2) = -\frac{W(\hat{S}_{11},\hat{S}_{20})}{W(\hat{S}_{11},\hat{S}_{10})} = -\frac{\hat{\delta}_{\hat{q}_{\mu^2}}(\nu^2)}{\hat{\Delta}_{\hat{q}_{\mu^2}}(\nu^2)}.
\end{equation}
\noindent
Thanks to Corollary \ref{coroega} we immediately obtain the following Lemma.
\begin{lemma}\label{egWT1}
 \[ \Delta_{q_{\nu_m^2}}(\mu_m^2) = \hat{\Delta}_{\hat{q}_{\mu_m^2}}(\nu_m^2) \quad \textrm{and} \quad M_{q_{\nu_m^2}}(\mu_m^2) = \hat{M}_{\hat{q}_{\mu_m^2}}(\nu_m^2), \quad \forall m \geq 1.\]
\end{lemma}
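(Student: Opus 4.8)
The plan is to recognize that every quantity in the statement is a Wronskian of fundamental solutions, and that Corollary \ref{coroega} is exactly the tool that says such Wronskians are unchanged when one switches from the choice $-\mu^2$ of spectral parameter made in Subsection \ref{firstdef} to the choice $-\nu^2$ made in Subsection \ref{secdef}. Recalling (\ref{defDelta1})--(\ref{defM1}) and (\ref{defDelta2})--(\ref{defM2}), we have
\[ \Delta_{q_{\nu^2}}(\mu^2) = W(S_{11},S_{10}), \qquad \delta_{q_{\nu^2}}(\mu^2) = W(S_{11},S_{20}) \]
and
\[ \hat{\Delta}_{\hat{q}_{\mu^2}}(\nu^2) = W(\hat{S}_{11},\hat{S}_{10}), \qquad \hat{\delta}_{\hat{q}_{\mu^2}}(\nu^2) = W(\hat{S}_{11},\hat{S}_{20}). \]
Since $M_{q_{\nu^2}}(\mu^2) = -\delta_{q_{\nu^2}}(\mu^2)/\Delta_{q_{\nu^2}}(\mu^2)$ and likewise for $\hat{M}$, it suffices to establish the two Wronskian identities $\Delta_{q_{\nu^2}}(\mu^2) = \hat{\Delta}_{\hat{q}_{\mu^2}}(\nu^2)$ and $\delta_{q_{\nu^2}}(\mu^2) = \hat{\delta}_{\hat{q}_{\mu^2}}(\nu^2)$, after which the equality of the Weyl--Titchmarsh functions follows by division.

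First I would identify both families of fundamental solutions with solutions of the \emph{same} radial equation (\ref{eqR}). By construction (see (\ref{nouvU}), (\ref{uS}) and their analogues in Subsection \ref{secdef}), $S_{jn}$ and $\hat{S}_{jn}$ are the images of solutions of (\ref{eqR}) under the Liouville change of variables $X^1=g(x^1)$, weight $(f_1/s_{12})^{-1/4}$, respectively $\hat{X}^1=\hat{g}(x^1)$, weight $(f_1/s_{13})^{-1/4}$. Denote by $u_{jn}$ and $\hat{u}_{jn}$ the corresponding pulled-back solutions; the whole point is to check $u_{10}=\hat{u}_{10}$, $u_{20}=\hat{u}_{20}$, $u_{11}=\hat{u}_{11}$. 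This is a boundary-asymptotics verification. At the end $\{x^1=0\}$ one has $X^1\sim x^1$ and $\hat{X}^1\sim x^1$ because $s_{12},s_{13}\to 1$ there, while both weights behave like $\sqrt{x^1}$ by Lemma \ref{poidsbords} (and its $s_{13}$-analogue); hence the normalizations (\ref{S_0}) give, in the variable $x^1$,
\[ u_{10}\sim (x^1)^{1-i\lambda}, \qquad u_{20}\sim\frac{1}{2i\lambda}(x^1)^{1+i\lambda}, \]
with identical leading coefficients for $\hat{u}_{10},\hat{u}_{20}$. Since $\lambda\neq 0$ the two exponents $1\pm i\lambda$ are distinct, so these asymptotics determine the solutions uniquely and force $u_{10}=\hat{u}_{10}$, $u_{20}=\hat{u}_{20}$. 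The same argument at $\{x^1=A\}$, using (\ref{S_1}) and the behaviour of the weights there, gives $u_{11}=\hat{u}_{11}$.

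With the solutions identified, the conclusion is then immediate from Corollary \ref{coroega}: applied to the pair $(u_{11},u_{10})$ it gives $W_{X^1}(S_{11},S_{10}) = W_{\hat{X}^1}(\hat{S}_{11},\hat{S}_{10})$, i.e. $\Delta_{q_{\nu^2}}(\mu^2) = \hat{\Delta}_{\hat{q}_{\mu^2}}(\nu^2)$, and applied to $(u_{11},u_{20})$ it gives $\delta_{q_{\nu^2}}(\mu^2) = \hat{\delta}_{\hat{q}_{\mu^2}}(\nu^2)$; the ratio yields $M_{q_{\nu^2}}(\mu^2) = \hat{M}_{\hat{q}_{\mu^2}}(\nu^2)$. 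Specializing $(\mu,\nu)$ to the coupled spectrum $(\mu_m,\nu_m)$ gives the stated equalities for all $m\geq 1$. The one delicate point I expect is the matching of the leading coefficients at $\{x^1=A\}$, where $s_{12}$ and $s_{13}$ need not tend to $1$; there one must use the explicit form of the weights (as in the proof of Lemma \ref{poidsbords}, via the reduced Robertson relations $s_{23}-s_{22}=1$ and $s_{32}-s_{33}=1$) to confirm that the two normalizations agree. I note, however, that even if the two pulled-back copies of the solution at $A$ were to differ by a multiplicative constant, that constant would appear in both $\Delta_{q_{\nu^2}}$ and $\delta_{q_{\nu^2}}$ and therefore cancel in the ratio, so the equality $M_{q_{\nu^2}}(\mu^2)=\hat{M}_{\hat{q}_{\mu^2}}(\nu^2)$ is robust in any case.
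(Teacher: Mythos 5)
Your proof is correct and takes essentially the same route as the paper, whose entire proof is to invoke Corollary \ref{coroega}; your verification that the two fundamental systems pull back to the same solutions of (\ref{eqR}) (via the boundary asymptotics and the oscillation argument, valid since $\lambda\neq 0$ is real) is exactly the step the paper compresses into the word ``immediately''. Your worry about the end $\{x^1=A\}$ in fact dissolves, because Definition \ref{defAH} already imposes $s_{12},s_{13}\to 1$ at that end as well (and Lemma \ref{poidsbords} is stated to hold there too), so the normalizations match exactly and both claimed equalities hold, including $\Delta_{q_{\nu^2}}(\mu^2)=\hat{\Delta}_{\hat{q}_{\mu^2}}(\nu^2)$ and not only the ratio $M_{q_{\nu^2}}(\mu^2)=\hat{M}_{\hat{q}_{\mu^2}}(\nu^2)$.
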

\noindent
As in Subsection \ref{firstdef} the characteristic functions satisfies the following lemma.
\begin{lemma}\label{holo2}
 For any fixed $\mu$ the maps
 \[ \nu \mapsto \hat{\Delta}_{\hat{q}_{\mu^2}}(\nu^2) = \Delta_{q_{\nu^2}}(\mu^2) \quad \textrm{and} \quad \nu \mapsto \hat{\delta}_{\hat{q}_{\mu^2}}(\nu^2) = \delta_{q_{\nu^2}}(\mu^2)\]
 are entire.
\end{lemma}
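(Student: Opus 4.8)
The plan is to follow verbatim the argument used for Lemma \ref{holo1}, simply exchanging the roles of the two spectral parameters. The crucial observation is that in the Schr\"odinger equation (\ref{Schro2}) the spectral parameter is now $-\nu^2$, while the potential $\hat{q}_{\mu^2}$ depends on $\mu$ but is independent of $\nu$. Consequently, by the very construction of the fundamental systems of solutions $\hat{S}_{jn}$, $j \in \{1,2\}$, $n \in \{0,1\}$ (the analogue of the third defining property recalled in Subsection \ref{firstdef}), for each fixed $\hat{X}^1 \in (0,\hat{A}^1)$ the map $\nu \mapsto \hat{S}_{jn}(\hat{X}^1,\mu^2,\nu^2)$ is entire.

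First I would note that the characteristic functions are Wronskians,
\[ \hat{\Delta}_{\hat{q}_{\mu^2}}(\nu^2) = W(\hat{S}_{11},\hat{S}_{10}) \quad \textrm{and} \quad \hat{\delta}_{\hat{q}_{\mu^2}}(\nu^2) = W(\hat{S}_{11},\hat{S}_{20}), \]
i.e. finite algebraic combinations of the $\hat{S}_{jn}$ and of their $\hat{X}^1$-derivatives evaluated at a single interior point. Since both the solutions and their spatial derivatives depend holomorphically on a parameter that enters the equation holomorphically, each such combination is entire in $\nu$; hence $\nu \mapsto \hat{\Delta}_{\hat{q}_{\mu^2}}(\nu^2)$ and $\nu \mapsto \hat{\delta}_{\hat{q}_{\mu^2}}(\nu^2)$ are entire.

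It then remains to identify these functions with $\Delta_{q_{\nu^2}}(\mu^2)$ and $\delta_{q_{\nu^2}}(\mu^2)$ for all $\nu \in \C$ (and fixed $\mu$). For this I would invoke Corollary \ref{coroega}, which asserts that the relevant Wronskians are invariant under the Liouville change of variables relating the first and the second constructions. Because $s_{12}$ and $s_{13}$ both tend to $1$ at the two ends, Lemma \ref{poidsbords} shows that the two weight functions $(f_1/s_{12})^{1/4}$ and $(f_1/s_{13})^{1/4}$ share the same $\sqrt{x^1}$ leading behaviour, so the normalizations (\ref{S_0})--(\ref{S_1}) and their hatted analogues match once pulled back to the original variable $x^1$. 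This upgrades Lemma \ref{egWT1}, initially stated only on the coupled spectrum, to the identities $\hat{\Delta}_{\hat{q}_{\mu^2}}(\nu^2) = \Delta_{q_{\nu^2}}(\mu^2)$ and $\hat{\delta}_{\hat{q}_{\mu^2}}(\nu^2) = \delta_{q_{\nu^2}}(\mu^2)$ on all of $\C^2$, which are precisely the displayed equalities.

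The only genuinely delicate point is the analyticity at the singular endpoints: one must check that the normalized fundamental solutions can be defined so that entirety in $\nu$ is preserved as $\hat{X}^1 \to 0$ and $\hat{X}^1 \to \hat{A}^1$. This is not automatic because of the quadratic singularity $-(\lambda^2 + \tfrac{1}{4})/(\hat{X}^1)^2$ of the potential; however, the preceding Lemma establishes exactly the regularity of $\hat{q}_{\mu^2}$ (quadratic singular part plus an integrable remainder) required to place us in the framework of \cite{FY}, where this uniform-in-$\nu$ analytic dependence of the normalized solutions is proved. Granting that input, the argument above is complete.
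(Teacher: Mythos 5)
Your proposal is correct and follows essentially the same route as the paper: the paper treats Lemma \ref{holo2} as immediate, since the fundamental systems $\hat{S}_{jn}$ for Equation (\ref{Schro2}) are constructed (exactly as in Subsection \ref{firstdef}) with entirety in the spectral parameter $\nu$ as a defining property, the characteristic functions are Wronskians of these, and the identification with $\Delta_{q_{\nu^2}}(\mu^2)$, $\delta_{q_{\nu^2}}(\mu^2)$ is the Wronskian invariance of Corollary \ref{coroega}. Your added remarks on matching the endpoint normalizations via Lemma \ref{poidsbords} and on the singular endpoints being handled by the framework of \cite{FY} are correct elaborations of points the paper leaves implicit.
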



\subsection{A third construction of characteristic and Weyl-Titchmarsh functions and application}\label{troisdef}

The aim of this Subsection is to show that, if we allow the angular momenta to be complex numbers, the characteristic functions $\Delta$ and $\delta$ are bounded on $(i\R)^2$. Thus,
in this Subsection $\mu_m$ and $\nu_m$ are assume to be in $i\R$.
In Subsections \ref{firstdef} and \ref{secdef} we defined the characteristic and the Weyl-Titchmarsh functions with $-\mu_m^2$ and $-\nu_m^2$ as the spectral parameter respectively. 
We now make a third choice of spectral parameter. We recall that the radial equation is given by (\ref{eqR}) and we rewrite this equation as
\[  -u'' + \frac{1}{2} (\log(f_1))' u' -(\lambda^2 +1)s_{11}u = - (\mu_m^2 + \nu_m^2) \left( \frac{\mu_m^2 s_{12} + \nu_m^2 s_{13}}{\mu_m^2 + \nu_m^2}\right)u.\]
We put, for $(y,y') \in \R^2$,
\[ \mu := \mu_m = iy, \quad \nu := \nu_m = iy', \quad \omega^2 := \mu^2 + \nu^2 \quad \textrm{and} \quad r_{\mu^2,\nu^2}(x^1) := \frac{\mu^2 s_{12}(x^1) + \nu^2 s_{13}(x^1)}{\mu^2 + \nu^2}.\]

\begin{remark}\label{estir}
 There exist some positive constants $c_1$ and $c_2$ such that for all $(\mu,\nu) \in (i\R)^2$ and $x^1 \in (0,A)$,
 \[ 0 < c_1 \leq r_{\mu^2,\nu^2}(x^1) \leq c_2 < +\infty.\]
\end{remark}
\noindent
To choose $-\omega^2$ as the spectral parameter we make a Liouville change of variables (that depends on $\mu^2$ and $\nu^2$ and is a kind of average of the previous ones):
\[\check{X}_{\mu^2,\nu^2}^1 = \check{g}_{\mu^2,\nu^2}(x^1) = \int_{0}^{x^1} \sqrt{r_{\mu^2,\nu^2}(t)}\, \mathrm dt.\]
For the sake of clarity, we put
\[ \check{X}^1 := \check{X}_{\mu^2,\nu^2}^1 \quad \textrm{and} \quad \check{g}(x^1) := \check{g}_{\mu^2,\nu^2}(x^1).\]
We define
\[\check{u}(\check{X}^1,\mu^2,\nu^2) = u(\check{h}(\check{X}^1),\mu^2,\nu^2),\]
where $\check{h} = \check{g}^{-1}$. As in Subsection \ref{firstred}, we introduce a weight function and we define
\begin{eqnarray*}
 \check{U}(\check{X}^1,\mu^2,\nu^2) = \left( \frac{f_1}{r_{\mu^2,\nu^2}}(\check{h}(\check{X}^1)) \right)^{-\frac{1}{4}} u(\check{h}(\check{X}^1),\mu^2,\nu^2).
\end{eqnarray*}
After calculation, we obtain that $\check{U}(\check{X}^1,\mu^2,\nu^2)$ satisfies, in the variable $\check{X}^1$, the Schr\"odinger equation
\begin{equation}\label{Schro3}
 - \ddot{\check{U}}(\check{X}^1,\mu^2,\nu^2) + \check{q}_{\mu^2,\nu^2}(\check{X}^1,\lambda) \check{U}(\check{X}^1,\mu^2,\nu^2) = -\omega^2 \check{U}(\check{X}^1,\mu^2,\nu^2),
\end{equation}
where,
\begin{align}\label{potq3}
\check{q}_{\mu^2,\nu^2}(\check{X}^1,\lambda) =& -(\lambda^2+1) \frac{s_{11}(\check{X}^1)}{r_{\mu^2,\nu^2}(\check{X}^1)} +\frac{1}{16} \left( \dot{ \left( \log \left( \frac{f_{1}(\check{X}^1)}{r_{\mu^2,\nu^2}(\check{X}^1)} \right) \right)}\right)^2 - \frac{1}{4} \ddot{\left( \log \left( \frac{f_{1}(\check{X}^1)}{r_{\mu^2,\nu^2}(\check{X}^1)} \right) \right)} \quad.
\end{align}

\begin{lemma}
 The potential $\check{q}_{\mu^2,\nu^2}$ satisfies, at the end $\{ \check{X}^1 = 0\}$,
 \[\check{q}_{\mu^2,\nu^2}(\check{X}^1,\lambda) = - \frac{\lambda^2 + \frac{1}{4}}{(\check{X}^1)^2} + \check{q}_{0,\mu^2,\nu^2}(\check{X}^1,\lambda),\]
 where $\check{X}^1 \check{q}_{0,\mu^2,\nu^2}(\check{X}^1,\lambda) \in L^1\left( 0, \frac{\check{A}^1}{2} \right)$ with $\check{A}^1 = \check{g}(A)$ and 
 $\check{q}_{0,\mu^2,\nu^2}$ is uniformly bounded for $(\mu,\nu) \in (i\R)^2$. Similarly, at the end $\{\check{X}^1 = \check{A}^1\}$,
  \[\check{q}_{\mu^2,\nu^2}(\check{X}^1,\lambda) = - \frac{\lambda^2 + \frac{1}{4}}{(\check{A}^1-\check{X}^1)^2} + \check{q}_{\check{A}^1,\mu^2,\nu^2}(\check{X}^1,\lambda),\]
 where $(\check{A}^1- \check{X}^1) \check{q}_{\check{A}^1,\mu^2,\nu^2}(\check{X}^1,\lambda) \in L^1\left(\frac{\check{A}^1}{2},\check{A}^1 \right)$ with $\check{A}^1 = \check{g}(A)$ and 
 $\check{q}_{\check{A}^1,\mu^2,\nu^2}$ is uniformly bounded for $(\mu,\nu) \in (i\R)^2$. 
\end{lemma}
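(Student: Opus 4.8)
The plan is to follow the same route as the proofs of the analogous lemmas in Subsections \ref{firstdef} and \ref{secdef}, the one genuinely new feature being the required uniformity in $(\mu,\nu)\in(i\R)^2$. The starting observation is structural: for $(\mu,\nu)\in(i\R)^2$, writing $\mu=iy$ and $\nu=iy'$, the weights $\frac{\mu^2}{\mu^2+\nu^2}=\frac{y^2}{y^2+y'^2}$ and $\frac{\nu^2}{\mu^2+\nu^2}=\frac{y'^2}{y^2+y'^2}$ are nonnegative, sum to one, and are \emph{constant in} $x^1$, so $r_{\mu^2,\nu^2}$ is a convex combination of $s_{12}$ and $s_{13}$. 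I would first strengthen Remark \ref{estir} by recalling that the hyperbolicity conditions of Definition \ref{defAH} hold for both $s_{12}$ and $s_{13}$ with constants $C_n$ that do not depend on $(\mu,\nu)$; taking the convex combination preserves these bounds, so $r_{\mu^2,\nu^2}=[1]_{\epsilon_0}$ and, more generally, $(x^1\partial_{x^1})^n(r_{\mu^2,\nu^2}-1)$ obeys the estimates of Definition \ref{defAH} uniformly in $(\mu,\nu)$. In particular the two-sided bound $0<c_1\le r_{\mu^2,\nu^2}\le c_2$ of Remark \ref{estir} holds with $c_1,c_2$ independent of $(\mu,\nu)$.

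From $r_{\mu^2,\nu^2}=[1]_{\epsilon_0}$ (uniformly) it follows, exactly as in the proof of the first lemma of Subsection \ref{firstdef}, that $\check{X}^1=\check g(x^1)=\int_0^{x^1}\sqrt{r_{\mu^2,\nu^2}(t)}\,dt\sim x^1$ as $x^1\to 0$, with the equivalence uniform in $(\mu,\nu)$; hence the hyperbolicity conditions may be transferred directly to the variable $\check X^1$. I would then treat the three terms of (\ref{potq3}) separately. Using $(x^1)^2 s_{11}=[1]_{\epsilon_0}$ and $r_{\mu^2,\nu^2}=[1]_{\epsilon_0}$, the first term is $-(\lambda^2+1)/(\check X^1)^2$ plus a uniformly integrable remainder. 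Using $f_1=c_1(x^1)^2[1]_{\epsilon_0}$ from (\ref{Rob3}) together with $r_{\mu^2,\nu^2}=[1]_{\epsilon_0}$, one gets $f_1/r_{\mu^2,\nu^2}=c_1(\check X^1)^2[1]_{\epsilon_0}$, so $\log(f_1/r_{\mu^2,\nu^2})=2\log\check X^1+O(1)$ and the two logarithmic-derivative terms contribute $\frac{1}{4(\check X^1)^2}+\frac{1}{2(\check X^1)^2}=\frac{3}{4(\check X^1)^2}$ plus remainders. Summing, $-\frac{\lambda^2+1}{(\check X^1)^2}+\frac{3}{4(\check X^1)^2}=-\frac{\lambda^2+\frac14}{(\check X^1)^2}$, which is exactly the claimed leading singularity, and $\check q_{0,\mu^2,\nu^2}$ collects the $[1]_{\epsilon_0}$-errors.

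The point that distinguishes the present construction from those of Subsections \ref{firstdef} and \ref{secdef}, and the reason the uniform statement is both true and useful, is that the averaging change of variables $\check g$ has absorbed the \emph{entire} $(\mu^2,\nu^2)$-linear part of the potential into the spectral parameter $-\omega^2$: the potential (\ref{potq3}) contains no term of the type $\nu^2 s_{13}/s_{12}$ present in (\ref{potq}), and its dependence on $(\mu,\nu)$ enters only through the bounded quantity $r_{\mu^2,\nu^2}\in[c_1,c_2]$. Consequently each $[1]_{\epsilon_0}$-error in $\check q_{0,\mu^2,\nu^2}$ is controlled by constants independent of $(\mu,\nu)$, so $\check X^1\check q_{0,\mu^2,\nu^2}\in L^1(0,\check A^1/2)$ with $L^1$-norm uniformly bounded in $(\mu,\nu)\in(i\R)^2$; this is the uniform-boundedness assertion. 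The analysis at the end $\{x^1=A\}$ is identical after replacing $x^1$ by $A-x^1$ and invoking the hyperbolicity conditions at $A$ together with Remark \ref{Robsimpl} (which, when $s_{22}$ or $s_{33}$ is non-constant, fixes the same normalization $\alpha=\beta=1$ there).

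The only delicate point is the uniformity; the computation of the leading $-\frac{\lambda^2+\frac14}{(\check X^1)^2}$ term is a routine repetition of the first lemma. I expect the main thing to check is that the implied constants in the $[1]_{\epsilon_0}$-remainders genuinely do not depend on the convex weights; this reduces to the observation that a convex combination of two quantities each satisfying a bound of the form $|(x^1\partial_{x^1})^n(\,\cdot\,-1)|\le C_n(1+|\log x^1|)^{-\min(n,1)-1-\epsilon_0}$ satisfies the same bound with the same $C_n$. The locus $\mu^2+\nu^2=0$, where $r_{\mu^2,\nu^2}$ would be undefined, meets $(i\R)^2$ only at the origin $(\mu,\nu)=(0,0)$ (since $\mu^2+\nu^2=-(y^2+y'^2)$), which plays no role as $\omega^2$ is the spectral parameter; away from it the weights lie in $[0,1]$ and all estimates are uniform.
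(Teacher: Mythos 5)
Your proposal is correct and follows essentially the route the paper intends: the paper states this lemma without proof, treating it as the analogue of the lemma of Subsection \ref{firstdef} (transfer of the hyperbolicity conditions through the Liouville change of variables, giving the leading singularity $-\frac{\lambda^2+\frac{1}{4}}{(\check{X}^1)^2}$) combined with the uniform two-sided bound on $r_{\mu^2,\nu^2}$ of Remark \ref{estir}. Your convex-combination observation, showing that the weights $\frac{y^2}{y^2+y'^2}$, $\frac{y'^2}{y^2+y'^2}$ preserve the constants $C_n$ of Definition \ref{defAH} and that all $(\mu,\nu)$-dependence enters only through $r_{\mu^2,\nu^2}\in[c_1,c_2]$, is exactly the detail the paper leaves implicit to justify the claimed uniformity.
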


\begin{remark}\label{estiA}
 Thanks to Remark \ref{estir}, we immediately obtain that there exist some positive constants $A^-$ and $A^+$ such that for all $(\mu,\nu) \in (i \R)^2$,
 \[ A^- \leq \check{A}^1 =: \check{A}_{\mu^2,\nu^2}^1 \leq A^+.\]
\end{remark}
\noindent
Once more, we follow the procedure of Subsection \ref{firstdef} to define the characteristic and Weyl-Titchmarsh functions corresponding to Equation (\ref{Schro3}) using two 
fondamental systems of solutions $\{\check{S}_{j0} \}_{j=1,2}$ and $\{\check{S}_{j1} \}_{j=1,2}$ satisfying the asymptotics (\ref{S_0})-(\ref{S_1}). Thus, we define the characteristic function
\begin{equation}\label{defDelta3}
 \check{\Delta}_{\check{q}_{\mu^2,\nu^2}}(\omega^2) = W(\check{S}_{11},\check{S}_{10}),
\end{equation}
and the Weyl-Titchmarsh function 
\begin{equation}\label{defM3}
 \check{M}_{\check{q}_{\mu^2,\nu^2}}(\omega^2) = -\frac{W(\check{S}_{11},\check{S}_{20})}{W(\check{S}_{11},\check{S}_{10})} =: -\frac{\check{\delta}_{\check{q}_{\mu^2,\nu^2}}(\omega^2)}{\check{\Delta}_{\check{q}_{\mu^2,\nu^2}}(\omega^2)}.
\end{equation}
As in Subsection \ref{secdef}, Lemma \ref{egWT1}, we can use the Corollary \ref{coroega} to prove the following Lemma.

\begin{lemma}\label{eqWT2}
 \[ \Delta_{q_{\nu^2}}(\mu^2) = \check{\Delta}_{\check{q}_{\mu^2,\nu^2}}(\omega^2) \quad \textrm{and} \quad M_{q_{\nu^2}}(\mu^2) = \check{M}_{\check{q}_{\mu^2,\nu^2}}(\omega^2), \quad \forall (\mu,\nu) \in (i\R)^2.\]
\end{lemma}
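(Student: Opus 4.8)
The plan is to follow the same route as in the proof of Lemma~\ref{egWT1}, reducing everything to the Wronskian invariance recorded in Corollary~\ref{coroega}. Both $\Delta_{q_{\nu^2}}(\mu^2)=W_{X^1}(S_{11},S_{10})$ and $\check{\Delta}_{\check{q}_{\mu^2,\nu^2}}(\omega^2)=W_{\check{X}^1}(\check{S}_{11},\check{S}_{10})$ are Wronskians of fundamental systems of solutions, computed in the two Liouville variables $X^1=g(x^1)$ and $\check{X}^1=\check{g}_{\mu^2,\nu^2}(x^1)$ with their respective weights $(f_1/s_{12})^{-1/4}$ and $(f_1/r_{\mu^2,\nu^2})^{-1/4}$. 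Corollary~\ref{coroega} already guarantees that such Wronskians agree provided they are built from the \emph{same} underlying solution of the radial equation~(\ref{eqR}); so the only thing I really need to check is that $S_{jn}$ and $\check{S}_{jn}$ pull back to the same $u_{jn}$ on $(0,A)$.

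First I would pull each $\check{S}_{jn}$ back to the radial variable by setting $\check{u}_{jn}(x^1)=\left(\frac{f_1}{r_{\mu^2,\nu^2}}(x^1)\right)^{1/4}\check{S}_{jn}(\check{g}(x^1))$, exactly as in~(\ref{uS}). Since $s_{12},s_{13}\to 1$ at the end $\{x^1=0\}$ by condition~(\ref{cadre}), we have $r_{\mu^2,\nu^2}(x^1)\to 1$, hence $\check{X}^1=\check{g}(x^1)\sim x^1$ and $(f_1/r_{\mu^2,\nu^2})^{1/4}\sim\sqrt{x^1}$ there, just as in Lemma~\ref{poidsbords}. Inserting the normalization $\check{S}_{10}\sim(\check{X}^1)^{1/2-i\lambda}$ from~(\ref{S_0}) gives $\check{u}_{10}(x^1)\sim(x^1)^{1-i\lambda}$, which is precisely the leading behaviour of $u_{10}$ found in Subsection~\ref{firstdef}. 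Because $\lambda\neq 0$, the two indicial exponents $1\pm i\lambda$ at this regular singular end are distinct, so the solution of~(\ref{eqR}) with prescribed leading term $(x^1)^{1-i\lambda}$ (and no admixture of $(x^1)^{1+i\lambda}$) is unique; therefore $\check{u}_{10}=u_{10}$. The identical argument at $\{x^1=A\}$, using $\alpha=\beta=1$ from Remark~\ref{Robsimpl}, gives $\check{u}_{11}=u_{11}$, and likewise $\check{u}_{20}=u_{20}$, $\check{u}_{21}=u_{21}$.

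With these identifications in hand, Corollary~\ref{coroega} yields directly
\[ \Delta_{q_{\nu^2}}(\mu^2)=W_{X^1}(S_{11},S_{10})=W_{\check{X}^1}(\check{S}_{11},\check{S}_{10})=\check{\Delta}_{\check{q}_{\mu^2,\nu^2}}(\omega^2), \]
and in the same way $\delta_{q_{\nu^2}}(\mu^2)=\check{\delta}_{\check{q}_{\mu^2,\nu^2}}(\omega^2)$; dividing the second relation by the first gives the asserted equality of the Weyl-Titchmarsh functions. I expect the only genuinely delicate point to be the verification that the \emph{$\mu,\nu$-dependent} change of variables $\check{g}_{\mu^2,\nu^2}$ does not spoil the matching of leading asymptotics at the two ends. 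This is exactly where Remark~\ref{estir} (that $r_{\mu^2,\nu^2}$ stays between two positive constants on $(i\R)^2$, so $\check{g}$ is a genuine diffeomorphism) and the boundary limits $s_{12},s_{13}\to 1$ enter, and it is also the reason the statement is restricted to $(\mu,\nu)\in(i\R)^2$.
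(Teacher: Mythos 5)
Your proof is correct and takes essentially the same route as the paper, whose entire argument is the one-line invocation of Corollary \ref{coroega} (exactly as for Lemma \ref{egWT1}). The details you supply --- pulling $\check{S}_{jn}$ back to the radial variable, matching the leading asymptotics $(x^1)^{1-i\lambda}$ (resp. $(A-x^1)^{1-i\lambda}$) at the two ends using $s_{12},s_{13},r_{\mu^2,\nu^2}\to 1$, and invoking uniqueness of the solution with prescribed leading behaviour since $\lambda\neq 0$ --- are precisely the verification the paper leaves implicit when it applies that corollary.
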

\noindent
We finish this Subsection following the proof of Proposition 3.2 of \cite{DaKaNi} and proving the following Proposition.

\begin{prop}\label{asym}
 For $\omega = iy$, where $\pm y \geq 0$, when $|\omega| \to \infty$,
 \[ \check{\Delta}_{\check{q}_{\mu^2,\nu^2}}(\omega^2) = \frac{ \Gamma(1-i\lambda)^2}{\pi 2^{2i\lambda}} \omega^{2i\lambda} e^{\pm \lambda \pi} 2 \cosh \left( \omega \check{A}^1 \mp \lambda \pi \right)[1]_{\epsilon},\]
 \[ \check{\delta}_{\check{q}_{\mu^2,\nu^2}}(\omega^2) = \frac{ \Gamma(1-i\lambda) \Gamma(1+i\lambda)}{2i\lambda \pi} 2 \cosh \left( \omega \check{A}^1 \right)[1]_{\epsilon},\]
 \[ \check{M}_{\check{q}_{\mu^2,\nu^2}}(\omega^2) = - \frac{\Gamma(1+i\lambda)^2e^{\mp \lambda \pi} 2^{2i\lambda}}{2i\lambda  \Gamma(1-i\lambda)} \omega^{-2i\lambda} \frac{\cosh \left( \omega \check{A}^1 \right)}{\cosh \left( \omega \check{A}^1 \mp \lambda \pi \right)}[1]_{\epsilon},\]
 where $[1]_{\epsilon} = O \left( \frac{1}{(\log|\omega|)^{\epsilon}} \right)$ when $|\omega| \to \infty$ with $\epsilon = \min(\epsilon_0,\epsilon_1)$.
 \end{prop}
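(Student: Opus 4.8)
The plan is to follow closely the proof of Proposition 3.2 of \cite{DaKaNi}, the only genuinely new feature being that here the change of variables, and hence the potential, the endpoint $\check{A}^1$ and the spectral parameter $\omega^2 = \mu^2 + \nu^2$, all depend on the pair $(\mu,\nu)$; the whole analysis must therefore be carried out uniformly for $(\mu,\nu) \in (i\R)^2$. The point of the averaged Liouville change of variables $\check{X}^1 = \check{g}_{\mu^2,\nu^2}(x^1)$ is precisely that it reduces the radial equation (\ref{eqR}) to the Schr\"odinger equation (\ref{Schro3}) whose potential $\check{q}_{\mu^2,\nu^2}$ has the \emph{same} universal singular part $-(\lambda^2+\frac14)(\check{X}^1)^{-2}$ at each end, with a regular remainder that, by the preceding Lemma together with Remark \ref{estir}, is uniformly bounded and satisfies a weighted $L^1$ bound uniformly in $(\mu,\nu)$. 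Combined with Remark \ref{estiA}, which confines $\check{A}^1$ to the fixed compact interval $[A^-,A^+]$, this uniformity is what will let us produce error terms of the form $[1]_{\epsilon} = O((\log|\omega|)^{-\epsilon})$ that are independent of $(\mu,\nu)$.

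First I would work end by end. Near $\{\check{X}^1 = 0\}$ the model equation $-\ddot{U} - (\lambda^2+\frac14)(\check{X}^1)^{-2}U = -\omega^2 U$ is solved explicitly by $(\check{X}^1)^{1/2}$ times the modified Bessel functions $I_{\pm i\lambda}(\omega \check{X}^1)$; since $I_{\pm i\lambda}(z) \sim (z/2)^{\pm i\lambda}/\Gamma(1\pm i\lambda)$ as $z\to 0$, the normalisations in (\ref{S_0}) force the explicit constants $\Gamma(1\mp i\lambda)$ and $2^{\mp i\lambda}$ to appear, and likewise at $\{\check{X}^1 = \check{A}^1\}$ from (\ref{S_1}). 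Treating $\check{q}_{0,\mu^2,\nu^2}$ as a perturbation, I would set up a Volterra integral equation for each $\check{S}_{jn}$ with Green kernel built from these Bessel solutions, and show that the resulting Neumann series converges with a remainder controlled by the weighted $L^1$ norm of the regular part of the potential. Because $\check{X}^1\check{q}_{0,\mu^2,\nu^2} \in L^1$ uniformly and $\check{q}_{0,\mu^2,\nu^2}$ is uniformly bounded, this remainder is $O((\log|\omega|)^{-\epsilon})$ with $\epsilon = \min(\epsilon_0,\epsilon_1)$, uniformly in $(\mu,\nu)$; the logarithmic rate is inherited from the hyperbolicity decay rates $\epsilon_0,\epsilon_1$ of Definition \ref{defAH}.

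Next I would insert the large-argument asymptotics of the Bessel functions on the imaginary axis. For $\omega = iy$ the two Stokes sectors $\pm y \geq 0$ are responsible for the two signs in $e^{\pm\lambda\pi}$ and in the shift $\mp\lambda\pi$; combining the two large-argument behaviours $e^{\pm z}/\sqrt{2\pi z}$ of the Bessel solutions yields the hyperbolic cosines. Concretely, the solution $\check{S}_{10}$ normalised at $0$ extends to the far end as a linear combination of $\check{S}_{11}$ and $\check{S}_{21}$, the coefficients of which are exactly $\check{\Delta}_{\check{q}_{\mu^2,\nu^2}}(\omega^2) = W(\check{S}_{11},\check{S}_{10})$ and $\check{\delta}_{\check{q}_{\mu^2,\nu^2}}(\omega^2) = W(\check{S}_{11},\check{S}_{20})$. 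Carrying the exponential asymptotics from both ends through this connection computation and simplifying gives the stated formulas for $\check{\Delta}$ and $\check{\delta}$, and then $\check{M} = -\check{\delta}/\check{\Delta}$ gives the last formula after cancelling the common $\cosh(\omega\check{A}^1\mp\lambda\pi)$ factor against the remaining $\Gamma$ and $\omega^{\pm 2i\lambda}$ factors.

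The main obstacle will be the uniformity in $(\mu,\nu)$ rather than the asymptotic expansion itself: since both the potential $\check{q}_{\mu^2,\nu^2}$ and the endpoint $\check{A}^1$ vary with $(\mu,\nu)$, I must make sure that every estimate in the Volterra iteration, and in particular the $O((\log|\omega|)^{-\epsilon})$ error, is governed only by the uniform bounds of Remarks \ref{estir} and \ref{estiA} and not by the individual values of $\mu$ and $\nu$. This is exactly the feature that distinguishes the present multivariable situation from the one-angular-momentum analysis of \cite{DaKaNi}, and it is what will make the bound usable later when $(\mu,\nu)$ are allowed to range over all of $(i\R)^2$.
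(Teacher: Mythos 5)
Your proposal is correct and takes essentially the same approach as the paper: the paper's own proof is a one-line reduction to Proposition 3.2 of \cite{DaKaNi}, noting only that since $\check{q}_{0,\mu^2,\nu^2}$ is uniformly bounded for $(\mu,\nu) \in (i\R)^2$, the argument there applies without additional complication. The Bessel-function model solutions, Volterra iteration, and uniform control via Remarks \ref{estir} and \ref{estiA} that you spell out are precisely the content the paper delegates to that cited proposition, with the uniformity in $(\mu,\nu)$ being the one genuinely new point, exactly as you identified.
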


\begin{proof}
 The only difference with Proposition 3.2 of \cite{DaKaNi} is the fact that our potential $q_{\mu^2,\nu^2}$ depends on the angular momenta $\mu^2$ and $\nu^2$. However, since 
 $\check{q}_{0,\mu^2,\nu^2}$ is uniformly bounded for $(\mu,\nu) \in (i\R)^2$, we obtain Proposition \ref{asym} without additional complication.
\end{proof}

\begin{coro}\label{bornedelta}
 There exists $C > 0$ such that for all $(\mu,\nu) \in (i \R)^2$,
 \[ |\Delta_{q_{\nu^2}}(\mu^2)| = |\check{\Delta}_{\check{q}_{\mu^2,\nu^2}}(\omega^2)|  \leq C\]
 and
 \[|\delta_{q_{\nu^2}}(\mu^2)| = |\check{\delta}_{\check{q}_{\mu^2,\nu^2}}(\omega^2)| \leq C.\]
\end{coro}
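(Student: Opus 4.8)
The plan is to reduce everything to the third construction and then read the bound off the asymptotics of Proposition \ref{asym}. First I would invoke Lemma \ref{eqWT2}, which gives $\Delta_{q_{\nu^2}}(\mu^2) = \check{\Delta}_{\check{q}_{\mu^2,\nu^2}}(\omega^2)$ and $M_{q_{\nu^2}}(\mu^2) = \check{M}_{\check{q}_{\mu^2,\nu^2}}(\omega^2)$ for all $(\mu,\nu) \in (i\R)^2$; combined with Corollary \ref{coroega} (or, once $\Delta = \check{\Delta}$ is known, with the identity $M = -\delta/\Delta$) this also yields $\delta_{q_{\nu^2}}(\mu^2) = \check{\delta}_{\check{q}_{\mu^2,\nu^2}}(\omega^2)$. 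Hence it suffices to bound $\check{\Delta}$ and $\check{\delta}$ uniformly over $(\mu,\nu) \in (i\R)^2$. The crucial observation is that on this set $\omega^2 = \mu^2 + \nu^2 = -(y^2 + y'^2)$ is real and nonpositive, so $\omega \in i\R$, and $|\omega| \to \infty$ precisely when $(y,y') \to \infty$.

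I would then split $(i\R)^2$ into the region $\{|\omega| \le R\}$ and its complement, with $R$ fixed below. On $\{|\omega| \le R\}$ the parameters $(\mu,\nu)$ range over a compact subset of $(i\R)^2$; since the fundamental systems $S_{jn}$ depend analytically on $\mu$ (Lemma \ref{holo1}) and on the parameter $\nu^2$ entering the potential (Lemma \ref{holo2}), the Wronskians defining $\Delta_{q_{\nu^2}}(\mu^2)$ and $\delta_{q_{\nu^2}}(\mu^2)$ are jointly continuous in $(\mu,\nu)$ and therefore bounded on this compact set. Working with $\Delta,\delta$ rather than $\check{\Delta},\check{\delta}$ here is convenient, since the latter carry the $(\mu,\nu)$-dependent change of variables.

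On $\{|\omega| > R\}$ I would use Proposition \ref{asym} and check that every factor there stays bounded along $\omega \in i\R$. Writing $\omega = iy_\omega$ with $\pm y_\omega \ge 0$, one has $|\omega^{2i\lambda}| = e^{\mp \lambda \pi}$, so the product $\omega^{2i\lambda} e^{\pm \lambda \pi}$ has modulus exactly $1$; the factor $\cosh(\omega\check{A}^1 \mp \lambda\pi)$ has real part $\mp\lambda\pi$ and purely imaginary remaining part, whence $|\cosh(\omega\check{A}^1 \mp \lambda\pi)|^2 = \cos^2(y_\omega\check{A}^1) + \sinh^2(\lambda\pi) \le 1 + \sinh^2(\lambda\pi)$; and the factor $\cosh(\omega\check{A}^1)$ in $\check{\delta}$ has purely imaginary argument, so $|\cosh(\omega\check{A}^1)| = |\cos(y_\omega\check{A}^1)| \le 1$. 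By Remark \ref{estiA} the quantity $\check{A}^1$ stays in $[A^-,A^+]$, and the error factor $[1]_{\epsilon} = O((\log|\omega|)^{-\epsilon})$ tends to $1$ uniformly in the direction of $(y,y')$ because $\check{q}_{0,\mu^2,\nu^2}$ is uniformly bounded over $(i\R)^2$. Thus $|\check{\Delta}|$ and $|\check{\delta}|$ are bounded for $|\omega| > R$ once $R$ is large, and taking $C$ to be the larger of the two bounds finishes the argument.

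The step requiring the most care is the uniformity of the asymptotic estimate in Proposition \ref{asym} across all directions $(y,y')$ sharing a common large value of $|\omega|$: the implied constants in $[1]_{\epsilon}$ must not degenerate as the ratio $y/y'$ varies, which is exactly what the uniform boundedness of $\check{q}_{0,\mu^2,\nu^2}$ over $(i\R)^2$ and the bounds $A^- \le \check{A}^1 \le A^+$ of Remark \ref{estiA} guarantee. Everything else reduces to elementary modulus computations for $\cosh$ and for the power $\omega^{2i\lambda}$ along the imaginary axis.
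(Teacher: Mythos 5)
Your proposal is correct and takes essentially the same route as the paper: the paper's own proof is a one-line appeal to Proposition \ref{asym}, Remark \ref{estiA}, and the fact that $\omega^2 = \mu^2 + \nu^2 \leq 0$ (hence $\omega \in i\R$) for $(\mu,\nu) \in (i\R)^2$, which is exactly the combination you use. Your additional details --- the reduction via Lemma \ref{eqWT2} and Corollary \ref{coroega}, the modulus computations for $\omega^{2i\lambda}$ and the $\cosh$ factors, and the compactness argument for the region $\{|\omega| \leq R\}$ --- are precisely what the paper's phrase ``immediate consequence'' leaves implicit.
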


\begin{proof}
 This corollary is an immediate consequence of Proposition \ref{asym}, Remark \ref{estiA} and the definition of $\omega^2 = \mu^2 + \nu^2 \leq 0$ when $(\mu,\nu) \in (i\R)^2$.
\end{proof}

\section{The inverse problem at fixed energy for the angular equations}\label{invang}

The aim of this Section is to show the uniqueness of the angular part of the St\"ackel matrix i.e. of the second and the third lines. First, we prove that the block
\[ \begin{pmatrix} s_{22} & s_{23} \\ s_{32} & s_{33} \end{pmatrix}\]
is uniquely determined by the knowledge of the scattering matrix at a fixed energy using the fact that the scattering matrices act on the same space and 
the first invariance described in Proposition \ref{propinv}. Secondly, we use the decomposition on the generalized harmonics and the second invariance described in 
Proposition \ref{propinv} to prove the uniqueness of the coefficients $s_{21}$ and $s_{31}$. We finally show the uniqueness of the coupled spectrum which will be useful in the study of 
the radial part.

\subsection{A first reduction and a first uniqueness result}\label{firstred}

We first recall that (see (\ref{nvg})),
\[ g = \frac{(dx^1)^2 + d\Omega_{\mathcal{T}^2}^2 + P(x^1,x^2,x^3,dx^1,dx^2,dx^3)}{(x^1)^2}\]
and
\[ \tilde{g} = \frac{(dx^1)^2 + \tilde{d\Omega}_{\mathcal{T}^2}^2 + \tilde{P}(x^1,x^2,x^3,dx^1,dx^2,dx^3)}{(x^1)^2}.\]
Our main assumption is
\[S_g(\lambda) = S_{\tilde{g}}(\lambda),\]
where the equality holds as operators acting on $L^2(\mathcal{T}^2, dVol_{d\Omega_{\mathcal{T}^2}}; \C^2)$ with
\[ dVol_{d\Omega_{\mathcal{T}^2}} =  \sqrt{\det(d\Omega_{\mathcal{T}^2}^2)}.\]
Thus,
\[ \sqrt{\det(d\Omega_{\mathcal{T}^2}^2)} = \sqrt{\det(\tilde{d\Omega}_{\mathcal{T}^2}^2)},\]
since these operators have to act on the same space. Since,
\[ d\Omega_{\mathcal{T}^2}^2 = s^{11}((dx^2)^2 + (dx^3)^2) \quad \textrm{and} \quad \tilde{d\Omega}_{\mathcal{T}^2}^2 = \tilde{s}^{11}((dx^2)^2 + (dx^3)^2),\]
this equality implies that
\begin{equation}\label{s11}
 s^{11} = \tilde{s}^{11}.
\end{equation}
Using Remark \ref{Robsimpl}, we can obtain more informations from this equality. Indeed, we first note that,
\begin{eqnarray*}
 s^{11} &=& s_{22}s_{33} - s_{23}s_{32} \\
 &=& s_{22}s_{33} - (1+s_{22})(1+s_{33}) \\
 &=& - 1 - s_{22} - s_{33}.
\end{eqnarray*}
Thus, the equality (\ref{s11}), allows us to obtain
\[ s_{22} - \tilde{s}_{22} = \tilde{s}_{33} - s_{33}.\]
Since the left-hand side only depends on the variable $x^2$ and the right-hand side only depends on the variable $x^3$, we can deduce that there exists a constant $c \in \R$ 
such that
\[ s_{22} - \tilde{s}_{22} = c = \tilde{s}_{33} - s_{33}.\]
Using Remark \ref{Robsimpl} again, we can thus write
\[ \begin{pmatrix} s_{22} & s_{23} \\ s_{32} & s_{33} \end{pmatrix} = \begin{pmatrix} \tilde{s}_{22} & \tilde{s}_{23} \\ \tilde{s}_{32} & \tilde{s}_{33} \end{pmatrix} + c \begin{pmatrix} 1 & 1 \\ -1 & -1 \end{pmatrix},\]
or equivalently,
\[ \begin{pmatrix} s_{22} & s_{23} \\ s_{32} & s_{33} \end{pmatrix} = \begin{pmatrix} \tilde{s}_{22} & \tilde{s}_{23} \\ \tilde{s}_{32} & \tilde{s}_{33} \end{pmatrix} G,\]
where
\[G = \begin{pmatrix} 1-c & -c \\ c & 1+c \end{pmatrix},\]
is a constant matrix with determinant equals to $1$. Moreover, as it was mentioned in Proposition \ref{propinv}, if $\hat{S}$ is a second St\"ackel matrix such that
\[\begin{pmatrix} s_{i2} & s_{i3} \end{pmatrix} = \begin{pmatrix} \hat{s}_{i2} & \hat{s}_{i3} \end{pmatrix} G, \quad \forall i \in \{1,2,3\},\]
then
\[ g = \hat{g},\]
since
\[ s^{i1} = \hat{s}^{i1}, \quad \forall i \in \{1,2,3\}.\]
The presence of the matrix $G$ is then due to an invariance of the metric $g$. We can thus assume that $G = I_2$. We conclude that
\begin{equation}\label{egs23}
 \begin{pmatrix} s_{22} & s_{23} \\ s_{32} & s_{33} \end{pmatrix} = \begin{pmatrix} \tilde{s}_{22} & \tilde{s}_{23} \\ \tilde{s}_{32} & \tilde{s}_{33} \end{pmatrix}.
\end{equation}

\subsection{End of the inverse problem for the angular part}\label{secuni}

The aim of this Subsection is to show that the coefficients $s_{21}$ and $s_{31}$ are uniquely defined.
First, since $\{\tilde{Y}_m\}_{m \geq 1}$ is a Hilbertian basis of $L^2(\mathcal{T}^2, s^{11} dx^2 dx^3)$, we can deduce that, for all $m \in \N \setminus \{0\}$, there exists a subset $E_m \subset \N \setminus \{0\}$ such that 
\[ Y_m = \sum_{p \in E_m} c_p \tilde{Y}_p.\]
We recall that, thanks to (\ref{defLetH}),
\[ \begin{pmatrix} H \\ L \end{pmatrix} = \frac{1}{s^{11}} \begin{pmatrix}
               s_{32} & -s_{22} \\ 
               -s_{33} & s_{23}
              \end{pmatrix} \begin{pmatrix} A_2 \\ A_3 \end{pmatrix},\]
where $A_j$, $j \in \{2,3\}$, were defined in (\ref{defAi}). Clearly,
\begin{equation}\label{lienentreAetHL}
 \begin{pmatrix} A_2 \\ A_3 \end{pmatrix} = T \begin{pmatrix} H \\ L \end{pmatrix},
\end{equation}
where
\[ T = - \begin{pmatrix}
               s_{23} & s_{22} \\ 
               s_{33} & s_{32}
              \end{pmatrix}.\]
We recall that
\[ \tilde{T} = - \begin{pmatrix}
               \tilde{s}_{23} & \tilde{s}_{22} \\ 
               \tilde{s}_{33} & \tilde{s}_{32}
              \end{pmatrix}= T.\]
We finally deduce from (\ref{lienentreAetHL}) that
\[ -\begin{pmatrix} \partial_2^2 \\ \partial_3^2 \end{pmatrix} = T \begin{pmatrix} H \\ L \end{pmatrix} + (\lambda^2+1) \begin{pmatrix} s_{21} \\ s_{31} \end{pmatrix}\]
and we then obtain that
\begin{equation}\label{eg1}
 T \begin{pmatrix} H \\ L \end{pmatrix} + (\lambda^2+1) \begin{pmatrix} s_{21} \\ s_{31} \end{pmatrix} = T \begin{pmatrix} \tilde{H} \\ \tilde{L} \end{pmatrix} + (\lambda^2+1) \begin{pmatrix} \tilde{s}_{21} \\ \tilde{s}_{31} \end{pmatrix}.
\end{equation}

\begin{lemma}\label{HLcont}
 For all $m \geq 1$,
 \[ \tilde{H} \left(\sum_{p \in E_m} c_p \tilde{Y}_p \right) = \sum_{p \in E_m} c_p \tilde{H}(\tilde{Y}_p) \quad \textrm{and} \quad \tilde{L}\left(\sum_{p \in E_m} c_p \tilde{Y}_p\right) = \sum_{p \in E_m} c_p \tilde{L}(\tilde{Y}_p).\]
\end{lemma}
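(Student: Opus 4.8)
The plan is to deduce the lemma from the self-adjointness of $\tilde{H}$ and $\tilde{L}$ proved in Lemma \ref{propHL}, combined with the smoothness of the harmonics. The sole content of the statement is that the \emph{closed} (in fact self-adjoint) operators $\tilde{H}$ and $\tilde{L}$ may be applied term by term to the series $Y_m = \sum_{p \in E_m} c_p \tilde{Y}_p$; equivalently, that $Y_m$ lies in the domains $D(\tilde{H})$ and $D(\tilde{L})$. Note that this is not automatic, since the expansion $Y_m = \sum_{p \in E_m} c_p \tilde{Y}_p$ converges \emph{a priori} only in $L^2(\mathcal{T}^2, s^{11} dx^2 dx^3)$ and the index set $E_m$ may be infinite, so one cannot differentiate the series termwise without further justification.

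First I would record the consequence of the spectral theorem. Since $\tilde{H}$ is self-adjoint on $\mathcal{H}_{\mathcal{T}^2} = L^2(\mathcal{T}^2, s^{11} dx^2 dx^3)$ and diagonalised by the Hilbertian basis $\{\tilde{Y}_p\}_{p \geq 1}$ with $\tilde{H}\tilde{Y}_p = \tilde{\mu}_p^2 \tilde{Y}_p$, its domain is
\[ D(\tilde{H}) = \Big\{ v = \sum_{p \geq 1} v_p \tilde{Y}_p \in \mathcal{H}_{\mathcal{T}^2} : \sum_{p \geq 1} |v_p|^2 \tilde{\mu}_p^4 < \infty \Big\}, \]
and for $v \in D(\tilde{H})$ one has $\tilde{H} v = \sum_{p \geq 1} \tilde{\mu}_p^2 v_p \tilde{Y}_p$, the series converging in $\mathcal{H}_{\mathcal{T}^2}$. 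The analogous description holds for $\tilde{L}$ with eigenvalues $\tilde{\nu}_p^2$. Hence it suffices to prove that $Y_m \in D(\tilde{H}) \cap D(\tilde{L})$.

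Next I would invoke regularity. By Lemma \ref{propHL} the operator $H$ is elliptic on the closed manifold $\mathcal{T}^2$, and since $Y_m$ is an eigenfunction, elliptic regularity gives $Y_m \in C^\infty(\mathcal{T}^2)$ (this is also visible directly from the product structure $Y_m = v_m(x^2) w_m(x^3)$ of Remark \ref{remmult}). As $\tilde{H}$ and $\tilde{L}$ are second-order differential operators with smooth coefficients, their self-adjoint realisations on the compact manifold $\mathcal{T}^2$ without boundary have domain equal to the Sobolev space $H^2(\mathcal{T}^2) \supset C^\infty(\mathcal{T}^2)$. Therefore $Y_m \in D(\tilde{H}) \cap D(\tilde{L})$, and the spectral representation above yields
\[ \tilde{H} Y_m = \sum_{p \in E_m} c_p \tilde{\mu}_p^2 \tilde{Y}_p = \sum_{p \in E_m} c_p \tilde{H}(\tilde{Y}_p), \qquad \tilde{L} Y_m = \sum_{p \in E_m} c_p \tilde{\nu}_p^2 \tilde{Y}_p = \sum_{p \in E_m} c_p \tilde{L}(\tilde{Y}_p). \]

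The essential input, and the only mild obstacle, is the domain membership $Y_m \in D(\tilde{H})$, i.e.\ the square-summability $\sum_{p \in E_m} |c_p|^2 \tilde{\mu}_p^4 < \infty$. This is a genuine constraint rather than a formality: the Weyl law for $\{\tilde{\mu}_p^2\}$ recalled after Lemma \ref{distrivp} shows that $\tilde{\mu}_p^2 \to \infty$, so the mere $L^2$-membership $\sum_{p \in E_m} |c_p|^2 < \infty$ does not suffice; it is precisely the $H^2$-regularity of the smooth function $Y_m$ that supplies the stronger summability and thereby legitimises the termwise action of the operators.
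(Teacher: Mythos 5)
Your proof is correct, but it takes a genuinely different route from the one in the paper. The paper's argument is a resolvent trick: since $\tilde{H}$ is self-adjoint, the operators $(\tilde{H}+i)^{-1}$ and $(\tilde{H}+i)^{-1}\tilde{H}$ are bounded, hence can be passed termwise through the $L^2$-convergent series $Y_m=\sum_{p\in E_m}c_p\tilde{Y}_p$; multiplying back by $(\tilde{H}+i)$ on the left then yields the stated identity, the crucial point being that the image series $\sum_{p\in E_m}c_p\tilde{H}(\tilde{Y}_p)$ converges because the coefficients $c_p=\langle Y_m,\tilde{Y}_p\rangle$ decay rapidly --- a fact the paper only sketches, via integration by parts against the operator and the Weyl law on the eigenvalues. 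You instead prove the domain membership $Y_m\in D(\tilde{H})\cap D(\tilde{L})$ head-on: $Y_m$ is smooth by elliptic regularity (or by the product structure of Remark \ref{remmult}), and the unique self-adjoint realisation of an elliptic second-order operator with smooth coefficients on the closed manifold $\mathcal{T}^2$ has domain $H^2(\mathcal{T}^2)\supset C^\infty(\mathcal{T}^2)$, after which the termwise identity is an immediate consequence of the spectral theorem. Both arguments rest on the same inputs (self-adjointness and ellipticity from Lemma \ref{propHL}), but they spend the regularity in different places: the paper needs a quantitative decay estimate on the $c_p$, which it leaves implicit, whereas you need the standard facts of essential self-adjointness and the identification $D(\tilde{H})=H^2(\mathcal{T}^2)$; granting those, your version is arguably the more complete as written, while the paper's has the merit of remaining within elementary Hilbert-space manipulations once the coefficient decay is accepted.
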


\begin{proof}
 We recall that $\tilde{H}$ is selfadjoint. Thus the operator $(\tilde{H}+i)^{-1}$ is bounded. Similarly the operator $(\tilde{H}+i)^{-1} \tilde{H}$ is bounded. Thus,
\begin{eqnarray*}
 (\tilde{H}+i)^{-1} \tilde{H} \left(\sum_{p \in E_m} c_p \tilde{Y}_p \right) &=& \sum_{p \in E_m} c_p (\tilde{H}+i)^{-1} \tilde{H}(\tilde{Y}_p )\\
 &=& (\tilde{H}+i)^{-1} \left( \sum_{p \in E_m} c_p \tilde{H}(\tilde{Y}_p ) \right).
\end{eqnarray*}
Multiplying by $(\tilde{H}+i)$ from the left we obtain the result of the Lemma. Note that the sum $\sum_{p \in E_m} c_p \tilde{H}(\tilde{Y}_p)$ is finite because the coefficients $c_p$ are sufficiently decreasing.
Indeed, we note that $c_p = \langle Y_m , \tilde{Y}_p \rangle$ and we can use integration by parts with the help of the operator $H$ and the Weyl law on the eigenvalues to 
obtain the decay we want.
\end{proof}

\begin{remark}
 If $E_m$, $m \in \N \setminus \{0\}$, are finite then Lemma \ref{HLcont} is obvious. In fact, following the idea of \cite{DaKaNi} Proposition 4.1, 
 we could obtain that this sets are finite using asymptotics of the Weyl-Titchmarsh function.
\end{remark}
\noindent
Applying (\ref{eg1}) to the vector of generalized harmonics
\[ \begin{pmatrix} Y_m \\ Y_m \end{pmatrix} = \begin{pmatrix} \sum_{p \in E_m} c_p\tilde{Y}_p \\ \sum_{p \in E_m} c_p\tilde{Y}_p\end{pmatrix}\]
we obtain, thanks to Lemma \ref{HLcont} and (\ref{LHvp}), that,
\begin{eqnarray*}
  \left( T \begin{pmatrix} H \\ L \end{pmatrix} + (\lambda^2 +1) \begin{pmatrix} s_{21} \\ s_{31} \end{pmatrix} \right) \begin{pmatrix} Y_m \\ Y_m \end{pmatrix}
    &=&  \left( T \begin{pmatrix} \mu_m^2 \\ \nu_m^2 \end{pmatrix} + (\lambda^2 +1) \begin{pmatrix} s_{21} \\ s_{31} \end{pmatrix} \right) \begin{pmatrix} Y_m \\ Y_m \end{pmatrix} \\
 &=&\sum_{p \in E_m} c_p \left( T \begin{pmatrix} \mu_m^2 \\ \nu_m^2 \end{pmatrix} + (\lambda^2 +1) \begin{pmatrix} s_{21} \\ s_{31} \end{pmatrix} \right) \begin{pmatrix} \tilde{Y}_p \\ \tilde{Y}_p \end{pmatrix} \\
  \end{eqnarray*}
and
\begin{eqnarray*}
  \left( T \begin{pmatrix} H \\ L \end{pmatrix} + (\lambda^2 +1) \begin{pmatrix} s_{21} \\ s_{31} \end{pmatrix} \right) \begin{pmatrix} Y_m \\ Y_m \end{pmatrix}
  &=&   \sum_{p \in E_m} c_p  \left( T \begin{pmatrix} \tilde{H} \\ \tilde{L} \end{pmatrix} + (\lambda^2 +1) \begin{pmatrix} \tilde{s}_{21} \\ \tilde{s}_{31} \end{pmatrix} \right) \begin{pmatrix} \tilde{Y}_p \\ \tilde{Y}_p \end{pmatrix} \\
  &=&   \sum_{p \in E_m} c_p  \left( T \begin{pmatrix} \tilde{\mu}_p^2 \\ \tilde{\nu}_p^2 \end{pmatrix} + (\lambda^2 +1) \begin{pmatrix} \tilde{s}_{21} \\ \tilde{s}_{31} \end{pmatrix} \right) \begin{pmatrix} \tilde{Y}_p \\ \tilde{Y}_p \end{pmatrix}.
  \end{eqnarray*}
Hence,
   \[ \sum_{p \in E_m} c_p \left( T \begin{pmatrix} \mu_m^2 \\ \nu_m^2 \end{pmatrix} + (\lambda^2 +1) \begin{pmatrix} s_{21} \\ s_{31} \end{pmatrix} \right) \begin{pmatrix} \tilde{Y}_p \\ \tilde{Y}_p \end{pmatrix}
   = \sum_{p \in E_m} c_p  \left( T \begin{pmatrix} \tilde{\mu}_p^2 \\ \tilde{\nu}_p^2 \end{pmatrix} + (\lambda^2 +1) \begin{pmatrix} \tilde{s}_{21} \\ \tilde{s}_{31} \end{pmatrix} \right) \begin{pmatrix} \tilde{Y}_p \\ \tilde{Y}_p \end{pmatrix}.\]
Since $\{\tilde{Y}_p\}_{p \geq 1}$ is a Hilbertian basis we deduce from this equality that for all $m \geq 1$, for all $p \in E_m$,
\begin{equation}\label{eqmp}
  T \begin{pmatrix} \mu_m^2 \\ \nu_m^2 \end{pmatrix} + (\lambda^2 +1) \begin{pmatrix} s_{21} \\ s_{31} \end{pmatrix} 
  = T \begin{pmatrix} \tilde{\mu}_p^2 \\ \tilde{\nu}_p^2 \end{pmatrix} + (\lambda^2 +1) \begin{pmatrix} \tilde{s}_{21} \\ \tilde{s}_{31} \end{pmatrix}.
\end{equation}
We deduce from (\ref{eqmp}) that,
\[ \begin{pmatrix}  \tilde{\mu}_{p}^2 - \mu_{m}^2 \\ \tilde{\nu}_{p}^2 - \nu_{m}^2 \end{pmatrix} = (\lambda^2 +1) T^{-1} \begin{pmatrix} s_{21} - \tilde{s}_{21} \\ s_{31} - \tilde{s}_{31} \end{pmatrix}.\]
Since the right-hand side is independent of $m$ and $p$, we can deduce from this equality that there exists a constant vector
\[ \begin{pmatrix}  c_1 \\ c_2 \end{pmatrix},\]
such that
\[ \begin{pmatrix}  \tilde{\mu}_{p}^2  \\ \tilde{\nu}_{p}^2 \end{pmatrix} = \begin{pmatrix}  \mu_{m}^2  \\ \nu_{m}^2 \end{pmatrix} + \begin{pmatrix}  c_1 \\ c_2 \end{pmatrix},\]
and
\begin{equation}\label{s23ci}
  \begin{pmatrix}
               s_{21} \\ 
               s_{31}
              \end{pmatrix} = \begin{pmatrix}
               \tilde{s}_{21} \\ 
               \tilde{s}_{31}  \end{pmatrix} + \frac{1}{\lambda^2+1} T \begin{pmatrix}  c_1 \\ c_2 \end{pmatrix}.
\end{equation}

\noindent
From (\ref{s23ci}), we immediately deduce that
\begin{equation}\label{eqcol1}
 \begin{cases}
s_{21}(x^2) = \tilde{s}_{21}(x^2) - C_1 s_{23}(x^2) - C_2 s_{22}(x^2) \\
s_{31}(x^3) = \tilde{s}_{31}(x^3) - C_1 s_{33}(x^3) - C_2 s_{32}(x^3)
\end{cases},
\end{equation}
where $C_i = \frac{c_i}{\lambda^2+1}$ for $i \in \{1,2\}$.
We recall that 
\[g = \sum_{i=1}^3 H_i^2 (dx^i)^2 \quad \textrm{with} \quad H_i^2 = \frac{\det(S)}{s^{i1}} \quad \forall i \in \{1,2,3\}.\]
Since the minors $s^{i1}$ only depend on the second and the third column, they don't change under the transformation given in (\ref{eqcol1}).
Thus, as mentioned in the Introduction, Proposition \ref{propinv}, recalling that the determinant of a matrix doesn't 
change if we add to the first column a linear combination of the second and the third columns, we conclude that the equalities (\ref{eqcol1}) describe an invariance of the 
metric $g$ under the definition of the St\"ackel matrix $S$. We can then choose $C_i=0$, $i \in \{1,2\}$, i.e. $c_1 = c_2 =0$. Finally, we have shown that
\begin{equation}\label{col1}
  \begin{pmatrix}
               s_{21} \\ 
               s_{31}
              \end{pmatrix} = \begin{pmatrix}
               \tilde{s}_{21} \\ 
               \tilde{s}_{31}  \end{pmatrix}.
\end{equation}
From the definition of the operators $L$ and $H$ given by (\ref{defLetH}), we deduce from (\ref{egs23}) and (\ref{col1}) that
\[ H = \tilde{H} \quad \textrm{and} \quad L = \tilde{L}.\]
We then conclude that these operators have the same eigenfunctions, i.e. we can choose 
\begin{equation}\label{exthyp}
Y_m = \tilde{Y}_m, \quad \forall m \geq 1
\end{equation}
and the same coupled spectrum
\begin{equation}\label{egmom}
 \begin{pmatrix}  \mu_{m}^2  \\ \nu_{m}^2 \end{pmatrix} = \begin{pmatrix}  \tilde{\mu}_{m}^2  \\ \tilde{\nu}_{m}^2 \end{pmatrix}, \quad \forall m \geq 1.
\end{equation}

\section{The inverse problem at fixed energy for the radial equation}\label{invrad}

The aim of this Section is to show that the radial part of the St\"ackel matrix, i.e. the first line, is uniquely determined by the knowledge of the scattering matrix.
We first use a multivariable version of the 
Complex Angular Momentum method to extend the equality of the Weyl-Titchmarsh functions (valid on the coupled spectrum) to complex angular momenta. In a second time, we use the Borg-Marchenko Theorem (see 
for instance \cite{FY,KST}) to obtain the uniqueness of the quotients
\[ \frac{s_{11}}{s_{12}} \quad \textrm{and} \quad \frac{s_{11}}{s_{13}}.\]

\subsection{Complexification of the Angular Momenta}

We recall that, thanks to our main assumption in Theorem \ref{main}, (\ref{exthyp})-(\ref{egmom}) and (\ref{lienLM}), we know that,
\begin{equation}\label{hypdep}
 M(\mu_m^2,\nu_m^2) = \tilde{M}(\mu_m^2,\nu_m^2), \quad \forall m \geq 1,
\end{equation}
where
\[M(\mu_m^2,\nu_m^2) = M_{q_{\nu_m^2}}(\mu_m^2) = M_{\hat{q}_{\mu_m^2}}(\nu_m^2)\]
and
\[\tilde{M}(\mu_m^2,\nu_m^2) = M_{\tilde{q}_{\nu_m^2}}(\mu_m^2) = M_{\tilde{\hat{q}}_{\mu_m^2}}(\nu_m^2).\]
The aim of this Subsection is to show that
\begin{equation}\label{CAMeq}
 M(\mu^2,\nu^2) = \tilde{M}(\mu^2,\nu^2), \quad \forall (\mu,\nu) \in \C^2 \setminus P, 
\end{equation}
where $P$ is the set of points $(\mu,\nu) \in \C^2$ such that $(\mu^2,\nu^2)$ is a pole of $M$ and $\tilde{M}$, or equivalently which is a zero of $\Delta$ and $\tilde{\Delta}$.
Usually, in the Complexification of the Angular Momentum method there is only one angular momentum which we complexify using
uniqueness results for holomorphic 
functions in certain classes. In \cite{DNK2}, there are two \emph{independent} angular momenta and the authors are able to use the Complexification of the Angular Momentum method for only 
one angular momentum. Here, we cannot fix one angular momentum and let the other one belong to $\C$ since the two angular momenta are not independent (see 
Lemma \ref{distrivp}). We thus have to complexify simultaneously the two angular momenta and we then need to use uniqueness results for multivariable holomorphic functions.
Therefore, to obtain (\ref{CAMeq}) we want to use the following result given in \cite{Ber,Bl}.

\begin{theorem}\label{bloom}
 Let $K$ be an open cone in $\R^2$ with vertex the origin and $T(K) = \{ z \in \C^2,$ $\mathrm{Re}(z) \in K \}$. Suppose that $f$ is a bounded and an analytic function on $T(K)$. Let $E$ be a 
 discrete subset of $K$ such that for some constant $h > 0$, $|e_1 - e_2| \geq h$, for all $(e_1,e_2) \in E$. Let $n(r) = \# E \cap B(0,r)$. Assume that $f$ vanishes on $E$. 
 Then $f$ is identically zero if
 \[ \varlimsup \frac{n(r)}{r^2} > 0, \quad r \to + \infty.\]
\end{theorem}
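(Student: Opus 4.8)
The plan is to argue by contradiction. Suppose $f \not\equiv 0$ on $T(K)$; after normalizing we may assume $\sup_{T(K)} |f| = 1$, so that $u := \log|f|$ is plurisubharmonic and $\le 0$ on $T(K)$, with $u = -\infty$ exactly on the analytic hypersurface $V := \{ f = 0 \}$. The whole strategy is to compare two estimates for the size (area, equivalently the Riesz mass of $u$) of $V$ inside large balls: a \emph{lower} bound forced by the many prescribed zeros $E$, and an \emph{upper} bound forced by the boundedness of $f$. The hypothesis $\varlimsup_{r \to \infty} n(r)/r^2 > 0$ is precisely calibrated so that these two bounds become incompatible.

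First I would establish the lower bound. Since $V$ is a one-dimensional complex-analytic variety, the monotonicity formula for holomorphic (hence area-minimizing) subvarieties gives, for every $e \in E$ and every small $\rho$ with $B(e,\rho) \subset T(K)$, the density estimate $\mathrm{Area}(V \cap B(e,\rho)) \ge \pi \rho^2$. The separation hypothesis $|e_1 - e_2| \ge h$ makes the balls $B(e, h/2)$, $e \in E$, pairwise disjoint, so summing over $e \in E \cap B(0,r)$ (discarding the bounded contribution near the vertex and the lateral boundary of the cone, where the balls may leave $T(K)$) yields
\[ \mathrm{Area}(V \cap B(0,r)) \; \ge \; c\, h^2\, n(r) - O(r) \]
for a universal $c > 0$. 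Along the subsequence $r_j \to \infty$ realizing the $\varlimsup$, this is $\ge c'\, r_j^2$ with $c' > 0$.

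The second, and much harder, step is the matching upper bound: boundedness of $f$ on the tube must force $\mathrm{Area}(V \cap B(0,r)) = o(r^2)$. The natural tool is the Lelong--Jensen formula, which identifies this area (with multiplicities) with the Riesz mass $\int \Delta u$ and bounds it by the growth of suitable spherical averages of $u$; since $u \le 0$, these averages are controlled from above. \textbf{The main obstacle lies exactly here}: a Euclidean ball $B(0,r)$ is \emph{not} contained in $T(K) = K + i\R^2$, since its real part spills outside the cone $K$, so $u$ is not even defined on all of $B(0,r)$ and the classical Jensen machinery cannot be applied verbatim. One must instead run the Lelong--Jensen argument against a plurisubharmonic exhaustion of $T(K)$ adapted to the conical real base (so that boundedness in the $i\R^2$-directions is genuinely exploited and the growth order is pinned strictly below $2$), and then compare the mass measured by this exhaustion with the Euclidean counting function $n(r)$. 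This is the technical core of the Berndtsson--Bloom argument. Once $\mathrm{Area}(V \cap B(0,r)) = o(r^2)$ is in hand, it contradicts the lower bound $\ge c' r_j^2$, and the contradiction forces $f \equiv 0$.

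Finally, as a guide to intuition (and an alternative route), I would note the one-variable model: restricting $f$ to a complex line in an interior direction $v \in K$ with fixed real offset yields a bounded holomorphic function on a half-plane, whose real zeros $\tau_k$ would have to satisfy the Blaschke summability $\sum 1/\tau_k < \infty$; positive linear density makes this sum diverge, forcing the slice to vanish, hence $f \equiv 0$ by the identity theorem. Turning this heuristic into a proof, however, requires an integral-geometric (Crofton/Fubini) averaging over the family of slicing lines, since the individual points of $E$ do not align on any fixed pencil of complex lines — which is, in the end, the same area computation viewed through one-dimensional sections.
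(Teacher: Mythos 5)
The paper does not prove this statement at all: Theorem \ref{bloom} is quoted verbatim from the literature (\cite{Ber,Bl}), so your proposal must be judged on its own merits --- and it contains a genuine error precisely at the step you yourself flag as the ``technical core''. The upper bound you want, namely that boundedness of $f$ on $T(K)$ forces $\mathrm{Area}(V \cap B(0,r)) = o(r^2)$, is false, and no choice of plurisubharmonic exhaustion will rescue it. Take $K = (\R^+)^2$ and $f(z_1,z_2) = 1 - e^{-(z_1+z_2-2)}$, which satisfies $|f| \leq 1+e^2$ on $T(K)$ and is not identically zero. Its zero variety is the union of the affine hyperplanes $\{z_1+z_2 = 2 + 2\pi i k\}$, $k \in \Z$; inside $T(K)$ each sheet is a strip of width $2$ in the $\mathrm{Re}(z_1)$-direction, unbounded in the $\mathrm{Im}(z_1)$-direction, and the roughly $r$ sheets with $|k| \lesssim r$ each meet $B(0,r)$ in a set of area $\asymp r$, so $\mathrm{Area}(V \cap B(0,r) \cap T(K)) \asymp r^2$. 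Your lower bound (which is essentially correct, granting the Lelong density estimate and the observation that positive $r^2$-density forces most points of $E$ to lie deep inside the cone) therefore lands at exactly the same order $r^2$ that boundedness actually permits, and no contradiction can ever be extracted by comparing unweighted areas.

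What the genuine argument (Berndtsson's) uses instead is a \emph{weighted}, Blaschke-type necessary condition on zero varieties of bounded holomorphic functions in tubes over cones: the area of $V$ is integrated against a weight comparable to $\mathrm{dist}(z,\partial T(K))/(1+|z|)^{3}$ --- the two-variable analogue of the half-plane condition $\sum_k \mathrm{Im}(\zeta_k)/(1+|\zeta_k|^2) < \infty$ --- and it is this weighted mass that boundedness forces to be finite. The weight is exactly what separates the two regimes your computation conflates: the sheets of the counterexample above have bounded real parts, so their distance to $\partial T(K)$ stays bounded while $|z|$ grows, and their weighted mass sums to a finite quantity; by contrast, a separated real zero $e \in E$ deep in the cone has $\mathrm{dist}(e,\partial T(K)) \gtrsim |e|$, so the ball $B(e,h/2)$ contributes $\gtrsim h^2/(1+|e|)^2$ to the weighted mass, and the hypothesis $\varlimsup n(r)/r^2 > 0$ (using monotonicity of $n$ over dyadic shells around the subsequence realizing the limsup) then makes the total diverge. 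Your closing one-variable slicing heuristic actually points at the right mechanism --- the Blaschke condition on each half-plane slice carries the factor $1/\tau_k$, i.e.\ precisely the missing weight --- but averaging those slice conditions over lines produces the weighted integral, not the unweighted area, so your claim that this reduces to ``the same area computation'' repeats the same error rather than repairing it.
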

\noindent
We first introduce the function
\[\begin{array}{ccccl}
\psi & : & \C^2 & \to & \C \\
 & & (\mu,\nu) & \mapsto & \tilde{\Delta}(\mu^2,\nu^2)\delta(\mu^2,\nu^2) - \Delta(\mu^2,\nu^2)\tilde{\delta}(\mu^2,\nu^2)\\
\end{array},\]
with,
\[\delta(\mu^2,\nu^2) = \delta_{q_{\nu^2}}(\mu^2) = \delta_{\hat{q}_{\mu^2}}(\nu^2),\]
where $\delta_{q_{\nu^2}}(\mu^2)$ and $\delta_{\hat{q}_{\mu^2}}(\nu^2)$ were defined in (\ref{defDelta1}) and
\[\Delta(\mu^2,\nu^2) = \Delta_{q_{\nu^2}}(\mu^2) = \Delta_{\hat{q}_{\mu^2}}(\nu^2),\]
where $\Delta_{q_{\nu^2}}(\mu^2)$ and $\Delta_{\hat{q}_{\mu^2}}(\nu^2)$ were defined in (\ref{defDelta2}). Our aim is then to show that $\psi$ is identically zero.

\begin{lemma}\label{lem1}
 The map $\psi$ is entire as a function of two complex variables.
\end{lemma}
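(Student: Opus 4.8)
The plan is to deduce the joint holomorphy of $\psi$ from the \emph{separate} holomorphy of its four building blocks $\Delta,\delta,\tilde\Delta,\tilde\delta$, which has already been recorded. First I would observe that, by Lemma \ref{holo1}, for every fixed $\nu\in\C$ the maps $\mu\mapsto\Delta(\mu^2,\nu^2)=\Delta_{q_{\nu^2}}(\mu^2)$ and $\mu\mapsto\delta(\mu^2,\nu^2)=\delta_{q_{\nu^2}}(\mu^2)$ are entire; indeed $\mu\mapsto\mu^2$ is entire and $\Delta_{q_{\nu^2}}$, $\delta_{q_{\nu^2}}$ are entire functions of their spectral argument by that lemma. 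Symmetrically, Lemma \ref{holo2} together with the identities $\Delta(\mu^2,\nu^2)=\hat\Delta_{\hat q_{\mu^2}}(\nu^2)$ and $\delta(\mu^2,\nu^2)=\hat\delta_{\hat q_{\mu^2}}(\nu^2)$ recorded there shows that, for every fixed $\mu\in\C$, the maps $\nu\mapsto\Delta(\mu^2,\nu^2)$ and $\nu\mapsto\delta(\mu^2,\nu^2)$ are entire. Since Lemmas \ref{holo1} and \ref{holo2} apply verbatim to the second manifold $(\mathcal M,\tilde g)$, the functions $\tilde\Delta$ and $\tilde\delta$ enjoy the identical separate-holomorphy property.

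Next I would note that each of $\Delta,\delta,\tilde\Delta,\tilde\delta$ is thus \emph{separately holomorphic} on all of $\C^2$, i.e. holomorphic in each of the two variables $\mu,\nu$ while the other is held fixed. As finite sums and products of separately holomorphic functions are again separately holomorphic, the function $\psi=\tilde\Delta\,\delta-\Delta\,\tilde\delta$ is separately holomorphic on $\C^2$.

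To upgrade separate holomorphy to joint holomorphy I would invoke the classical theorem of Hartogs on separate analyticity: a complex-valued function on a domain of $\C^2$ that is holomorphic in each variable separately is automatically jointly holomorphic there. Applied to $\psi$ on the whole of $\C^2$, this gives at once that $\psi$ is entire, as claimed.

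The only genuinely nontrivial ingredient is Hartogs' theorem, whose strength is precisely that it requires no a priori continuity or local boundedness of $\psi$; the elementary Osgood lemma would suffice only after one established joint continuity. If one preferred to avoid Hartogs altogether, the robust alternative would be to build the fundamental systems $S_{jn}(X^1,\mu^2,\nu^2)$ directly from their Volterra integral equations: in the transformed equation (\ref{eqmu}) the effective coefficient $q_{\nu^2}(X^1)+\mu^2$ is an affine function of $(\mu^2,\nu^2)$, while the singular endpoint normalisations are $(\mu,\nu)$-independent, so the successive approximations are jointly entire in $(\mu,\nu)$ and converge locally uniformly; joint holomorphy then passes to the Wronskians $\Delta,\delta$ and hence to $\psi$. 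This dependence-on-parameters analysis is the one point to watch, but either route delivers the conclusion immediately from the separate holomorphy established above.
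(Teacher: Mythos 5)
Your proposal is correct and follows exactly the paper's own argument: separate holomorphy of the characteristic functions from Lemmas \ref{holo1} and \ref{holo2}, combined with Hartogs' theorem on separate analyticity, yields that $\psi$ is entire. The additional remarks (closure of separately holomorphic functions under sums and products, and the alternative Volterra-iteration route) are sound elaborations but do not change the substance of the argument.
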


\begin{proof}
 We use Hartogs Theorem (see for instance \cite{Horman}) which states that a function of two complex variables is holomorphic if and only if this function is 
 holomorphic with respect to each variable separately. Indeed, thanks to Lemmas \ref{holo1} and \ref{holo2}, we can then immediately conclude.
\end{proof}
\noindent
To use Theorem \ref{bloom} we need the following estimate on the function $\psi$.

\begin{lemma}\label{lem2}
There exist some positive constants $C$, $A$ and $B$ such that
 \[| \psi(\mu,\nu) | \leq C e^{A |\mathrm{Re}(\mu)| + B| \mathrm{Re}(\nu)|}, \quad \forall (\mu,\nu) \in \C^2.\]
\end{lemma}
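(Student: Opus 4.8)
The plan is to reduce the bound on $\psi$ to growth estimates for the fundamental systems of solutions. Since $\Delta(\mu^2,\nu^2)=W(S_{11},S_{10})$ and $\delta(\mu^2,\nu^2)=W(S_{11},S_{20})$ (and likewise $\tilde\Delta,\tilde\delta$) are Wronskians of solutions of (\ref{eqmu}), and the Wronskian of two solutions of that equation is independent of $X^1$, I would evaluate each Wronskian at a fixed interior point, say $X^1=A^1/2$. It then suffices to bound each $S_{jn}(A^1/2,\mu^2,\nu^2)$ together with its $X^1$-derivative by $Ce^{A|\mathrm{Re}(\mu)|+B|\mathrm{Re}(\nu)|}$, uniformly in $(\mu,\nu)\in\C^2$: the claimed bound on $\psi$ follows at once because a finite sum of products of functions of this exponential type is again of exponential type, the exponents merely adding and being absorbed into new constants $A$, $B$, $C$. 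The boundedness of $\Delta$ and $\delta$ on $(i\R)^2$ already recorded in Corollary \ref{bornedelta} is exactly the special case $\mathrm{Re}(\mu)=\mathrm{Re}(\nu)=0$, and serves as a consistency check.

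To bound the $S_{jn}$, I would write each solution through the Volterra integral equation built from its prescribed asymptotics (\ref{S_0})--(\ref{S_1}) at the relevant end and iterate. The crucial structural point is that, by (\ref{potq}), the potential splits as $q_{\nu^2}(X^1,\lambda)=\nu^2 l(X^1)+p(X^1,\lambda)$, where $l=s_{13}/s_{12}$ and $p$ is the $\nu$-independent remainder; here $l$ is smooth and, since $s_{12},s_{13}$ are positive and tend to $1$ at the ends, it is bounded between two positive constants. Rewriting (\ref{eqmu}) as $\ddot U=(\mu^2+\nu^2 l+p)U$, the growth of the solutions over an interval is governed by the exponential of $\int|\mathrm{Re}\sqrt{\mu^2+\nu^2 l(t)}|\,dt$, and the whole estimate then hinges on the elementary inequality
\[\big|\mathrm{Re}\sqrt{\mu^2+\nu^2 l}\big|=\sqrt{\tfrac{1}{2}\big(|\mu^2+\nu^2 l|+\mathrm{Re}(\mu^2+\nu^2 l)\big)}\le |\mathrm{Re}(\mu)|+\sqrt{l}\,|\mathrm{Re}(\nu)|,\]
valid for each fixed $X^1$ because $|\mu^2+\nu^2 l|+\mathrm{Re}(\mu^2+\nu^2 l)\le 2\big((\mathrm{Re}\,\mu)^2+l(\mathrm{Re}\,\nu)^2\big)$. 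As $l$ is bounded, integrating over $(0,A^1)$ produces precisely a factor of the form $e^{A|\mathrm{Re}(\mu)|+B|\mathrm{Re}(\nu)|}$, with $A\sim A^1$ and $B\sim A^1\sup\sqrt{l}$.

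The delicate part, and the main obstacle, is the behaviour near the two singular ends $\{X^1=0\}$ and $\{X^1=A^1\}$, where the $-\frac{\lambda^2+1/4}{(X^1)^2}$ term makes the free solutions Bessel-type functions rather than pure exponentials, so the naive Gronwall argument must be replaced by the Volterra scheme adapted to singular potentials as in \cite{FY,DaKaNi}. I would run the successive-approximation argument with the free solutions taken to be the modified Bessel functions of index $i\lambda$, whose large-argument asymptotics reproduce the same $\mathrm{Re}\sqrt{\cdot}$ growth, and check that all constants are uniform in $(\mu,\nu)$, the integrability $X^1 q_{0,\nu^2}(X^1,\lambda)\in L^1$ established earlier controlling the correction terms. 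Propagating the estimates for $S_{10},S_{20}$ from $0$ and for $S_{11}$ from $A^1$ to the interior point, combining them in the Wronskians, and repeating for the tilde objects then yields the bound on $\psi$. Thus the genuine difficulty is not the bulk estimate, which the real-part inequality handles cleanly, but ensuring that the endpoint growth constants are uniform in both angular momenta at once; this is exactly where the boundedness of $l=s_{13}/s_{12}$ and the asymptotically hyperbolic conditions (\ref{condhyp}) are used.
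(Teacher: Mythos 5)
Your reduction to bounding the fundamental solutions (via constancy of the Wronskian, evaluated at an interior point) is sound, and your elementary inequality $\bigl|\mathrm{Re}\sqrt{\mu^2+l\nu^2}\bigr|\le|\mathrm{Re}(\mu)|+\sqrt{l}\,|\mathrm{Re}(\nu)|$ is in fact correct. But the proof has a genuine gap at its central step: the assertion that solutions of $\ddot U=(\mu^2+\nu^2l(X^1)+p)U$ grow at most like $\exp\bigl(\int|\mathrm{Re}\sqrt{\mu^2+\nu^2l(t)}|\,dt\bigr)$ \emph{uniformly} in $(\mu,\nu)\in\C^2$. This is a uniform Liouville--Green (WKB) bound for a two-parameter family with the \emph{variable} coefficient $l=s_{13}/s_{12}$, and it does not follow from the Volterra scheme you describe. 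A Volterra iteration needs explicit free solutions with kernel bounds: if you take the free equation to be $\ddot U=(\mu^2+\nu^2l(t))U$, there are no explicit solutions unless $l$ is constant; if instead you keep exponential (or Bessel, near the ends) free solutions in $\mu$ and put $\nu^2l(t)$ into the perturbation, the iteration exponentiates $\int_0^{X^1}\frac{t\,|\nu|^2l(t)}{1+|\mu|t}\,dt\sim\frac{|\nu|^2}{|\mu|}$, which for $|\mu|\sim|\nu|$ produces a factor $e^{c|\nu|}$ involving the \emph{full modulus} of $\nu$, not $|\mathrm{Re}(\nu)|$. Such a bound is useless here, because $F=\psi e^{-A\mu-B\nu}$ must be bounded on the tube $T(K)$, on which $\mathrm{Im}(\nu)$ is unbounded. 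The real obstruction is thus precisely what you dismiss as ``the bulk estimate, which the real-part inequality handles cleanly'': near the turning set $\{t:\ \mu^2+\nu^2l(t)=0\}$, which moves with the ratio $\mu/\nu$ and can degenerate when it meets a critical point of $l$ (note $l\to1$ at both ends, so a non-constant $l$ has interior critical points), one would need uniform Airy-type turning-point analysis in two complex parameters. None of this is supplied, and the real-part inequality alone cannot replace it.

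This is exactly the difficulty the paper's proof is engineered to avoid. It never proves a uniform two-variable ODE estimate; instead it establishes three weaker facts: (Step 1) for each fixed $\nu$, $|\psi(\mu,\nu)|\le C_1(\nu)e^{A|\mathrm{Re}(\mu)|}$ (the constant $C_1(\nu)$ may grow like $e^{C|\nu|^2}$ -- this is what the naive Volterra scheme actually yields); (Step 2) the symmetric bound $C_2(\mu)e^{B|\mathrm{Re}(\nu)|}$, using the second construction with spectral parameter $-\nu^2$; (Step 3) the uniform bound $|\psi|\le C$ on $(i\R)^2$, via the third construction with spectral parameter $-\omega^2=-(\mu^2+\nu^2)$, where the $(\mu,\nu)$-dependent Liouville transform is legitimate because $r_{\mu^2,\nu^2}$ stays uniformly positive on $(i\R)^2$. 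This last fact is Corollary \ref{bornedelta}, which your proposal demotes to a ``consistency check'' but which is in fact the linchpin. Two successive applications of the Phragm\'en--Lindel\"of theorem (first in $\mu$ for fixed $\nu\in i\R$, then in $\nu$ for fixed $\mu\in\C$) then combine these: only the exponential \emph{types} from Steps 1--2 and the uniform boundary bound from Step 3 enter, so the uncontrolled constants $C_1(\nu)$, $C_2(\mu)$ are harmless. To salvage your approach, either restrict your direct estimate to $(i\R)^2$ -- where $\mu^2+\nu^2l(t)$ never vanishes and your argument essentially becomes the paper's Step 3 -- and then add the Phragm\'en--Lindel\"of step, or carry out the full uniform turning-point analysis, which is a substantially harder route.
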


\begin{proof}
 The proof of this Lemma consists in four steps. \\
 \underline{Step 1:} We claim that for all fixed $\nu \in \C$ there exists a constant $C_1(\nu)$ such that for all $\mu \in \C$
 \[| \psi(\mu,\nu) | \leq C_1(\nu) e^{A |\mathrm{Re}(\mu)|}.\]
 To obtain this estimate we study the solutions $S_{j0}$ and $S_{j1}$ defined in Subsection \ref{firstdef}.
 First, we show that for $j \in \{1,2\}$,
 \[ |S_{j0}(X^1,\mu^2,\nu^2) |\leq C(\nu) \frac{e^{|\mathrm{Re}(\mu)|X^1}}{|\mu|^{\frac{1}{2}}},\]
 \[ |S_{j0}'(X^1,\mu^2,\nu^2) |\leq C(\nu) {|\mu|^{\frac{1}{2}}} e^{|\mathrm{Re}(\mu)|X^1},\]
 \[ |S_{j1}(X^1,\mu^2,\nu^2) |\leq C(\nu) \frac{e^{|\mathrm{Re}(\mu)|(A^1-X^1)}}{|\mu|^{\frac{1}{2}}},\]
 \[ |S_{j1}'(X^1,\mu^2,\nu^2) |\leq C(\nu) {|\mu|^{\frac{1}{2}}} e^{|\mathrm{Re}(\mu)|(A^1-X^1)}.\]
 As in \cite{DaKaNi}, we can show by an iterative procedure that
 \begin{equation}\label{estimSj0}
  |S_{j0}(X^1,\mu^2,\nu^2)| \leq C \left( \frac{X^1}{1+|\mu| X^1} \right)^{\frac{1}{2}} e^{|\mathrm{Re}(\mu)|X^1} \exp \left( \int_{0}^{X^1} \frac{ t |q_{0,\nu^2}(t)|}{1+|\mu|t} \, dt \right).
 \end{equation}
 Recall now that, thanks to the asymptotically hyperbolic structure, we have for all $X^1 \in (0,X_0^1)$, where $X_0^1 \in (0,A^1)$ is fixed,
 \[ t |q_{0,\nu^2}(t)| \leq \frac{C(1+\nu^2)}{t(1+|\log(t)|)^{1+\epsilon_0}}, \quad \forall t \in (0,X).\]
 Thus, as shown in Subsection 3.1 of \cite{DaKaNi},
 \begin{equation}\label{estintq}
   \int_{0}^{X^1} \frac{ t |q_{0,\nu^2}(t)|}{1+|\mu|t} \, dt \leq (1+\nu^2) O \left( \frac{1}{(\log(|\mu|))^{\epsilon_0}} \right).
 \end{equation}
 We can then conclude
  \[ |S_{j0}(X^1,\mu^2,\nu^2)| \leq C(\nu) \frac{e^{|\mathrm{Re}(\mu)|X^1}}{|\mu|^{\frac{1}{2}}}.\]
 The result on $S_{j0}'(X^1,\mu^2,\nu^2)$ is obtained similarly using the estimate on the derivative of the Green kernel given in Proposition 3.1 of \cite{DaKaNi}. By symmetry, we also 
 obtain the corresponding estimates on $S_{j1}(X^1,\mu^2,\nu^2)$ and $S_{j1}'(X^1,\mu^2,\nu^2)$. We can then conclude that
 \[ \Delta(\mu^2,\nu^2) = \Delta_{q_{\nu^2}}(\mu^2) = W(S_{11},S_{10})\]
 and
  \[ \delta(\mu^2,\nu^2) = \delta_{q_{\nu^2}}(\mu^2) = W(S_{11},S_{20}),\]
 satisfy
 \[ |\Delta(\mu^2,\nu^2)| \leq C_1(\nu) e^{A| \mathrm{Re}(\mu)|} \quad \textrm{and} \quad |\delta(\mu^2,\nu^2)| \leq C_1(\nu) e^{A| \mathrm{Re}(\mu)|}, \quad \forall (\mu,\nu) \in \C^2.\]
 Finally, we have shown that
  \[| \psi(\mu,\nu) | \leq C_1(\nu) e^{A |\mathrm{Re}(\mu)|}, \quad \forall (\mu,\nu) \in \C^2.\]
\underline{Step 2:} We can also show that for all fixed $\mu \in \C$ there exists a constant $C_2(\mu)$ such that for all $\nu \in \C$
 \[| \psi(\mu,\nu) | \leq C_2(\mu) e^{\hat{A} |\mathrm{Re}(\nu)|}.\]
 To obtain this estimate we use the strategy of the first step on Equation (\ref{Schro2}) with potential (\ref{potq2}) introduced in Subsection \ref{secdef}.\\
\underline{Step 3:} Thanks to Corollary \ref{bornedelta}, there exists a constant $C$ such that for all $(y,y') \in \R^2$,
 \begin{equation}\label{borne}
 | \psi(iy,iy') | \leq C.
 \end{equation}
 \underline{Step 4:} We finish the proof of the Lemma by the use of the Phragm\'en-Lindel\"of Theorem (see \cite{Bo} Theorem 1.4.3).
 We first fix $\nu \in i\R$. Thus, the application $\mu \mapsto \psi(\mu,\nu)$ satisfies
 \[ \begin{cases}
|\psi(\mu,\nu)| \leq C_1(\nu) e^{A|\mathrm{Re}(\mu)|}, \quad \forall \mu \in \C, \quad \quad \textrm{(Step} \, \,\textrm{1)}\\
|\psi(\mu,\nu)| \leq C, \quad \forall \mu \in i\R, \quad \quad \textrm{(Step} \, \,\textrm{3)}
\end{cases}.\]
Thanks to the Phragm\'en-Lindel\"of Theorem, we deduce from these equalities that
\[|\psi(\mu,\nu)| \leq C e^{A|\mathrm{Re}(\mu)|}, \quad \forall (\mu,\nu) \in \C \times i \R.\]
We now fix $\mu \in \C$, then the application $\nu \mapsto \psi(\mu,\nu)$ satisfies
 \[ \begin{cases}
|\psi(\mu,\nu)| \leq C_2(\mu) e^{B|\mathrm{Re}(\nu)|}, \quad \forall \nu \in \C, \quad \quad \textrm{(Step} \, \,\textrm{1)}\\
|\psi(\mu,\nu)| \leq C e^{A|\mathrm{Re}(\mu)|}, \quad \forall \nu \in i\R
\end{cases}.\]
Thus, using once more the Phragm\'en-Lindel\"of Theorem, we obtain
 \[| \psi(\mu,\nu) | \leq C e^{A |\mathrm{Re}(\mu)| + B| \mathrm{Re}(\nu)|}, \quad \forall (\mu,\nu) \in \C^2.\]
\end{proof}
\noindent
We apply Theorem \ref{bloom} with
\[ K = (\R^+)^2 \quad \textrm{and} \quad F(\mu,\nu)= \psi(\mu,\nu) e^{-A\mu - B \nu}.\]

\begin{lemma}\label{lem3}
 The application $F$ is bounded and holomorphic on
 \[ T((\R^+)^2) = \{(\mu,\nu) \in \C^2, \, \, (\mathrm{Re}(\mu),\mathrm{Re}(\nu)) \in \R^+ \times \R^+ \}.\]
\end{lemma}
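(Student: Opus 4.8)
The plan is to read off both assertions directly from the two preceding lemmas, since the exponential correction $e^{-A\mu - B\nu}$ has been tailored precisely for this purpose.

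First I would treat holomorphy, which is immediate. By Lemma \ref{lem1} the function $\psi$ is entire on $\C^2$, and the map $(\mu,\nu) \mapsto e^{-A\mu - B\nu}$ is visibly entire as well. Their product $F$ is therefore entire on all of $\C^2$, and in particular holomorphic on the tube $T((\R^+)^2)$.

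For boundedness I would exploit the sign structure of the cone. On $T((\R^+)^2)$ one has $\mathrm{Re}(\mu) \geq 0$ and $\mathrm{Re}(\nu) \geq 0$, so that $|\mathrm{Re}(\mu)| = \mathrm{Re}(\mu)$ and $|\mathrm{Re}(\nu)| = \mathrm{Re}(\nu)$. Since $A$ and $B$ are real, $|e^{-A\mu - B\nu}| = e^{-A\mathrm{Re}(\mu) - B\mathrm{Re}(\nu)}$, whence
\[ |F(\mu,\nu)| = |\psi(\mu,\nu)|\, e^{-A\mathrm{Re}(\mu) - B\mathrm{Re}(\nu)}. \]
Inserting the exponential-type estimate of Lemma \ref{lem2} then gives
\[ |F(\mu,\nu)| \leq C\, e^{A\mathrm{Re}(\mu) + B\mathrm{Re}(\nu)}\, e^{-A\mathrm{Re}(\mu) - B\mathrm{Re}(\nu)} = C, \]
a uniform bound on the whole tube.

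There is essentially no obstacle here: all the analytic content sits in Lemmas \ref{lem1} and \ref{lem2}, and this lemma merely records that the positivity of the real parts on the cone $K = (\R^+)^2$ lets the correcting exponential cancel the growth of $\psi$ exactly. The role of the statement is logistical: it verifies the two hypotheses (bounded and analytic on $T(K)$) required to apply Theorem \ref{bloom} to $F$, which is the next step toward proving $\psi \equiv 0$ and hence (\ref{CAMeq}).
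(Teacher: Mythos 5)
Your proof is correct and matches the paper's argument exactly: the paper simply states that the lemma is an immediate consequence of Lemmas \ref{lem1} and \ref{lem2}, and your write-up supplies precisely the routine verification (entirety of $\psi$ times an entire exponential, plus cancellation of the growth bound using $|\mathrm{Re}(\mu)| = \mathrm{Re}(\mu)$ and $|\mathrm{Re}(\nu)| = \mathrm{Re}(\nu)$ on the tube) that the paper leaves implicit.
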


\begin{proof}
 This lemma is an immediate consequence of Lemmas \ref{lem1} and \ref{lem2}.
\end{proof}
\noindent
We now recall that $(\mu_m^2,\nu_m^2)$, $m \geq 1$, denotes the coupled spectrum of the operators $H$ and $L$. We note that $\mu_m^2$ and $\nu_m^2$ tend to $+ \infty$, 
as $m \to + \infty$. Therefore, there exists $M \geq 1$ such that $\mu_m^2 \geq 0$ and $\nu_m^2 \geq 0$ for all $m \geq M$. We then set
\[ E_M = \{(|\mu_m|,|\nu_m|), \, \, m \geq M \}.\]
Thanks to Equation (\ref{hypdep}), we note that, the application $F$ satisfies
 \[ F(\mu_m,\nu_m) = 0, \quad \forall m \geq M,\]
 since
\begin{equation}\label{psi0}
 \psi(\mu_m,\nu_m) = 0, \quad \forall m \geq M.
\end{equation}
Moreover, since the characteristic functions are, by definition, even functions with respect to $\mu$ and $\nu$, we obtain that
 \[ F(|\mu_m|,|\nu_m|) = F(\mu_m,\nu_m) = 0, \quad \forall m \geq M,\]
i.e. that $F$ vanishes on $E_M$.

\begin{remark}
We emphasize that $E_M$ denotes the set of eigenvalues counted with multiplicity (which is at most 4). Since we need a separation property given in the following Lemma
to apply the Bloom's Theorem, we have to consider a new set, also denoted by $E_M$, which corresponds to the previous set of eigenvalues counted without multiplicity.
To obtain this separation property on the coupled spectrum $E_M$, we also need to restrict our analysis to a suitable cone given in the following Lemma.
\end{remark}

\begin{lemma}\label{separationBloom}
We set
\begin{equation}\label{definitionducone}
\mathcal{C} = \{(\mu^2,\theta^2 \mu^2), \quad c_1 + \epsilon \leq \theta^2 \leq c_2 - \epsilon\}, \quad 0 < \epsilon << 1,
\end{equation}
where
\[ c_1 = \max \left( -\frac{s_{32}}{s_{33}} \right) \quad \textrm{and} \quad c_2 = \min \left( -\frac{s_{22}}{s_{23}} \right).\]
In that case, there exists $h > 0$ such that $|e_1 - e_2| \geq h$ for all $(e_1,e_2) \in (E_M \cap \mathcal{C})^2$, $e_1 \neq e_2$.
\end{lemma}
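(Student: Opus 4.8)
The plan is to read the coupled spectrum as the intersection of two families of planar curves and to exploit a uniform transversality together with a growing gap between the curves of each family. Working in the variables $(a,b) = (\mu^2,\nu^2)$, Lemma \ref{eqsep} shows that $(a,b)$ belongs to the coupled spectrum if and only if \emph{both} separated angular problems
\[ -v'' + [-(\lambda^2+1)s_{21} + a\,s_{22} + b\,s_{23}]v = 0 \quad \textrm{and} \quad -w'' + [-(\lambda^2+1)s_{31} + a\,s_{32} + b\,s_{33}]w = 0 \]
admit a nontrivial periodic solution. I denote by $\Sigma_2$ (resp. $\Sigma_3$) the real-analytic curve in the $(a,b)$-plane along which the first (resp. second) operator has $0$ as a periodic eigenvalue, so that the coupled spectrum is exactly $\Sigma_2 \cap \Sigma_3$. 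Each $\Sigma_i$ is a disjoint union of graphs $b = \phi_k^{(i)}(a)$ indexed by the eigenvalue number, because for fixed $a$ the admissible $b$ form the discrete spectrum of a weighted one-dimensional periodic Sturm--Liouville problem.

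First I would establish uniform transversality of the two families. By the Feynman--Hellmann formula applied to the eigenvalue that vanishes along $\Sigma_2$, the slope satisfies $\frac{db}{da} = -\langle s_{22}\rangle_v/\langle s_{23}\rangle_v$, a weighted average of the pointwise ratio $-s_{22}/s_{23}$; since $s_{22}<0<s_{23}$ this slope lies in $[c_2,C_2]$ with $c_2 = \min(-s_{22}/s_{23})$ and $C_2 = \max(-s_{22}/s_{23})$. The same computation along $\Sigma_3$ gives a slope in $[C_1,c_1]$ with $C_1 = \min(-s_{32}/s_{33})$ and $c_1 = \max(-s_{32}/s_{33})$. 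The condition $s^{11}>0$ of Remark \ref{rkrie} forces $\max_{x^2}(s_{22}/s_{23}) \leq \min_{x^3}(s_{32}/s_{33})$, hence $c_1 \leq c_2$, and (assuming the strict inequality that makes $\mathcal{C}$ nonempty) the slope intervals are disjoint: every crossing of a $\Sigma_2$ curve with a $\Sigma_3$ curve is transversal with angle bounded below by a positive constant.

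Next I would prove that inside the subcone $\mathcal{C}$ the curves of each family separate. The point of the restriction $c_1+\epsilon \leq \theta^2 \leq c_2-\epsilon$ (with $\theta^2 = b/a$) is that the two effective potentials become \emph{deep} negative wells: along the first problem $a\,s_{22}+b\,s_{23} = a\,s_{23}(\theta^2 + s_{22}/s_{23}) \leq -\epsilon'\,a$, and along the second $a\,s_{32}+b\,s_{33} = a\,s_{33}(\theta^2 + s_{32}/s_{33}) \leq -\epsilon'\,a$, both of depth of order $a=\mu^2$. The eigenvalue forced to stay near $0$ is therefore a highly excited state of a well of depth $\sim a$, and a WKB/Weyl-law count gives density of states $\sim a^{-1/2}$ near energy $0$, so its neighbours are spaced by an amount of order $\sqrt a$; dividing by the bounded rate $\langle s_{23}\rangle_v$, resp. $\langle -s_{33}\rangle_w$, this means that adjacent graphs $\phi_k^{(i)}$ are vertically separated by a quantity of order $|\mu|$. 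Combined with the uniform transversality, two distinct points of $\Sigma_2\cap\Sigma_3$ lying in $\mathcal{C}$ are then at distance of order $|\mu|$ in the $(a,b)=(\mu^2,\nu^2)$ variables.

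Finally I would transfer this to the variables of $E_M$. Since $\mathcal{C}$ is bounded away from both coordinate axes ($0 < c_1+\epsilon \leq \theta^2 \leq c_2-\epsilon < \infty$), the map $(a,b)\mapsto(\sqrt a,\sqrt b)=(|\mu|,|\nu|)$ has derivative of size $\sim |\mu|^{-1}$ there, so a separation of order $|\mu|$ in $(a,b)$ becomes a separation bounded below by a fixed positive constant for the points $(|\mu_m|,|\nu_m|)$; the finitely many low-lying points, being distinct, are automatically separated, and taking the minimum yields the desired uniform $h>0$. The main obstacle is the uniform eigenvalue-spacing estimate of the third step: it is exactly here that the interior subcone is indispensable, since at the boundary slopes $\theta^2 = c_1$ or $\theta^2 = c_2$ one of the wells degenerates, the high eigenvalues cease to be uniformly spaced, and neighbouring spectral curves may collapse onto one another. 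Alternatively, the separated, asymptotically lattice-like distribution of the joint spectrum of the commuting pair $(H,L)$ furnished by Colin de Verdi\`ere \cite{CdV1,CdV2} could be invoked to the same effect.
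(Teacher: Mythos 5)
Your proposal is correct in substance and rests on the same geometric picture as the paper's own proof: the coupled spectrum is realized as the set of transversal intersections of two families of spectral curves, one family for each periodic angular problem, and separation follows from (i) uniform transversality between the two families and (ii) uniform spacing of the curves within each family, both of which hold only inside the interior subcone. Where you differ is in how (i) and (ii) are obtained. For (i), the paper sets $\theta^2 = \nu^2/\mu^2$, performs the $\theta$-dependent Liouville transformations, localizes the curves as $\mu \approx 2m\pi/\tilde{B}(\theta^2)$ and $\mu \approx 2k\pi/\tilde{C}(\theta^2)$, and reads transversality off the strict monotonicity of $\tilde{B}$ (decreasing, since $s_{23}>0$) and $\tilde{C}$ (increasing, since $-s_{33}>0$); your Feynman--Hellmann computation, which confines the slopes of the two families to the disjoint intervals $[c_2,C_2]$ and $[C_1,c_1]$ (disjointness being exactly the content of Remark \ref{rkrie}), is a slicker, coordinate-free route to the same conclusion, and it buys you transversality without ever computing $\tilde{B}$ and $\tilde{C}$. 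For (ii), however, your WKB/Weyl-law spacing assertion is precisely where the paper's proof does its technical work: after the Liouville change of variables the transformed potential $Q_{\theta^2}$ is bounded \emph{uniformly} for $\theta^2 \in [c_1+\epsilon,\,c_2-\epsilon]$, the periodic characteristic function is shown to equal $2-2\cos\left(\mu\tilde{B}(\theta^2)\right) + O(1/\mu)$, and Rouch\'e's theorem then pins each curve near the model curve $\mu = 2m\pi/\tilde{B}(\theta^2)$, after which a mean-value-theorem case analysis on $1/\tilde{B}(\theta^2)$ yields the uniform gap $\delta$ directly in the $(|\mu|,|\nu|)$-plane (sparing your final square-root transfer step, which is itself fine but needs the small case distinction between comparable and non-comparable abscissas). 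So the step you flag as the "main obstacle" is exactly the step the paper fills, and your heuristic is correct and can be made rigorous this way; as you observe, the uniformity degenerates as $\theta^2 \to c_1$ or $\theta^2 \to c_2$, which is why the interior cone is essential.

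One caution on your proposed shortcut: invoking Colin de Verdi\`ere \cite{CdV1,CdV2} "to the same effect" is not something the paper does and is not automatic. The paper uses \cite{CdV1} (Theorem \ref{cdv}) only for the \emph{counting} estimate of Lemma \ref{number}; that result gives Weyl-type asymptotics for the joint spectrum in cones but no lower bound on gaps, and a lattice-like description with uniform gaps would require the integrable setting of \cite{CdV2} together with hypotheses (regular values of the joint symbol, action variables) whose verification amounts to the very analysis you are trying to bypass. Also, strictly speaking the sets $\Sigma_i$ are unions of eigenvalue branches that may touch at double periodic eigenvalues, so "disjoint union of graphs" deserves a caveat, though this does not affect the spacing argument, which only concerns consecutive eigenvalues at fixed abscissa.
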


\begin{proof}
 See Appendix \ref{apsep}.
\end{proof}

\begin{remark}\label{rkconerestreint}
 We note that, as we have shown in Lemma \ref{distrivp}, we know that there exist real constants $C_1$, $C_2$, $D_1$ and $D_2$ such that for all $m \geq 1$,
\[ C_1 \mu_m^2 + D_1 \leq \nu_{m}^2 \leq C_2 \mu_m^2 + D_2,\]
where
\[ C_1 = \min \left( -\frac{s_{32}}{s_{33}} \right) > 0 \quad \textrm{and} \quad C_2 = -\min \left(\frac{s_{22}}{s_{23}} \right) > 0.\]
We then easily obtain that
\[ 0 < C_1 \leq c_1 < c_2 \leq C_2.\]
\end{remark}

\begin{lemma}\label{number}
 We set
 \[n(r) = \# E_M \cap B(0,r) \cap \mathcal{C},\]
 where $\mathcal{C}$ was defined in (\ref{definitionducone}), then
  \[ \varlimsup \frac{n(r)}{r^2} > 0, \quad r \to + \infty.\]
\end{lemma}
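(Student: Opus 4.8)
The goal is to show that the joint spectrum of $(H,L)$ fills the cone $\mathcal{C}$ with strictly positive asymptotic density, so that $n(r)$ grows like $r^2$. The engine is the Weyl law for the joint spectrum of commuting elliptic self-adjoint operators on a compact manifold, due to Colin de Verdi\`ere \cite{CdV1,CdV2}: since $(H,L)$ is a pair of commuting elliptic operators (Lemma \ref{propHL}) on the compact surface $\mathcal{T}^2$, the associated flow is completely integrable and we are precisely in the setting of \cite{CdV2}.

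First I would describe the image of the joint principal symbol. A joint eigenfunction $Y_m$ with large $(\mu_m^2,\nu_m^2)$ concentrates microlocally on the set where the full symbols of $H-\mu_m^2$ and $L-\nu_m^2$ vanish; by the characteristic relation (\ref{systemeeqA2A3}) and $A_j = -\partial_j^2 + (\text{lower order})$, the concentration momenta obey, to leading order, $\xi_2^2 = -(s_{22}\mu^2 + s_{23}\nu^2)$ and $\xi_3^2 = -(s_{32}\mu^2 + s_{33}\nu^2)$. The constraints $\xi_2^2 \ge 0$ and $\xi_3^2 \ge 0$, together with the signs (\ref{cadre}), force
\[ -\frac{s_{32}(x^3)}{s_{33}(x^3)}\ \le\ \frac{\nu^2}{\mu^2}\ \le\ -\frac{s_{22}(x^2)}{s_{23}(x^2)}, \]
which are exactly the intervals already met in Lemma \ref{distrivp}. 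Choosing $x^3$ at a minimum of $-s_{32}/s_{33}$ and $x^2$ at a maximum of $-s_{22}/s_{23}$ shows that every slope of $[C_1,C_2]$ is attained, where $C_1=\min(-s_{32}/s_{33})$ and $C_2=\max(-s_{22}/s_{23})$ are the constants of Lemma \ref{distrivp}; hence the image of the joint symbol is the full closed cone $\{C_1\le \nu^2/\mu^2\le C_2\}$, on whose interior the pushforward of the Liouville measure is smooth and positive.

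Next I would apply the integrable Weyl law. For any closed subcone $\Gamma\subset (\R^+)^2$ whose two bounding rays are not charged by the limiting density, it yields
\[ \#\bigl\{m : (\mu_m^2,\nu_m^2)\in\Gamma,\ \mu_m^2+\nu_m^2\le\Lambda\bigr\}\ \underset{\Lambda\to+\infty}{\sim}\ c_\Gamma\,\Lambda, \qquad c_\Gamma=\frac{1}{(2\pi)^2}\,\mathrm{Vol}\bigl\{(x,\xi)\in T^\star\mathcal{T}^2:\ p_H+p_L\le 1,\ (p_H,p_L)(x,\xi)\in\Gamma\bigr\}, \]
the power $\Lambda^1$ being forced by the homogeneity of degree two of $p_H,p_L$ over the two-dimensional base (consistently with the scalar Weyl law of \cite{KKL}, Theorem 2.21). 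Taking $\Gamma=\mathcal{C}$ and $\Lambda=r^2$, and recalling $|\mu_m|^2+|\nu_m|^2=\mu_m^2+\nu_m^2$ for $m\ge M$, this gives $n(r)\sim c_{\mathcal{C}}\,r^2$ with $c_{\mathcal{C}}\ge 0$. The positivity $c_{\mathcal{C}}>0$ is the crux: by Remark \ref{rkconerestreint} we have $C_1\le c_1<c_2\le C_2$, so the interval $[c_1+\epsilon,c_2-\epsilon]$ defining $\mathcal{C}$ in (\ref{definitionducone}) lies in the interior $(C_1,C_2)$; by continuity of the symbols it is realized on a nonempty open subset of $T^\star\mathcal{T}^2\setminus 0$, whose phase-space region has positive Liouville volume, whence $c_{\mathcal{C}}>0$.

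Finally, the set $E_M$ in Lemma \ref{number} is counted without multiplicity, whereas the Weyl count above is with multiplicity; since each joint eigenvalue has multiplicity at most four (Remark \ref{remmult}), the two counts differ by a bounded factor, so the density stays positive and $\varlimsup_{r\to+\infty} n(r)/r^2\ge c_{\mathcal{C}}/4>0$, which is the assertion. I expect the main obstacle to be the rigorous invocation of the joint Weyl asymptotics — in particular, checking that the limiting spectral measure is absolutely continuous near the bounding rays $\nu^2=c_1\mu^2$ and $\nu^2=c_2\mu^2$ so that $\partial\mathcal{C}$ carries no mass, and extracting a clean lower bound on $n(r)$ — rather than the elementary symbol computation and positivity check. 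As a more self-contained fallback one could avoid \cite{CdV1,CdV2} altogether and count directly: for a fixed admissible slope the two periodic Hill problems of Lemma \ref{eqsep} each carry $\sim\sqrt{\Lambda}$ eigenvalues by the one-dimensional Weyl law, and a transversality argument for the compatible pairs lying in $\mathcal{C}$ reproduces the same $r^2$ lower bound.
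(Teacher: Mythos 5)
Your strategy is the same as the paper's: reduce Lemma \ref{number} to Colin de Verdi\`ere's asymptotics for the joint spectrum of commuting elliptic operators \cite{CdV1,CdV2}, show that the phase-space volume lying over the cone $\mathcal{C}$ grows like $r^2$, and divide out the maximal multiplicity $4$ of Remark \ref{remmult}. Your symbol computation (the image of the joint symbol is the cone of squared slopes $[C_1,C_2]$, and $\mathcal{C}$ sits strictly inside it by Remark \ref{rkconerestreint}) is also essentially how the paper obtains positivity of the leading coefficient, via the explicit inclusion $\mathcal{T}^2\times\bigl(B(0,r/\sqrt{2c_1})\setminus B(0,\eta)\bigr)\subset p^{-1}(\mathcal{C}\cap B(0,r))$ of (\ref{inclucdv}).

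There is, however, a genuine gap, and it is exactly the point you flag at the end and then leave open: the condition that the bounding rays of $\mathcal{C}$ ``carry no mass'' is not a soft regularity issue to be checked afterwards, it is the hypothesis $\partial C\cap W=\emptyset$ of Colin de Verdi\`ere's theorem (Theorem \ref{cdv}), where $W$ is the set of critical values of the joint symbol map, and verifying it constitutes the bulk of the paper's proof. Concretely, the paper first replaces $(H,L)$ by $P_1=\sqrt{L+M}$, $P_2=\sqrt{H+M}$, so as to have first-order commuting pseudodifferential operators with elliptic sum of squares (the exact setting of \cite{CdV1}), then computes the differential of $p=(p_1,p_2)$ and all of its $2\times2$ minors to determine $W$. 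The outcome is that $W$ is not always reduced to the origin: when $\partial_2(s_{22})=0$ and/or $\partial_3(s_{33})=0$, $W$ contains the explicit rays $\mathcal{D}_1=\{t(\sqrt{s_{23}},\sqrt{-s_{22}}),\ t\geq 0\}$ and/or $\mathcal{D}_2=\{t(\sqrt{-s_{33}},\sqrt{s_{32}}),\ t\geq 0\}$, whose squared slopes are then precisely $c_2$ and $c_1$. In these degenerate cases (which occur, e.g., for the first example of Example \ref{exemple}, where the angular lines of the St\"ackel matrix are constant) the extreme rays of the image cone consist of critical values, so the joint Weyl asymptotics cannot be invoked for any cone whose boundary meets them; this is exactly why $\mathcal{C}$ was defined in (\ref{definitionducone}) with the margins $c_1+\epsilon\leq\theta^2\leq c_2-\epsilon$, which guarantee $\partial\mathcal{C}\cap W=\emptyset$ in all four cases. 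Without this computation your invocation of the Weyl law is unjustified precisely where it could fail. As for your ``self-contained fallback'' of counting periodic eigenvalues of the two Hill problems, the paper does not use it for Lemma \ref{number}; an analysis of that type appears instead in the proof of the separation Lemma \ref{separationBloom}, and turning it into a two-dimensional density lower bound would require the transversality argument you only allude to.
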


\begin{proof}
 See Appendix \ref{ap}.
\end{proof}

\begin{remark}
 We emphasize that the number of points of the coupled spectrum $n(r)$ we use to apply the Bloom's Theorem is not exactly the one we compute in the framework of Colin de 
 Verdi\`ere. Indeed, Colin de Verdi\`ere computes the number of points of the coupled spectrum counting multiplicity whereas $n(r)$ denotes the number of points of the 
 coupled spectrum counting without multiplicity. However, as we have seen before (see Remark \ref{remmult}) the multiplicity of a coupled eigenvalue is at most $4$. Therefore, 
 $n(r)$ is greater than the quarter of the number computed in the work of Colin de Verdi\`ere and is thus still of order $r^2$.
\end{remark}

\noindent
We can then use Theorem \ref{bloom} on the cone $\mathcal{C}$ to conclude that
\[ F(\mu,\nu) = 0, \quad \forall (\mu,\nu) \in \C^2.\]
From this equality we immediately deduce that
\[M(\mu^2,\nu^2) = \tilde{M}(\mu^2,\nu^2), \quad \forall (\mu,\nu) \in \C^2 \setminus P,\]
where $P$ is the set of points $(\mu,\nu) \in \C^2$ such that $(\mu^2,\nu^2)$ is a zero of the characteristic functions $\Delta$ and $\tilde{\Delta}$.

\subsection{Inverse problem for the radial part}

With the help of a multivariable version of the Complex Angular Momentum method we have shown
\[M(\mu^2,\nu^2) = \tilde{M}(\mu^2,\nu^2), \quad \forall (\mu,\nu) \in \C^2 \setminus P, \]
where $P$ is the set of points $(\mu,\nu) \in \C^2$ such that $(\mu^2,\nu^2)$ is a zero of the characteristic function $\Delta$. By definition, it means that
\[  M_{q_{\nu^2}}(\mu^2) =  M_{\tilde{q}_{\nu^2}}(\mu^2), \quad \forall (\mu,\nu) \in \C^2\setminus P,\]
where $M_{q_{\nu^2}}(\mu^2)$ was defined in (\ref{defM1}). We can thus use the celebrated Borg-Marchenko Theorem in the form given in \cite{DaKaNi,FY} and recalled in the
Introduction (see (\ref{BorgMar1})-(\ref{BorgMar2})) to obtain that
\[  q_{\nu_m^2}(X^1,\lambda) =  \tilde{q}_{\nu_m^2}(X^1,\lambda), \quad \forall m \geq 1, \quad \forall X^1 \in (0,A^1).\]
Thanks to (\ref{potq}), and since the previous equality is true for all $m \geq 1$, we then have, for all $X^1 \in (0,A^1)$,
\begin{equation}\label{eq1}
\frac{s_{13}(X^1)}{s_{12}(X^1)} = \frac{\tilde{s}_{13}(X^1)}{\tilde{s}_{12}(X^1)}
\end{equation}
and
\begin{align}\label{eq3}
-&(\lambda^2+1) \frac{s_{11}(X^1)}{s_{12}(X^1)} +\frac{1}{16} \left( \dot{\left( \log \left( \frac{f_{1}(X^1)}{s_{12}(X^1)} \right) \right)}\right)^2 - \frac{1}{4} \ddot{\left( \log \left( \frac{f_{1}(X^1)}{s_{12}(X^1)} \right) \right)} \nonumber\\
=& -(\lambda^2+1) \frac{\tilde{s}_{11}(X^1)}{\tilde{s}_{12}(X^1)}+\frac{1}{16} \left( \dot{ \left( \log \left( \frac{\tilde{f}_{1}(X^1)}{\tilde{s}_{12}(X^1)} \right) \right)}\right)^2 - \frac{1}{4} \ddot{\left( \log \left( \frac{\tilde{f}_{1}(X^1)}{\tilde{s}_{12}(X^1)} \right) \right)} \quad.
\end{align}
We want to rewrite this equation as a Cauchy problem for a second order non-linear differential equation with boundary conditions at the end $\{X^1 = 0\}$. To do that, we put
\[f = \frac{s_{11}}{s_{12}}, \quad h = \frac{s_{12}}{f_1}, \quad l = \frac{s_{13}}{s_{12}} = \tilde{l} \quad \textrm{and} \quad u = \left( \frac{h}{\tilde{h}} \right)^{\frac{1}{4}}.\]
We can thus rewrite (\ref{eq3}) into the form
\begin{equation}\label{eq4}
u'' + \frac{1}{2} (\log(\tilde{h}))' u' + (\lambda^2 + 1) (\tilde{f} - f)u = 0.
\end{equation}
Using the Robertson condition (\ref{Rob1}) we can write
\begin{equation}\label{ega}
 \frac{s_{11}}{s_{12}} = f = - \frac{s^{12}}{s^{11}} - l \frac{s^{13}}{s^{11}} + h(ls_{32} - s_{33})(s_{23}-ls_{22}).
\end{equation}
\begin{remark}
 Thanks to (\ref{ega}), we see that we can write $\frac{s_{11}}{s_{12}}$ as a function of $\frac{s_{13}}{s_{12}}$ and $\frac{f_1}{s_{12}}$, i.e.
 \begin{equation}\label{rk}
 \frac{s_{11}}{s_{12}} = \Phi \left( \frac{s_{13}}{s_{12}}, \frac{f_{1}}{s_{12}} \right) \quad \textrm{and} \quad \frac{\tilde{s}_{11}}{\tilde{s}_{12}} = \Phi \left( \frac{s_{13}}{s_{12}}, \frac{\tilde{f}_{1}}{\tilde{s}_{12}} \right),
 \end{equation}
 where
 \[ \Phi(X,Y) = - \frac{s^{12}}{s^{11}} - X \frac{s^{13}}{s^{11}} + \frac{1}{Y}(Xs_{32} - s_{33})(s_{23}-Xs_{22}).\]
 Thus, to show that
 \[\frac{s_{11}}{s_{12}} = \frac{\tilde{s}_{11}}{\tilde{s}_{12}},\]
 it is sufficient by (\ref{eq1}) to prove that
  \[\frac{f_{1}}{s_{12}} = \frac{\tilde{f}_{1}}{\tilde{s}_{12}}.\]
\end{remark}
\noindent
From (\ref{ega}), we deduce that
\begin{eqnarray*}
 f - \tilde{f} &=&  (h - \tilde{h}) (ls_{32} - s_{33})(s_{23}-ls_{22})\\
 &=& \tilde{h}(u^4 - 1)(ls_{32} - s_{33})(s_{23}-ls_{22}).
\end{eqnarray*}
Finally, using this last equality, we can rewrite (\ref{eq4}) as
\begin{equation}\label{eq5}
u'' + \frac{1}{2} (\log(\tilde{h}))' u' + (\lambda^2 + 1) \tilde{h}(ls_{32} - s_{33})(s_{23}-ls_{22}) (u^5 - u)= 0.
\end{equation}

\begin{lemma}
 The function $u$ defined by
 \[u = \left( \frac{h}{\tilde{h}} \right)^{\frac{1}{4}},\]
 satisfies
 \[u(0) = 1 \quad \textrm{and} \quad u'(0) = 0.\]
\end{lemma}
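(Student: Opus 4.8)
The plan is to read off the two Cauchy conditions from the behaviour of $h$ and $\tilde{h}$ at the end $\{X^1 = 0\}$: the value $u(0)=1$ will come directly from the asymptotically hyperbolic expansion of the weight, while the vanishing of $u'(0)$ will come from the regular singular structure of the radial equation once $u(0)=1$ is known. Throughout I work in the Liouville variable $X^1$, recalling that $X^1 = g(x^1)\sim x^1$ as $x^1\to 0$ because $s_{12}\to 1$.

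First I would establish $u(0)=1$. Applying Lemma \ref{poidsbords} to both metrics gives, as $x^1\to 0$,
\[\left(\frac{f_1}{s_{12}}\right)^{\frac14}=\sqrt{x^1}\,[1]_{\epsilon_0}\qquad\text{and}\qquad \left(\frac{\tilde{f}_1}{\tilde{s}_{12}}\right)^{\frac14}=\sqrt{x^1}\,[1]_{\epsilon_0},\]
so that, taking reciprocals, $h^{\frac14}=(x^1)^{-\frac12}[1]_{\epsilon_0}$ and $\tilde{h}^{\frac14}=(x^1)^{-\frac12}[1]_{\epsilon_0}$ with \emph{identical} singular factor $(x^1)^{-\frac12}$. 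Hence $u = h^{\frac14}/\tilde{h}^{\frac14}=[1]_{\epsilon_0}\to 1$ as $x^1\to 0$, i.e. $u(0)=1$. In particular $u$ is bounded in a neighbourhood of $\{X^1=0\}$, which is the only input I need for the second condition.

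For $u'(0)=0$ I would exploit the singular structure of (\ref{eq4}) at $\{X^1=0\}$. By Lemma \ref{poidsbords} one has $\tilde{h}\sim (X^1)^{-2}$, hence $\tfrac12(\log \tilde{h})'\sim -1/X^1$, while the Robertson normalisation $s_{23}-s_{22}=s_{32}-s_{33}=1$ together with $l=s_{13}/s_{12}\to 1$ at the end gives $(ls_{32}-s_{33})(s_{23}-ls_{22})\to 1$. Using the relation $f-\tilde{f}=\tilde{h}\,(u^4-1)(ls_{32}-s_{33})(s_{23}-ls_{22})$ and linearising (\ref{eq4}) about the constant solution $u\equiv 1$, the perturbation $v=u-1$ satisfies, to leading order near the end,
\[v''-\frac{1}{X^1}v'-\frac{4(\lambda^2+1)}{(X^1)^2}\,v=0,\]
whose indicial equation $s^2-2s-4(\lambda^2+1)=0$ has the two \emph{real} roots $s_{\pm}=1\pm\sqrt{4\lambda^2+5}$, one exceeding $1$ and the other negative. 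Since $u(0)=1$ is finite, $u$ cannot carry the component attached to the negative root (which is unbounded at $X^1=0$), so necessarily $u=1+O\big((X^1)^{1+\sqrt{4\lambda^2+5}}\big)$; differentiating yields $u'(X^1)=O\big((X^1)^{\sqrt{4\lambda^2+5}}\big)\to 0$, that is $u'(0)=0$.

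The main obstacle is the rigorous branch selection at the singular endpoint: I must show that the finiteness of $\lim_{X^1\to 0}u$ really forces the decaying branch $(X^1)^{1+\sqrt{4\lambda^2+5}}$ for the \emph{full nonlinear} equation, not merely for its linearisation. I would handle this by a Frobenius-type fixed point argument, rewriting (\ref{eq4}) as an integral (Volterra) equation on a small interval $(0,\eta)$ whose contraction constant is governed by the gap $\sqrt{4\lambda^2+5}=s_+-1>0$; the quadratic remainder $u^5-u-4(u-1)=O((u-1)^2)$ and the genuinely subleading (only logarithmically small) errors in $\tfrac12(\log\tilde{h})'=-1/X^1+o(1/X^1)$ and in $(\lambda^2+1)\tilde{h}(ls_{32}-s_{33})(s_{23}-ls_{22})=(\lambda^2+1)(X^1)^{-2}+o((X^1)^{-2})$, controlled through the derivative bounds of Definition \ref{defAH}, have to be absorbed into this scheme. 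The identical analysis carried out at $\{X^1=A\}$ would furthermore give $u(A)=1$ and $u'(A)=0$, providing the Cauchy data at both ends used subsequently to conclude $u\equiv 1$.
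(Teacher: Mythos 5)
Your proof of $u(0)=1$ coincides with the paper's: the paper's entire proof is the one-line remark that the asymptotically hyperbolic structures of Definition \ref{defAH} are the same for $g$ and $\tilde{g}$, which is exactly your application of Lemma \ref{poidsbords} to both weights. For $u'(0)=0$, however, you take a genuinely different, and in fact more substantial, route. The paper again invokes only the matching of the asymptotic structures, but this is not by itself conclusive: the derivative bounds of Definition \ref{defAH} give $u-1=O\bigl((1+|\log X^1|)^{-1-\epsilon_0}\bigr)$ together with $u'=O\bigl((X^1)^{-1}(1+|\log X^1|)^{-2-\epsilon_0}\bigr)$, and this last bound diverges as $X^1\to 0$; indeed a function such as $1+(1+|\log X^1|)^{-1-\epsilon_0}$ satisfies all the bounds of Definition \ref{defAH} yet has a derivative blowing up at the end. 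So matching asymptotics alone cannot force $u'\to 0$, and your argument supplies the missing mechanism: the radial ODE (\ref{eq4}) has a regular singular point at $X^1=0$, its linearisation about $u\equiv 1$ has the real indicial roots $1\pm\sqrt{4\lambda^2+5}$, boundedness of $u$ excludes the negative root, and the surviving branch has exponent strictly greater than $1$, whence $u'(0)=0$. The Volterra/Levinson-type scheme you sketch is the standard way to absorb the nonlinearity and the logarithmically small coefficient errors, and it works cleanly here because your first step gives the rate $u-1=O\bigl((1+|\log X^1|)^{-1-\epsilon_0}\bigr)$, which is integrable in the variable $t=-\log X^1$. What your route buys, beyond an actual proof of the second Cauchy condition, is the polynomial rate $u-1=O\bigl((X^1)^{1+\sqrt{4\lambda^2+5}}\bigr)$, which is also welcome when the singular Cauchy problem (\ref{Cauchy}), whose coefficients blow up at $X^1=0$, is subsequently used to conclude $u\equiv 1$.

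One point you should make explicit, because your argument hinges on it. Your linearised equation carries the zeroth-order coefficient $-4(\lambda^2+1)/(X^1)^2$: this is what one obtains from (\ref{eq4}) combined with $f-\tilde{f}=\tilde{h}(u^4-1)(ls_{32}-s_{33})(s_{23}-ls_{22})$, namely $u''+\frac{1}{2}(\log\tilde{h})'u'-(\lambda^2+1)\tilde{h}(ls_{32}-s_{33})(s_{23}-ls_{22})(u^5-u)=0$, and it is opposite in sign to (\ref{eq5}) as printed (a sign slip in the paper, harmless there since $u\equiv 1$ solves either version and Cauchy uniqueness is insensitive to it). For you the sign is critical: with the sign as printed in (\ref{eq5}) the indicial roots would be $1\pm i\sqrt{4\lambda^2+3}$, both Frobenius branches would be bounded with oscillating, non-vanishing derivatives, and the branch-selection argument would collapse. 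You used the correct sign, so your proof stands; just state clearly that you are working from (\ref{eq4}) rather than from (\ref{eq5}) as displayed.
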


\begin{proof}
 The proof is a consequence of the fact that the asymptotically hyperbolic structures given in the third point of Definition \ref{defAH} are the same on the two manifolds.
\end{proof}
\noindent
We thus study the Cauchy problem
\begin{equation}\label{Cauchy}
 \begin{cases}
u'' + \frac{1}{2} (\log(\tilde{h}))' u' + (\lambda^2 + 1) \tilde{h}(ls_{32} - s_{33})(s_{23}-ls_{22}) (u^5 - u)= 0 \\
u(0) = 1 \quad \textrm{and} \quad u'(0) = 0
\end{cases}.
\end{equation}
We immediately note that $u = 1$ is a solution of (\ref{Cauchy}). By uniqueness of the Cauchy problem for the ODE (\ref{Cauchy}) we conclude that $u = 1$. We then have shown that
  \[\frac{f_{1}}{s_{12}} = \frac{\tilde{f}_{1}}{\tilde{s}_{12}}\]
and, using (\ref{eq1}) and (\ref{rk}), we can conclude that
\begin{equation}\label{egalite}
 \frac{s_{11}}{s_{12}} = \frac{\tilde{s}_{11}}{\tilde{s}_{12}} \quad \textrm{and} \quad \frac{s_{11}}{s_{13}} = \frac{\tilde{s}_{11}}{\tilde{s}_{13}}.
\end{equation}

\section{Resolution of the inverse problem}\label{conclusion}

We can now finish the resolution of our inverse problem. We first note that
\[ g = \sum_{i=1}^3 H_i^2 (dx^i)^2 = \psi^{\star}g',\]
where
\[ g' = \frac{H_1^2}{s_{12}} (dX^1)^2 + H_2^2 (dx^2)^2 + H_3^2 (dx^3)^2,\]
where $\psi$ is the diffeomorphism (equal to the identity at the compactified ends $\{x^1 = 0 \}$ and $\{x^1 = A \}$) corresponding to the Liouville change of variables in the first variable
  \[ X^1 =  \int_0^{x^1} \sqrt{s_{12}(s)} \, ds.\]
Similarly,
\[ \tilde{g} = \sum_{i=1}^3 \tilde{H}_i^2 (dx^i)^2 = \tilde{\psi}^{\star}\tilde{g}',\]
where
\[ \tilde{g}' = \frac{\tilde{H}_1^2}{\tilde{s}_{12}} (d\tilde{X}^1)^2 + \tilde{H}_2^2 (dx^2)^2 + \tilde{H}_3^2 (dx^3)^2,\]
where $\tilde{\psi}$ is the diffeomorphism (equal to the identity at the compactified ends $\{x^1 = 0 \}$ and $\{x^1 = A \}$) corresponding to the same Liouville change of variables in the first variable for the second 
manifold. We note that, thanks to the Borg-Marchenko Theorem, we can identify
\[ A^1 = \tilde{A}^1.\]
We now note that, thanks to (\ref{egalite}),
\[ \frac{H_1^2}{s_{12}} = \frac{\det(S)}{s_{12}s^{11}} = \frac{s_{11}}{s_{12}} + \frac{s^{12}}{s^{11}} + \frac{s_{13}}{s_{12}} \frac{s^{13}}{s^{11}} = \frac{\tilde{H}_1^2}{\tilde{s}_{12}},\]
\[ H_2^2 = \frac{\det(S)}{s^{21}} = \frac{\frac{s_{11}}{s_{12}}s^{11}  + s^{12} + \frac{s_{13}}{s_{12}}s^{13}}{ \frac{s_{13}}{s_{12}}s_{32} - s_{33}} = \tilde{H}_2^2\]
and
\[ H_3^2 = \frac{\det(S)}{s^{31}} = \frac{\frac{s_{11}}{s_{12}}s^{11}  + s^{12} + \frac{s_{13}}{s_{12}}s^{13}}{s_{23} - \frac{s_{13}}{s_{12}}s_{22}} = \tilde{H}_3^2.\]
We can then deduce from these equalities that
\[ g' = \tilde{g}'.\]
Finally, we have shown that there exists a diffeomorphism $\Psi := \psi^{-1} \tilde{\psi}$ such that
\[ \tilde{g} = \Psi^{\star} g,\]
where $\Psi$ is the identity at the two ends.

\newpage
\appendix
\section{Proof of Proposition \ref{propcadre}}\label{ap1}

The aim of this Appendix is to prove Proposition \ref{propcadre} which we recall here.

\begin{prop}
 Let $S$ be a St\"ackel matrix with corresponding metric $g_S$. There exists a St\"ackel matrix $\hat{S}$
 with $g_{\hat{S}} = g_S$
 and such that
 \begin{equation}\tag{C}
 \begin{cases}
 \hat{s}_{12}(x^1) > 0 \quad \textrm{and} \quad \hat{s}_{13}(x^1) > 0, \quad \forall x^1\\
 \hat{s}_{22}(x^2) < 0 \quad \textrm{and} \quad \hat{s}_{23}(x^2) > 0, \quad \forall x^2\\
 \hat{s}_{32}(x^3) > 0 \quad \textrm{and} \quad \hat{s}_{33}(x^3) < 0, \quad \forall x^3\\
 \lim\limits_{x^1 \to 0} s_{12}(x^1) = \lim\limits_{x^1 \to 0} s_{13}(x^1) = 1
\end{cases}.
\end{equation}
\end{prop}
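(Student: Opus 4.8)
The only tool I would use is the first invariance in Proposition \ref{propinv}: right-multiplying the block of columns $2$ and $3$ by a constant invertible matrix $G$ leaves the metric unchanged. The plan is therefore to choose $G$ so that the new columns acquire the prescribed sign pattern, and then to apply a diagonal rescaling to normalize the limits. To organize the signs, I would read the second and third columns as three plane curves
\[ \Gamma_i = \{ (s_{i2}(x^i), s_{i3}(x^i)) \} \subset \R^2, \quad i \in \{1,2,3\}, \]
one for each row, and observe that replacing $(c_2,c_3)$ by $(c_2,c_3)G^{-1}$ amounts to applying the two fixed linear functionals given by the columns of $G^{-1}$ to each $\Gamma_i$. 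Condition (C) then says exactly that the first new column, a functional $L_2$, must be positive on $\Gamma_1$ and $\Gamma_3$ and negative on $\Gamma_2$, while the second new column, a functional $L_3$, must be positive on $\Gamma_1,\Gamma_2$ and negative on $\Gamma_3$.

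The heart of the argument is to produce these two functionals, and this is where the Riemannian condition enters. The minors $s^{11}, s^{21}, s^{31}$ are, up to sign, the three pairwise oriented areas of the curves, so the requirement that $\det(S)$ and the three minors share one sign forces the three curves to sit, for all values of the corresponding variables, in three disjoint angular sectors arranged in a fixed cyclic order. After a harmless swap of columns $2$ and $3$ (the invariance with $\det G=-1$) I may assume this order is counterclockwise. Writing the angular extent of $\Gamma_i$ as $[\theta_i^-,\theta_i^+]$, the strict inequalities ``$s^{21}>0$ for all $x^1,x^3$'' and ``$s^{31}>0$ for all $x^1,x^2$'' translate into $\theta_3^- - \theta_1^+ > \pi$ and $\theta_2^+ - \theta_1^- < \pi$. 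The first says that $\Gamma_3\cup\Gamma_1$, traversed through the $3\to1$ gap, spans less than a half turn, hence lies in an open half-plane whose complementary open half-plane contains $\Gamma_2$; the defining functional of that half-plane is the desired $L_2$ with pattern $(+,-,+)$. The second says $\Gamma_1\cup\Gamma_2$ lies in an open half-plane avoiding $\Gamma_3$, giving $L_3$ with pattern $(+,+,-)$.

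With $L_2,L_3$ in hand, they are linearly independent (their signs differ on $\Gamma_2$), so they are the columns of an invertible $G^{-1}$, and the first invariance yields a St\"ackel matrix $\hat S$ with $g_{\hat S}=g_S$ satisfying the first three lines of (C); one checks directly that the resulting $\hat s^{21}$ and $\hat s^{31}$ are then automatically positive, consistent with Remark \ref{rkrie}. Finally I would compose $G$ with a positive diagonal matrix: multiplying the new columns $2$ and $3$ by $1/\lim_{x^1\to0}\hat s_{12}$ and $1/\lim_{x^1\to0}\hat s_{13}$ preserves every sign condition and enforces the normalization $\lim_{x^1\to0}\hat s_{12}=\lim_{x^1\to0}\hat s_{13}=1$. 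The main obstacle is the second paragraph: converting the algebraic sign conditions on the minors into the clean angular inequalities $\theta_3^--\theta_1^+>\pi$ and $\theta_2^+-\theta_1^-<\pi$, and ensuring the three curves genuinely lie in sectors on which a continuous angle is well defined, which the strict nonvanishing of the minors guarantees. The concluding rescaling is routine, provided the relevant limits exist and are finite and positive, as they are under the standing assumptions.
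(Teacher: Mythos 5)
Your proposal is correct in substance and reaches the statement by a genuinely different route than the paper's proof in Appendix \ref{ap1}, even though both arguments use exactly the same tools, namely the first invariance of Proposition \ref{propinv} together with the Riemannian requirement that $\det(S)$, $s^{11}$, $s^{21}$, $s^{31}$ share one sign. The paper argues in three steps: it first shows that each entry of columns $2$ and $3$ has constant (possibly non-strict) sign, by deriving pointwise inequalities such as (\ref{eqriem1}) between the quotients $s_{13}/s_{12}$, $s_{23}/s_{22}$, $s_{33}/s_{32}$ from the non-vanishing of the minors and obtaining a contradiction when a sign change makes one of these quotients blow up; it then removes the zeros with an explicit mixing matrix $G$; and only then, through a case-by-case analysis of the resulting sign patterns, does it exhibit explicit shear matrices, with a constant $b$ chosen strictly between the pointwise ranges of two quotients, before performing the same diagonal rescaling you propose for the normalization at $x^1=0$. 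Your reformulation --- rows as plane curves $\Gamma_i$, signed minors as pairwise wedge products, the Riemannian condition as a cyclic angular ordering, and the columns of $G^{-1}$ as two separating linear functionals $L_2,L_3$ --- is really the coordinate-free form of the same mechanism (the paper's quotient inequalities are precisely slope, i.e.\ angular, comparisons, and its choice of $b$ is precisely the choice of a separating functional), but it buys a genuine structural simplification: since the new entries are $\hat s_{i2}=L_2(v_i)$ and $\hat s_{i3}=L_3(v_i)$, their strict signs hold by construction, so the paper's Steps 1 and 2 and its entire case analysis become unnecessary; your observation that $\hat s^{21},\hat s^{31}>0$ then come for free is also correct and consistent with Remark \ref{rkrie}.

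Two points in your sketch need care. First, the separating half-plane for $\Gamma_3\cup\Gamma_1$ versus $\Gamma_2$ cannot be an arbitrary half-plane containing the arc swept through the $3\to 1$ gap: its boundary line must be rotated into the two angular gaps surrounding $\Gamma_2$, and the existence of such a position uses all three wedge conditions (including $s^{11}$, which orders $\Gamma_2$ against $\Gamma_3$), not only the two inequalities you display. Second, and more seriously, the strict inequalities $\theta_3^--\theta_1^+>\pi$ and $\theta_2^+-\theta_1^-<\pi$ do not follow from pointwise positivity of the minors alone, because on the non-compact range $x^1\in(0,A)$ the angular suprema and infima of $\Gamma_1$ need not be attained; in a degenerate configuration where the closure of the direction set of $\Gamma_1$ touches the antipode of an attained direction of $\Gamma_3$, no strictly separating functional exists at all. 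You should be aware, however, that this is a gap you share with the paper rather than a defect of your route: the paper's choice of a constant $b$ lying strictly between the range of a quotient over $(0,A)$ and the attained extremum of a quotient over a compact torus variable fails in exactly the same configurations. Both proofs therefore implicitly assume a mild non-degeneracy at the ends $\{x^1=0\}$ and $\{x^1=A\}$ --- for instance that $s_{12}$ and $s_{13}$ extend continuously to the closed interval with the minor conditions persisting there --- which is in any case needed, by both of you, for the limits in the last line of (\ref{cadre}) to exist, be finite and positive, so that the final diagonal rescaling makes sense.
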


\begin{proof}
 The proof of this Proposition consists in three steps and uses the riemannian structure and the invariances of the metric described in Proposition \ref{propinv}.
 We first show that the coefficients of the second and the third columns are non-negative or non-positive functions. Secondly, we show that these coefficients can be
 assumed to be positive or negative functions. Finally, we show that we can find a St\"ackel matrix with the same associated metric and satisfying the condition
 (\ref{cadre}).\\
 
 \noindent
 \underline{Step 1:} We claim that for all $(i,j) \in \{1,2,3\} \times \{2,3\}$, $s_{ij} \geq 0$ or $s_{ij} \leq 0$. Since the proof is similar for the third column we just 
 give the proof for the second one. First, if one of the functions $s_{12}$, $s_{22}$ and $s_{32}$ is identically zero, the two others cannot vanish on their intervals 
 of definition since the minors $s^{11}$, $s^{21}$ and $s^{31}$ cannot vanish. Thus, in this case we immediately obtain that $s_{i2} \geq 0$ or $s_{i2} \leq 0$ for all 
 $i \in \{1,2,3\}$. We can thus assume that there exists a triplet 
 $(x_0^1,x_0^2,x_0^3)$ such that $s_{12}(x_0^1) \neq 0$, $s_{22}(x_0^2) \neq 0$ and $s_{32}(x_0^3) \neq 0$.
 Without loss of generality, we assume that $\det(S) > 0$ and $s^{i1} > 0$ for all $i \in \{1,2,3\}$.
 From the positivity property of the minors we can deduce that, according to the sign of the quantities $s_{12}(x_0^1)$, $s_{22}(x_0^2)$ and $s_{32}(x_0^3)$,
 \begin{itemize}
  \item \underline{$s_{12}(x_0^1) > 0$, $s_{22}(x_0^2) > 0$ and $s_{32}(x_0^3) > 0$:} This case is impossible since the minors $s^{11}$, $s^{21}$ and $s^{31}$ cannot be all 
  positive.
  \item \underline{$s_{12}(x_0^1) > 0$, $s_{22}(x_0^2) < 0$ and $s_{32}(x_0^3) > 0$:}
  \begin{equation}\label{eqriem1}
   \frac{s_{33}(x_0^3)}{s_{32}(x_0^3)} < \frac{s_{23}(x_0^2)}{s_{22}(x_0^2)} < \frac{s_{13}(x_0^1)}{s_{12}(x_0^1)}.
  \end{equation}
  \item \underline{$s_{12}(x_0^1) > 0$, $s_{22}(x_0^2) > 0$ and $s_{32}(x_0^3) < 0$:}
  \begin{equation}\label{eqriem2}
  \frac{s_{13}(x_0^1)}{s_{12}(x_0^1)} <  \frac{s_{33}(x_0^3)}{s_{32}(x_0^3)} < \frac{s_{23}(x_0^2)}{s_{22}(x_0^2)}.
  \end{equation}
  \item \underline{$s_{12}(x_0^1) > 0$, $s_{22}(x_0^2) < 0$ and $s_{32}(x_0^3) > 0$:}
  \begin{equation}\label{eqriem3}
    \frac{s_{23}(x_0^2)}{s_{22}(x_0^2)} < \frac{s_{13}(x_0^1)}{s_{12}(x_0^1)} < \frac{s_{33}(x_0^3)}{s_{32}(x_0^3)}.
  \end{equation}
 \end{itemize}
 Since the four cases corresponding to the case $s_{12}(x_0^1) < 0$ are similar, we just treat the four cases above. Assume, for instance, that there exists $\alpha_0^2$ such that
 $s_{22}(\alpha_0^2) = 0$. We want to show that $s_{22}$ does not change of sign. We denote by $I$ the maximal interval (possibly reduced to $\alpha_0^2$) containing $\alpha_0^2$ such that
 $s_{22}(x^2) = 0$ for all $x^2 \in I$. Since the minors $s^{11}$ and $s^{31}$ are non-vanishing quantities, the functions $s_{12}$ and $s_{32}$ can then not vanish. Thus, there 
 exists two real constants $c_1$ and $c_2$ such that
 \begin{equation}\label{quobor}
   c_1 \leq \frac{s_{13}}{s_{12}} \leq c_2\quad \textrm{and} \quad c_1 \leq \frac{s_{33}}{s_{32}} \leq c_2 ,
 \end{equation}
 i.e. these quotients are bounded. Moreover, $s_{23}(x^2) \neq 0$ for all $x^2 \in I$ and by continuity there exists an interval $J$ such that $I \subsetneq J$ and
 $s_{23}(x^2) \neq 0$ for all $x^2 \in J$. If we assume that $s_{22}$ changes sign in a neighbourhood of $I$ we obtain that for all $\epsilon > 0$ there exist 
 $y_0^2 \in J$ and $y_1^2 \in J$ such that
 \[ 0 < s_{22}(y_0^2)  < \epsilon \quad \textrm{and} \quad -\epsilon < s_{22}(y_1^2)  < 0.\]
 Thus, for all $M > 0$ there exist, $y_0^2 \in J$ and $y_1^2 \in J$ such that
  \[ \frac{s_{23}(y_0^2)}{s_{22}(y_0^2)} > M \quad \textrm{and} \quad \frac{s_{23}(y_1^2)}{s_{22}(y_1^2)} < - M.\]
 We thus obtain a contradiction between (\ref{quobor}) and each of the equalities (\ref{eqriem1}), (\ref{eqriem2}) and (\ref{eqriem3}). We can then conclude that $s_{22}(x^2) \geq 0$ or 
 $s_{22}(x^2) \leq 0$. The proof is similar for $s_{12}$ and $s_{32}$.\\
 
 \noindent
 \underline{Step 2:} We show, thanks to the first invariance given in Proposition \ref{propinv}, that there exists a St\"ackel matrix having the same associated metric
 as $S$ and such that for all $(i,j) \in \{1,2,3\} \times \{2,3\}$, $s_{ij} > 0$ or $s_{ij} < 0$. We recall that there is at most one vanishing function $s_{ij}$, 
 $(i,j) \in \{1,2,3\} \times \{2,3\}$, per column since the minors
 $s^{11}$, $s^{21}$ and $s^{31}$ are non-zero quantities. We assume that one coefficient of the second column vanishes. By symmetry, we can assume that this is $s_{12}$, 
 i.e. that $s_{12}(x_0^1) = 0$ at one point $x_0^1$. We first assume that $s_{23}$ and $s_{33}$ do not vanish. In this case, there exists a real $a \geq 1$ such that
 \[ |s_{23}| < a |s_{22}| \quad \textrm{and} \quad |s_{33}| < a |s_{32}|\]
 and a real constant $b \geq 1$ such that
 \[ |s_{22}| < b |s_{23}| \quad \textrm{and} \quad |s_{32}| < b |s_{33}|.\]
 We now search a $2 \times 2$ constant invertible matrix $G$ such that the coefficients of the new St\"ackel matrix, obtained by the transformation given in the first point
 of Proposition \ref{propinv}, are positive or negative. For instance, if $s_{12}$ and $s_{13}$ have the same sign, we put
 \[ G =  \begin{pmatrix} a & 1  \\ 1 & b \end{pmatrix}\]
 and we thus obtain a new St\"ackel matrix whose second and third columns are
 \[ \begin{pmatrix} as_{12} + s_{13} & s_{12} + bs_{13}  \\ as_{22} + s_{23} & s_{22} + bs_{23}  \\ as_{32} + s_{33} & s_{32} + bs_{33} \end{pmatrix}.\]
 We can easily show that these six components are positive or negative (we recall that $s_{12}$ and $s_{13}$ cannot vanish simultaneously).
 However, if $s_{12}$ and $s_{13}$ have different signs, we put
 \[ G =  \begin{pmatrix} a & -1  \\ -1 & b \end{pmatrix}\]
 and we also obtain positive or negative components. If $s_{23}$ or $s_{33}$ vanish we just have to choose the suitable constants $a$ and $b$ using the fact there is at most 
 one vanishing function in the third column.\\
 
 \noindent
 \underline{Step 3:} Finally, we show, thanks to the first invariance given in Proposition \ref{propinv} and the riemannian structure, that there exists a St\"ackel matrix having the same associated
 metric as $S$ and satisfying the condition (\ref{cadre}) of Definition \ref{defAH}. We recall that thanks, to the second step, we can assume that the St\"ackel matrix $S$ satisfies $s_{ij} > 0$ or $s_{ij} < 0$ 
 for all $(i,j) \in \{1,2,3\} \times \{2,3\}$. We recall that the metric $g$ is riemannian if and only if $\det(S)$, $s^{11}$, $s^{21}$ and $s^{31}$ have the same sign. 
 Without loss of generality, we assume that these quantities are all positive. We recall that according to the sign of the functions $s_{12}$, $s_{22}$ 
 and $s_{32}$ the inequalities (\ref{eqriem1})-(\ref{eqriem3}) are satisfied. We thus have to treat different cases according to the sign of the components of the St\"ackel 
 matrix. We first want to obtain the sign conditions in (\ref{cadre}). Since the proof are similar in the other cases, we just give the proof in the case
\[ s_{12} > 0, \quad s_{22} < 0 \quad \textrm{and} \quad s_{32} > 0.\]
We then give, in each case, the matrix $G \in GL_2(\R)$ such that the transformation given in the first invariance of Proposition \ref{propinv} provides us the signs we want.
        \begin{itemize}
         \item \underline{If $s_{13} > 0$, $s_{23} < 0$ and $s_{33} > 0$:} We put
          \[ G =  \begin{pmatrix} 1 & -1  \\ 0 & b \end{pmatrix},\]
          where
          \[ \frac{s_{13}}{s_{12}} < b < \frac{s_{23}}{s_{22}} < \frac{s_{33}}{s_{32}},\]
          and we obtain the required signs. Indeed, we obtain that the second and the third column of the new St\"ackel matrix are given by
 \[ \begin{pmatrix} s_{12} & -s_{12} + bs_{13}  \\ s_{22} & -s_{22} + bs_{23}  \\ s_{32}  & -s_{32} + bs_{33} \end{pmatrix}\]
 which has the desired signs thanks to our choice of constant $b$.
         \item \underline{If $s_{13} > 0$, $s_{23} > 0$ and $s_{33} < 0$:} We put $G = I_2$.
         \item \underline{If $s_{13} > 0$, $s_{23} < 0$ and $s_{33} < 0$:} We put
          \[ G =  \begin{pmatrix} 1 & -1  \\ 0 & b \end{pmatrix},\]
          where
          \[ \frac{s_{33}}{s_{32}} < \frac{s_{13}}{s_{12}} < b < \frac{s_{23}}{s_{22}}.\]
        \end{itemize}
        As previously, the case $s_{13} < 0$ is similar and we thus omit its proof. Up to this point, we proved that we can assume that  
  \begin{equation}
 \begin{cases}
 s_{12}(x^1) > 0 \quad \textrm{and} \quad s_{13}(x^1) > 0, \quad \forall x^1\\
 s_{22}(x^2) < 0 \quad \textrm{and} \quad s_{23}(x^2) > 0, \quad \forall x^2\\
 s_{32}(x^3) > 0 \quad \textrm{and} \quad s_{33}(x^3) < 0, \quad \forall x^3
\end{cases}.
\end{equation}
Finally, we just have to use once more the invariance with respect to the multiplication of the second and the third column by an invertible constant $2 \times 2$ matrix
$G$ to obtain that we can assume that
 \[\lim\limits_{x^1 \to 0} s_{12}(x^1) = \lim\limits_{x^1 \to 0} s_{13}(x^1) = 1.\]
Indeed, we just have to set
\[ G =  \begin{pmatrix} \frac{1}{\alpha} & 0  \\ 0 & \frac{1}{\beta} \end{pmatrix},\]
where,
\[ \alpha = \lim\limits_{x^1 \to 0} s_{12}(x^1) > 0 \quad \textrm{and} \quad \beta = \lim\limits_{x^1 \to 0} s_{13}(x^1) > 0.\]
The result then follows.
\end{proof}
\newpage
\section{Proof of Lemma \ref{separationBloom}}\label{apsep}

The aim of this Appendix is to prove Lemma \ref{separationBloom} which we recall here.

\begin{lemma}
We set
\[ E_M = \{(|\mu_m|,|\nu_m|), \, \, m \geq M \}\]
and
\[ \mathcal{C} = \{(\mu^2,\theta^2 \mu^2), \quad c_1 + \epsilon \leq \theta^2 \leq c_2 + \epsilon \}, \quad 0 < \epsilon << 1,\]
where
\[ c_1 = \max \left( -\frac{s_{32}}{s_{33}} \right) \quad \textrm{and} \quad c_2 = \min \left( -\frac{s_{22}}{s_{23}} \right).\]
In that case, there exists $h > 0$ such that $|e_1 - e_2| \geq h$ for all $(e_1,e_2) \in (E_M \cap \mathcal{C})^2$, $e_1 \neq e_2$.
\end{lemma}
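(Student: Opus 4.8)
The plan is to realize the coupled spectrum inside $\mathcal{C}$ as the intersection of two families of smooth curves and to show these curves meet transversally while being mutually separated, all measured in the $(|\mu|,|\nu|)$–quadrant. Recall from Lemma \ref{eqsep} that $(\mu_m^2,\nu_m^2)$ is a joint eigenvalue precisely when the two periodic Sturm--Liouville problems
\[ -v'' + \big[\mu^2 s_{22}(x^2) + \nu^2 s_{23}(x^2) - (\lambda^2+1)s_{21}(x^2)\big]v = 0, \]
\[ -w'' + \big[\mu^2 s_{32}(x^3) + \nu^2 s_{33}(x^3) - (\lambda^2+1)s_{31}(x^3)\big]w = 0 \]
simultaneously admit a periodic solution. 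The first condition cuts out a countable family of curves $\Gamma_k^{(2)}$ and the second a family $\Gamma_l^{(3)}$, and $E_M$ (identified with the coupled spectrum) is their intersection. The key role of $\mathcal{C}$ is that it is chosen exactly so that both potential coefficients are negative throughout the torus: $\nu^2/\mu^2 < c_2 = \min(-s_{22}/s_{23})$ forces $\mu^2 s_{22}+\nu^2 s_{23}<0$ everywhere (using $s_{23}>0$), while $\nu^2/\mu^2 > c_1 = \max(-s_{32}/s_{33})$ forces $\mu^2 s_{32}+\nu^2 s_{33}<0$ everywhere (using $s_{33}<0$). Hence on $\mathcal{C}$ both one--dimensional problems are uniformly oscillatory, which is the regime in which semiclassical (Bohr--Sommerfeld) asymptotics apply cleanly.

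The second step is \textbf{transversality}, and it is elementary. After the Liouville reductions (\ref{eqrad1bis}) and (\ref{eqrad2bis}) the angular problems read $-\ddot V + p_{\mu^2,2}V = -\nu^2 V$ and $-\ddot W + p_{\mu^2,3}W = \nu^2 W$, with $\partial_{\mu^2} p_{\mu^2,2} = s_{22}/s_{23}$ and $\partial_{\mu^2} p_{\mu^2,3} = -s_{32}/s_{33}$. The Feynman--Hellmann formula then gives that along $\Gamma_k^{(2)}$ the slope $\tfrac{d\nu^2}{d\mu^2}$ is a weighted average of $-s_{22}/s_{23}$, hence lies in $[c_2,C_2]$ with $C_2=\max(-s_{22}/s_{23})$, while along $\Gamma_l^{(3)}$ it is a weighted average of $-s_{32}/s_{33}$, hence lies in $[C_1,c_1]$ with $C_1=\min(-s_{32}/s_{33})$. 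By Remark \ref{rkrie} and the nonemptiness of $\mathcal{C}$ one has $c_1<c_2$, so these intervals are disjoint and every curve of one family crosses every curve of the other with an angle bounded below. Equivalently, the action map $\Phi=(I_2,I_3)$ with $I_2=\tfrac1\pi\int_0^B\sqrt{-(\mu^2 s_{22}+\nu^2 s_{23})}\,dx^2$ and $I_3$ defined analogously on $[0,C]$ has nonvanishing Jacobian on $\mathcal{C}$; since $I_2,I_3$ are homogeneous of degree one in $(|\mu|,|\nu|)$, the matrix $D\Phi$ is homogeneous of degree zero and $|\det D\Phi|$ is bounded below on $\mathcal{C}$ by compactness of the set of directions, so $\Phi$ is bi-Lipschitz on $\mathcal{C}$ away from the origin.

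The third step separates consecutive curves of each family and concludes. A uniform semiclassical Weyl law --- this is precisely the joint-spectrum analysis of Colin de Verdi\`ere \cite{CdV1,CdV2} --- shows that in the oscillatory regime the density of periodic eigenvalues of the $x^2$--problem is of order $|\mu|$, so consecutive curves $\Gamma_k^{(2)},\Gamma_{k+1}^{(2)}$ are separated by a gap $\Delta(\nu^2)$ of order $|\mu|$, i.e. a gap in $|\nu|$ bounded below by a positive constant (and symmetrically for the $x^3$--family). More invariantly, Bohr--Sommerfeld quantization places the joint eigenvalues within $O(1/|\mu|)$ of $\Phi^{-1}(\Lambda)$, where $\Lambda$ is a lattice of spacing bounded below. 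Taking $M$ large enough that this error is below a quarter of the lattice spacing, distinct joint eigenvalues are sent by $\Phi$ to distinct nodes at mutual distance bounded below; pulling back through the bi-Lipschitz $\Phi$ yields a constant $h>0$ with $|e_1-e_2|\ge h$ for all distinct $e_1,e_2\in E_M\cap\mathcal{C}$. Since a coupled eigenvalue has multiplicity at most four (Remark \ref{remmult}) and $E_M$ is taken without multiplicity, this does not affect the branch counting.

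The main obstacle is the third step: making the semiclassical Bohr--Sommerfeld quantization precise and, above all, \emph{uniform} throughout the cone as $|\mu|,|\nu|\to\infty$, with explicit control of the remainder. This is exactly where the distribution results for the joint spectrum of Colin de Verdi\`ere are indispensable; by contrast the transversality of step two is completely elementary, resting only on the sign conditions (\ref{cadre}) fed through the Feynman--Hellmann formula.
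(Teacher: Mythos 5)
Your overall strategy---realizing $E_M\cap\mathcal{C}$ as the transversal intersection of the two families of curves cut out by the periodic $x^2$- and $x^3$-problems, then separating the curves within each family---is exactly the paper's (Appendix \ref{apsep}), and your second step is a genuinely nice alternative to the paper's treatment of the geometry: instead of computing the explicit asymptotic curves $\mu=2m\pi/\tilde{B}(\theta^2)$ and invoking the monotonicity of $\tilde{B}$ and $\tilde{C}$, you bound the slopes $d\nu^2/d\mu^2$ directly by Feynman--Hellmann, obtaining the disjoint slope intervals $[c_2,C_2]$ and $[C_1,c_1]$; this gives both transversality and the fact that a curve of one family meets a curve of the other at most once, which is what guarantees that distinct joint eigenvalues carry distinct pairs of quantization indices. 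The one caveat is that periodic eigenvalues may be double, so the eigenvalue branches need not be $C^1$; you should either work with the analytic characteristic function $2-W(C_0,S_1)-W(C_1,S_0)$ as the paper does, or observe that one-sided directional derivatives of any eigenvalue branch are still expectations of $s_{22},s_{23}$ (resp.\ $s_{32},s_{33}$), so the slope bounds survive at crossings.

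The genuine gap is your third step, and you half-concede it yourself. The sentence ``a uniform semiclassical Weyl law \dots shows that \dots consecutive curves are separated by a gap'' is a non sequitur: a Weyl law controls the counting function, hence only the \emph{average} spacing, and gives no lower bound on individual gaps. What is needed is a Bohr--Sommerfeld quantization with a remainder that is uniform in the direction $\theta^2$ over the cone, and this is precisely the content of the paper's equations (\ref{equation9ap2})--(\ref{equation11ap2}): after the $\theta^2$-dependent Liouville transformation, the potentials $Q_{\theta^2}$ in (\ref{equation7ap2}) are uniformly bounded on the cone, so the Wronskian asymptotics $W(C_0,S_1)=\cos\left(\mu\tilde{B}(\theta^2)\right)\left(1+O(1/\mu)\right)$ hold uniformly, Rouch\'e's theorem localizes the exact quantization curves at distance $O(1/\mu)$ from $\mu=2m\pi/\tilde{B}(\theta^2)$, and the uniform gap (\ref{equation13ap2}) between consecutive curves is then established by a mean value theorem computation on $1/\tilde{B}(\theta^2)$---it is proved, not quoted. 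Moreover, your attribution of this step to Colin de Verdi\`ere misplaces the reference: in the paper, \cite{CdV1} enters only in Lemma \ref{number} (Appendix \ref{ap}) to \emph{count} joint eigenvalues in $\mathcal{C}\cap B(0,r)$, and the counting statement of Theorem \ref{cdv} cannot produce the separation constant $h$. One could try instead to invoke the lattice structure of the joint spectrum from the integrable-case paper \cite{CdV2}, but then you would have to verify its hypotheses for the moment map $p=(p_1,p_2)$ on $T^{\star}\mathcal{T}^2$ and extract a remainder estimate uniform up to the boundary of the cone, none of which appears in your sketch. In short, the elementary one-dimensional ODE analysis you defer is not an optional refinement; it is the proof.
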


\begin{proof}
 We recall that the coupled spectrum was defined in Remark \ref{rknum} by
 \begin{equation}\label{equation1ap2}
   HY_m = \mu_m^2 Y_m \quad \textrm{and} \quad LY_m = \nu_m^2 Y_m, \quad \forall m \geq 1,
 \end{equation}
  where $H$ and $L$ are commuting, elliptic and selfadjoint operators of order two. Writing $Y_m(x^2,x^3) = v_m(x^2) w_m(x^3)$, we obtain that (\ref{equation1ap2}) is 
  equivalent to
  \begin{eqnarray}\label{equation2ap2}
-v_m''(x^2) + \left[-(\lambda^2 +1)s_{21}(x^2) + \mu_m^2 s_{22}(x^2) + \nu_m^2 s_{23}(x^2)\right]v_m(x^2) = 0,
\end{eqnarray}
and
\begin{eqnarray} \label{equation3ap2}
-w_m''(x^3) + \left[-(\lambda^2 +1)s_{31}(x^3) + \mu_m^2 s_{32}(x^3) + \nu_m^2 s_{33}(x^3)\right]w_m(x^3) = 0,
  \end{eqnarray}
where $v_m$ and $w_m$ are periodic functions, i.e.
  \begin{eqnarray}\label{equation4ap2}
  \begin{cases}
   v_m(0) = v_m(B) \quad \textrm{and} \quad v_m'(0) = v_m'(B) \\
   w_m(0) = w_m(C) \quad \textrm{and} \quad w_m'(0) = w_m'(C) \\
  \end{cases}.
  \end{eqnarray}
We first consider Equation (\ref{equation2ap2}) which we rewrite as
\[ -v'' - (\lambda^2 + 1) s_{21} v = \mu^2 \left[ -s_{22} - \theta^2 s_{23} \right]v,\]
where $v := v_m$, $\mu^2 := \mu_m^2$, $\nu^2 := \nu_m^2$ and 
\[\theta^2 := \frac{\nu^2}{\mu^2}.\]
In the following we will consider Schr\"odinger equations associated with (\ref{equation2ap2})-(\ref{equation3ap2})
whose spectral parameter is $\mu^2$ which tends to $+ \infty$. Moreover, these equations depend on the parameter $\theta^2$ which is always bounded 
in a suitable cone we introduce now. We recall that, as we have shown in Lemma \ref{distrivp}, there exist real constants $C_1$, $C_2$, $D_1$ and $D_2$ such that for
all $m \geq 1$,
\[ C_1 \mu_m^2 + D_1 \leq \nu_{m}^2 \leq C_2 \mu_m^2 + D_2,\]
where
\[ C_1 = \min \left( -\frac{s_{32}}{s_{33}} \right) > 0 \quad \textrm{and} \quad C_2 = -\min \left(\frac{s_{22}}{s_{23}} \right) > 0.\]
 Let $\epsilon > 0$ be fixed, we then consider $\theta^2$ such that
  \begin{eqnarray}\label{equation4bisap2}
c_1 + \frac{D_1}{\mu^2} + \epsilon \leq \theta^2 \leq c_2 + \frac{D_2}{\mu^2} - \epsilon,
\end{eqnarray}
where
\[ c_1 = \max \left( -\frac{s_{32}}{s_{33}} \right) \quad \textrm{and} \quad c_2 = \min \left( -\frac{s_{22}}{s_{23}} \right).\]
We note that
\[ 0 < C_1 \leq c_1 < c_2 \leq C_2.\]
This implies that, for sufficiently large $\mu^2$, there exists $\delta > 0$ such that
  \begin{eqnarray}\label{equation5ap2}
 -s_{22} - \theta^2 s_{23} \geq  \left(\epsilon - \frac{D_2}{\mu^2} \right) s_{23} \geq \delta > 0
  \end{eqnarray}
and
\begin{eqnarray} \label{equation6ap2}
 -s_{32} - \theta^2 s_{33} \geq \left( \epsilon + \frac{D_1}{\mu^2} \right) (-s_{33}) \geq \delta > 0.
  \end{eqnarray}
For such a $\theta^2$, we can thus proceed to the Liouville change of variables
\[ X^2 = \int_0^{x^2} \sqrt{-s_{22}(t) - \theta^2 s_{23}(t)} \, dt,\]
in Equation (\ref{equation2ap2}). This new variable thus satisfies $X^2 \in [0,\tilde{B}(\theta^2)]$, where
\begin{equation}\label{equation6bisap2}
 \tilde{B}(\theta^2) = \int_0^{B} \sqrt{-s_{22}(t) - \theta^2 s_{23}(t)} \, dt. 
\end{equation}
Finally, we set
\[ V(X^2) = \left[-s_{22}(x^2(X^2)) - \theta^2 s_{23}(x^2(X^2)) \right]^{\frac{1}{4}} v(x^2(X^2)).\]
This new function then satisfies in the variable $X^2$ the Schr\"odinger equation
  \begin{eqnarray}\label{equation7ap2}
-\ddot{V}^2(X^2) + Q_{\theta^2}(X^2) V(X^2) = \mu^2 V(X^2),
\end{eqnarray}
where $\mu^2$ is the spectral parameter, $Q_{\theta^2}(X^2)$ is uniformly bounded with respect to $\theta^2$ satisfying (\ref{equation4bisap2}) and for such a 
$\theta^2$,
\[ Q_{\theta^2}(X^2) = O(1).\]

\noindent
We now search the couples $(\mu^2,\theta^2)$ such that (\ref{equation7ap2}) admits periodic solutions. We define $\{C_0,S_0\}$ and $\{C_1,S_1\}$ the usual Fondamental Systems of 
Solutions of (\ref{equation7ap2}), i.e.
\[ C_0(0) = 1, \quad \dot{C}_0(0) = 0, \quad S_0(0) = 0 \quad \textrm{and} \quad \dot{S}_0(0) = 1,\]
and
\[ C_1(\tilde{B}) = 1, \quad \dot{C}_1(\tilde{B}) = 0, \quad S_1(\tilde{B}) = 0 \quad \textrm{and} \quad \dot{S}_1(\tilde{B}) = 1.\]
We recall that these functions are analytic and even with respect to $\mu$. We write the solutions $V$ of (\ref{equation7ap2}) as
\[ V = \alpha C_0 + \beta S_0 = \gamma C_1 + \delta S_1,\]
where $\alpha$, $\beta$, $\gamma$ and $\delta$ are real constants. Thus,
\[ V(0) = \alpha, \quad \dot{V}(0) = 0, \quad V(\tilde{B}) = \gamma \quad \textrm{and} \quad \dot{V}(\tilde{B}) = \delta.\]
$V$ is then a periodic function if and only if
\[ V(0) = V(\tilde{B}) \quad \Leftrightarrow \quad \alpha = \gamma \quad \Leftrightarrow \quad W(V,S_0) = W(V,S_1)\]
and
\[ \dot{V}(0) = \dot{V}(\tilde{B}) \quad \Leftrightarrow \quad \beta = \delta \quad \Leftrightarrow \quad W(C_0,V) = W(C_1,V),\]
where $W(f,g) = fg'-f'g$ denotes the Wronskian of two functions $f$ and $g$. In other words, $V$ is a periodic solution of (\ref{equation7ap2}) if and only if
  \begin{eqnarray}\label{equation8ap2}
 W(V,S_0-S_1) = W(C_0-C_1,V) = 0.
\end{eqnarray}
We thus add to the Equation (\ref{equation7ap2}) the boundary conditions (\ref{equation8ap2}) and we define the corresponding characteristic functions. In other words, 
we defined
\[ \Delta_1(\mu^2,\theta^2) = W(C_0-C_1,S_0-S_1) = 2 - W(C_0,S_1) - W(C_1,S_0).\]
We emphasize that $\Delta_1(\mu^2,\theta^2)$ vanishes if and only if there exists a periodic solution of (\ref{equation7ap2}) for $(\mu^2,\theta^2)$. The asymptotics 
of $W(C_0,S_1)$ and $W(C_1,S_0)$ are well known (see for instance \cite{DNK2,FY2}). Indeed, we know that
  \begin{eqnarray}\label{equation9ap2}
 W(C_0,S_1) = \cos \left( \mu \tilde{B}(\theta^2) \right) \times \left( 1 + O \left( \frac{1}{\mu} \right) \right)
\end{eqnarray}
and
  \begin{eqnarray}\label{equation10ap2}
 W(C_1,S_0) = \cos \left( \mu \tilde{B}(\theta^2) \right) \times \left( 1 + O \left( \frac{1}{\mu} \right) \right),
\end{eqnarray}
where $\mu = \sqrt{\mu^2}$ (we do not have to precise the sign of $\mu$ since the characteristic functions are even functions). We then obtain that
  \begin{eqnarray}\label{equation11ap2}
  \Delta_1(\mu^2,\theta^2) = 0 \quad \Leftrightarrow \quad  2 - 2 \cos \left( \mu \tilde{B}(\theta^2) \right) + O \left( \frac{1}{\mu} \right) = 0.
\end{eqnarray}
Using the Rouch\'e's Theorem (see for instance \cite{FY2}) we can then deduce that the couples $(\mu^2,\theta^2)$ satisfying (\ref{equation11ap2})
are close for large $\mu$ to the couples $(\mu^2,\theta^2)$ satisfying
  \[ 2 - 2 \cos \left( \mu \tilde{B}(\theta^2) \right) = 0 \quad \Leftrightarrow \quad \cos \left( \mu \tilde{B}(\theta^2) \right)  = 1.\]
The solutions of this last equation are
\[ \mu = \frac{2m\pi}{\tilde{B}(\theta^2)}, \quad m \in \Z,\]
for $\theta^2$ satisfying (\ref{equation4bisap2}) and $m$ sufficiently large. Finally, we recall that $\tilde{B}(\theta^2)$ is given by (\ref{equation6bisap2}).
Thus, since $s_{23}$ is a positive function, the map $\tilde{B}$ is strictly decreasing with respect to $\theta^2 \in [c_1 + \epsilon,c_2 - \epsilon]$. The map $\frac{1}{\tilde{B}(\theta^2)}$
is then strictly increasing. We can summarize these facts on the following picture:
\begin{figure}[htbp]\label{Figure1ap2}
    \center
   \includegraphics[scale=0.25]{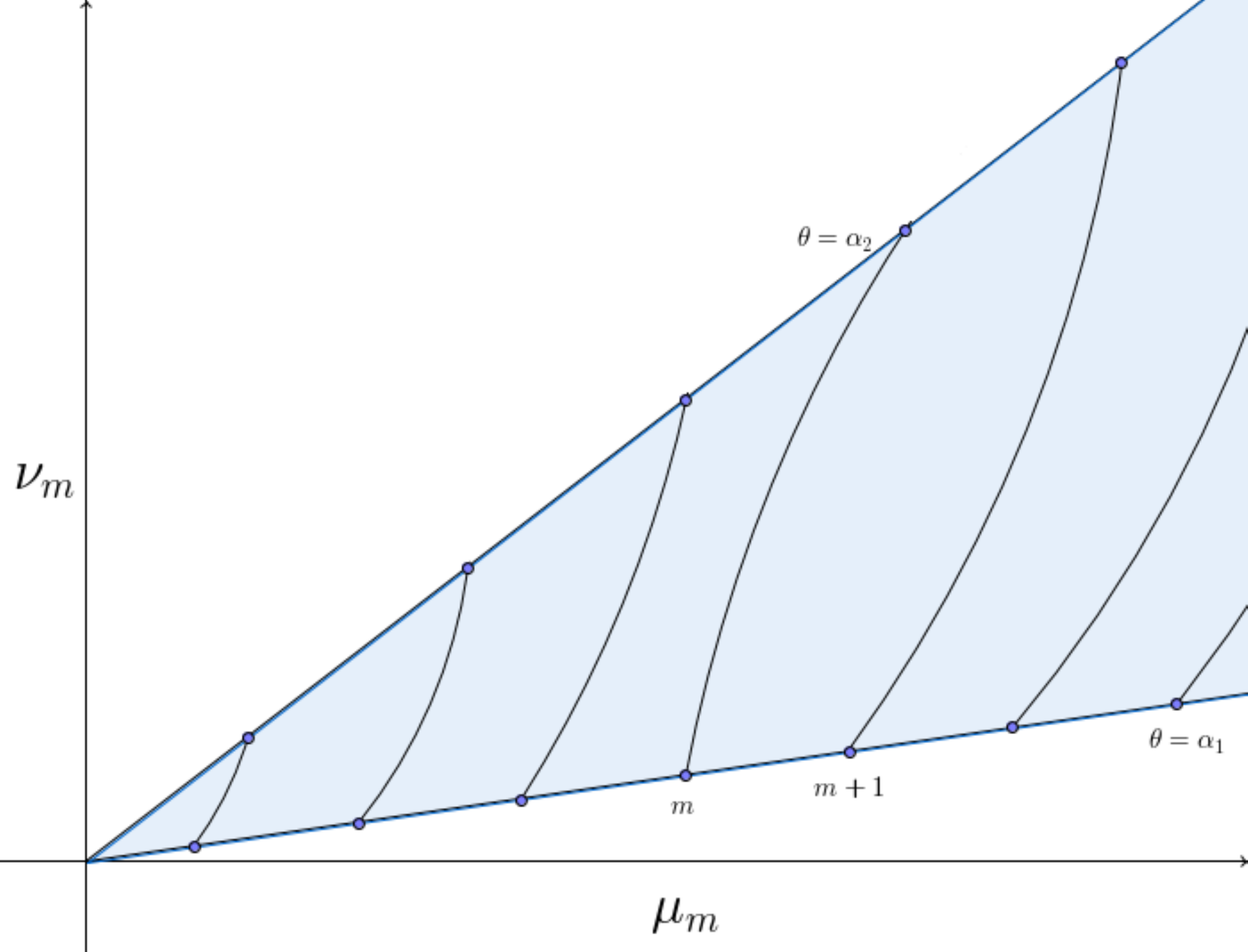}
    \caption{First approximation of the coupled spectrum}
\end{figure} 

\noindent
We do the same analysis on the Equation (\ref{equation3ap2}). We recall that if $\theta^2$ satisfies (\ref{equation4bisap2}) then the inequality (\ref{equation6ap2}) is 
satisfied for $\mu^2$ sufficiently large. We can thus set
\[ X^3 = \int_0^{x^3} \sqrt{-s_{32}(t) - \theta^2 s_{33}(t)} \, dt.\]
This new variable satisfies $X^3 \in [0,\tilde{C}(\theta^2)]$, where
\begin{equation}\label{equation11bisap}
 \tilde{C}(\theta^2) = \int_0^{C} \sqrt{-s_{32}(t) - \theta^2 s_{33}(t)} \, dt. 
\end{equation}
We then set
\[ W(X^3) = \left[ -s_{32}(x^3(X^3)) - \theta^2 s_{33}(x^3(X^3)) \right]^{\frac{1}{4}} w(x^3(X^3)).\]
This function then satisfies, in the variable $X^3$,  the Schr\"odinger equation
  \begin{eqnarray}\label{equation12ap2}
-\ddot{W}^2(X^3) + \tilde{Q}_{\theta^2}(X^3) W(X^3) = \mu^2 W(X^3), \quad \textrm{where} \quad \tilde{Q}_{\theta^2}(X^3) = O(1),
\end{eqnarray}
for $\theta^2$ satisfying (\ref{equation4bisap2}) and $\mu^2$ sufficiently large. As previously, we obtain that (\ref{equation12ap2}) has a periodic solution if and only
if
\[ \Delta_2(\mu^2,\theta^2) := 2 - W(C_0,S_1) - W(C_1,S_0) = 0.\]
Thanks to the asymptotics (\ref{equation9ap2})-(\ref{equation10ap2}) we obtain that
\[ \Delta_2(\mu^2,\theta^2) = 0 \quad \Leftrightarrow \quad 2 - 2 \cos\left( \mu \tilde{C}(\theta^2) \right) + O\left( \frac{1}{\mu} \right) = 0.\]
 Using once more the Rouch\'e's Theorem, we obtain that the couples $(\mu^2,\theta^2)$ satisfying the previous equality are close for large $\mu$ to the couples 
 satisfying
 \[ \cos\left( \mu \tilde{C}(\theta^2) \right) = 1, \quad \textrm{i.e.} \quad \mu^2 = \frac{2 \pi k}{\tilde{C}(\theta^2)}, \quad k \in \Z,\]
 where $k$ is sufficiently large and $\theta^2$ satisfies Equation (\ref{equation4bisap2}). We recall that $\tilde{C}(\theta^2)$ is given by 
 (\ref{equation11bisap}).
Since $s_{33}$ is a negative function, the map $\tilde{C}$ is strictly increasing for $\theta^2 \in [c_1 + \epsilon,c_2 - \epsilon]$.
The map $\frac{1}{\tilde{C}(\theta^2)}$ is then strictly decreasing. We can summarize these facts on the following picture:
\begin{figure}[htbp]
    \center
   \includegraphics[scale=0.25]{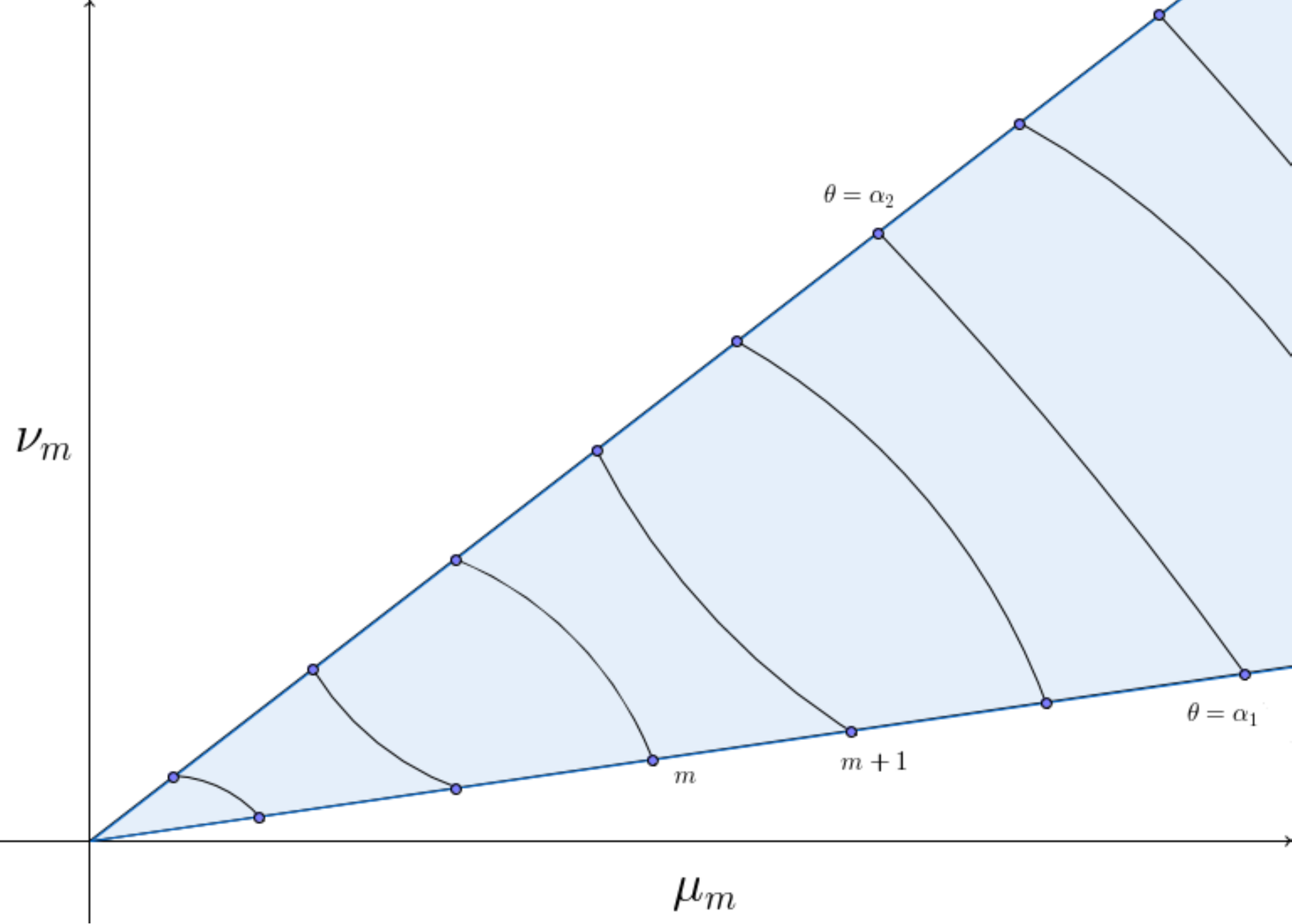}
    \caption{Second approximation of the coupled spectrum}
\end{figure}

\noindent
The coupled spectrum $\Lambda = \{(\mu_m^2,\nu_m^2),$ $m \geq 1\}$, or equivalently the coupled spectrum $(\mu_m^2,\theta_m^2)$, is then given by
\[ \Lambda = \{ \Delta_1(\mu^2,\theta^2) = 0 \} \cap \{ \Delta_2(\mu^2,\theta^2) = 0 \},\]
since for all $(\mu_m^2,\nu_m^2) \in \Lambda$, there exists simultaneously a periodic solution of (\ref{equation7ap2}) and a periodic solution of (\ref{equation12ap2}).
Using the previous two figures we obtain the following one on which the coupled spectrum corresponds to the intersection between the previous curves:
  \begin{figure}[htbp]
    \center
   \includegraphics[scale=0.25]{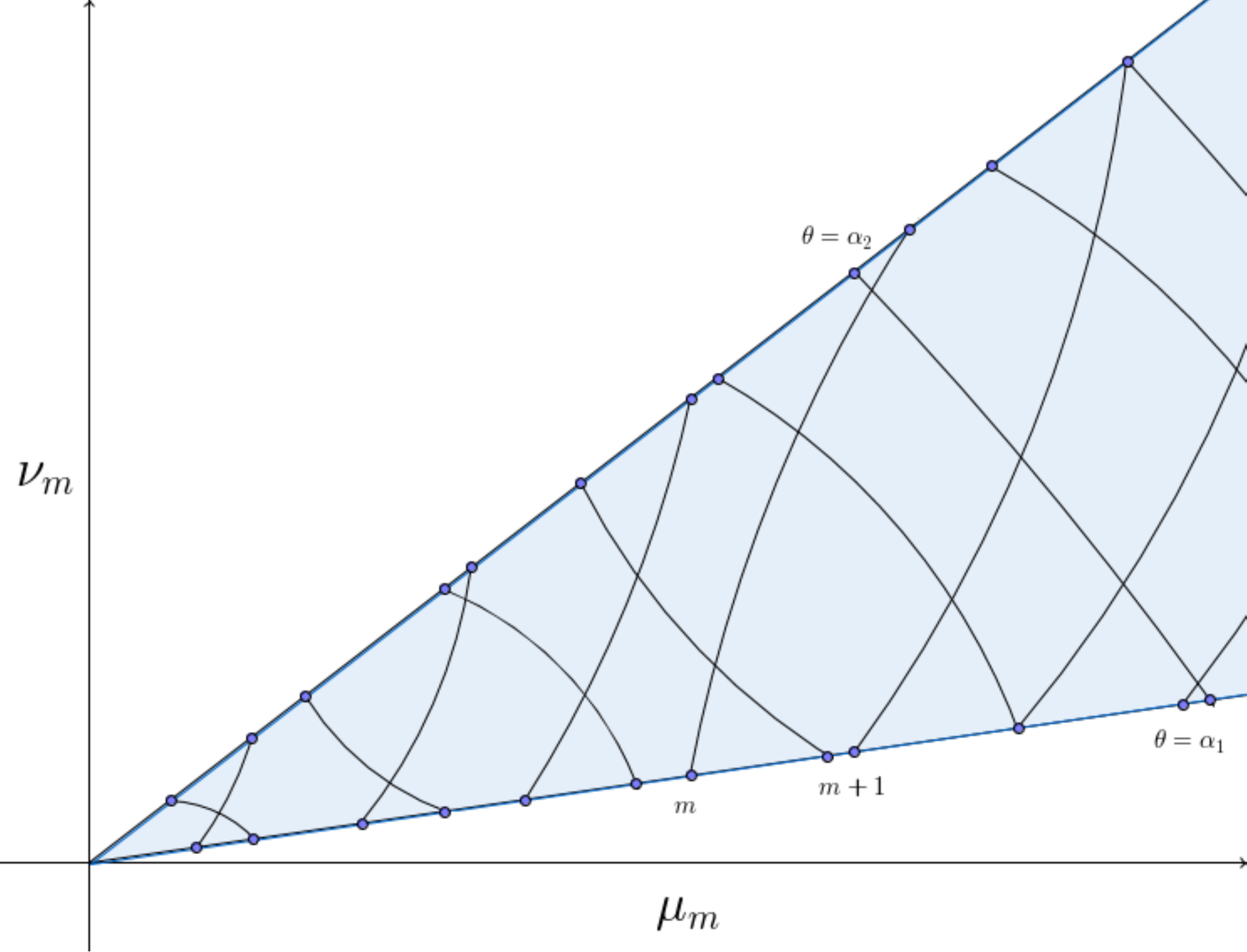}
    \caption{The coupled spectrum}
\end{figure}

\noindent
We now want to use this particular structure of the coupled spectrum to prove Lemma \ref{separationBloom}. We work on the plane $(\mu,\theta)$ and we set $\nu = \theta \mu$, 
with $0 < \alpha_1 \leq \theta \leq \alpha_2$, where
\[ \alpha_1 = \sqrt{c_1^2 + \epsilon} \quad \textrm{and} \quad \alpha_2 = \sqrt{c_2^2 - \epsilon},\]
with $\epsilon > 0$. We recall that for large $m$ we can approximate $\mu_m$ by
\[ \mu_m = \frac{2m \pi}{\tilde{B}(\theta^2)},\]
where
\[ \tilde{B}(\theta^2) = \int_0^{B} \sqrt{-s_{22}(t) - \theta^2 s_{23}(t)} \, dt.\]
We first want to show that the curves drawn in the first Figure are uniformly separated. In other words, we show that there exists $\delta > 0$ such that the distance 
between two successive curves is greater than $\delta$. Precisely, we want to show that there exists $\delta > 0$ such that for large $m$ and for all
$(\theta_1,\theta_2) \in [\alpha_1,\alpha_2]^2$,
\begin{equation}\label{equation13ap2}
 |\mu_{m+1}(\theta_2) - \mu_m(\theta_1)| + |\theta_2 \mu_{m+1}(\theta_2) - \theta_1 \mu_m(\theta_1)| \geq \delta.
\end{equation}
If we note,
\[ d = |\mu_{m+1}(\theta_2) - \mu_m(\theta_1)|,\]
we immediately obtain that (\ref{equation13ap2}) is equivalent to
\begin{equation}\label{equation14ap2}
 d + |d \theta_2 + (\theta_2 - \theta_1) \mu_m(\theta_1)| \geq \delta.
\end{equation}
We now use the mean value Theorem on the map $\frac{1}{\tilde{B}(\theta^2)}$ and we thus obtain
\[ \frac{1}{\tilde{B}(\theta_2^2)} = \frac{1}{\tilde{B}(\theta_1^2)} + e(\xi)(\theta_2^2 - \theta_1^2),\]
where
\[ e(\xi) = - \frac{\tilde{B}'(\xi^2)}{\tilde{B}(\xi^2)^2} > 0,\]
with $\xi \in (\theta_1,\theta_2)$. Actually, we can show that there exist two positive constants $e_1$ and $e_2$ such that
\[ 0 < e_1 \leq e(\xi) \leq e_2, \quad \forall \xi \in [\alpha_1,\alpha_2].\]
We then easily obtain that
\[ d = \left| \frac{2\pi}{\tilde{B}(\theta_1^2)} + 2(m+1)\pi e(\xi)(\theta_1+\theta_2)(\theta_1 - \theta_2) \right|.\]
Using the triangle inequality we thus obtain that
\begin{equation}\label{equation15ap2}
 2(m+1)\pi e(\xi)(\theta_1+\theta_2)|\theta_1 - \theta_2| \geq \frac{2\pi}{\tilde{B}(\theta_1^2)} - d.
\end{equation}
We thus have to study different cases.\\
 
\noindent
\underline{Case 1:} If
 \[d \geq \frac{2\pi}{\tilde{B}(\theta_1^2)},\]
we easily obtain
\[  |\mu_{m+1}(\theta_2) - \mu_m(\theta_1)| + |\theta_2 \mu_{m+1}(\theta_2) - \theta_1 \mu_m(\theta_1)| \geq d \geq \frac{2\pi}{\tilde{B}(\theta_1^2)}.\]
 
\noindent
\underline{Case 2:} If
 \[d < \frac{2\pi}{\tilde{B}(\theta_1^2)},\]
then (\ref{equation15ap2}) gives us
\[ |\theta_1 - \theta_2| > \frac{2\pi -d\tilde{B}(\theta_1^2)}{2(m+1)\pi e(\xi)(\theta_1+\theta_2)\tilde{B}(\theta_1^2)}.\]
Thus,
\begin{eqnarray*}
 \mu_m(\theta_1) |\theta_1 - \theta_2| &=& \frac{2m\pi}{\tilde{B}(\theta_1^2)} |\theta_1 - \theta_2| \\
 &>& \frac{m}{m+1} \frac{2\pi -d\tilde{B}(\theta_1^2)}{e(\xi)(\theta_1+\theta_2)\tilde{B}(\theta_1^2)^2}\\
 &>&  \frac{2\pi -d\tilde{B}(\alpha_1^2)}{4 e_2 \alpha_2 \tilde{B}(\alpha_1^2)^2}.
\end{eqnarray*}
We note that
\[ d\theta_2 <  \frac{2\pi -d\tilde{B}(\alpha_1^2)}{4 e_2 \alpha_2 \tilde{B}(\alpha_1^2)^2} \quad \Leftrightarrow \quad d <  \frac{2\pi}{(4 \theta_2 e_2 \alpha_2\tilde{B}(\alpha_1^2) +1)\tilde{B}(\alpha_1^2)}.\]
If
\[ d > \frac{2\pi}{(4\theta_2  e_2 \alpha_2\tilde{B}(\alpha_1^2) +1)\tilde{B}(\alpha_1^2)},\]
then as in the Case $1$, we easily obtain
\[  |\mu_{m+1}(\theta_2) - \mu_m(\theta_1)| + |\theta_2 \mu_{m+1}(\theta_2) - \theta_1 \mu_m(\theta_1)| \geq d \geq \delta.\]
If
\[ d < \frac{2\pi}{(4\theta_2  e_2 \alpha_2\tilde{B}(\alpha_1^2) +1)\tilde{B}(\alpha_1^2)},\]
we then obtain
\begin{eqnarray*}
 && |\mu_{m+1}(\theta_2) - \mu_m(\theta_1)| + |\theta_2 \mu_{m+1}(\theta_2) - \theta_1 \mu_m(\theta_1)| \\
 &=&  d + |d \theta_2 + (\theta_2 - \theta_1) \mu_m(\theta_1)| \\
 &=& d + |\theta_2 - \theta_1| \mu_m(\theta_1) - d \theta_2 \\
 &>& d + \frac{2\pi -d\tilde{B}(\alpha_1^2)}{4 e_2 \alpha_2 \tilde{B}(\alpha_1^2)^2} - d \theta_2 \\
 &=& \frac{\pi}{2 e_2 \alpha_2 \tilde{B}(\alpha_1^2)^2} + d \left( 1 - \frac{1}{4 e_2 \alpha_2 \tilde{B}(\alpha_1^2)} - \theta_2 \right).
\end{eqnarray*}
We note that there exists $d_0 > 0$ such that for all $d < d_0$,
\[  d \left( 1 - \frac{1}{4 e_2 \alpha_2 \tilde{B}(\alpha_1^2)} - \theta_2 \right) > - \frac{\pi}{4 e_2 \alpha_2 \tilde{B}(\alpha_1^2)^2}.\]
Thus, for all $d < d_0$, we immediately obtain
\[ |\mu_{m+1}(\theta_2) - \mu_m(\theta_1)| + |\theta_2 \mu_{m+1}(\theta_2) - \theta_1 \mu_m(\theta_1)| \geq \frac{\pi}{4 e_2 \alpha_2 \tilde{B}(\alpha_1^2)^2} \geq \delta.\]
Moreover, if $d \geq d_0$ we conclude as in the Case 1.\\

\noindent
We thus have shown that the curves of the first Figure are uniformly separated. Since, the same analysis is also true for the second Figure we have 
shown Lemma \ref{separationBloom}. 
 
\end{proof}

\newpage
\section{Proof of Lemma \ref{number}}\label{ap}

The aim of this Appendix is to prove Lemma \ref{number} which we recall here.

\begin{lemma}
 We set
 \[n(r) = \# E_M \cap B(0,r) \cap \mathcal{C}, \]
 where
  \[ E_M = \{(|\mu_m|,|\nu_m|), \, \, m \geq 1 \},\]
  without multiplicity. Then,
  \[ \varlimsup \frac{n(r)}{r^2} > 0, \quad r \to + \infty.\]
\end{lemma}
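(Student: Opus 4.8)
The plan is to obtain the lower bound for $n(r)$ from the asymptotic distribution of the joint spectrum of the pair $(H,L)$, as described by Colin de Verdi\`ere in \cite{CdV1,CdV2}. By Lemma \ref{propHL} the operators $H$ and $L$ are commuting, elliptic and selfadjoint of order two on the compact surface $\mathcal{T}^2$, so their coupled spectrum $(\mu_m^2,\nu_m^2)_{m\ge 1}$ is exactly the kind of object to which the joint Weyl law applies: the number of joint eigenvalues, \emph{counted with multiplicity}, lying in a region $\Omega\subset\R^2$ with negligible boundary is asymptotic to $(2\pi)^{-2}$ times the Liouville measure of $\{(x,\xi)\in T^{\star}\mathcal{T}^2 : (\sigma_H,\sigma_L)(x,\xi)\in\Omega\}$, where $\sigma_H,\sigma_L$ are the principal symbols of $H$ and $L$. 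Because these symbols are homogeneous of degree two in $\xi$ and $T^{\star}\mathcal{T}^2$ has two fibre dimensions, this measure scales linearly under dilations of $\Omega$.

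Concretely, I would apply this law to the sector-truncated ball $\Omega_r = B(0,r)\cap\mathcal{C}$, written in the variables $(\mu^2,\nu^2)$ as $\{\,a+b\le r^2,\ b/a\in[c_1+\epsilon,c_2-\epsilon]\,\}$. Since $\Omega_r$ is the $r^2$-dilate of a fixed region $\Omega_1$, the homogeneity just mentioned gives, writing $N_{\mathrm{mult}}(\Omega_r)$ for the count with multiplicity,
\[ N_{\mathrm{mult}}(\Omega_r) \ \sim\ \kappa\, r^2, \qquad \kappa = \frac{1}{(2\pi)^2}\,\mathrm{vol}\{(x,\xi) : (\sigma_H,\sigma_L)(x,\xi)\in\Omega_1\}, \qquad r\to+\infty. \]
Everything then reduces to proving that the sector density $\kappa$ is strictly positive, after which the conclusion is immediate: each coupled eigenvalue has multiplicity at most four (Remark \ref{remmult}), so $n(r)\ge\frac14\,N_{\mathrm{mult}}(\Omega_r)\sim\frac{\kappa}{4}\,r^2$, and hence $\varlimsup_{r\to+\infty} n(r)/r^2\ge\kappa/4>0$.

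The positivity of $\kappa$ is the heart of the matter. The support of the pushforward measure is a closed cone in $(\R^+)^2$ whose set of directions is the asymptotic support of the ratios $\nu_m^2/\mu_m^2$; by Lemma \ref{distrivp} this ratio is asymptotically confined to $[C_1,C_2]$, and by Remark \ref{rkconerestreint} we have $0<C_1\le c_1<c_2\le C_2$, so for $\epsilon>0$ small the sector $[c_1+\epsilon,c_2-\epsilon]$ of directions defining $\mathcal{C}$ lies in the \emph{interior} of $[C_1,C_2]$; on this interior the pushforward density does not degenerate, whence $\mathrm{vol}\{(\sigma_H,\sigma_L)\in\Omega_1\}>0$ and $\kappa>0$. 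The delicate points I expect are, first, checking that Colin de Verdi\`ere's hypotheses genuinely hold for our pair $(H,L)$ and that $\partial\Omega_1$ is negligible for the joint density, and second, the verification that $\mathcal{C}$ really sits in the interior of the cone charged by the joint symbol. The explicit two-parameter description of the coupled spectrum obtained in the proof of Lemma \ref{separationBloom} (the transverse families $\mu\,\tilde B(\theta^2)\in 2\pi\Z$ and $\mu\,\tilde C(\theta^2)\in 2\pi\Z$) makes this geometric picture concrete and could be used as an alternative, fully elementary, counting of the intersection points inside $\Omega_r$.
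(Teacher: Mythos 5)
You follow the same route as the paper: Appendix \ref{ap} likewise deduces the lemma from Colin de Verdi\`ere's joint Weyl asymptotics \cite{CdV1,CdV2}, a positivity argument for the phase-space volume attached to the cone, and the division by $4$ permitted by Remark \ref{remmult}. The problem is that the point you set aside as ``delicate'' is not a routine check but the actual content of the proof. Theorem \ref{cdv} (Theorem 0.7 of \cite{CdV1}) is not a statement about regions with ``negligible boundary'': its hypothesis is that the boundary of the counting cone avoids the set $W$ of \emph{critical values} of the joint principal symbol $p=(p_1,p_2)$, and near critical values the joint eigenvalues can cluster so that the volume asymptotics fail. Verifying this occupies most of the paper's proof: one computes $Dp$, finds where its $2\times 2$ minors all vanish, and obtains that $W$ is the origin together with (possibly) the half-lines $\mathcal{D}_1$ and $\mathcal{D}_2$ generated by $(\sqrt{s_{23}},\sqrt{-s_{22}})$ and $(\sqrt{-s_{33}},\sqrt{s_{32}})$, which occur exactly when $s_{22}$, respectively $s_{33}$, is constant. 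It is precisely because $c_1=\max\left(-\frac{s_{32}}{s_{33}}\right)$ and $c_2=\min\left(-\frac{s_{22}}{s_{23}}\right)$ that the boundary rays of $\mathcal{C}$ in (\ref{definitionducone}), pinched strictly inside $(c_1,c_2)$, avoid these critical lines. None of this computation appears in your proposal, so your appeal to the joint Weyl law is unsupported. A related framework issue: \cite{CdV1} concerns commuting \emph{first-order} operators whose sum of squares is elliptic, while $H$ and $L$ are second-order and only semibounded; the paper therefore first shifts by $M$ and passes to $P_1=\sqrt{L+M}$, $P_2=\sqrt{H+M}$, replacing $\sqrt{\mu_m^2+M}$ by $|\mu_m|$ at the end, a reduction your direct use of $(H,L)$ skips over.

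Your argument for $\kappa>0$ also runs in the wrong logical direction: you identify the support of the pushforward measure with the asymptotic cone of the ratios $\nu_m^2/\mu_m^2$ via Lemma \ref{distrivp}, but that lemma is a consequence of the very structure one must exploit here; the support has to be determined at the symbol level. The fact that makes the volume grow like $r^2$ is that $c_1$ is the maximum of the lower endpoints and $c_2$ the minimum of the upper endpoints of the \emph{pointwise} ranges of the symbol ratio, so every direction in $[c_1+\epsilon,c_2-\epsilon]$ is attained by $p(x,\cdot)$ at \emph{every} $x\in\mathcal{T}^2$ on a sector of $\xi$-directions of uniformly positive angle; this yields $\mathrm{vol}_{\Omega}\left(p^{-1}(\mathcal{C}\cap B(0,r))\right)\geq c\,r^2$, which is the inclusion the paper actually establishes. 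Mere interiority in the cone $[C_1,C_2]$ of Lemma \ref{distrivp} would not by itself justify a nondegenerate density, since directions outside $[c_1,c_2]$ may be attained only over small sets of base points. Finally, the ``fully elementary'' alternative you sketch via the curves of Lemma \ref{separationBloom} is not free either: it would require proving that near each intersection of the two approximating families there really is a point of the joint spectrum, a two-parameter Rouch\'e/degree argument that the paper's method avoids entirely.
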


\begin{proof}
 To prove the Lemma we use the work of Colin de Verdi\`ere on the coupled spectrum of commuting pseudodifferential operators in \cite{CdV1,CdV2}.
 We recall that the operators $L$ and $H$ are defined by (\ref{defLetH}) and satisfy (\ref{LHvp}). Since we proved in Lemma \ref{propHL} that $H$ and $L$ are semibounded 
 operators, we can say that there exists $M \in \R$ such that $L + M$ and $H + M$ are positive operators. We set
 \[ P_1 = \sqrt{L + M} \quad \textrm{and} \quad P_2 = \sqrt{H+M}.\]
 The operators $P_1$ and $P_2$ are commuting, selfadjoint pseudodifferential operators of order $1$ such that $P_1^2 + P_2^2$ is an elliptic operator. These operators are thus in the framework 
 of \cite{CdV1}. The principal symbol of $P_1$ and $P_2$ are given by
 \begin{equation}\label{symbprinc}
 p_1(x,\xi) = \sqrt{ -\frac{s_{33}}{s^{11}} \xi_2^2 + \frac{s_{23}}{s^{11}}\xi_3^2} \quad \textrm{and} \quad  p_2(x,\xi) = \sqrt{ \frac{s_{32}}{s^{11}} \xi_2^2 - \frac{s_{22}}{s^{11}}\xi_3^2},
\end{equation}
 respectively. We put
 \[ p(x,\xi) = (p_1(x,\xi),p_2(x,\xi)),\]
 where $x :=(x^2,x^3)$, $\xi := (\xi_2,\xi_3)$ and $(x,\xi)$ is a point on the cotangent bundle of $\mathcal{T}^2$, i.e. $T^{\star} \mathcal{T}^2$. We will apply Theorem 0.7 of \cite{CdV1} to $P_1$ and $P_2$. We recall here this result adapted to our framework.
 \begin{theorem}\label{cdv}
  Let $C$ be a cone of $\dot{\R}^2 = \R^2 \setminus \{(0,0) \}$, with piecewise $C^1$ boundary such that $\partial C \cap W = \emptyset$, where $\partial C$ is the boundary
  of $C$ and $W$ is the set of critical values of $p$. We then have
   \[ \# \{ \lambda \in C \cap \Lambda, \, \, |\lambda| \leq r \} = \frac{1}{4\pi^2} \mathrm{vol}_{\Omega} \left( p^{-1}(C \cap B(0,r)) \right) + O(r),\]
 where $\Lambda$ is the coupled spectrum of $P_1$ and $P_2$ and $\Omega = dx^2 \wedge dx^3 \wedge d\xi_2 \wedge d\xi_3$.
 \end{theorem}
 \noindent
 Thus, to use Theorem \ref{cdv}, we have to determine the set $W$ of critical values of $p$. We first have to determine the critical points of $p$ i.e. the points for 
 which the differential of $p$ is not onto. The differential of $p$ is given by (we omit the variables)
 \[ Dp(x,\xi) = \frac{-1}{4p_1 p_2} \begin{pmatrix}  \partial_2 \left(\frac{s_{33}}{s^{11}} \right) \xi_2^2 - \partial_2 \left(\frac{s_{23}}{s^{11}} \right) \xi_3^2 & \partial_3 \left(\frac{s_{33}}{s^{11}} \right) \xi_2^2 - \partial_3 \left(\frac{s_{23}}{s^{11}} \right) \xi_3^2 & 2\frac{s_{33}}{s^{11}} \xi_2 & -2\frac{s_{23}}{s^{11}} \xi_3 \\
   -\partial_2 \left(\frac{s_{32}}{s^{11}} \right) \xi_2^2 + \partial_2 \left(\frac{s_{22}}{s^{11}} \right) \xi_3^2 & - \partial_3 \left(\frac{s_{32}}{s^{11}} \right) \xi_2^2 + \partial_3 \left(\frac{s_{22}}{s^{11}} \right) \xi_3^2 & -2\frac{s_{32}}{s^{11}} \xi_2 & 2\frac{s_{22}}{s^{11}} \xi_3 \\ \end{pmatrix}.\]
 We compute the six $2 \times 2$ minors of this matrix and we search the points $(x,\xi)$ for which all these minors vanish. After calculation, we obtain that $(x,\xi)$ is a critical
 point of $p$ if and only if the four following conditions are satisfied:
 \[ \begin{cases}
\xi_2 \xi_3 = 0\\
\xi_3 \partial_2(s_{22})(\xi_2^2 + \xi_3^2) = 0\\
\xi_2 \partial_3(s_{33})(\xi_2^2 + \xi_3^2) = 0\\
\partial_2(s_{22}) \partial_3(s_{33})(\xi_2^2 + \xi_3^2)^2 = 0
\end{cases}.\]
Thus, there are four cases to study according to the vanishing of $\partial_2(s_{22})$ and $\partial_3(s_{33})$. We finally obtain that
\[ W = \begin{cases}
(0,0) \quad \textrm{if} \quad \partial_2(s_{22}) \neq 0 \quad \textrm{and} \quad \partial_3(s_{33}) \neq 0\\
\mathcal{D}_1 \quad \textrm{if} \quad \partial_2(s_{22}) = 0 \quad \textrm{and} \quad \partial_3(s_{33}) \neq 0\\
\mathcal{D}_2 \quad \textrm{if} \quad \partial_2(s_{22}) \neq 0 \quad \textrm{and} \quad \partial_3(s_{33}) = 0\\
\mathcal{D}_1 \cup \mathcal{D}_2 \quad \textrm{if} \quad \partial_2(s_{22}) = 0 \quad \textrm{and} \quad \partial_3(s_{33}) = 0\\
\end{cases},\]
where
\[ \mathcal{D}_1 = \{t(\sqrt{s_{23}},\sqrt{-s_{22}}), \, \, t \geq 0 \} \quad \textrm{and} \quad \mathcal{D}_2 = \{t(\sqrt{-s_{33}},\sqrt{s_{32}}), \, \, t \geq 0 \},\]
where $s_{22}$, $s_{23} = s_{22} +1$, $s_{33}$ and $s_{32} = s_{33} +1$ are constants according to the case we study.
 We now recall that in Theorem \ref{cdv}, we have to choose a cone $C$ such that $\partial C \cap W = \emptyset$ and we want to study the set
 \[p^{-1}(C \cap B(0,r)) = p^{-1}(C) \cap p^{-1}(B(0,r)).\]
 Let $r > 0$, we first study the set $p^{-1}(B(0,r))$. We recall that there exists a constant $c_1 > 0$ such that
 \[ \max \left( -\frac{s_{33}}{s^{11}}, \frac{s_{23}}{s^{11}},\frac{s_{32}}{s^{11}},-\frac{s_{22}}{s^{11}} \right) \leq c_1.\]
 Thus, if $(\xi_2,\xi_3) \in B \left( 0, \frac{r}{\sqrt{2c_1}} \right)$ and $(x^2,x^3) \in \mathcal{T}^2$, then
 \[ \Vert p(x,\xi) \Vert = \sqrt{p_1(x,\xi) + p_2(x,\xi)} \leq \sqrt{2c_1(\xi_2^2 + \xi_3^2)} \leq r.\]
 We deduce from this fact that
 \begin{equation}\label{imreciboule}
   \mathcal{T}^2 \times B\left( 0, \frac{r}{\sqrt{2c_1}} \right) \subset p^{-1}(B(0,r)).
 \end{equation}
 We now study the set $p^{-1}(C)$. We have to divide our study in four cases as we have seen before.\\
 
 \noindent
 \underline{Case 1:} $\partial_2(s_{22}) \neq 0$ and $\partial_3(s_{33}) \neq 0$. In this case we just have to avoid the point $\{(0,0)\}$. We consider the cone
 \[ C = \{(x,y) \in \R^2 \, \, \mathrm{such} \,\, \mathrm{that} \,\, \epsilon \leq x, \, \, \epsilon \leq y \}, \quad \epsilon > 0.\]
   \begin{figure}[h]
    \center
   \includegraphics[scale=0.25]{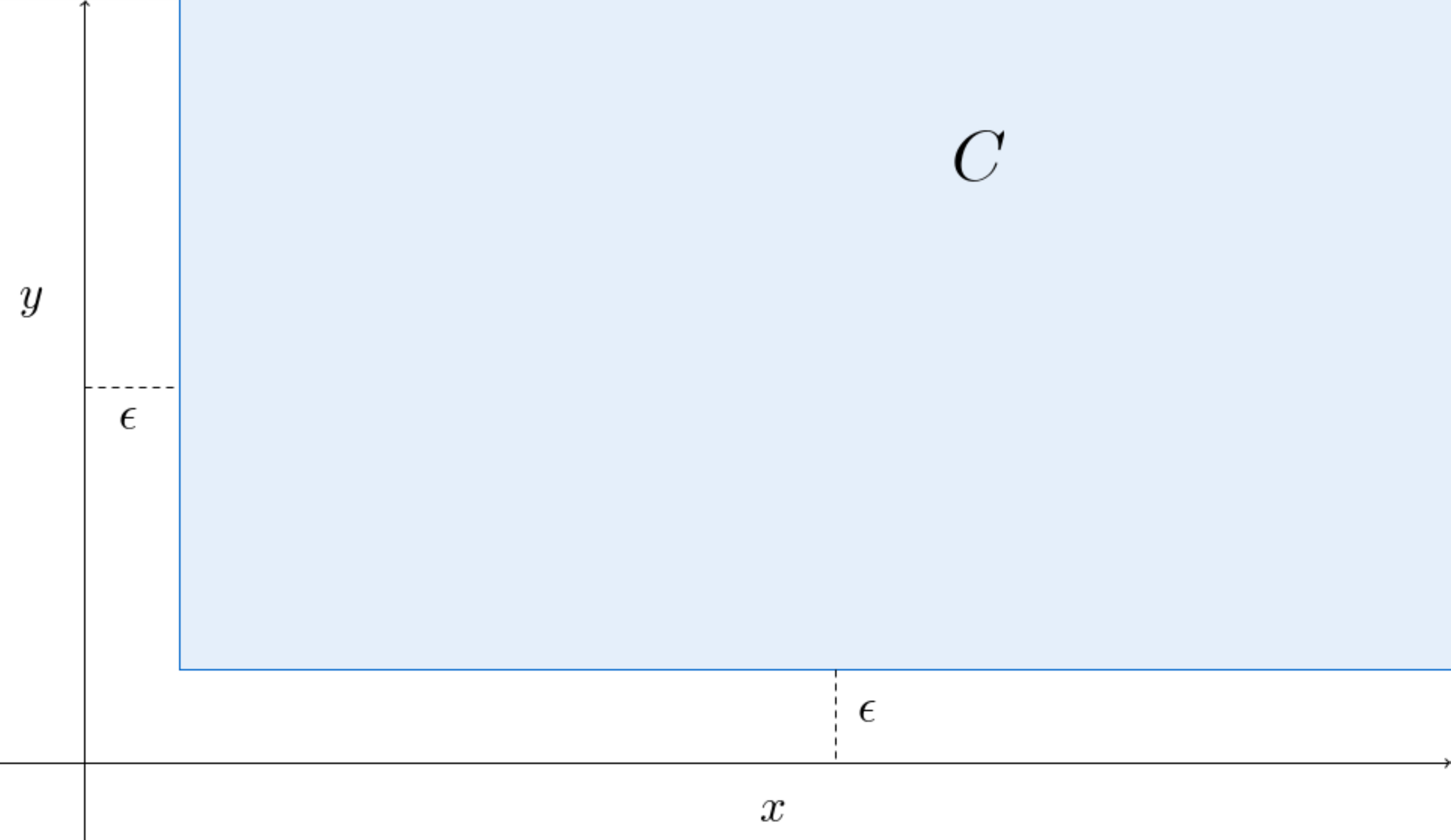}
    \caption{Case 1}
\end{figure}\\
 By definition
 \[ p^{-1}(C) = \{(x,\xi) \in \mathcal{T}^2 \times \R^2, \, \, \epsilon \leq p_1(x,\xi), \, \, \epsilon \leq p_2(x,\xi) \}\]
 and since there exists $c_2 > 0$ such that
 \[ c_2 \leq \min \left( -\frac{s_{33}}{s^{11}}, \frac{s_{23}}{s^{11}},\frac{s_{32}}{s^{11}},-\frac{s_{22}}{s^{11}} \right),\]
there exists $\eta > 0$ such that
\[ \mathcal{T}^2 \times (\R^2 \setminus B(0,\eta)) \subset p^{-1}(C).\]
  \underline{Case 2:} $\partial_2(s_{22}) = 0$ and $\partial_3(s_{33}) \neq 0$. We have to avoid the half-line $\mathcal{D}_1$ which has slope
  $\beta_1 = \sqrt{\frac{-s_{22}}{s_{23}}}$. We consider the cone
   \[ C = \{(x,y) \in \R^2 \, \, \mathrm{such} \,\, \mathrm{that} \,\, \epsilon \leq x, \, \, \epsilon \leq y \leq \beta_1 x - \epsilon\}, \quad \epsilon > 0.\]
      \begin{figure}[h]
    \center
   \includegraphics[scale=0.25]{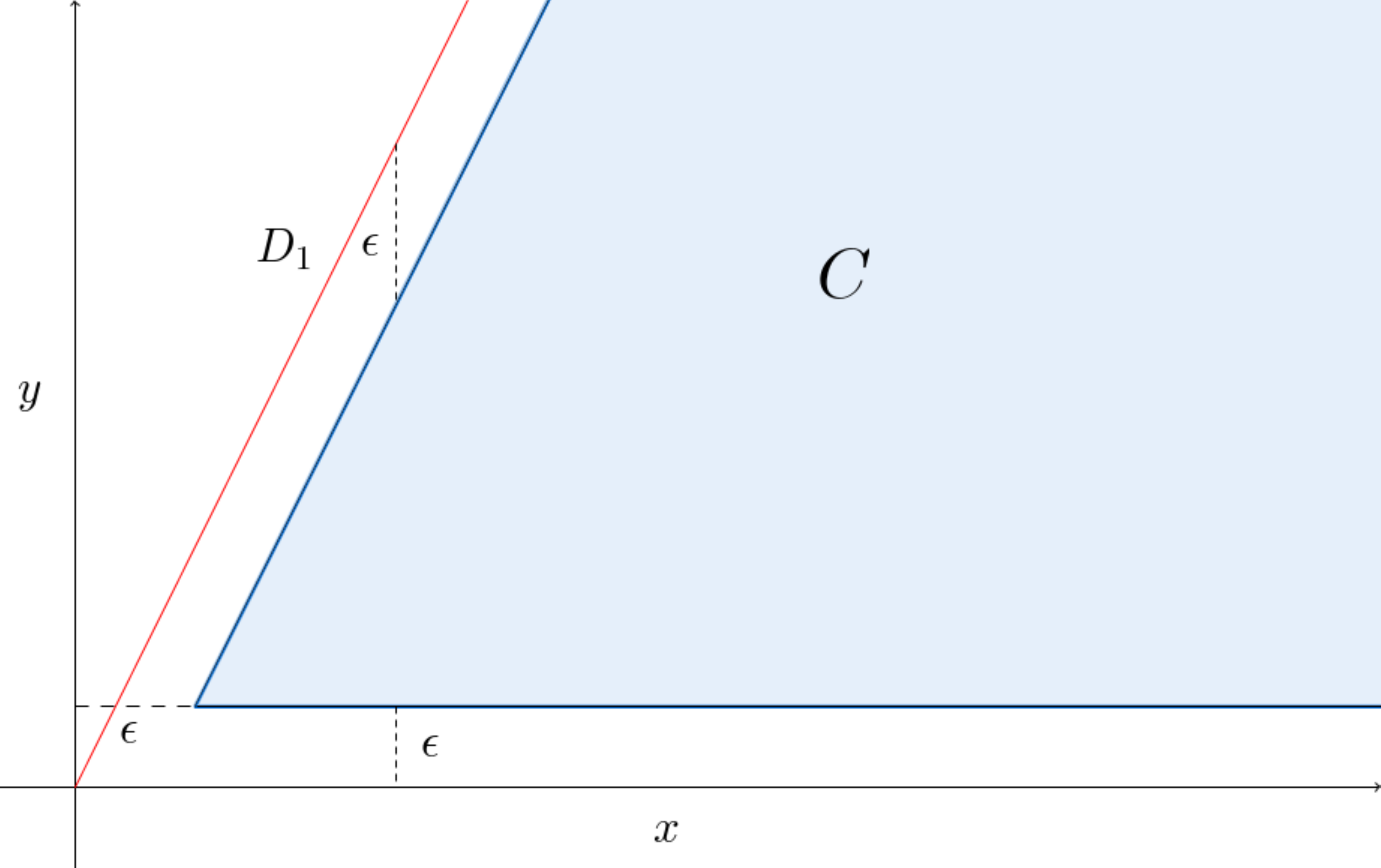}
    \caption{Case 2}
\end{figure}\\
 As in the first case, there is $\eta > 0$ such that
 \[ p_1(x,\xi) \geq \epsilon, \quad \forall (x,\xi) \in \mathcal{T}^2 \times (\R^2 \setminus B(0,\eta))\]
 and
  \[ p_2(x,\xi) \geq \epsilon, \quad \forall (x,\xi) \in \mathcal{T}^2 \times (\R^2 \setminus B(0,\eta)).\]
The last condition can be rewritten as
\begin{eqnarray*}
 p_2(x,\xi) \leq \beta_1 p_1(x,\xi) - \epsilon &\Leftrightarrow& \sqrt{ \frac{s_{32}}{s^{11}} \xi_2^2 - \frac{s_{22}}{s^{11}}\xi_3^2} \leq \sqrt{\frac{-s_{22}}{s_{23}}} \sqrt{ -\frac{s_{33}}{s^{11}} \xi_2^2 + \frac{s_{23}}{s^{11}}\xi_3^2} - \epsilon \\
 &\Leftrightarrow& \sqrt{ \frac{s_{32}}{s^{11}} \xi_2^2 - \frac{s_{22}}{s^{11}}\xi_3^2} \leq \sqrt{ \frac{s_{22}s_{33}}{s_{23}s^{11}} \xi_2^2 - \frac{s_{22}}{s^{11}}\xi_3^2} - \epsilon. 
\end{eqnarray*}
We recall that, thanks to the condition given in Remark \ref{rkrie},
\[\frac{s_{22} s_{33}}{s_{23}} > s_{32}.\]
Thus, there exists $\epsilon > 0$ small enough such that
  \[  p_2(x,\xi) \leq \beta_1 p_1(x,\xi) - \epsilon, \quad \forall (x,\xi) \in \mathcal{T}^2 \times \R^2.\]
  Finally, we have shown that for such an $\epsilon$, there exists $\eta > 0$ such that
  \[ \mathcal{T}^2 \times (\R^2 \setminus B(0,\eta)) \subset p^{-1}(C).\]
  \underline{Case 3:} $\partial_2(s_{22}) \neq 0$ and $\partial_3(s_{33}) = 0$. We have to avoid the half-line $\mathcal{D}_2$ which 
  has slope $\beta_2 = \sqrt{-\frac{s_{32}}{s_{33}}}$. We consider the cone 
     \[ C = \{(x,y) \in \R^2 \, \, \mathrm{such} \,\, \mathrm{that} \,\, \epsilon \leq x, \, \, \beta_2 x + \epsilon \leq y \}, \quad \epsilon > 0,\]
      \begin{figure}[h]
    \center
   \includegraphics[scale=0.25]{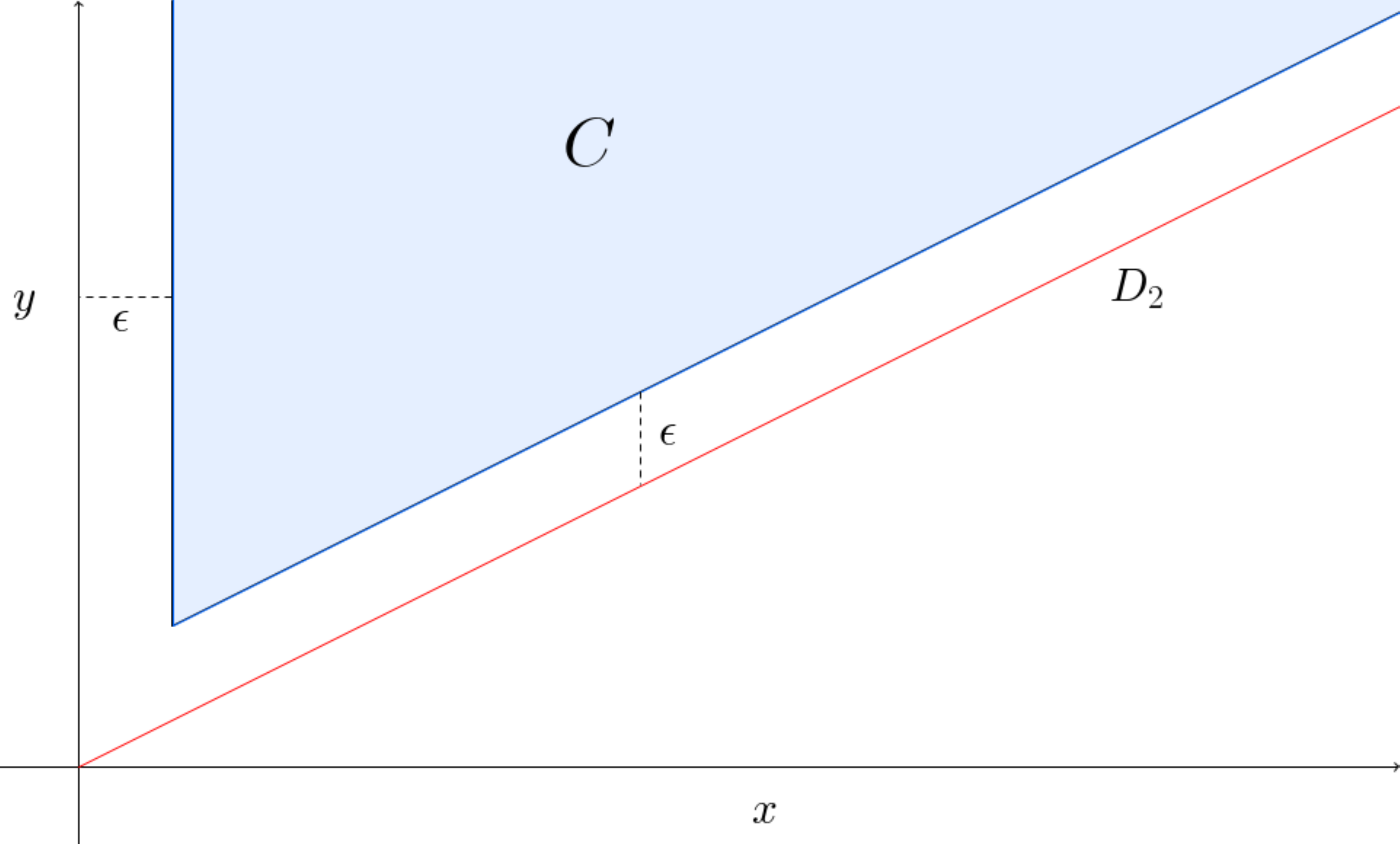}
    \caption{Case 3}
\end{figure}\\
and we show, as in the second case, that for $\epsilon > 0$ small enough there exists $\eta > 0$ such that
  \[ \mathcal{T}^2 \times (\R^2 \setminus B(0,\eta)) \subset p^{-1}(C).\]
  \underline{Case 4:} $\partial_2(s_{22}) = 0$ and $\partial_3(s_{33}) = 0$. We have to avoid $\mathcal{D}_1 \cup \mathcal{D}_2$ which have slopes
  $\alpha_1$ and $\alpha_2$ respectively. We consider the cone
   \[ C = \{(x,y) \in \R^2 \, \, \mathrm{such} \,\, \mathrm{that} \,\, \epsilon \leq x, \, \, \beta_2 x + \epsilon \leq y \leq \beta_1 x - \epsilon\}, \quad \epsilon > 0.\]
        \begin{figure}[h]
    \center
   \includegraphics[scale=0.25]{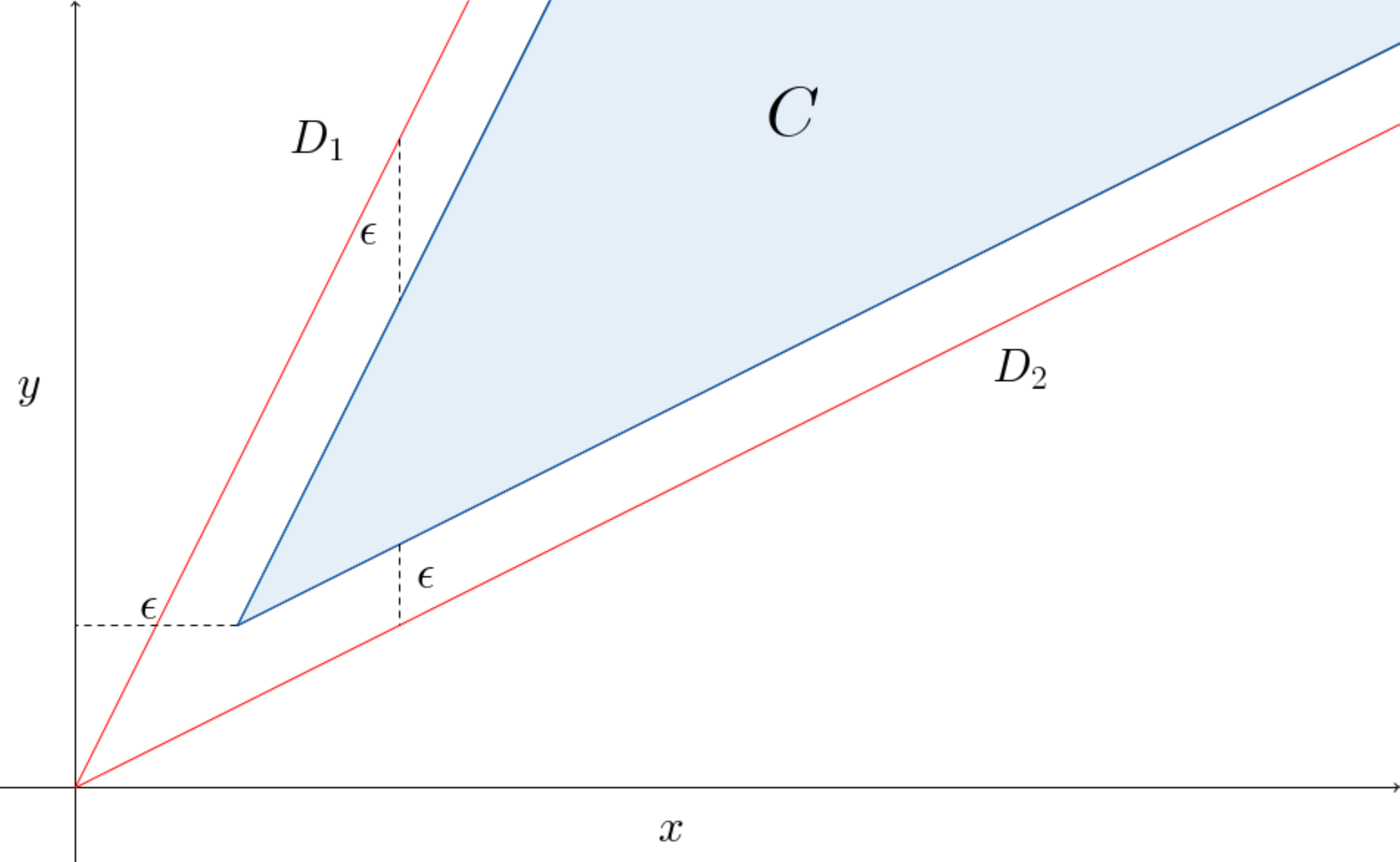}
    \caption{Case 4}
\end{figure}\\
 As in the first case, there is $\eta > 0$ such that
 \[ p_1(x,\xi) \geq \epsilon, \quad \forall (x,\xi) \in \mathcal{T}^2 \times (\R^2 \setminus B(0,\eta))\]
 and, as in the second and the third cases, there exists $\epsilon >0$ small enough such that
  \[  p_2(x,\xi) \leq \beta_1 p_1(x,\xi) - \epsilon, \quad \forall (x,\xi) \in \mathcal{T}^2 \times \R^2\]
and
  \[  \beta_2 p_1(x,\xi) + \epsilon \leq p_2(x,\xi), \quad \forall (x,\xi) \in \mathcal{T}^2 \times \R^2.\]
Thus, for $\epsilon > 0$ small enough, there exists $\eta > 0$ such that
  \[ \mathcal{T}^2 \times (\R^2 \setminus B(0,\eta)) \subset p^{-1}(C).\]
In conclusion, we have shown that in any cases there exists $\eta > 0$ such that
\begin{equation}\label{imrecicone}
  \mathcal{T}^2 \times (\R^2 \setminus B(0,\eta)) \subset p^{-1}(C).
\end{equation}
Moreover, in each case the cone $\mathcal{C}$ defined in (\ref{definitionducone}) is, by definition, included in the cone $C$ we considered and we can thus apply Theorem
\ref{cdv} to this cone. Therefore, thanks to (\ref{imreciboule})-(\ref{imrecicone}) we thus have shown that for $r>0$ large enough
 \begin{equation}\label{inclucdv}
   \mathcal{T}^2 \times \left( B \left( 0, \frac{r}{\sqrt{2c_1}} \right) \setminus B(0,\eta) \right) \subset p^{-1}(\mathcal{C}) \cap p^{-1}(B(0,r)) = p^{-1}(\mathcal{C} \cap B(0,r)).
 \end{equation}
From the inclusion (\ref{inclucdv}) we can deduce that there exists a constant $c > 0$ such that
\[ cr^2 \leq \frac{1}{4\pi^2} \mathrm{vol}_{\Omega} \left( B \left( 0, \frac{r}{\sqrt{2c_1}} \right) \setminus B(0,\eta) \right) \leq \frac{1}{4\pi^2} \textrm{vol}_{\Omega} \left( p^{-1}(\mathcal{C} \cap B(0,r)) \right).\]
Thanks to Theorem \ref{cdv}, we can then conclude that there exists $c>0$ such that
\[ \# \{ \lambda \in \mathcal{C} \cap \Lambda, \, \, |\lambda| \leq r \} \geq cr^2.\]
Finally, we recall that
\[ \Lambda = \{ (\sqrt{\mu_m^2 + M}, \sqrt{\nu_m^2 + M}), \quad m \geq 1 \}\]
and we note that, thanks to the fact that $\mu_m^2 \to +\infty$ and $\nu_m^2 \to +\infty$, as $m \to + \infty$,
\[\sqrt{\mu_m^2 + M} \sim |\mu_m| \quad \textrm{and} \quad \sqrt{\nu_m^2 + M} \sim |\nu_m|, \quad m \to + \infty.\]
We recall that
\[ n(r) = \# \{ \lambda \in \mathcal{C} \cap E_M , \, \, |\lambda| \leq r \},\]
without multiplicity, whereas the result obtained before was computed counting multiplicity. However, the multiplicity of the coupled eigenvalues is at most $4$ (see Remark
\ref{remmult}). Thus, even if it means divide by $4$, we can conclude that
  \[ \varlimsup \frac{n(r)}{r^2} > 0, \quad r \to +\infty.\]
\end{proof}

\vspace{0,5cm}

\noindent
\textit{Acknowledgments:} This paper was initiated by Thierry Daud\'e and Fran\c{c}ois Nicoleau while the author was visiting Thierry Daud\'e and Niky Kamran at McGill University
during his PhD. The author wants to deeply thank Thierry Daud\'e and Fran\c{c}ois Nicoleau for their help and their support and Niky Kamran for his hospitality and 
encouragement. The author also would like to warmly thank Georgi Popov and Thomas Wallez for useful discussions on the study of the coupled spectrum. Finally, the author also
would like to thank his friends and colleagues Valentin Samoyeau and Pierre Vidotto for their support.

\newpage
{}

\end{document}